\DeclareFontFamily{U}{rsfs}{} \DeclareFontShape{U}{rsfs}{n}{it}{<->
rsfs10}{} \DeclareSymbolFont{mscr}{U}{rsfs}{n}{it}
\DeclareSymbolFontAlphabet{\scr}{mscr}
\def\mathscr{\scr}
\let\oldproofname=\proofname
\renewcommand{\proofname}{\rm\bf{\oldproofname}}
\newtheorem{thm}{Theorem}[section]
\newtheorem{cor}[thm]{Corollary}
\newtheorem{con}[thm]{Conjecture}
\newtheorem{example}[thm]{Example}
\newtheorem{lem}[thm]{Lemma}
\newtheorem{prop}[thm]{Proposition}
\theoremstyle{definition}
\newtheorem{defn}[thm]{Definition}
\newtheorem{rem}[thm]{Remark}
\numberwithin{equation}{section}
\DeclareFontFamily{U}{rsf}{} \DeclareFontShape{U}{rsf}{m}{n}{
  <5> <6> rsfs5 <7> <8> <9> rsfs7 <10->  rsfs10}{}
\newcommand{\To}{\longrightarrow}
\newcommand{\bMap}{\mathbf{Map}}
\newcommand{\fkC}{\text{\bfseries\sf{C}}}
\renewcommand{\imath}{\sqrt{-1}}
\newcommand{\id}{\text{id}}
\DeclareMathOperator{\Hom}{Hom}
\def\PV{{\mathbin{\cal{PV}}}}
\def\e#1\e{\begin{equation}#1\end{equation}}
\def\ea#1\ea{\begin{align}#1\end{align}}
\theoremstyle{plain}% default
\def\vdim{\mathop{\rm vdim}\nolimits}
\def\dim{\mathop{\rm dim}\nolimits}
\def\id{\mathop{\rm id}\nolimits}
\def\Hom{\mathop{\rm Hom}\nolimits}
\def\bT{{\mathbin{\mathbb T}}}
\def\PV{\mathcal{P}}
\def\C{{\mathbin{\mathbb C}}}
\def\vp{\varphi}
\def\om{\omega}
\def\La{\Lambda}
\def\Tau{{\rm T}}
\def\sst{\scriptscriptstyle}
\def\ot{\otimes}
\def\cSz{{\mathbin{\cal S}\kern -0.1em}^{\kern .1em 0}}
\def\otL{{\kern .1em\mathop{\otimes}\limits^{\sst L}\kern .1em}}
\def\boxtL{{\kern .2em\mathop{\boxtimes}\limits^{\sst L}\kern .2em}}
\newcommand{\mbL}{\mathbb{L}}
\newcommand{\mbT}{\mathbb{T}}
\newcommand{\mS}{\mathcal{S}}
\newcommand{\mT}{\mathcal{T}}
\newcommand{\mP}{\mathcal{P}}
\newcommand{\Perf}{\textnormal{Perf}}
\newcommand{\fkLag}{{\text{\bfseries\sf{Lag}}}}
\newcommand{\fkLLag}{{\text{\bfseries\sf{LLag}}}}
\newcommand{\fkSymp}{{\text{\bfseries\sf{Symp}}}}
\newcommand{\fkLSymp}{{\text{\bfseries\sf{LSymp}}}}
\newcommand{\fkFill}{{\text{\bfseries\sf{Fill}}}}
\newcommand{\fkOr}{{\text{\bfseries\sf{Or}}}}
\newcommand{\fkCob}{{\text{\bfseries\sf{Cob}}}}
\numberwithin{equation}{section}
\begin{document}

\title[]{Perversely categorified Lagrangian correspondences}
\author{Lino Amorim and Oren Ben-Bassat}\thanks{}%
\address{Mathematical Institute, University of Oxford, ROQ, Woodstock Road, Oxford OX2 6GG, United Kingdom}
\address{Department of Mathematics, University of Haifa, Haifa, Israel}
\email{camposamorim@maths.ox.ac.uk, ben-bassat@math.haifa.ac.il}
\dedicatory{}
\subjclass{}%
\thanks{We would like to thank Dominic Joyce for sharing some of his insights on this topic and many helpful discussions. We would also like to thank Tony Pantev, Damien Calaque and Bertrand To\"{e}n for helpful conversations.
The first author was supported by EPSRC grant EP/J016950/1.
The second author acknowledges the support of the European Commission under the Marie Curie Programme for the IEF grant which enabled this research to take place. The contents of this article reflect the author's views and not the views of the European Commission. }
\keywords{}%

%\date{}%
%\dedicatory{}%
%\commby{}%
% ----------------------------------------------------------------
\begin{abstract}
In this article, we construct a $2$-category of Lagrangians in a fixed shifted symplectic derived stack S. The objects and morphisms are all given by Lagrangians living on various fiber products. A special case of this gives a $2$-category of $n$-shifted symplectic derived stacks $\fkSymp^n$. This is a $2$-category version of Weinstein's symplectic category in the setting of derived symplectic geometry. We introduce another $2$-category $\fkSymp^{or}$ of $0$-shifted symplectic derived stacks where the objects and morphisms in $\fkSymp^0$ are enhanced with orientation data. Using this, we define a partially linearized $2$-category $\fkLSymp$. Joyce and his collaborators defined a certain perverse sheaf on any oriented $(-1)$-shifted symplectic derived stack. In $\fkLSymp$, the $2$-morphisms in $\fkSymp^{or}$ are replaced by the hypercohomology of the perverse sheaf assigned to the $(-1)$-shifted symplectic derived Lagrangian intersections. To define the compositions in $\fkLSymp$ we use a conjecture by Joyce, that Lagrangians in $(-1)$-shifted symplectic stacks define canonical elements in the hypercohomology of the perverse sheaf over the Lagrangian. We refine and expand his conjecture and use it to construct $\fkLSymp$ and a $2$-functor from $\fkSymp^{or}$ to $\fkLSymp$. We prove Joyce's conjecture in the most general local model. Finally, we define a $2$-category of $d$-oriented derived stacks and fillings. Taking mapping stacks into a $n$-shifted symplectic stack defines a $2$-functor from this category to $\fkSymp^{n-d}$. 
\end{abstract}

\maketitle 

\tableofcontents

\section{Introduction}

Since the early stages of the development of Symplectic Geometry it is was clear the important role played by Lagrangian correspondences as natural generalizations of symplectomorphisms. Weinstein \cite{W} considered a symplectic ``category" where the set of morphisms between two symplectic manifolds $M_0$ and $M_1$ is the set of Lagrangian  correspondences, that is submanifolds of the product $M_0^- \times M_1$. Composition in this category should be defined as a fiber product, given Lagrangian correspondences $L_1 \to M_0^- \times M_1 $ and $L_2 \to M_1^- \times M_2 $, one considers the composition
$$L_1 \times_{M_1} L_2 \to M_0^- \times M_2.$$
If this fiber product is transversal then this is again a Lagrangian correspondence. Since we cannot guarantee transversality in general one is forced to work with ``categories" where the composition is only partially defined or to consider strings of correspondences as is done by Wehrheim and Woodward \cite{WW}. 
One then expects that symplectic invariants of symplectic manifolds can be made functorial with respect to Lagrangian correspondences. Weinstein's constructions were related to quantization where one associates to each symplectic manifold a linear space and to each Lagrangian a linear map. More recently Wehrheim and Woodward carried such a construction in the context of Floer theory (under some technical restrictions) \cite{WW}. Namely, they associated to each symplectic manifold its Donaldson--Fukaya category and to each Lagrangian correspondence a functor between those categories. The Donaldson--Fukaya category is a category whose objects are Lagrangian submanifolds and morphism spaces are the Floer cohomology groups. Moreover, they showed that this data can be assembled into a (weak) $2$-category, which they called the Weinstein--Floer $2$-category.

In this paper we explore similar ideas in the context of derived symplectic geometry recently introduced by Pantev, To\"en, Vaqui\'e and Vezzosi in \cite{PTVV}. It turns out that all the basic constructions involving Lagrangian correspondences have direct analogues in the derived world, with the advantage that all fiber products (in the homotopy sense) exist and so we don't have to worry with transversality conditions. However there are two new phenomena in the derived setting: 
\begin{itemize}
\item[1)]The intersection of two derived Lagrangians in a $n$-shifted symplectic derived  stack (or schemes) is naturally a $(n-1)$-shifted symplectic derived stack. This allows us to consider iterated Lagrangian correspondences.
\item[2)]Each (oriented) $(-1)$-shifted symplectic derived stack carries a natural perverse sheaf. Its hypercohomology will replace Floer cohomology in our context.
\end{itemize}

The first phenomenon is the starting point of our first main result. Since the (derived) intersection $X \cap Y$ of two $n$-shifted Lagrangians is $(n-1)$-shifted symplectic one can think of Lagrangians in $X \cap Y$ as ``relative" Lagrangian correspondences between $X$ and $Y$. It should be possible to iterate this construction and so define a symplectic $k$-category for each $k$. More precisely since derived stacks form an $\infty$-category one expects that there will be an $(\infty, k)$-category of ``relative" Lagrangian correspondences. This was already proposed by Calaque in \cite{Cal} and is subject of ongoing work by Haugseng \cite{ R, R2} , Li-Bland \cite{LB}, and others. Schreiber has written extensively on higher Lagrangian correspondences and their quantization in sections 1.2.10, 3.9.14, and 6 of \cite{Sch}.
In this paper, we only consider the case of $k=2$ and work with weak $2$-categories (also known as bicategories), as this is all we need for our main goal of constructing an analogue, in derived algebraic geometry of the Weinstein--Floer $2$-category. We prove the following
\begin{thm} 
Let $S$ be an $n$-shifted symplectic derived stack. There is a weak $2$-category $\fkLag(S)$ whose objects are (derived) Lagrangians in $S$, $1$-morphisms are ``relative" Lagrangian correspondences and $2$-morphisms are ``relative" Lagrangian correspondences between ``relative" Lagrangian correspondences.
\end{thm}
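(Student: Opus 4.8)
The plan is to produce $\fkLag(S)$ by writing down all the structural data of a bicategory directly from derived fiber products, using two geometric inputs: the theorem of Pantev--To\"en--Vaqui\'e--Vezzosi \cite{PTVV} that the intersection of two Lagrangians in an $m$-shifted symplectic stack carries a canonical $(m-1)$-shifted symplectic structure, and the fact that the fiber-product composite of Lagrangian correspondences is again a Lagrangian correspondence \cite{PTVV,Cal}. Concretely I would take the objects ($0$-cells) to be the Lagrangians $X\to S$; given two objects $X,Y$, I would set the $1$-morphisms $X\to Y$ to be the Lagrangians $L\to X\times_S Y$, where $X\times_S Y$ is given the $(n-1)$-shifted symplectic structure of \cite{PTVV} (with the sign/orientation convention on the $X$-factor reversed, as in Weinstein's $M_0^-\times M_1$, chosen so that the diagonal becomes Lagrangian); and given two parallel $1$-morphisms $L,L'\colon X\to Y$, I would set the $2$-morphisms $L\Rightarrow L'$ to be the Lagrangians in the $(n-2)$-shifted symplectic stack $L\times_{X\times_S Y}L'$, again by \cite{PTVV}. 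Thus the whole structure is built by iterating the principle that ``intersection lowers the shift by one.''

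For the composition operations I would use fiber products throughout. Horizontal composition of $1$-morphisms $L\colon X\to Y$ and $M\colon Y\to Z$ is the Lagrangian $L\times_Y M\to X\times_S Z$, which is a Lagrangian correspondence by the composition theorem; vertical composition of $2$-morphisms $\alpha\colon L\Rightarrow L'$ and $\beta\colon L'\Rightarrow L''$ is the analogous composite $\alpha\times_{L'}\beta$ formed inside the $(n-1)$-shifted symplectic base $X\times_S Y$; and horizontal composition of $2$-morphisms is obtained by the same fiber-product recipe one level down, its being Lagrangian following once more from the composition theorem. The identity $1$-morphism on $X$ is the diagonal $\Delta_X\colon X\to X\times_S X$, which is Lagrangian for the chosen sign convention, and the identity $2$-morphism on $L$ is the diagonal $\Delta_L\colon L\to L\times_{X\times_S Y}L$.

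The associators and unitors I would extract from the canonical equivalences of iterated derived fiber products, $(L\times_Y M)\times_Z N\simeq L\times_Y(M\times_Z N)$ and $\Delta_X\times_X L\simeq L\simeq L\times_Y\Delta_Y$. Each such equivalence $\phi$ between Lagrangians over a common base $B$ I would promote to an invertible $2$-morphism by taking its graph $\Gamma_\phi\subset L\times_B L'$, which is Lagrangian because $\phi$ respects the Lagrangian structures; its vertical inverse is $\Gamma_{\phi^{-1}}$, and $\Gamma_\phi\times_{L'}\Gamma_{\phi^{-1}}\simeq\Delta_L$ exhibits invertibility. In this way the associativity and unitality of fiber products, which hold only up to coherent equivalence, become invertible $2$-cells rather than equalities, which is exactly why $\fkLag(S)$ is a genuinely \emph{weak} $2$-category.

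The main obstacle is coherence. The pentagon and triangle axioms amount to the statement that the associator and unitor $2$-cells agree after the two prescribed pastings, and this ultimately reduces to the coherence of associativity of fiber products in the $\infty$-category of derived stacks; the work is to truncate that $\infty$-categorical coherence to precisely the bicategorical data and to check that every comparison equivalence used is compatible with the Lagrangian structures (not merely with the underlying stacks), so that all the graphs $\Gamma_\phi$ involved are Lagrangian and the pasted $2$-cells literally coincide. I would therefore expect most of the effort to go into verifying that the symplectic and Lagrangian structures transported along the canonical fiber-product equivalences match on the nose, which is what makes the pentagon and triangle identities hold in $\fkLag(S)$ and not only in the underlying $\infty$-category of stacks.
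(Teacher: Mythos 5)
Your overall architecture matches the paper's (fiber products for all compositions, the diagonal as identity, graphs of equivalences as associators and unitors), but there is one structural gap that would stop the proposal from producing a bicategory at all: you never quotient the $2$-morphisms, and you explicitly aim to make the axioms hold ``on the nose'' (``the pasted $2$-cells literally coincide''). This cannot be arranged. Every composition in this construction depends on a choice of model for a homotopy fiber product, so vertical composition of $2$-morphisms is associative and unital only up to a canonical Lagrangeomorphism (this is the content of Propositions \ref{prop:Associativity} and \ref{prop:IdOriginal}); likewise the interchange law $(V_2 * U_2)\odot(V_1 * U_1)=(V_2\odot V_1)*(U_2\odot U_1)$ --- an axiom your proposal never addresses --- only holds up to Lagrangeomorphism (Lemma \ref{lem:Compat}). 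But in a bicategory the hom-categories are honest categories, so these identities among $2$-morphisms must be strict equalities. The paper resolves this by defining $2$-morphisms as \emph{Lagrangeomorphism equivalence classes} of Lagrangians in $N_0\times_{X_{01}}N_1$ (Definition \ref{defn:THECAT}), and then all the required equalities are proved as equalities of classes, via uniqueness/characterization results (Proposition \ref{prop:Characterization}, Lemmas \ref{lem:LagUniqP01} and \ref{lem:LagUniqP02}) which identify the differently-built Lagrangians by matching both the underlying homotopy limits and the isotropic structures up to homotopy. Without this quotient, no amount of $\infty$-categorical coherence of fiber products rescues the strict axioms.

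A second, smaller gap: horizontal composition of $2$-morphisms is not ``the same fiber-product recipe one level down,'' and does not follow from the correspondence-composition theorem, because the ambient stacks $M_0\times_{X_{01}}M_1$, $N_0\times_{X_{12}}N_1$ and $P_0\times_{X_{02}}P_1$ (with $P_i=M_i\times_{X_1}N_i$) do not sit in a composable correspondence pattern over a common symplectic middle. The paper has to manufacture a genuinely new Lagrangian correspondence
\[
P_0\times_{X_0\times_S X_1\times_S X_2}P_1 \longrightarrow \bigl(M_{01}\times N_{01}\bigr)^{-}\times P_{01},
\]
which is Proposition \ref{prop:6}, obtained by applying the triple-intersection Theorem \ref{thm:ThreeLag} in $X_1\times_S X_1$ to $M_0\times_{X_0}M_1$, $N_0\times_{X_2}N_1$ and the diagonal $\Delta_{X_1}$ of Corollary \ref{cor:diag}; horizontal composition is then $C_\varphi$ applied to this correspondence. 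Relatedly, you are right that graphs $\Gamma_\phi$ must be checked Lagrangian, but note the precise mechanism: by Proposition \ref{prop:5} and Corollary \ref{cor:HIsomLag} this requires constructing a homotopy between the two isotropic structures in the path space $\mathcal{P}_0(\mathcal{A}^{2,cl}(-,n))$ satisfying an endpoint-compatibility condition, not merely an equivalence of stacks commuting with the maps to the base --- this is exactly the diagram-filling work that occupies Sections 3 and 4 of the paper.
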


As observed by Calaque \cite{Cal}, the point is a $(n+1)$-shifted symplectic derived stack $\bullet_{n+1}$ and a Lagrangian in $\bullet_{n+1}$  is the same as a $n$-shifted symplectic derived stack. Therefore as a corollary of the above we obtain a $2$-category 
$$\fkSymp^n=\fkLag(\bullet_{n+1}),$$
with objects $n$-shifted derived symplectic stacks. Additionally, we show that this $2$-category is symmetric monoidal as defined in \cite{S-P}. 

These categories are highly non-linear and so not very manageable, our goal is to construct a linear version of the category $\fkSymp^n$, in the case of $n=0$. As a by-product we will also obtain a linearization of $\fkLag(S)$ when $S$ is a $1$-shifted symplectic derived stack. 
In order to do this, one first needs to chose some extra data on the Lagrangians, which goes by the general name of orientation data. This is closely related to Kontsevich--Soibelman orientation data \cite{KoSo}. Our version is partially inspired by the notion of relatively spin Lagrangian from Lagrangian Floer theory introduced in \cite{FOOO}. We describe in Theorem \ref{thm:sympor} a symmetric monoidal weak $2$-category  
\[\fkSymp_c^{or},
\]
whose objects are $0$-shifted symplectic derived stacks equipped with line bundles, $1$-morphisms are oriented $0$-shifted Lagrangians correspondences and $2$-morphisms are proper, oriented $(-1)$-shifted ``relative" Lagrangian correspondences. Also, there is a $2$-functor of symmetric monoidal $2$-categories $\fkSymp_c^{or} \to \fkSymp^{0}$ which forgets the orientation data. 

Our second main result is a linearization of $\fkSymp_c^{or}$. This is related to the second phenomenon mentioned above and can be thought as part of the programme by Joyce and his collaborators \cite{Joyc2,BBJ,BBDJS,BBBJ,JS}, on the categorification of Donaldson--Thomas invariants. One of the outcomes of this theory is that a $(-1)$-shifted symplectic derived stack $X$, together with some orientation data, carries a natural perverse sheaf $\mathcal{P}_X$. The idea of such perverse sheaf was due to Behrend who suggested it as a sort of categorification of the Behrend function, a function used to present Donaldson--Thomas invariants as a weighted Euler characteristic on the moduli space of sheaves \cite{Beh}.  For example in the case $X$ is the moduli space of coherent sheaves on a Calabi--Yau $3$-fold, the Euler characteristic of the hypercohomology of $\mathcal{P}_X$ is the Donaldson--Thomas invariant of the $3$-fold. In our setting we will be interested in the perverse sheaf in the intersection of two $0$-shifted Lagrangians, which is $(-1)$-shifted symplectic as proved in \cite{PTVV}. In fact, the intersection of any two Lagrangians carries such a perverse sheaf, even in the holomorphic case as proved by Bussi in \cite{Bu}.

Joyce conjectured that Lagrangians in a $(-1)$-shifted symplectic derived stack define sections of the sheaf $\mathcal{P}_X$. We reformulate this conjecture as a sort of ``quantization" of $(-1)$-shifted symplectic derived stacks. In other words, a $(-1)$-shifted Lagrangian correspondence $\phi: M \to X_0^- \times X_1$, together with some orientation data, determines a map 
$$\mu_M : \phi_0^* \mathcal{P}_{X_0}[\vdim M] \To \phi_1^! \mathcal{P}_{X_1},$$
in the derived category of constructible sheaves of $M$. We formulate a more detailed version of this in Conjecture \ref{con:OurConj}, namely we describe the behaviour of $\mu$ under composition of Lagrangian correspondences. We also give the local construction of the map $\mu_M$ in the case of derived schemes. 

Assuming this conjecture we prove the following 
\begin{thm}
There exists a symmetric monoidal weak $2$-category $\fkLSymp$, whose objects are $0$-shifted symplectic derived stacks with line bundles, $1$-morphisms are oriented Lagrangian correspondences and the space of $2$-morphisms between $X$ and $Y$ is the hypercohomology  $$\mathbb{H}^\bullet(\mathcal{P}_{X\cap Y}[-\vdim X]),$$
where $\vdim X$ is the virtual dimension of $X$.

Moreover there is a $2$-functor of symmetric monoidal $2$-categories $\fkSymp_c^{or} \to \fkLSymp$.
\end{thm}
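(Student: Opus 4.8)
The plan is to build $\fkLSymp$ on top of $\fkSymp_c^{or}$ (Theorem \ref{thm:sympor}) by retaining the same objects and $1$-morphisms and replacing only the $2$-morphism data. Thus the objects of $\fkLSymp$ are $0$-shifted symplectic derived stacks with line bundles, and the $1$-morphisms are oriented $0$-shifted Lagrangian correspondences, exactly as in $\fkSymp_c^{or}$. For two parallel $1$-morphisms $X$ and $Y$ between the same pair of objects, whose intersection $X\cap Y$ is $(-1)$-shifted symplectic and carries the perverse sheaf $\mathcal{P}_{X\cap Y}$, I would declare the set of $2$-morphisms from $X$ to $Y$ to be $\mathbb{H}^\bullet(\mathcal{P}_{X\cap Y}[-\vdim X])$. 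The $2$-functor $\fkSymp_c^{or}\To\fkLSymp$ is then forced to be the identity on objects and $1$-morphisms, and on a geometric $2$-morphism — a proper oriented $(-1)$-shifted relative Lagrangian $M$ in $X\cap Y$ — it is defined to be the canonical class $\mu_M\in\mathbb{H}^\bullet(\mathcal{P}_{X\cap Y}[-\vdim X])$ produced by Conjecture \ref{con:OurConj}.

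The central task is to equip these hypercohomology groups with vertical and horizontal compositions making $\fkLSymp$ a weak $2$-category and the above assignment a $2$-functor. For vertical composition of classes along parallel $1$-morphisms $X,Y,Z$, I would use the Lagrangian correspondence between the $(-1)$-shifted symplectic stacks $X\cap Y$, $Y\cap Z$ and $X\cap Z$ that realises composition of relative Lagrangians in $\fkSymp_c^{or}$ (the fibre product over $Y$): applying the associated structure map $\mu$ of Conjecture \ref{con:OurConj} to the external product of two input classes, and pushing forward to $X\cap Z$, yields a convolution product on hypercohomology. Horizontal composition is defined analogously from the fibre-product description of composition of Lagrangian correspondences in $\fkSymp_c^{or}$ together with the functoriality of $\mu$ under composition, which is the detailed content of the conjecture. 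The identity $2$-morphisms are the canonical classes of the diagonal relative Lagrangians, which act as units for these products by the normalisation built into $\mu$.

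The main obstacle I expect is verifying the coherence axioms of a weak $2$-category — associativity of vertical composition, the associators and unitors for horizontal composition, and above all the interchange (Godement) law — at the level of these convolution products. Each such identity corresponds, under the $2$-functor, to a comparison of the canonical classes $\mu$ assigned to two iterated Lagrangian compositions that are canonically equivalent in $\fkSymp_c^{or}$; it therefore reduces to the compatibility of $\mu$ with composition of correspondences, which is exactly what Conjecture \ref{con:OurConj} is formulated to supply. The delicate point is that these must hold as equalities of classes in hypercohomology, not merely up to the geometric equivalences available in $\fkSymp_c^{or}$, so the argument hinges on the precise refined form of the conjecture, including its behaviour under the associativity and base-change equivalences of fibre products.

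Finally, to upgrade $\fkLSymp$ to a symmetric monoidal $2$-category and the functor to a symmetric monoidal $2$-functor, I would transport the monoidal structure from $\fkSymp_c^{or}$ using the multiplicativity of $\mathcal{P}$. Concretely, the Thom--Sebastiani isomorphism $\mathcal{P}_{(X\cap Y)\times(X'\cap Y')}\cong\mathcal{P}_{X\cap Y}\boxtimes\mathcal{P}_{X'\cap Y'}$, together with the matching behaviour of the orientation data, induces Künneth maps on hypercohomology that serve as the tensor structure on $2$-morphisms, and the symmetry of this external product yields the braiding. Checking that the convolution products and the canonical classes $\mu$ are compatible with these Künneth maps — again a consequence of the multiplicative properties recorded in Conjecture \ref{con:OurConj} and Theorem \ref{thm:sympor} — then shows that the forgetful $2$-functor $\fkSymp_c^{or}\To\fkLSymp$ is symmetric monoidal.
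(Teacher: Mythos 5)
Your proposal follows essentially the same route as the paper: keep the objects and $1$-morphisms of $\fkSymp^{or}_c$, define $2$-morphisms as the hypercohomology $\mathbb{H}^\bullet(\mathcal{P}_{X_{01}}[-\vdim X])$, build vertical and horizontal composition as convolution via the structure maps $\mu$ of Conjecture \ref{con:OurConj} applied to the triple-intersection Lagrangians $X_{012}$ and $Z_0\times_{S_0\times S_1\times S_2}Z_1$, take identities, associators and unitors to be the classes of graphs of (oriented) Lagrangeomorphisms, define $F$ on a proper oriented $2$-morphism $N$ as the class $(\mu_N)_*(1)$, and transport the monoidal structure using $\mathcal{P}_{X\times Y}\cong\mathcal{P}_X\boxtimes\mathcal{P}_Y$ (Thom--Sebastiani) and K\"unneth. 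The one point you gloss over is that the interchange law holds only up to the Koszul sign $(-1)^{|b_1||a_2|}$, which is why the paper enriches $\fkLSymp$ over graded vector spaces and inserts explicit signs into the definitions of $\odot$ and $*$; without this sign bookkeeping the strict equality you assert would fail.
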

Given two classical smooth algebraic Lagrangians (of  complex dimension $n$) in a smooth algebraic symplectic variety, these are examples of $0$-shifted Lagrangians, therefore their (derived) intersection $X\cap Y$ is $(-1)$-shifted symplectic. If this intersection is clean one can show that the hypercohomology $\mathbb{H}^\bullet(\mathcal{P}_{X\cap Y}[-\vdim X])$ is isomorphic to the Floer cohomology of the pair $X, Y$. In fact,  this should hold in general and therefore we expect that a version of the Weinstein--Floer $2$-category for holomorphic symplectic manifolds embeds in $\fkLSymp$. 

The method used to construct $\fkLSymp$ in fact applies to any $1$-shifted symplectic derived stack, not just to the point. Concretely let $S$ be a $1$-shifted symplectic derived stack, in Theorem \ref{thm:Lagor} we construct a $2$-category $\fkLag^{or}_c(S)$ whose objects are Lagrangians in $S$ equipped with line bundles. There is also a $2$-functor 
\[\fkLag_c^{or}(S) \to \fkLag(S),
\]
that forgets the orientation data. Then assuming Conjecture \ref{con:OurConj}, the above construction gives the following
\begin{thm}
For each $1$-shifted symplectic derived stack $S$ there is a weak $2$-category $\fkLLag(S)$ whose objects are Lagrangians in $S$ equipped with line bundles. The $1$-morphisms are ``relative" oriented Lagrangian correspondences and the space of $2$-morphisms between $X$ and $Y$ is the hypercohomology $\mathbb{H}^\bullet(\mathcal{P}_{X\cap Y}[-\vdim X])$.
Also there is a $2$-functor $\fkLag_c^{or}(S) \to \fkLLag(S)$.
\end{thm}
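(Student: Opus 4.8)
The plan is to re-run, now relative to $S$, the construction that produced $\fkLSymp$: the role of the point $\bullet_1$ there was inessential, and $\fkLSymp$ is exactly the case $S = \bullet_1$. The objects and $1$-morphisms are taken verbatim from the $2$-category $\fkLag^{or}_c(S)$ of Theorem \ref{thm:Lagor}. For two parallel $1$-morphisms $X, Y \colon L_0 \to L_1$ — that is, two oriented Lagrangians in the $0$-shifted symplectic stack $W = L_0^- \times_S L_1$ (which is $0$-shifted by the relative Lagrangian intersection theorem of \cite{PTVV}, since $S$ is $1$-shifted) — their derived intersection $X \cap Y = X \times_W Y$ is $(-1)$-shifted symplectic, and the orientation data carried by $X$ and $Y$ endows it with a perverse sheaf $\mathcal{P}_{X\cap Y}$. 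I define the space of $2$-morphisms from $X$ to $Y$ to be $\mathbb{H}^\bullet(\mathcal{P}_{X\cap Y}[-\vdim X])$. Because every Lagrangian in $W$ shares the common virtual dimension $\tfrac{1}{2}\vdim W$, the normalizing shifts attached to $X$, $Y$ and $Z$ all agree, which makes the compositions below degree-compatible.

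The compositions are all instances of the maps $\mu$ of Conjecture \ref{con:OurConj}. For vertical composition of $2$-morphisms $X \To Y \To Z$ I form the triple intersection $X \times_W Y \times_W Z$ and regard it as a $(-1)$-shifted Lagrangian correspondence from $(X\cap Y) \times (Y\cap Z)$ to $X \cap Z$; the resulting map $\mu$ sends the external product of $\alpha \in \mathbb{H}^\bullet(\mathcal{P}_{X\cap Y})$ and $\beta \in \mathbb{H}^\bullet(\mathcal{P}_{Y\cap Z})$ to a class in $\mathbb{H}^\bullet(\mathcal{P}_{X\cap Z}[-\vdim X])$, which I take to be $\beta \circ \alpha$. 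Horizontal composition is defined in the same way from the natural $(-1)$-shifted Lagrangian correspondence relating $(X\cap Y) \times (P \cap Q)$ to $(P\circ X) \cap (Q\circ Y)$ attached to the evident iterated fiber product, for $1$-morphisms $X,Y \colon L_0 \to L_1$ and $P,Q \colon L_1 \to L_2$. The identity $2$-morphism on $X$ is the canonical class in $\mathbb{H}^\bullet(\mathcal{P}_{X\cap X}[-\vdim X])$ that Conjecture \ref{con:OurConj} assigns to the diagonal Lagrangian $X \hookrightarrow X \times_W X$.

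It then remains to verify the weak $2$-category axioms and to build the forgetful $2$-functor. Associativity of vertical composition, the interchange law, and the unit axioms all reduce to the compatibility of the maps $\mu$ under composition of Lagrangian correspondences, which is the content of the ``more detailed'' part of Conjecture \ref{con:OurConj}; the associator and unitor $2$-isomorphisms are inherited from $\fkLag^{or}_c(S)$ together with the canonical isomorphisms of perverse sheaves on iterated intersections. The $2$-functor $\fkLag^{or}_c(S) \to \fkLLag(S)$ is the identity on objects and $1$-morphisms and sends a $2$-morphism — a proper oriented $(-1)$-shifted relative Lagrangian correspondence $M$, i.e.\ a Lagrangian in $X \cap Y$ — to the canonical class $\mu_M \in \mathbb{H}^\bullet(\mathcal{P}_{X\cap Y}[-\vdim X])$; this is the special case of Conjecture \ref{con:OurConj} in which the source factor is the $(-1)$-shifted point, and properness of $M$ is exactly what makes the associated pushforward, hence the class, well defined. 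Compatibility of this assignment with both compositions is once more the composition statement of the conjecture.

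The main obstacle is not a single hard computation but the coherence bookkeeping: pinning down all the associator and unitor data and checking the pentagon and triangle identities while tracking the virtual-dimension shifts and the orientation line bundles through every iterated fiber product. The conceptual reason the relative case is no harder than the absolute one is that each ingredient — the $(-1)$-shifted symplectic structures on the intersections, the perverse sheaves, and the maps $\mu$ — is constructed locally over $S$ and is functorial in the base, so the verifications carried out for $\fkLSymp$ transport without change once $W = L_0^- \times_S L_1$ replaces the absolute product. The single load-bearing input throughout is Conjecture \ref{con:OurConj}, on which both the compositions and the functoriality of the forgetful $2$-functor rest.
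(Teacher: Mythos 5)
Your proposal is correct and takes essentially the same route as the paper: the paper's proof of this theorem consists precisely of transporting the $\fkLSymp$ construction of Sections \ref{subsec:sheaves}, \ref{Ssec:PL} and \ref{subsec:linfunct} to the relative setting over $S$, with objects and $1$-morphisms taken from $\fkLag^{or}_c(S)$ (Theorem \ref{thm:Lagor}), $2$-morphisms given by hypercohomology of the perverse sheaf on the oriented $(-1)$-shifted intersections, and all compositions, identities, coherence data and the forgetful homomorphism defined via the classes $\mu$ of Conjecture \ref{con:OurConj} together with Lemma \ref{lem:XWdiag}, exactly as you describe. Your observations that all Lagrangians in a fixed $X_{01}$ share the virtual dimension $\tfrac{1}{2}\vdim X_{01}$ (making the shifts consistent) and that properness of the $2$-morphisms is what renders the pushforward classes well defined match the paper's normalizations verbatim.
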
 
It would be interesting to understand the information encoded by this $2$-category for some specific examples of $S$, such as the loop space of the classifying stack $BG$ of a reductive smooth group scheme $G$; or the moduli stack of perfect complexes in an elliptic curve. 

The Rozansky--Witten topological sigma-model \cite{KR} predicts the existence of $2$-category associated to each holomorphic symplectic manifold $S$ (in our context this is a $0$-shifted symplectic derived scheme), with objects holomorphic Lagrangians in $S$. The above theorem provides an (at least formally) analogous $2$-category in the $1$-shifted case.

\vspace{.2cm}
One of the main sources of examples of shifted symplectic derived stacks is the following construction. Let $S$ be a $n$-shifted symplectic derived Artin stack and let $X$ be a $\mathcal{O}$-compact derived stack equipped with an $\mathcal{O}$-orientation of dimension $d$, as defined in \cite{PTVV}. Rather informally this can be thought of as a volume form (of degree $d$) that allows us to ``integrate functions" on $X$.
Then the mapping stack $\bMap(X,S)$ inherits an $(n-d)$-shifted symplectic structure by a theorem of \cite{PTVV}. 

There is also a relative version of $\mathcal{O}$-orientation that we call {\it fillings} (or relative $\mathcal{O}$-orientations) of an $\mathcal{O}$-oriented derived stack $(X,[X])$. Heuristically, these are objects whose boundary is $X$. They were introduced by Calaque in \cite{Cal} that also proved that the mapping stack takes relative orientations to Lagrangians.

We elaborate on this constructions and show that there is a weak $2$-category $\fkFill(X)$ whose objects are fillings of $(X,[X])$. Moreover we show that there is a $2$-functor 
\[\mathcal{M}_S: \fkFill_{\fkC}(X) \To \fkLag\left(\bMap(X,S)\right),
\]
where $\fkFill_{\fkC}(X)$ is an appropriate subcategory of $\fkFill(X)$, dependent on $S$.

As a particular case when $X$ is the empty set thought of as a $(d-1)$-dimensional $\mathcal{O}$-oriented, derived stack then the bicategory $\fkOr_d:=\fkFill(\emptyset_{d-1})$ is a symmetric monoidal weak $2$-category of $\mathcal{O}$-compact derived stacks with $\mathcal{O}$-orientations of dimension $d$. We then have a symmetric monoidal $2$-functor 
\[\mathcal{M}_S: \fkOr^d_{\fkC} \To \fkSymp^{n-d}
\] 
determined by a $n$-shifted symplectic derived Artin stack $S$. 

Finally one can check that, similarly to \cite{Cal}, the Betti stack construction gives a $2$-functor from the $2$-category $\fkCob_d^{or}$ of cobordisms of $d$-dimensional manifolds, defined in \cite{S-P}, to $\fkOr^d_{\fkC}$. Therefore composing with $\mathcal{M}_S$ one obtains a symmetric monoidal $2$-functor
\[\mathcal{Z}_S: \fkCob_d^{or} \To \fkSymp^{n-d}.
\]
Thus providing examples of extended Topological Filed Theories (TFT) with values in our symplectic $2$-category.
Just as the shifted symplectic geometry of \cite{PTVV} can be thought of as a mathematically rigorous framework for understanding the AKSZ formalism \cite{AKSZ}, we hope this article will be used towards the understanding of classical BV theory (including boundaries) as in the article \cite{CMR}.

The above constructions, in the case that $n=d$, give rise to the following (in our opinion) very interesting question. Consider the following diagram of $2$-categories  
\[\xymatrix{ ? \ar[r] \ar[d] & \fkSymp_c^{or} \ar[d]^{}\ar[r]^(.6){} & \fkLSymp \\
\fkOr^d_{\fkC} \ar[r]^{\mathcal{M}_S} & \fkSymp^{0} &
}
\]
Is there a natural $2$-category that completes the above diagram? This should amount to finding, for each specific $S$, some geometric structure on $\mathcal{O}$-oriented (or relatively oriented) derived stacks that naturally induces orientations on the symplectic (or Lagrangian) derived stack $\bMap(-,S)$. There is work, related to this question, in the case of the moduli of coherent sheaves by Cao--Leung \cite{Cao} and Hua \cite{H}. We leave this problem for future work.

This paper is organized in the following way. In Section 2 we review some basics of shifted symplectic geometry and provide various constructions of symplectic and Lagrangian structures on derived intersections of Lagrangians. In Section 3 we define and study equivalences between shifted symplectic and Lagrangian derived stacks. In Section 4 we construct the $2$-category $\fkLag(S)$, using the results from the previous two sections. In Section 5 we study orientation data, define $\fkSymp^{or}$ and formulate Conjecture \ref{con:OurConj}. In Section 6 we construct $\fkLSymp$ and $\fkLLag(S)$ assuming Conjecture \ref{con:OurConj}. Finally, in Section 7 we construct the $2$-category of fillings $\fkFill(X)$ and construct the $2$-functors $\mathcal{M}_S$ and $\mathcal{Z}_S$.

\section{Derived Lagrangian Intersections}

\subsection{Review of shifted symplectic geometry}\

We will review some of the basics of shifted symplectic geometry following the work of Pantev, To\"{e}n, Vaqui\'{e} and Vezzosi \cite{PTVV}. 
We start by establishing some notation and conventions. We work relative to a fixed field $k$ of characteristic zero which we suppress from the notation with it being understood that everything is relative to this field. We often suppress pullbacks of (relative) tangent or cotangent complexes in order to simplify notation. Also, since all of our fiber products are homotopy fiber products, we denote them simply in the form $X\times_{Z} Y$ without any special emphasis on the fact that these are homotopy fiber products. The same goes for other derived functors. 

We assume that all the derived Artin stacks are locally of finite presentation. In particular given such a derived Artin stack $F$, its cotangent complex $\mathbb{L}_{F}$ is dualizable and hence we define its tangent complex $\mathbb{T}_F:=\mathbb{L}_{F}^\vee$.
We call a morphism of derived Artin stacks $f:X \longrightarrow Y$ {\it formally \'{e}tale} if the relative cotangent complex $\mathbb{L}_{f}$ vanishes. All the morphisms in this article are assumed to be homotopically finitely presentable and so we do not distinguish between formally \'{e}tale morphisms and \'{e}tale morphisms.  

Let $F$ be a derived stack. 
In \cite{PTVV}, the authors define a space $\mathcal{A}^{p}(F,n)$ of $n$-shifted $p$-forms on $F$ and a space of $n$-shifted closed $p$-forms $\mathcal{A}^{p,cl}(F,n)$. In order to define these one first considers the affine case. For a commutative (non-positively graded) dg-algebra $A$ one defines the simplicial sets
\[\mathcal{A}^{p}(A,n)= \arrowvert (\wedge^{p}\mathbb{L}_{A}[n],d)\arrowvert,
\]
\[\mathcal{A}^{p,cl}(A,n)= \arrowvert ( \prod_{i\geq 0} \wedge^{p+i}\mathbb{L}_{A}[-i+n], d+dR )\arrowvert,
\]
where $d$ is the differential induced by the differential in $\mathbb{L}_{A}$, $dR$ is the de Rham differential and $|-|$ is the realization functor. These define $\infty$-functors on the $\infty$-category of commutative dg-algebras that satisfies \'{e}tale descent, which can be then extended to two $\infty$-functors
\[\mathcal{A}^{p}(-,n), \ \mathcal{A}^{p,cl}(-,n): \mathbf{dSt}^{op} \To \mathbb{S},
\]
where $\mathbf{dSt}$ and $\mathbb{S}$ are the $\infty$-categories of derived stacks and simplicial sets.

The projection to the first component $\prod_{i\geq 0} \wedge^{p+i}\mathbb{L}_{A}[-i+n] \to \wedge^{p}\mathbb{L}_{A}[n]$ then induces a morphism 
\[\mathcal{A}^{p,cl}(F,n) \To \mathcal{A}^{p}(F,n).
\]
Given a closed $p$-form $\omega$ we call its image under this map the underlying $p$-form and denote it by $\omega^0$.
We would like to recall from \cite{PTVV}, that if $F$ is a derived Artin stack we have the following description of the space of $p$-forms, as the mapping space
\[\mathcal{A}^{p}(F,n) \cong Map_{L_{qcoh}(F)}(\mathcal{O}_{F},\wedge^{p}\mathbb{L}_{F}[n] )
\]
where  $L_{qcoh}(F)$ is the $\infty$-category of quasi-coherent sheaves on $F$.

\begin{defn}
Let $S$ be a derived Artin stack. An element $\omega \in \mathcal{A}^{2,cl}(S,n)$ is called an {\it $n$-shifted symplectic form} if the underlying $2$-form $\omega^0$ is {\it non-degenerate}. Non-degeneracy is the condition that the map induced by $\omega^0$: 
\[\Theta_{\omega}:\mathbb{T}_{S} \longrightarrow \mathbb{L}_{S}[n]
\]
is a quasi-isomorphism. We will denote by $Symp(S,n)$ the space of all symplectic forms in $S$. We will call a pair $(S,\omega)$ a $n$-symplectic derived stack.
\end{defn}

\begin{example}
A smooth symplectic scheme is an example of a $0$-symplectic derived stack. The point $Spec(k)$ admits an unique $n$-shifted symplectic form, for every $n$. We will denote this $n$-symplectic stack simply by $\bullet_{n}$. 
\end{example}

Suppose that $(S, \omega)$ is an $n$-symplectic derived stack and consider a morphism of derived Artin stacks $f:X \To S$. An isotropic structure on $f$ is an element $h \in P_{0, f^{*}\omega}(\mathcal{A}^{2,cl}(X,n))$, that is a path in $\mathcal{A}^{2,cl}(X,n)$ from $0$ to $f^{*}\omega$. This determines homotopy commutativity data for the diagram 
\begin{equation}
\xymatrix{\mbT_{X} \ar[d]\ar[ddr]^{0}& & &  \\ f^{*}\mbT_{S}   \ar[d]_{f^{*}\Theta_{\omega}} \\ f^{*}\mbL_{S}[n] \ar[r]& \mbL_{X}[n] \ultwocell\omit
 }
\end{equation}
Recall that we have an exact sequence $\mathbb{L}_{f}[n-1] \longrightarrow f^*\mathbb{L}_{S}[n] \longrightarrow \mathbb{L}_{X}[n] $, therefore $h$ induces a map 
\[\Theta_{h}:\mathbb{T}_{X} \longrightarrow \mathbb{L}_{f}[n-1].
\]
We say $h$ is non-degenerate, if this map is a quasi-isomorphism.

\begin{defn}\label{defn:Lagrangian}
Let $(S, \omega)$ be a $n$-symplectic derived stack. A Lagrangian structure on a morphism $f:X \to S$ is a non-degenerate isotropic structure $h$. We denote by $\mathcal{L}ag(f, \omega)$ as the set of Lagrangian structures on $f$. A {\it Lagrangian} in $(S, \omega)$ is a pair $(f,h)$ consisting of a morphism $f:X \to S$ and an element $h \in \mathcal{L}ag(f, \omega)$. The collection of Lagrangians in $(S, \omega)$  will be written $\mathcal{L}ag(S, \omega)$.
\end{defn}

A simple, but conceptually important observation from \cite{Cal} is the following description of Lagrangians in a point.

\begin{example}\label{point}
Let $\bullet_{n+1}$ be the point equipped with the canonical $(n+1)$-shifted symplectic structure, let $X$ be a derived Artin stack and let $X\stackrel{\pi}\longrightarrow \bullet_{n+1}$ denote the canonical morphism of derived Artin stacks. A Lagrangian structure on $\pi$ is equivalent to a $n$-shifted form on $X$. To see this note that, by definition, an isotropic structure $h$ on $\pi$ is a loop (based at $0$) in $\mathcal{A}^{2,cl}(X,n+1)$, thus $h$ determines a class in $\pi_1(\mathcal{A}^{2,cl}(X,n+1))\simeq \pi_0(\mathcal{A}^{2,cl}(X,n))$. Denote by $\omega$ this closed 2-form. It follows easily from the isomorphism $\mathbb{L}_{X}\simeq \mathbb{L}_{\pi}$ that non-degeneracy of $h$ implies non-degeneracy of $\omega$. Hence $\omega$ is a $n$-shifted symplectic structure on $X$.  
\end{example}

We end this subsection with the definition of the product and the opposite for $n$-symplectic derived stacks. It is straightforward to check that they are indeed $n$-symplectic derived stacks.

\begin{defn} The product of $n$-symplectic derived stacks $(S_0,\omega_0)$ and $(S_1,\omega_1)$ is given by $(S_0 \times S_1, \omega_0 \boxplus \omega_1:= p^{*}_0 \omega_0 +  p^{*}_1 \omega_1)$ where $p_0, p_1$ are the standard projections.

If $(S,\omega)$ is a $n$-symplectic derived stack we define its opposite as the $n$-symplectic stack $(S, -\omega)$. Often we will denote $(S,\omega)$ simply by $S$, in that case we use the notation $S^{-}$ for its opposite.
\end{defn}

\subsection{New Lagrangians out of old}\ 

In this subsection we will give several constructions of symplectic and Lagrangian structures obtained by considering various derived intersections of Lagrangians. The first construction of this type that serves as inspiration can be found in \cite[Theorem 2.9]{PTVV}. Calaque in \cite{Cal} proved that the classical result about composing Lagrangian correspondences holds in the shifted setting. 

Here we will show that all these constructions and a few new ones follow from one basic result, Proposition \ref{prop:Lag2Lag}, and two canonical Lagrangians: the diagonal and the triple intersection of Lagrangians \cite{B}.
We start with an elementary proposition.

\begin{prop}\label{product} 
Let $(S_0,\omega_0)$ be an $n$-symplectic derived stack and  $f: X \To S_0$ be a map of derived stacks. There is a canonical bijection
$$\mathcal{L}ag(f, \omega_0) \longrightarrow \mathcal{L}ag(f, -\omega_0).$$

Moreover given another $n$-symplectic derived stack $(S_1,\omega_1)$ and a map $g: Y \To S_1$ there is a canonical map
$$\mathcal{L}ag(f, \omega_0) \times \mathcal{L}ag(g, \omega_1)\longrightarrow \mathcal{L}ag(f\times g, \omega_0 \boxplus \omega_1).$$
\end{prop}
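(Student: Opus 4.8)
The plan is to unwind a Lagrangian structure as a \emph{non-degenerate path} in the space of closed $2$-forms and to exploit both the additive structure of that space and the behaviour of (co)tangent complexes under products. Since $\mathcal{A}^{2,cl}(-,n)$ takes values in spaces underlying complexes, each $\mathcal{A}^{2,cl}(Z,n)$ is an infinite loop space; in particular it carries a homotopy-coherent addition, pullback along any morphism is additive, and for closed forms $\alpha,\beta$ the sum of a path from $0$ to $\alpha$ with a path from $0$ to $\beta$ yields a path from $0$ to $\alpha+\beta$. The first assertion is then immediate: given $h\in P_{0,f^{*}\omega_0}(\mathcal{A}^{2,cl}(X,n))$, the path $-h$ runs from $0$ to $-f^{*}\omega_0=f^{*}(-\omega_0)$, so it is an isotropic structure on $f$ relative to $-\omega_0$; as negation is additive, the induced map on fibers is $\Theta_{-h}=-\Theta_h$, a quasi-isomorphism exactly when $\Theta_h$ is. Thus $h\mapsto -h$ is the desired (visibly involutive) bijection.

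For the product map, write $q_0\colon X\times Y\to X$ and $q_1\colon X\times Y\to Y$ for the projections, and $p_0,p_1$ for the projections off $S_0\times S_1$. From $p_0\circ(f\times g)=f\circ q_0$ and $p_1\circ(f\times g)=g\circ q_1$ one obtains
\[
(f\times g)^{*}(\omega_0\boxplus\omega_1)=q_0^{*}f^{*}\omega_0+q_1^{*}g^{*}\omega_1 .
\]
Given $h_0\in\mathcal{L}ag(f,\omega_0)$ and $h_1\in\mathcal{L}ag(g,\omega_1)$, pulling back produces paths $q_0^{*}h_0$ from $0$ to $q_0^{*}f^{*}\omega_0$ and $q_1^{*}h_1$ from $0$ to $q_1^{*}g^{*}\omega_1$ in $\mathcal{A}^{2,cl}(X\times Y,n)$, and I define the candidate isotropic structure to be their sum
\[
h:=q_0^{*}h_0+q_1^{*}h_1\in P_{0,\,(f\times g)^{*}(\omega_0\boxplus\omega_1)}\bigl(\mathcal{A}^{2,cl}(X\times Y,n)\bigr).
\]

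It then remains to verify non-degeneracy. Under the canonical splittings $\mathbb{T}_{X\times Y}\simeq q_0^{*}\mathbb{T}_X\oplus q_1^{*}\mathbb{T}_Y$, $(f\times g)^{*}\mathbb{L}_{S_0\times S_1}\simeq q_0^{*}f^{*}\mathbb{L}_{S_0}\oplus q_1^{*}g^{*}\mathbb{L}_{S_1}$, and $\mathbb{L}_{f\times g}\simeq q_0^{*}\mathbb{L}_f\oplus q_1^{*}\mathbb{L}_g$, the fiber sequence $\mathbb{L}_{f\times g}[n-1]\to (f\times g)^{*}\mathbb{L}_{S_0\times S_1}[n]\to\mathbb{L}_{X\times Y}[n]$ is the direct sum of the $q_i$-pullbacks of the corresponding sequences for $f$ and $g$, while $\Theta_{\omega_0\boxplus\omega_1}$ splits as $q_0^{*}\Theta_{\omega_0}\oplus q_1^{*}\Theta_{\omega_1}$. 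Since $h$ is the sum of the two pullback homotopies, the induced map on fibers splits accordingly,
\[
\Theta_h\simeq q_0^{*}\Theta_{h_0}\oplus q_1^{*}\Theta_{h_1}\colon\ \mathbb{T}_{X\times Y}\longrightarrow\mathbb{L}_{f\times g}[n-1],
\]
which is a quasi-isomorphism because each summand is and because pullbacks and finite direct sums preserve quasi-isomorphisms. Hence $h\in\mathcal{L}ag(f\times g,\omega_0\boxplus\omega_1)$, and $(h_0,h_1)\mapsto h$ is the required canonical map.

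The main obstacle, and the only step needing genuine care, is the claim in the last display that forming $\Theta_h$ is compatible with these direct-sum decompositions, i.e.\ that the homotopy-commutativity data packaged by $h=q_0^{*}h_0+q_1^{*}h_1$ restricts, under the splitting of the fiber sequence, to the pullbacks of the data packaged by $h_0$ and $h_1$. This is an additivity-and-naturality statement for the assignment $h\mapsto\Theta_h$ (passing a nullhomotopy of a sum through the boundary map of the fiber sequence), which should follow from functoriality of the path-space fibration $P_{0,-}$ and of the fiber sequence, but making it precise requires tracking coherences in the relevant $\infty$-categorical mapping spaces rather than manipulating chosen representatives.
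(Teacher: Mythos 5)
Your proof is correct, and since the paper states Proposition \ref{product} without proof (introducing it as ``an elementary proposition''), your argument --- negating the path $h$ for the first claim, and summing the pulled-back paths $q_0^*h_0+q_1^*h_1$ together with the direct-sum splittings of $\mathbb{T}_{X\times Y}$, $\mathbb{L}_{f\times g}$ and the associated fiber sequences for the second --- is exactly the argument the paper leaves implicit. The coherence caveat you raise at the end dissolves because $\mathcal{A}^{2,cl}(-,n)$ is defined as the realization of an honest complex, so negation, addition, and pullback of paths are strict chain-level operations, and the nullhomotopy of a sum decomposes on the nose into the sum of the nullhomotopies, making $\Theta_h\simeq q_0^*\Theta_{h_0}\oplus q_1^*\Theta_{h_1}$ immediate.
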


Next we show that, like in classical symplectic geometry, the diagonal map is Lagrangian.

\begin{prop}\label{prop:Diag}Let $(S,\omega)$ be an $n$-symplectic derived stack. Then the diagonal morphism
\[\Delta : S \To S^{-} \times S
\]
has a canonical Lagrangian structure.
\end{prop}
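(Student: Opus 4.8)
The plan is to exhibit the Lagrangian structure explicitly and then verify its non-degeneracy by a direct identification of $\Theta_h$. Write $\Omega := (-\omega)\boxplus\omega = -p_0^*\omega + p_1^*\omega$ for the $n$-symplectic form on $S^-\times S$, where $p_0,p_1 : S^-\times S \To S$ are the projections. Since $p_0\circ\Delta = p_1\circ\Delta = \id_S$, the base-change equivalences give canonical identifications $\Delta^* p_0^*\omega \simeq \omega \simeq \Delta^* p_1^*\omega$, so that $\Delta^*\Omega \simeq (-\omega) + \omega$ in $\mathcal{A}^{2,cl}(S,n)$. The space $\mathcal{A}^{2,cl}(S,n)$ is the realization of a complex of $k$-modules and hence carries a canonical (homotopy) abelian group structure; under it the element $(-\omega)+\omega$ admits a canonical path to $0$, coming from the natural null-homotopy of $x \mapsto x + (-x)$. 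This path is the desired isotropic structure $h\in P_{0,\Delta^*\Omega}(\mathcal{A}^{2,cl}(S,n))$, and it is manifestly canonical.

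The remaining content is non-degeneracy, i.e. that the induced map $\Theta_h : \mathbb{T}_S \To \mathbb{L}_\Delta[n-1]$ is a quasi-isomorphism. First I would compute $\mathbb{L}_\Delta$ from the cofiber sequence $\Delta^*\mathbb{L}_{S^-\times S}\To \mathbb{L}_S \To \mathbb{L}_\Delta$. Using $\Delta^*\mathbb{L}_{S^-\times S}\simeq \mathbb{L}_S\oplus\mathbb{L}_S$ together with the fact that the codifferential of the diagonal is the sum map $(a,b)\mapsto a+b$, this reads $\mathbb{L}_S\oplus\mathbb{L}_S \xrightarrow{(a,b)\mapsto a+b}\mathbb{L}_S\To\mathbb{L}_\Delta$. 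As the sum map is a split surjection, its fiber is the antidiagonal copy of $\mathbb{L}_S$, so $\mathbb{L}_\Delta\simeq\mathbb{L}_S[1]$ and $\mathbb{L}_\Delta[n-1]\simeq\mathbb{L}_S[n]$, the fiber inclusion $\mathbb{L}_\Delta[n-1]\hookrightarrow\mathbb{L}_S[n]\oplus\mathbb{L}_S[n]$ being $c\mapsto (c,-c)$.

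Next I would trace $\Theta_h$ through the defining diagram. The differential $\mathbb{T}_S \xrightarrow{d\Delta} \Delta^*\mathbb{T}_{S^-\times S} = \mathbb{T}_S\oplus\mathbb{T}_S$ is $v\mapsto (v,v)$, while $\Delta^*\Theta_\Omega = (-\Theta_\omega)\oplus\Theta_\omega$; hence the composite $\mathbb{T}_S \To \Delta^*\mathbb{L}_{S^-\times S}[n]$ is $v\mapsto (-\Theta_\omega v,\, \Theta_\omega v)$. Its further composite to $\mathbb{L}_S[n]$ via the sum map vanishes (this is exactly the data recorded by $h$), so it lifts to the fiber $\mathbb{L}_\Delta[n-1]$. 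Since $(-\Theta_\omega v,\Theta_\omega v) = (c,-c)$ with $c = -\Theta_\omega v$, the lift is $\Theta_h : v\mapsto -\Theta_\omega v$ under the identification $\mathbb{L}_\Delta[n-1]\simeq \mathbb{L}_S[n]$. Thus $\Theta_h \simeq -\Theta_\omega$, which is a quasi-isomorphism because $\omega$ is non-degenerate; therefore $h$ is a Lagrangian structure, and it is canonical by construction.

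The main obstacle is bookkeeping the homotopy coherences rather than any conceptual difficulty: the equivalence $\Delta^*\Omega\simeq(-\omega)+\omega$ and the null-homotopy $h$ must be produced $\infty$-categorically, as a genuine path in $\mathcal{A}^{2,cl}(S,n)$ built from the additive structure and the base-change equivalences $\Delta^*p_i^*\simeq\id^*$. The identification $\Theta_h\simeq-\Theta_\omega$ above should be read as taking place in the homotopy category, which is precisely what the non-degeneracy (quasi-isomorphism) condition requires, so no finer coherence is needed for that final step.
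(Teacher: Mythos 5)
Your proof is correct, and the construction of the isotropic structure agrees with the paper's: the paper takes the natural path $l$ from $(p_0\circ\Delta)^*\omega$ to $(p_1\circ\Delta)^*\omega$ induced by $p_0\circ\Delta \cong \id_S \cong p_1\circ\Delta$ and translates it by $-(p_0\circ\Delta)^*\omega$; your version packages the same homotopies as the identifications $\Delta^*p_i^*\omega\simeq\omega$ followed by additive cancellation. (Note that since $\mathcal{A}^{2,cl}(S,n)$ is the realization of a complex, addition is strict and $(-\omega)+\omega=0$ on the nose, so all the homotopical content of $h$ really sits in the base-change paths --- i.e.\ exactly in the paper's $l$.) Where you genuinely diverge is in the non-degeneracy check. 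The paper computes $\mathbb{L}_\Delta$ by factoring through the projection: from $p_0\circ\Delta\cong\id_S$ and $\mathbb{L}_{\id}=0$ it gets $\mathbb{L}_\Delta[-1]\simeq\Delta^*\mathbb{L}_{p_0}\simeq\mathbb{L}_S$, and then merely asserts that the resulting equivalence $\mathbb{T}_S\simeq\mathbb{L}_\Delta[n-1]$ is the one induced by $h$. You instead compute $\mathbb{L}_\Delta$ from the cofiber sequence $\Delta^*\mathbb{L}_{S^-\times S}\To\mathbb{L}_S\To\mathbb{L}_\Delta$ with the split sum map, identify the fiber as the antidiagonal, and then explicitly trace the lift to obtain $\Theta_h\simeq-\Theta_\omega$. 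This buys something the paper leaves to the reader: a concrete verification that the map induced by $h$ is (minus) $\Theta_\omega$, rather than just an abstract equivalence $\mathbb{T}_S\simeq\mathbb{L}_\Delta[n-1]$ that happens to exist. Your closing worry about coherence is also resolvable more cleanly than you state: because the sum map $\mathbb{L}_S\oplus\mathbb{L}_S\To\mathbb{L}_S$ is split, the connecting map $\mathbb{L}_S[n-1]\To\mathbb{L}_\Delta[n-1]$ in the fiber sequence is null, so the homotopy class of the lift is independent of the chosen null-homotopy of the composite; hence $\Theta_h\simeq-\Theta_\omega$ holds in the homotopy category for \emph{any} isotropic structure refining the given identifications, which is all that non-degeneracy requires. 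The two identifications of $\mathbb{L}_\Delta[-1]$ with $\mathbb{L}_S$ (via $\Delta^*\mathbb{L}_{p_0}$ versus via the antidiagonal) agree up to sign, so the two proofs are fully compatible.
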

\begin{proof}
Denote by $p_0$ and $p_1$ the two natural projections $S^{-} \times S \To S$. By definition of $\Delta$, $p_0 \circ \Delta$ and $p_1 \circ \Delta$ are homotopic to $id_S$, therefore we have a natural path $l$ from $(p_0 \circ \Delta)^{*}\omega$ to $(p_1 \circ \Delta)^{*}\omega$ in 
$\mathcal{A}^{2,cl}(S,n)$. Translating $l$ by $(p_0 \circ \Delta)^{*}\omega$ we obtain a path from $0$ to $-(p_0 \circ \Delta)^{*}\omega+(p_1 \circ \Delta)^{*}\omega$ which we denote by $h$.  
Next we compute
\[\Delta^*(-\omega_0 \boxplus \omega_1)=\Delta^{*}(p_{0}^{*}(-\omega)+p_{1}^{*}(\omega) )= -(p_0 \circ \Delta)^{*}\omega + (p_1 \circ \Delta)^{*}\omega
\]
and conclude that $h$ is an isotropic structure on $\Delta$.

Next we check non-degeneracy of $h$. First note that $p_0 \circ \Delta \cong id_S$ gives the exact triangle 
\[\Delta^{*}\mbL_{p_0}\To \mbL_{\text{id}}\To \mbL_{\Delta}\To.
\]
This implies $\mathbb{L}_{\Delta}[-1] \simeq \Delta^*\mbL_{p_0}$, since $\mathbb{L}_{id}=0$. Therefore 
$$\mathbb{L}_{\Delta}[-1] \simeq \Delta^*\mbL_{p_0}\simeq \Delta^*p_1^*\mbL_{S}\simeq \mbL_{S},$$ 
since $p_1 \circ \Delta \cong id_S$ and $\mbL_{p_0}\simeq p_1^*\mbL_{S}$. Hence we obtain the following quasi-isomorphism 
\[ \mbT_S \stackrel{\Theta_\omega}\simeq \mbL{_S}[n] \simeq \mbL_{\Delta}[n-1],
\]
which we can easily see it is induced by $h$.
\end{proof}

From now on we will call a Lagrangian $f:X \longrightarrow S_0^-\times S_1$, a Lagrangian correspondence from $S_0$ to $S_1$.

The following proposition generalizes to the shifted setting a result in classical symplectic geometry: (under appropriate transversality assumptions) a Lagrangian correspondence induces a map from the set of Lagrangians in one factor to the other. This result follows from Theorem 4.4 in \cite{Cal}, but we prove it here for the sake of completeness.

\begin{prop}\label{prop:Lag2Lag}
Let $(S_0,\omega_0)$ and $(S_1,\omega_1)$ be $n$-symplectic derived stacks, let 
\[f=f_0 \times f_1: X \To S_{0}^{-} \times S_1
\] be a Lagrangian correspondence and let $g:N \To S_0$ be a morphism of derived stacks. There is a map 
\[\mathcal{L}ag(g, \omega_0) \longrightarrow \mathcal{L}ag(c_f(g), \omega_1)
\]
where $c_f(g)=f_1 \circ \pi_X: N \times_{g,S_0,f_0} X \To S_1 $.
\end{prop}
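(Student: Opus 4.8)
The plan is to produce the Lagrangian structure on $c_f(g)$ in two stages — first the isotropic (path) data, then non-degeneracy — and to deduce non-degeneracy by a two-out-of-three argument over the exact triangles that govern the tangent and cotangent complexes of the fibre product $M:=N\times_{g,S_0,f_0}X$. Write $\pi_N\colon M\To N$ and $\pi_X\colon M\To X$ for the projections, so that $c_f(g)=f_1\circ\pi_X$. The homotopy-cartesian square supplies a homotopy $g\circ\pi_N\simeq f_0\circ\pi_X$, hence a canonical identification $\pi_N^{*}g^{*}\omega_0\simeq\pi_X^{*}f_0^{*}\omega_0$ in $\mathcal{A}^{2,cl}(M,n)$. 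Fix the given Lagrangian structure $k$ on $f$, namely a path from $0$ to $f^{*}(-\omega_0\boxplus\omega_1)=-f_0^{*}\omega_0+f_1^{*}\omega_1$, and let $h\in\mathcal{L}ag(g,\omega_0)$ be a path from $0$ to $g^{*}\omega_0$. I would send $h$ to the sum of paths $\pi_N^{*}h+\pi_X^{*}k$; reading off endpoints and cancelling $\pi_N^{*}g^{*}\omega_0$ against $\pi_X^{*}f_0^{*}\omega_0$ via the homotopy above gives a path from $0$ to $\pi_X^{*}f_1^{*}\omega_1=c_f(g)^{*}\omega_1$, i.e. an isotropic structure $\tilde h$ on $c_f(g)$. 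This already defines the desired map on underlying sets.

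The real content is non-degeneracy of $\tilde h$, i.e. that $\Theta_{\tilde h}\colon\mbT_M\To\mbL_{c_f(g)}[n-1]$ is a quasi-isomorphism. Here I would use three inputs. First, since $\pi_N$ is the base change of $f_0$ and $\pi_X$ the base change of $g$, one has $\mbT_{\pi_N}\simeq\pi_X^{*}\mbT_{f_0}$ and $\mbL_{\pi_X}\simeq\pi_N^{*}\mbL_g$, which (together with the composition triangle for $c_f(g)=f_1\circ\pi_X$) produce the two exact triangles
\[\pi_X^{*}\mbT_{f_0}\To\mbT_M\To\pi_N^{*}\mbT_N,\qquad \pi_X^{*}\mbL_{f_1}[n-1]\To\mbL_{c_f(g)}[n-1]\To\pi_N^{*}\mbL_g[n-1].\]
Second, the Lagrangian structure $h$ on $g$ gives the quasi-isomorphism $\Theta_h\colon\mbT_N\xrightarrow{\ \sim\ }\mbL_g[n-1]$. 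Third — and this is a lemma I would extract from the Lagrangian structure $k$ on the correspondence $f$ — there is a quasi-isomorphism $\mbT_{f_0}\xrightarrow{\ \sim\ }\mbL_{f_1}[n-1]$, the shifted incarnation of the classical fact that a Lagrangian correspondence relates the relative tangent of one leg to the relative cotangent of the other; it follows by combining the triangles for $\mbL_{f_0}$ and $\mbL_{f_1}$ with $\Theta_k$ and the two symplectic isomorphisms $\Theta_{\omega_0},\Theta_{\omega_1}$. Assembling these into a morphism from the first triangle to the second, with outer verticals $\pi_X^{*}(\text{third lemma})$ and $\pi_N^{*}\Theta_h$ and middle vertical $\Theta_{\tilde h}$, two-out-of-three forces $\Theta_{\tilde h}$ to be a quasi-isomorphism. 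Since $k$ is fixed and non-degenerate while $\Theta_h$ is a quasi-isomorphism for \emph{every} $h\in\mathcal{L}ag(g,\omega_0)$, the assignment $h\mapsto\tilde h$ lands in $\mathcal{L}ag(c_f(g),\omega_1)$, as required.

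The main obstacle is the third input together with the verification that the middle vertical really is $\Theta_{\tilde h}$: one must check that both squares commute up to the homotopies packaged in $h$ and $k$. I would do this by unwinding the definition of $\Theta_{(-)}$ from the homotopy-commutative diagram $(2.1)$ and carefully tracking the signs introduced by $-\omega_0$. This bookkeeping, rather than any conceptual difficulty, is where essentially all the work lies, and it reproduces the relevant part of Theorem 4.4 of \cite{Cal} in this special case.
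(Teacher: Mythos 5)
Your proposal is correct, and its first half coincides with the paper's: the isotropic structure the paper puts on $c_f(g)$ is the concatenation $\pi_N^*e\bullet l\bullet\pi_X^*h$ (Lagrangian path on $g$, base-change homotopy, pulled-back path on the correspondence), which is exactly your $\pi_N^*h+\pi_X^*k$ after the endpoint identification. Where you genuinely diverge is non-degeneracy. The paper never isolates your leg-swap lemma $\mbT_{f_0}\simeq\mbL_{f_1}[n-1]$; instead it factors $c_f(g)$ as $N\times_{S_0}X\xrightarrow{g\times_{S_0}f}S_0\times_{S_0}(S_0\times S_1)\cong S_0\times S_1\to S_1$ to obtain the rotated triangle $\mbL_{c_f(g)}[n-1]\To\mbL_g[n-1]\boxplus\mbL_f[n-1]\To(f_0\circ\pi_X)^*\mbL_{S_0}[n]$, maps into it the fiber-product tangent triangle $\mbT_M\To\mbT_N\boxplus\mbT_X\To(f_0\circ\pi_X)^*\mbT_{S_0}$ with outer verticals $\Theta_e\boxplus\Theta_h$ and $\Theta_{\omega_0}$, and applies two-out-of-three once, so all three non-degeneracy inputs appear in a single diagram. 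You instead use the shorter base-change triangles $\pi_X^*\mbT_{f_0}\To\mbT_M\To\pi_N^*\mbT_N$ and $\pi_X^*\mbL_{f_1}[n-1]\To\mbL_{c_f(g)}[n-1]\To\pi_N^*\mbL_g[n-1]$, with $\omega_0$ entering only through the auxiliary lemma; that lemma is itself proved by precisely the paper's kind of diagram (map $\mbT_{f_0}\To\mbT_X\To f_0^*\mbT_{S_0}$ into $\mbL_{f_1}[n-1]\To\mbL_f[n-1]\To f_0^*\mbL_{S_0}[n]$ via $\Theta_k$ and $f_0^*\Theta_{-\omega_0}$), so in effect you have factored the paper's single diagram chase into two smaller ones. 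What your route buys: a reusable statement (the shifted analogue of the classical tangent/cotangent swap across a correspondence, implicit in Theorem 4.4 of \cite{Cal}) and a final step that is pure base change. What the paper's route buys: a symmetric treatment of $N$ and $X$ and a diagram shape that extends directly to the triple-intersection argument of Theorem \ref{thm:ThreeLag}. Your honest caveat --- checking that the map filling the squares really is $\Theta_{\tilde h}$ --- is exactly the point the paper also treats lightly (the dotted arrow $\Theta_H$ and the phrase ``which we can easily see it is induced by $h$''), so you are at parity on rigor there.
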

\begin{proof}
Let $h$ be the Lagrangian structure in $f$, that is a path from $0$ to 
\[f^{*}(-p_{0}^{*}\omega_{0} + p_{1}^{*}\omega_{1}) = -(p_0 \circ f)^{*}\omega_0 + (p_1 \circ f)^{*}\omega_1 = -f_{0}^{*}\omega_0 + f_{1}^{*}\omega_1. 
\]
As before, up to translations this is equivalent to a path from $f_{0}^{*}\omega_0$ to $f_{1}^{*}\omega_1$ (which we will still denote by $h$).
Consider the following (homotopy) commutative diagram

\[\xymatrix{ N\times_{S_0}X \ar[r]^(.6){\pi_X}\ar[d]^{\pi_N}& X  \ar[d]_{f_0}\\
N\ar[r]^{g} & S_0
}
\]
It gives us a path $l$ from $\pi_{N}^{*}g^{*}\omega_{0}$ to $\pi_{X}^{*}f_{0}^{*}\omega_{0}.$
Let $e$ be a Lagrangian structure on $g$. We define a path $H$ to be the concatenation $\pi_{N}^{*}e \bullet l \bullet \pi_{X}^{*} h$. This is a path from $0$ to $\pi_{X}^{*}f_{1}^{*}\omega_{1}$, in other words an isotropic structure on $c_f(g)$. 

We now need to check non-degeneracy of $H$. First observe that the map $c_f(g)$ is homotopic to the following composition
\[N\times_{S_0}X \xrightarrow{g\times_{S_0}f} S_{0} \times_{S_0}(S_0 \times S_1) \cong S_0 \times S_1 \stackrel{p}\longrightarrow S_1.
\]
This gives the exact triangle
\begin{equation*}
(g\times_{S_0}f)^{*}\mbL_{p} \To \mbL_{c_f(g)} \To \mbL_{(g,f)}\To,
\end{equation*}
which can be rewritten as 
\begin{equation*}
(f_{0}\circ \pi_{X})^{*}\mbL_{S_0} \To \mbL_{c_f(g)} \To \mbL_{g} \boxplus \mbL_{f}\To,
\end{equation*}
since $\mbL_{p} \simeq p^{*}_0 \mbL_{S_0}$ and $\mbL_{(g,f)}\simeq \mbL_{g} \boxplus \mbL_{f}.$
Rotating and shifting, we get the exact triangle
\begin{equation}\label{eqn:Tri3} 
\mbL_{c_f(g)}[n-1] \To \mbL_{g}[n-1] \boxplus \mbL_{f}[n-1] \To (f_{0}\circ \pi_{X})^{*}\mbL_{S_0}[n].
\end{equation}

Next we recall that, since $e$ and $h$ are Lagrangian structures we have the commutative squares
\[\xymatrix{\mbT_{N}\ar[r]\ar[d]_{\Theta_{e}} & g^{*}\mbT_{S_0}\ar[d]^{g^{*}\Theta_{\omega_0}} \\
\mbL_{g}[n-1]\ar[r] & g^{*}\mbL_{S_0}[n]
}
\]
and 
\[\xymatrix{\mbT_{X}\ar[r]\ar[d]_{\Theta_{h}} & f^{*}(\mbT_{S_0} \boxplus \mbT_{S_1})\ar[d]^{\Theta_{-\omega_0}\boxplus \Theta_{\omega_1}} \\
\mbL_{f}[n-1]\ar[r] & f^{*}(\mbL_{S_0} \boxplus \mbL_{S_1})[n]
}
\]
We can pull back both diagrams to $N \times_{S_0} X$ and assemble them into the following homotopy commutative square
\[\xymatrix{\mbT_{N}\boxplus \mbT_{X}\ar[r]\ar[d]_{\Theta_{e} \boxplus\Theta_{h}} & (f_0 \circ \pi_{X})^{*}\mbT_{S_0}\ar[d]^{\Theta_{\omega_0}} \\
(\mbL_{g} \boxplus \mbL_{f})[n-1] \ar[r] & (f_0 \circ \pi_{X})^{*}\mbL_{S_0}[n]
}
\]
and hence we get the commutative diagram 
\begin{equation*}
\xymatrix@=6em{\mathbb{T}_{N \times_{S_0} X}\ar[r] \ar@{.>}[d]_{\Theta_{H}}  &  \mathbb{T}_{N} \boxplus \mathbb{T}_{X} \ar[r] \ar[d]_{\Theta_{e} \boxplus\Theta_{h}} & (f_0 \circ \pi_X)^{*} \mathbb{T}_{S_0} \ar[d]_{\Theta_{\omega_0}}\\
\mathbb{L}_{c_{f}(g)}[n-1]\ar[r]   &  \mathbb{L}_{g}[n-1] \boxplus  \mathbb{L}_{f}[n-1]\ar[r] & (f_0 \circ \pi_X)^{*} \mathbb{L}_{S_0}[n].}
\end{equation*}
The top row is exact by general properties of homotopy fiber products and the bottom row is exact by (\ref{eqn:Tri3}). Therefore we conclude that $\Theta_H$ is a quasi-isomorphism, since $\Theta_{e}$, $\Theta_{h}$ and $\Theta_{\omega_0}$ are quasi-isomorphisms. This completes the proof that $H$ is a Lagrangian structure on $c_f(g)$.\end{proof}

\begin{defn}\label{def:bullet}
Let $(S_0,\omega_0)$ and $(S_1,\omega_1)$ be $n$-symplectic derived stacks and let $f=f_0 \times f_1: X \To S_{0}^{-} \times S_1$ be a Lagrangian correspondence. Given $g:N \To S_0$ a map of derived stacks, we define the map
\[C_{f}:\mathcal{L}ag(g, \omega_0) \longrightarrow \mathcal{L}ag(c_f(g), \omega_1)
\]
given by Proposition \ref{prop:Lag2Lag}, where $c_f(g)=f_1 \circ \pi_X: N \times_{g,S_0,f} X \To S_1 $.

We will sometimes use the notation $C_X$ instead of $C_f$. Also when the map $g$ and a particular Lagrangian structure $h$ are fixed we write $C_X(N)$ for the Lagrangian $C_{f}(h)$ on the map $c_f(g)$.
\end{defn}

We will now use the map $C_f$ (for several different Lagrangian structures $f$) to give several constructions of new Lagrangians out of old ones, which we will later use. The first one was already proved in \cite[Theorem 2.9]{PTVV}.

\begin{cor}\label{cor:Const}
Let $(S,\omega)$ be a $n$-symplectic derived stack and let $f: X \To S$ and $g:Y \To S$ be maps of derived stacks. There is a map
\[\mathcal{L}ag(f,\omega) \times \mathcal{L}ag(g,\omega) \To Symp(X \times_S Y, n-1) 
\]
\end{cor}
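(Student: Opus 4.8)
The plan is to deduce this from Proposition \ref{prop:Lag2Lag} by realizing the derived fiber product $X\times_S Y$ as the source of a Lagrangian in the point $\bullet_n$, and then reading off the $(n-1)$-shifted symplectic form through the dictionary of Example \ref{point}. Recall that there a Lagrangian structure on the canonical map $Z\To\bullet_n$ is precisely the same datum as an $(n-1)$-shifted symplectic form on $Z$; so it suffices to produce, functorially in the two input Lagrangian structures, a Lagrangian structure on $X\times_S Y\To\bullet_n$.

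Concretely, I would proceed as follows. Given Lagrangian structures on $f$ and $g$, first apply the canonical bijection $\mathcal{L}ag(g,\omega)\cong\mathcal{L}ag(g,-\omega)$ of Proposition \ref{product} to view $g$ as a Lagrangian $g:Y\To S^-$. Since $\mathbb{L}_{\bullet_n}=0$ we have $S^-\cong S^-\times\bullet_n$ as $n$-symplectic derived stacks (the point contributes a trivial factor, $-\omega\boxplus 0=-\omega$), so this is exactly the data of a Lagrangian correspondence $g:Y\To S^-\times\bullet_n$ from $S$ to $\bullet_n$. Now invoke Proposition \ref{prop:Lag2Lag} with $S_0=S$, $S_1=\bullet_n$, with the Lagrangian correspondence taken to be this $g$ (so its two components are $g:Y\To S$ and the canonical $\pi:Y\To\bullet_n$), and with the transported map taken to be $f:X\To S$ equipped with its chosen Lagrangian structure. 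The relevant fiber product is then $X\times_{f,S,g}Y=X\times_S Y$, and the map $C_g$ of Definition \ref{def:bullet} produces a Lagrangian structure on
$$c=\pi\circ\pi_Y:X\times_S Y\To\bullet_n,$$
the canonical projection to the point. Applying Example \ref{point} to this Lagrangian yields the desired element of $Symp(X\times_S Y,n-1)$, and tracing the construction defines the asserted map $\mathcal{L}ag(f,\omega)\times\mathcal{L}ag(g,\omega)\To Symp(X\times_S Y,n-1)$.

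Non-degeneracy of the output form requires no extra work: it is exactly the non-degeneracy already verified inside Proposition \ref{prop:Lag2Lag} via the exact triangle \eqref{eqn:Tri3}, transported through the equivalence of Example \ref{point} (where the identification $\mathbb{L}_{Z}\simeq\mathbb{L}_{Z\to\bullet_n}$ turns non-degeneracy of the Lagrangian structure into non-degeneracy of the associated $2$-form). The only genuine care needed is in the bookkeeping of the three reinterpretations used above — the sign flip $\mathcal{L}ag(g,\omega)\cong\mathcal{L}ag(g,-\omega)$, the identification $S^-\times\bullet_n\cong S^-$, and the loop-versus-path dictionary of Example \ref{point} — to confirm that they assemble into a single well-defined map of sets. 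Each of these is formal, so I expect no substantive obstacle beyond checking compatibility of the shifts and that the fiber product is identified correctly; the symmetric choice (using $f$ as the correspondence and transporting $g$) gives the same construction on $Y\times_S X\cong X\times_S Y$ up to the evident equivalence.
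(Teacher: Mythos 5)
Your proof is correct, and it takes a genuinely different decomposition than the paper's, although both arguments run through Proposition \ref{prop:Lag2Lag} and the dictionary of Example \ref{point}. The paper first forms the product Lagrangian $f\times g: X\times Y \to S^-\times S$ via Proposition \ref{product}, then uses the canonical Lagrangian structure on the diagonal $\Delta: S \to S^-\times S$ from Proposition \ref{prop:Diag}, reinterpreted as a Lagrangian correspondence from $S^-\times S$ to $\bullet_n$, transports $f\times g$ along it, and finally identifies $(X\times Y)\times_{f\times g,\,S\times S,\,\Delta}S \cong X\times_S Y$. You instead keep $f$ and $g$ separate: you flip the sign on $g$, read $g: Y \to S^- \cong S^-\times\bullet_n$ as a Lagrangian correspondence from $S$ to $\bullet_n$, and transport $f$ along it, landing directly on $X\times_S Y$ with no auxiliary fiber-product identification. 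Your route is more economical --- it avoids Proposition \ref{prop:Diag} entirely --- at the cost of manifest symmetry in $f$ and $g$, which you rightly flag and resolve up to the evident equivalence; the paper's diagonal-based formulation is the one that reappears in the composition of correspondences (Corollary \ref{cor:LagInt}) and the triple intersections (Theorem \ref{thm:ThreeLag}), so its extra machinery pays off later in the paper's program. Note also that unwinding your construction through the proof of Proposition \ref{prop:Lag2Lag} (with $e=h_f$ and the flipped $h_g$ playing the role of the structure on the correspondence) yields exactly the concatenated loop of Remark \ref{rem:FirstLoop}, namely $\pi_X^* h_f$ followed by the comparison path $l$ and the reverse of $\pi_Y^* h_g$, so the two approaches produce the same symplectic structure on $X\times_S Y$, not merely two structures of the correct shift.
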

\begin{proof}
It follows from Proposition \ref{product} that there is a mapping
\begin{equation}\label{eqn:SymLagInt}\mathcal{L}ag(f,\omega) \times \mathcal{L}ag(g,\omega) \To \mathcal{L}ag(f\times g,-\omega \boxplus \omega).
\end{equation}
Proposition \ref{prop:Diag} determines a Lagrangian structure on the diagonal morphism $\Delta: S \longrightarrow S^- \times S$ which can be interpreted as a Lagrangian structure on the map $\Delta: S \longrightarrow (S^- \times S)^- \times \bullet_{n}$. Now by Proposition \ref{prop:Lag2Lag} we get a map
\[C_{\Delta}: \mathcal{L}ag(f \times g, -\omega \boxplus \omega) \To \mathcal{L}ag(c_\Delta(f \times g), \bullet_n).
\]
where $c_\Delta(f \times g)$ is the canonical map $(X\times Y) \times_{f\times g, S\times S, \Delta} S \To \bullet_n$. Now recall from Example \ref{point}, that a Lagrangian structure in the canonical map to the point is equivalent to a $(n-1)$-shifted symplectic structure on the domain. Therefore composing the above two maps we obtain a map 
$$\mathcal{L}ag(f,\omega) \times \mathcal{L}ag(g,\omega) \To Symp((X\times Y) \times_{f\times g, S\times S, \Delta} S, n-1),$$
which is the required map once we note that $(X\times Y) \times_{f\times g, S\times S, \Delta} S \cong X \times_S Y$.
\end{proof}

\begin{rem}\label{rem:FirstLoop}
Given Lagrangian structures $h_{f}$ on $f:X \to S$ and $h_{g}$ on $g:Y \to S$, the symplectic form which is produced from Corollary \ref{cor:Const} in $\mathcal{A}^{2,cl}(X\times_{S}Y, n-1)$ can be thought of as the loop at $0$ given by the concatenation 
\begin{equation}\label{eqn:firstloop}\xymatrix{\pi_{X}^{*}f^{*}\omega \ar@{-}|-*=0@{>}[dr]_{\pi_{X}^{*}h_{f}} & & \pi_{Y}^{*} g^{*} \omega \ar@{-}|-*=0@{>}[ll] \\
& 0 \ar@{-}|-*=0@{>}[ur]_{\pi_{Y}^{*}h_{g}} & 
}
\end{equation}
in $\mathcal{A}^{2,cl}(X\times_{S}Y, n)$ where the top path is induced by the homotopy between $g\circ \pi_{Y}$ and $f\circ \pi_{X}$. A Lagrangian structure on a morphism $\phi: N \to X \times_{S} Y$ is then a homotopy between the constant loop at $0$ in $\mathcal{A}^{2,cl}(N, n)$ to the pullback of (\ref{eqn:firstloop}) by $\phi$ which is a loop at $0$ in $\mathcal{A}^{2,cl}(N, n)$. This is equivalent to, in the path space $\mathcal{P}_0(\mathcal{A}^{2,cl}(N, n))$, to a path from $\phi^{*} \pi_{X}^{*} h_{f}$ to $\phi^{*} \pi_{Y}^{*} h_{g}$:
\[\xymatrix{\phi^{*} \pi_{X}^{*} h_{f} \ar@{-}|-*=0@{>}[r]_{}& \phi^{*} \pi_{Y}^{*} h_{g} }
\]
satisfying the following: when we evaluate at the endpoint we obtain the path in $\mathcal{A}^{2,cl}(N)$ from $\pi_X^*f^* \omega$  to $\pi_Y^*g^* \omega$ that is homotopic to the natural path induced by  $g\circ \pi_{Y}\cong f\circ \pi_{X}$.
\end{rem}
\begin{rem}\label{rem:InCase}
In the case that $S$ is the point $\bullet_n$ the map (\ref{eqn:SymLagInt}) takes the pair $(X,Y)$ of $(n-1)$-shifted symplectic stacks to $X^{-} \times Y$. Since in this case we do not write anything below the product symbol, it should not cause confusion that $X \times_{\bullet_n}Y= X^{-} \times Y$ as shifted symplectic derived stacks.
\end{rem}

In the next corollary we recover the result about composition of Lagrangian correspondences proved in \cite[Theorem 4.4]{Cal}.

\begin{cor}\label{cor:LagInt}
Let $(S_i,\omega_i)$ be $n$-symplectic derived stacks for $i=0,1,2$ and let $f:X \To S_0\times S_1$ and $g:Y \To S_1\times S_2$ be maps of derived stacks. There is a map 
\[\mathcal{L}ag(f, -\omega_0\boxplus \omega_1 ) \times \mathcal{L}ag(g, -\omega_1 \boxplus \omega_2) \longrightarrow \mathcal{L}ag(f\times_{S_1} g,-\omega_0\boxplus \omega_2),
\]
where $f\times_{S_1} g: X \times_{S_1} Y \To S_0 \times S_2$. When $S_0, S_1, S_2, f$ and $g$ are clear, we write this map as $(X, Y) \mapsto Y \bullet X$.
\end{cor}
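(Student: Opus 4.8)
The plan is to deduce this exactly as Corollary \ref{cor:Const} was deduced from the diagonal: apply Proposition \ref{prop:Lag2Lag} to a single ``product of diagonals'' Lagrangian correspondence. Write $f=f_0\times f_1$ and $g=g_1\times g_2$, and first combine the two given Lagrangian structures. By Proposition \ref{product}, a pair in $\mathcal{L}ag(f,-\omega_0\boxplus\omega_1)\times\mathcal{L}ag(g,-\omega_1\boxplus\omega_2)$ yields a Lagrangian structure on
\[
f\times g: X\times Y \To T_0:= S_0^-\times S_1\times S_1^-\times S_2,
\]
where $T_0$ carries the $n$-symplectic form $-\omega_0\boxplus\omega_1\boxplus-\omega_1\boxplus\omega_2$.

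Next I would build the correspondence that contracts the two middle copies of $S_1$ while carrying $S_0,S_2$ along as spectators. Set $T_1:=S_0^-\times S_2$ with form $-\omega_0\boxplus\omega_2$, and define
\[
\rho: S_0\times S_1\times S_2 \To T_0^-\times T_1, \qquad \rho(s_0,s_1,s_2)=(s_0,s_1,s_1,s_2,s_0,s_2).
\]
After permuting the factors of $T_0^-\times T_1$ (a symplectomorphism) so as to pair each stack with its opposite, $\rho$ becomes the product of diagonals $\Delta_{S_0}\times\Delta_{S_1}\times\Delta_{S_2}$: the two $S_1$ slots are grouped as $(S_1,-\omega_1)\times(S_1,\omega_1)=S_1^-\times S_1$, and similarly for $S_0$ and $S_2$. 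Each diagonal is Lagrangian by Proposition \ref{prop:Diag}, and the product is Lagrangian by Proposition \ref{product}; hence $\rho$ is a Lagrangian correspondence from $T_0$ to $T_1$.

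Then I would apply the map $C_\rho$ of Definition \ref{def:bullet} to the Lagrangian produced in the first step, obtaining a Lagrangian structure on $c_\rho(f\times g):(X\times Y)\times_{T_0}(S_0\times S_1\times S_2)\To T_1$. Finally I would identify the output: the homotopy fiber product is cut out by $f_0(x)=s_0$, $f_1(x)=s_1=g_1(y)$, $g_2(y)=s_2$, so it is canonically $X\times_{S_1}Y$, and the induced map to $T_1=S_0^-\times S_2$ is $(x,y)\mapsto(f_0(x),g_2(y))=f\times_{S_1}g$, with form $-\omega_0\boxplus\omega_2$. Composing the three maps gives the asserted $(X,Y)\mapsto Y\bullet X$.

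The only real work is bookkeeping: tracking the opposite forms and factor permutations so that $\rho$ genuinely pairs each $S_i$ with its opposite (so that Proposition \ref{prop:Diag} applies with the correct signs), and checking the identification of the homotopy fiber product with $X\times_{S_1}Y$. Both are routine once the factors are grouped correctly, since all the non-degeneracy content is already furnished by Proposition \ref{prop:Lag2Lag}.
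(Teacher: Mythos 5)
Your proposal is correct and is essentially the paper's own proof: the paper applies Proposition \ref{prop:Lag2Lag} to exactly the same triple-diagonal correspondence $\Delta\colon S_0\times S_1\times S_2 \to S_0\times S_1^-\times S_1\times S_2^-\times S_0^-\times S_2$ (your $\rho$), after combining the two given structures as in Corollary \ref{cor:Const}, and concludes with the same identification $(X\times Y)\times_{S_0\times S_1\times S_1\times S_2}(S_0\times S_1\times S_2)\cong X\times_{S_1}Y$. Your explicit justification that $\rho$ is Lagrangian --- permuting factors and invoking Propositions \ref{prop:Diag} and \ref{product} --- merely spells out what the paper cites directly from Proposition \ref{prop:Diag}.
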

\begin{proof} According to Proposition \ref{prop:Diag} the morphism 
\[\Delta: S_0 \times S_1 \times S_2 \longrightarrow S_0 \times S^{-}_1 \times S_1 \times S^{-}_2 \times S^{-}_0 \times S_2
\]
has a canonical Lagrangian structure. Using Proposition \ref{prop:Lag2Lag} and arguing as in the proof of Corollary \ref{cor:Const} we construct the map 
\[C_{\Delta} : \mathcal{L}ag(f, -\omega_0\boxplus \omega_1 ) \times \mathcal{L}ag(g, -\omega_1 \boxplus \omega_2) \longrightarrow  \mathcal{L}ag(c_\Delta(f\times g), -\omega_0\boxplus \omega_2), \]
where 
$$c_\Delta(f\times g): (X\times Y)\times_{S_0\times S_1\times S_1\times S_2} S_0\times S_1 \times S_2 \To S_0 \times S_2$$
is the natural map induced by $f\times g$. To complete the proof simply note that 
$$(X \times Y) \times_{S_{0} \times S_1 \times S_{1} \times S_2 } (S_0 \times S_1 \times S_2) \cong X_0 \times_{S_1} X_{1}.$$
\end{proof}

Our next goal is to prove a relative version of the previous corollary. In order to do that we need to use a theorem from \cite{B}. We give a slightly different proof here in order to match the spirit of the current article.

\begin{thm}\label{thm:ThreeLag} 
Let $(S,\omega)$ be a $n$-symplectic derived stack and let $f_i: X_{i} \longrightarrow S$ be Lagrangians, for $i=0,1,2$. Denote by $X_{ij} = (X_{i} \times_{S} X_{j}, \omega_{ij})$ the $(n-1)$-symplectic derived stacks constructed in Corollary \ref{cor:Const}. Then the natural morphism 
\[\varphi: Z=X_{0} \times_{S} X_{1}\times_{S} X_{2} \To X_{01} \times X_{12}\times X_{20}
\]
has a canonical Lagrangian structure.
\end{thm}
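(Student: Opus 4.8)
The plan is to produce the Lagrangian structure on $\varphi$ directly, mirroring the proofs of Corollaries \ref{cor:Const} and \ref{cor:LagInt}: first build a canonical isotropic structure from the cyclic cancellation of the three pairwise loops of Remark \ref{rem:FirstLoop}, and then verify its non-degeneracy through a ladder of exact triangles of cotangent complexes, exactly as in Proposition \ref{prop:Lag2Lag}. Throughout I would write $q_i : Z \to X_i$ for the three projections and $p := f_0 q_0 \simeq f_1 q_1 \simeq f_2 q_2 : Z \to S$ for the common composite, with $\eta_{ij}$ the canonical homotopies $f_i q_i \simeq f_j q_j$ coming from the fiber product structure of $Z = X_0 \times_S X_1 \times_S X_2$. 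Denote by $h_i := q_i^{*} h_{f_i}$ the pullbacks to $Z$ of the Lagrangian structures $h_{f_i}$ on the $f_i$, each a path from $0$ to $q_i^{*} f_i^{*}\omega = p^{*}\omega$ in $\mathcal{A}^{2,cl}(Z,n)$.

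For the isotropic structure I would pull back along $\varphi$ the description of each $\omega_{ij}$ given in Remark \ref{rem:FirstLoop}. Then $\varphi^{*}\omega_{ij}$ is the loop (based at $p^{*}\omega$) obtained by concatenating $\bar h_i$, the homotopy $\eta_{ij}$, and $h_j$, and the target form $\varphi^{*}(\omega_{01}\boxplus\omega_{12}\boxplus\omega_{20})$ is their sum. The crucial point is the cyclic cancellation: each $h_i$ occurs exactly once with each orientation, namely $h_0$ inside $\varphi^{*}\omega_{20}$ and $\bar h_0$ inside $\varphi^{*}\omega_{01}$, and likewise for $h_1,h_2$, so every pair $\bar h_i \cdot h_i$ contracts and only the composite $\eta_{01}\cdot\eta_{12}\cdot\eta_{20}$ of the three homotopies around the cycle survives. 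This composite is canonically trivial by the coherence of the triple homotopy fiber product, and the resulting contraction is precisely a path from $0$ to $\varphi^{*}(\omega_{01}\boxplus\omega_{12}\boxplus\omega_{20})$ in $\mathcal{A}^{2,cl}(Z,n-1)$, i.e. the desired canonical isotropic structure $H$.

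It remains to prove non-degeneracy, that is, that the induced map $\Theta_H : \mbT_Z \to \mbL_\varphi[n-2]$ is a quasi-isomorphism, and this is where I expect the real work to be. I would assemble a commutative ladder with exact rows whose middle and right vertical maps are known quasi-isomorphisms, forcing $\Theta_H$ to be one as well. On the tangent side, the base-change triangle for $Z = (X_0\times X_1 \times X_2)\times_{S^{3},\Delta_3}S$, with $\Delta_3 : S \to S^{3}$ the diagonal, expresses $\mbT_Z$ in terms of $\bigoplus_i q_i^{*}\mbT_{X_i}$ and $p^{*}\mbT_S$, while $\varphi^{*}\mbT_T = \bigoplus_{ij}\varphi^{*}\mbT_{X_{ij}}$ decomposes each summand analogously via $X_{ij}=(X_i\times X_j)\times_{S\times S}S$. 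On the cotangent side, the triangle $\varphi^{*}\mbL_T \to \mbL_Z \to \mbL_\varphi$ together with the symplectic identifications $\Theta_{\omega_{ij}} : \mbT_{X_{ij}} \simeq \mbL_{X_{ij}}[n-1]$ and the Lagrangian conditions $\Theta_{h_{f_i}} : \mbT_{X_i}\simeq \mbL_{f_i}[n-1]$ identify the relevant shifted terms.

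The main obstacle is the careful bookkeeping that matches these two families of triangles. One must check that, under the pairings supplied by $\omega$ and by the $h_{f_i}$, the duplication map $\varphi$ dualizes the diagonal conditions defining $Z$ so that the two rows are compatibly exact, and that the same cyclic cancellation used to build $H$ is precisely what makes the connecting maps agree. Once the ladder commutes, non-degeneracy of $\Theta_H$ follows from non-degeneracy of $\Theta_{\omega}$ and of the $\Theta_{h_{f_i}}$ by the two-out-of-three property for maps of exact triangles, which completes the proof.
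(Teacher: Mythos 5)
Your construction of the isotropic structure is fine and is essentially the paper's: the cyclic cancellation of the three pulled-back loops, with only the composite coherence homotopy of the triple fiber product surviving, is exactly the description given in Proposition \ref{prop:Characterization} (the paper defers this half of the proof to \cite{B} and to that proposition rather than re-deriving it).

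The gap is in the non-degeneracy step, which is where the theorem's actual content lies. The ladder you propose does not exist in the form your two-out-of-three argument needs. Writing $T=X_{01}\times X_{12}\times X_{20}$, your tangent-side triangle for $Z=(X_0\times X_1\times X_2)\times_{S^{3},\Delta_3}S$ has middle term $\bigoplus_i q_i^{*}\mbT_{X_i}$, while the cotangent-side triangle $\varphi^{*}\mbL_{T}\to\mbL_Z\to\mbL_\varphi$ has middle term $\bigoplus_{ij}\varphi^{*}\mbL_{X_{ij}}$; these are indexed by single indices and by pairs respectively and do not match summand-by-summand, so no single morphism of exact triangles has $\Theta_H$ as one vertical and your known quasi-isomorphisms as the other two. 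The one symmetric ladder that genuinely exists --- top row $\mbT_Z\to\varphi^{*}\mbT_{T}\to\mbT_\varphi[1]$, bottom row $\mbL_\varphi[n-2]\to\varphi^{*}\mbL_{T}[n-1]\to\mbL_Z[n-1]$, middle vertical $\Theta_{\omega_{01}}\boxplus\Theta_{\omega_{12}}\boxplus\Theta_{\omega_{20}}$ --- is circular: the bottom row is the $(n-2)$-shifted dual of the top row, the induced third vertical is (up to shift) the dual of $\Theta_H$ itself, and so two-out-of-three yields only a self-duality of the cone of $\Theta_H$, which holds for any isotropic structure and does not force it to vanish; note also that the Lagrangian conditions $\Theta_{h_{f_i}}$, whose non-degeneracy is indispensable input, never enter this diagram. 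The paper closes exactly this hole by breaking the symmetry: it rewrites $Z\cong X_{01}\times_{X_1}X_{12}$, and uses the Cartesian square of $q:Z\to X_{02}$ over $f_1:X_1\to S$ to identify $\mbL_q\cong\pi^{*}\mbL_{f_1}$ together with $\mbL_{\pi_{20}}\cong\mbL_{X_{01}}\boxplus\mbL_{X_{12}}$, producing the aligned ladder with rows $\mbT_Z\to\mbT_{X_{01}}\boxplus\mbT_{X_{12}}\to\pi^{*}\mbT_{X_1}$ and $\mbL_\varphi[n-2]\to\mbL_{X_{01}}[n-1]\boxplus\mbL_{X_{12}}[n-1]\to\pi^{*}\mbL_{f_1}[n-1]$, and verticals $\Theta_H$, $\Theta_{\omega_{01}}\boxplus\Theta_{\omega_{12}}$, $\pi^{*}\Theta_{h_1}$. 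This asymmetric choice --- two of the three pairwise symplectic forms plus the Lagrangian condition on the single middle map $f_1$ --- is precisely the idea missing from your ``careful bookkeeping'' paragraph; as stated, that paragraph asserts the conclusion rather than supplying the decomposition that makes the rows compatibly exact.
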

\begin{proof} 
The construction of a natural isotropic structure on the morphism $\varphi$ can be found in \cite{B} or in Proposition \ref{prop:Characterization}. We denote this isotropic structure by $H$ and show it is non-degenerate as follows.  
Using the canonical equivalence $Z \cong X_{01} \times_{X_1} X_{12}$, we let $\pi:Z \To X_1$ be the natural projection and get an exact triangle 
\[\mbT_{Z} \To \mbT_{X_{01}} \boxplus \mbT_{X_{12}} \To \pi^{*}\mbT_{X_1} \To
\]
If we denote by $q$ the composition 
\[Z\stackrel{\varphi}\longrightarrow X_{01} \times X_{12}\times X_{20} \xrightarrow{\pi_{20}} X_{20},
\]
where $\pi_{20}$ is the obvious projection, we obtain the exact triangle 
\[\varphi^{*} \mbL_{\pi_{20}} \To \mbL_{q} \To \mbL_{\varphi} \To. 
\]
Next we observe that as the following square is Cartesian 
\[\xymatrix{X_0 \times_{S} X_1 \times_{S} X_2 \ar[r]^{q} \ar[d]_{\pi} & X_0 \times_{S} X_{2} \ar[d]^{} \\
X_1 \ar[r]_{f_1} & S
}
\]
we have that $\mbL_{q} \cong \pi^{*} \mbL_{f_1}$. Also $\mbL_{\pi_{20}} \cong \mbL_{X_{01}} \boxplus \mbL_{X_{12}}$. Putting everything together we get the exact triangle 
\[\mbL_{X_{01}} \boxplus \mbL_{X_{12}} \To \pi^{*} \mbL_{f_1} \To \mbL_{\varphi} \To
\]
or equivalently, the exact triangle 
\[\mbL_{\varphi}[-1] \longrightarrow \mbL_{X_{01}} \boxplus \mbL_{X_{12}} \To \pi^{*} \mbL_{f_1} \To. 
\]

Now consider the following diagram  
\begin{equation}
\xymatrix@=6em{\mathbb{T}_{Z}\ar[r] \ar@{.>}[d]^{\Theta_{H}}  &  \mathbb{T}_{X_{01}} \boxplus \mathbb{T}_{X_{12}} \ar[r] \ar[d]^{\Theta_{\omega_{01}} \boxplus \Theta_{\omega_{12}}} & {\pi}^{*}\mbT_{X_1} \ar[d]^{\pi^{*}\Theta_{h_1}} \\
\mathbb{L}_{\varphi}[n-2]\ar[r]   &  \mbL_{X_{01}}[n-1] \boxplus  \mathbb{L}_{X_{02}}[n-1]\ar[r] & \pi^{*} \mathbb{L}_{f_{1}}[n-1],
}
\end{equation}
where $h_1$ is the Lagrangian structure in $f_1$. It follows from the construction of $\omega_{ij}$ that both squares commute. Moreover the above discussion shows that both rows are exact. Therefore we conclude that $\Theta_H$ is a quasi-isomorphism since the other two vertical arrows in the diagram are also quasi-isomorphism. One can see from the definition of $H$ in Proposition \ref{prop:Characterization} and a bit of diagram chasing that the left vertical map is in fact $\Theta_H$.
\end{proof}

As a corollary of this theorem we obtain a ``relative" version of Corollary \ref{cor:LagInt} that will later be used to define the composition of 1-morphisms (and vertical composition of 2-morphisms) in the 2-category we construct in Section 4.  

\begin{cor}\label{cor:Comp}
Let $(S,\omega)$ be a $n$-symplectic derived stack and $f_i: X_i \To S$ be Lagrangians, for $i=0,1,2$. Denote by $X_{ij}=(X_i \times_S X_j, \omega_{ij})$ the $(n-1)$-symplectic derived stack constructed in Corollary \ref{cor:Const}. Given morphisms $\phi: N_1 \To X_{01}$ and $\psi: N_2 \To X_{12}$, there is a map
\[\mathcal{L}ag(\phi, \omega_{01}) \times \mathcal{L}ag(\psi, \omega_{12}) \longrightarrow \mathcal{L}ag((\phi,\psi),\omega_{02})
\]
where $(\phi,\psi): N_1 \times_{X_1} N_2 \To X_{02}$ is the morphism induced by $\phi$ and $\psi$.
\end{cor}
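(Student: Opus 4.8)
The plan is to mimic the proof of Corollary \ref{cor:LagInt}, with the triple-intersection Lagrangian of Theorem \ref{thm:ThreeLag} playing the role that the diagonal $\Delta$ played there. Theorem \ref{thm:ThreeLag} equips
$$\varphi: Z = X_0 \times_S X_1 \times_S X_2 \To X_{01} \times X_{12} \times X_{20}$$
with a canonical Lagrangian structure, where each factor is $(n-1)$-symplectic. My first step is to reinterpret $\varphi$ as a Lagrangian correspondence from $X_{01} \times X_{12}$ to $X_{02}$. This needs two identifications of $(n-1)$-symplectic stacks: the swap isomorphism $X_{20} = X_2 \times_S X_0 \cong X_0 \times_S X_2 = X_{02}$, under which --- by the loop description of $\omega_{ij}$ in Remark \ref{rem:FirstLoop}, reversing orientation of the loop when $i$ and $j$ are interchanged --- the form $\omega_{20}$ corresponds to $-\omega_{02}$, so that $X_{20} \cong X_{02}^-$; followed by the global sign-flip bijection of Proposition \ref{product}. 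Composing these, the target $\omega_{01}\boxplus\omega_{12}\boxplus\omega_{20}$ becomes $-\omega_{01}\boxplus-\omega_{12}\boxplus\omega_{02}$, i.e. the product form on $(X_{01}\times X_{12})^- \times X_{02}$, so that $\varphi$ is now a Lagrangian correspondence with source $X_{01}\times X_{12}$ and target $X_{02}$.

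Next I would feed this correspondence into the operation $C_\varphi$ of Definition \ref{def:bullet} (built in Proposition \ref{prop:Lag2Lag}). Starting from Lagrangian structures on $\phi: N_1 \to X_{01}$ and $\psi: N_2 \to X_{12}$, the product construction of Proposition \ref{product} yields a Lagrangian structure on $\phi \times \psi: N_1 \times N_2 \to X_{01} \times X_{12}$. Applying $C_\varphi$ then produces a Lagrangian structure on
$$c_\varphi(\phi \times \psi): (N_1 \times N_2) \times_{X_{01} \times X_{12}} Z \To X_{02},$$
with $X_{02}$ carrying $\omega_{02}$. The crucial payoff of routing everything through $C_\varphi$ is that non-degeneracy is automatic from Proposition \ref{prop:Lag2Lag}, so no further diagram chase of tangent and cotangent complexes is required here.

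The remaining step, and the one needing genuine care, is to identify the domain of $c_\varphi(\phi \times \psi)$ with $N_1 \times_{X_1} N_2$ and to check that the induced map is $(\phi,\psi)$. Using the canonical equivalence $Z \cong X_{01} \times_{X_1} X_{12}$ together with compatibility of homotopy fiber products, one computes
$$(N_1 \times N_2) \times_{X_{01} \times X_{12}} (X_{01} \times_{X_1} X_{12}) \cong N_1 \times_{X_1} N_2,$$
where the two maps to $X_1$ are the composites of $\phi$ and $\psi$ with the projections $X_{01} \to X_1$ and $X_{12} \to X_1$. Tracing a point through the projection $Z \to X_{20}$ (equivalently $\to X_{02}$) --- sending $(x_0,x_1) \in X_{01}$ and $(x_1,x_2)\in X_{12}$ to $(x_0,x_2)$ --- shows the resulting morphism is exactly the map $(\phi,\psi)$ of the statement. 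The desired map on Lagrangian structures is then the composite of the product construction with $C_\varphi$ under these identifications.

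I expect the main obstacle to be the sign-and-opposite bookkeeping of the first paragraph: correctly matching $\omega_{20}$ with $-\omega_{02}$ under the swap and then applying the global reversal of Proposition \ref{product}, so that the reinterpreted $\varphi$ genuinely lands in $(X_{01}\times X_{12})^- \times X_{02}$ with $\omega_{02}$ appearing with the right sign. Once this is pinned down, every other assertion is a formal consequence of Theorem \ref{thm:ThreeLag} and the established properties of $C_\varphi$.
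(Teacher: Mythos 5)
Your proposal is correct and follows essentially the same route as the paper: the paper's proof likewise takes the Lagrangian structure of Theorem \ref{thm:ThreeLag} on $\varphi: X_0\times_S X_1\times_S X_2 \to (X_{01}\times X_{12})^-\times X_{02}$, applies Proposition \ref{prop:Lag2Lag} (via $C_\varphi$, together with the product construction of Proposition \ref{product} on $\phi\times\psi$), and concludes by identifying $(N_1\times N_2)\times_{X_{01}\times X_{12}}(X_0\times_S X_1\times_S X_2)\cong N_1\times_{X_1}N_2$ using the universal property of homotopy fiber products. The only difference is that you spell out the swap $X_{20}\cong X_{02}^-$ and the sign-flip bookkeeping, which the paper silently absorbs into its statement of the correspondence $\varphi$; your accounting of it is correct.
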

\begin{proof}
Theorem \ref{thm:ThreeLag} defines a Lagrangian structure on the morphism 
\[\varphi: X_{0} \times_{S} X_{1} \times_{S} X_{2} \longrightarrow  (X_{01} \times X_{12})^- \times X_{02}. \]
Now we apply Proposition \ref{prop:Lag2Lag} to this Lagrangian structure and, as before, obtain a map  
\[C_\varphi: \mathcal{L}ag(\phi, \omega_{01}) \times \mathcal{L}ag(\psi, \omega_{12}) \longrightarrow \mathcal{L}ag(c_\varphi(\phi \times \psi),\omega_{02})
\]
where 
$$c_\varphi(\phi \times \psi): (N_1 \times N_2)\times_{X_{01} \times X_{12}} (X_{0}\times_{S} X_{1} \times_{S} X_{2}) \To X_{02},$$
is the natural map induced by $\phi \times \psi$. To complete the proof note that we have the following equivalences of derived stacks
\begin{align*}
(N_1 \times N_2)\times_{X_{01} \times X_{12}} &(X_{0}\times_{S} X_{1} \times_{S} X_{2})= \\
& =  (N_1 \times N_2)\times_{(X_0 \times_{S} X_1) \times(X_1 \times_{S} X_2)} (X_{0}\times_{S} X_{1} \times_{S} X_{2})  \\ 
& \cong (N_1 \times N_2) \times_{X_1 \times X_1} X_{1} \\ 
& \cong N_1\times_{X_1} N_2,
\end{align*}
where we have used the universal property of homotopy fiber products.
\end{proof}
\begin{rem}\label{rem:comp}
As we saw in Remark \ref{rem:FirstLoop}, the isotropic structure $h_{N_1}$ can be interpreted as a path between $\phi_{0}^{*}h_0$ and $\phi_{1}^{*}h_1$ in $\mathcal{P}_{0}(\mathcal{A}^{2,cl}(N_1))$. Under this interpretation, one can easily check that the isotropic structure constructed above is given by the following concatenation
\[\xymatrix{{\pi_{1}^{*}\phi_{0}^{*}h_0} \ar@{-}[r]|-*=0@{>}_{\pi_{1}^{*}h_{N_1}} &{\pi_{1}^{*}\phi_{1}^{*}h_1} \ar@{-}[r]|-*=0@{>} &{\pi_{1}^{*}\psi_{1}^{*}h_1} \ar@{-}[r]|-*=0@{>}_{\pi^{*}_{2}h_{N_2}} & {\pi_{1}^{*}\psi_{2}^{*}h_2} } 
\]
Where the middle path is induced by the homotopy commutativity of the following diagram:
\begin{equation}\label{eqn:CompExplain}
\xymatrix{& & N_{1} \times_{X_1} N_{2} \ar[dr]^{\pi_{2}} \ar[dl]_{\pi_1}& & \\
& N_1 \ar[dr]^{\phi_{1}} \ar[dl]_{\phi_0} & & N_2 \ar[dr]^{\psi_{2}} \ar[dl]_{\psi_1} & \\
 X_0 & & X_1&  & X_2 
}
\end{equation}
\end{rem}

We need one more map between sets of Lagrangian structures, which will be used later to define the horizontal composition in the $2$-category to be defined in Section 4. For this we need the appropriate Lagrangian correspondence. The next proposition will provide such a correspondence and also be useful to describe symplectomorphisms (which will be introduced in Section 3).

\begin{prop}\label{prop:5} 
Let $(S,\omega)$ be an $n$-symplectic derived stack and let $f:X\longrightarrow S$ and $g:Y\longrightarrow S$ be Lagrangians, with isotropic structures $h_f$ and $h_g$. Consider a morphism of derived stacks $\Delta:W \longrightarrow X\times_{S} Y$ and denote by $u$ and $v$ the compositions of $\Delta$ with the projections to $X$ and $Y$, respectively. Assume we are given a homotopy $H$ between the paths $u^{*}h_f$ and $v^{*}h_g$, which when evaluating at one endpoint gives a path between $u^{*}f^*\omega$ and $v^{*}g^*\omega$ homotopic to the path induced by $f\circ u \cong g\circ v$. 

If $u$ and $v$ are \'{e}tale then $H$ induces a Lagrangian structure on $\Delta$ with respect to the symplectic structure on $X\times_{S} Y$ constructed in Corollary \ref{cor:Const}. On the other hand, if $\Delta$ has a Lagrangian structure then $u$ is \'{e}tale if and only if $v$ is \'{e}tale.
\end{prop}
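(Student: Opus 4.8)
The plan is to separate the isotropic content, which is automatic, from the non-degeneracy, which I will reduce to the \'etaleness of $u$ and $v$ through a single map of exact triangles. First I would record that $H$ is an isotropic structure on $\Delta$ with no hypothesis at all: the symplectic form on $X\times_S Y$ furnished by Corollary \ref{cor:Const} is the loop at $0$ of Remark \ref{rem:FirstLoop}, and that remark identifies an isotropic structure on a map into $X\times_S Y$ with a path from $u^*h_f$ to $v^*h_g$ in $\mathcal{P}_0(\mathcal{A}^{2,cl}(W,n))$ whose endpoint evaluation agrees with the canonical path coming from $f\circ u\cong g\circ v$. This is exactly the datum $H$. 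Writing $\Theta_H:\mbT_W\to\mbL_\Delta[n-2]$ for the associated map — the shift being $n-2$ because $X\times_S Y$ is $(n-1)$-symplectic — the whole proposition becomes the question of when $\Theta_H$ is a quasi-isomorphism.

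Next I would build the comparison with $f$. Since $\pi_Y:X\times_S Y\to Y$ is the base change of $f$ along $g$, base change gives $\mbL_{\pi_Y}\simeq\pi_X^*\mbL_f$, so the transitivity triangle of $W\xrightarrow{\Delta}X\times_S Y\xrightarrow{\pi_Y}Y$ reads
\[u^*\mbL_f\To\mbL_v\To\mbL_\Delta\To.\]
Shifting by $[n-2]$ and rotating yields an exact triangle whose first map is a connecting morphism $\partial:\mbL_\Delta[n-2]\to u^*\mbL_f[n-1]$ and whose cofibre is $\mbL_v[n-1]$. I claim the square
\[\xymatrix@=3em{
\mbT_W \ar[r]^(.45){du}\ar[d]_{\Theta_H} & u^*\mbT_X \ar[d]^{u^*\Theta_{h_f}}\\
\mbL_\Delta[n-2] \ar[r]^(.45){\partial} & u^*\mbL_f[n-1]
}\]
commutes, $du$ being the natural map in the tangent triangle of $u$. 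Establishing this is the heart of the argument: one unwinds the defining homotopy-commutative diagram of $\Theta_H$ together with the construction of the intersection form on $X\times_S Y$ (assembled from $h_f$ and $h_g$ exactly as in Proposition \ref{prop:Lag2Lag} and Theorem \ref{thm:ThreeLag}) and checks that the component of $\Theta_H$ seen through $\partial$ is precisely $u^*\Theta_{h_f}$.

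Granting the square, the top row extends to the tangent triangle of $u$, with cofibre $\mbL_u^\vee[1]$, and the bottom to the rotated triangle above, with cofibre $\mbL_v[n-1]$; since $f$ is Lagrangian the middle vertical $u^*\Theta_{h_f}$ is a quasi-isomorphism, so $\Theta_H$ is a quasi-isomorphism precisely when the induced map on cofibres $c:\mbL_u^\vee[1]\to\mbL_v[n-1]$ is one. If $u$ and $v$ are both \'etale then $\mbL_u=\mbL_v=0$, the source and target of $c$ vanish, and $\Theta_H$ is automatically a quasi-isomorphism, so $H$ is a Lagrangian structure. Conversely, if $\Delta$ already carries a Lagrangian structure, then $\Theta_H$ is a quasi-isomorphism, hence so is $c$, giving $\mbL_u^\vee\simeq\mbL_v[n-2]$; as all of these relative cotangent complexes are perfect we have $\mbL_u=0\Leftrightarrow\mbL_u^\vee=0\Leftrightarrow\mbL_v=0$, that is, $u$ is \'etale if and only if $v$ is \'etale.

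The main obstacle is the commutativity of that square, i.e.\ matching the $f$-part of the non-degeneracy map $\Theta_H$ with $u^*\Theta_{h_f}$ through the construction of the intersection symplectic form; this is the same style of diagram chase used in the proofs above, now applied to the loop of Remark \ref{rem:FirstLoop}. As a check that the square is the correct one, the symmetric argument using $v^*\Theta_{h_g}$ and the transitivity triangle of $W\to X\times_S Y\to X$ yields the condition $\mbL_v^\vee\simeq\mbL_u[n-2]$, which is exactly the dual of $\mbL_u^\vee\simeq\mbL_v[n-2]$, so the two halves are consistent.
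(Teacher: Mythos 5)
Your proof is correct, and it takes a genuinely different route to non-degeneracy than the paper does. The paper feeds both \'{e}tale hypotheses in at the start: from $u$ \'{e}tale and the base-change identification $\mbL_{\pi_X}\simeq \pi_Y^*\mbL_g$ it extracts $\mbL_\Delta[-1]\simeq v^*\mbL_g$, from $v$ \'{e}tale it gets $\mbT_W\simeq v^*\mbT_Y$, and it then identifies $\Theta_\Delta$ with $v^*\Theta_{h_g}$ conjugated by these equivalences; the converse is obtained by ``tracing back'' this chain assuming $u$ \'{e}tale. You instead assume nothing, build the mirror-image comparison (using $u$, $\pi_Y$ and $h_f$ where the paper uses $v$, $\pi_X$ and $h_g$), and package everything into a single map of exact triangles with middle vertical the quasi-isomorphism $u^*\Theta_{h_f}$, so that the whole proposition reduces to whether the cofibre map $c:\mbL_u^\vee[1]\to\mbL_v[n-1]$ is an equivalence. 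This buys a cleaner, more symmetric treatment: both implications fall out of one diagram, the ``iff'' in the converse is immediate from $\mbL_u^\vee\simeq\mbL_v[n-2]$ together with dualizability of the cotangent complexes (legitimate under the paper's standing locally-finite-presentation hypothesis), and you even get this equivalence as a strengthening valid for any Lagrangian structure on $\Delta$ without \'{e}taleness. Two small points to be aware of: for the converse you should note, as Remark \ref{rem:FirstLoop} guarantees, that an arbitrary Lagrangian structure on $\Delta$ is of the form $\Theta_H$ for some homotopy $H$ as in the hypothesis, so your square applies to it; and the commutativity of that square, which you rightly flag as the heart of the matter, is left at exactly the same ``diagram chase'' level of rigor as the paper's own identification of $\Theta_\Delta$ --- it does hold, by splicing the right-hand commuting square from the proof of Theorem \ref{thm:ThreeLag} (compatibility of $\Theta_{\omega}$ on $X\times_S Y$ with $\pi_X^*\Theta_{h_f}$ through $\mbL_{X\times_S Y}\to\pi_X^*\mbL_f$) with the naturality of the transitivity triangle for $v=\pi_Y\circ\Delta$. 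Your isotropic-structure step via Remark \ref{rem:FirstLoop} agrees with the paper's pyramid construction, so no hypotheses are missing there.
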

\begin{proof}
Consider the (homotopy) commutative diagram
\begin{equation}\label{eqn:diag}
\xymatrix@=4em{W\ar[dr]^{\Delta} \ar[drr]^{v} \ar[ddr]_{u}& & \\
& X\times_{S}Y\ar[d] \ar[r]& Y \ar[d]^{g} \\ & X \ar[r]^{f} & S
}
\end{equation}
Pulling back $\omega$ and the Lagrangian structures along the maps in this diagram gives rise to the following picture in  $\mathcal{A}^{2,cl}(W,n)$
\begin{equation}\label{equation}
\resizebox{10cm}{!}{
\xymatrix@=6em{& 0 \ar@{-}|-*=0@{>}[ddr]^>>>>>>>>>>>>>{\Delta^* \pi^*_{Y}h_g} \ar@{-}|-*=0@{>}[ddl]_{\Delta^* \pi^*_{X}h_f} \ar@{.}|-*=0@{>}[d]_{u^{*}h_f} \ar@{-}|-*=0@{>}[drr]^{v^{*}h_g}& &  \\
&  \ar@{.}|-*=0@{>}[dl] u^{*}f^{*}\omega \ar@{.}|-*=0@{>}[rr] &   & v^{*}g^{*}\omega \\
\Delta^{*}\pi_{X}^{*}f^{*}\omega \ar@{-}|-*=0@{>}[rr] &  &\Delta^{*}\pi_{Y}^{*}g^{*}\omega \ar@{-}|-*=0@{>}[ur]&
}
}
\end{equation}
The commutativity of the diagram $(\ref{eqn:diag})$ determines a $2$-simplex that fills the base of the diagram, i.e. it interpolates between the four ways of pulling back $\omega$. 
By definition, the boundary of the front triangle is the pullback by $\Delta$ of the loop that defines the $(n-1)$-shifted symplectic structure on $X\times_{S}Y$. By assumption, $H$ determines a $2$-simplex that fills the back triangle. All of the other faces of the pyramid are filled in by homotopies induced by the commutativity of the two triangles in the diagram in Equation \ref{eqn:diag}. Therefore, the front triangle bounds a $2$-simplex $\mathcal{A}^{2,cl}(W,n)$. This defines an isotropic structure $h_{\Delta}$ on the morphism $\Delta$.

In order to check the non-degeneracy of
\[\Theta_{\Delta}: \mathbb{T}_{W} \longrightarrow \mathbb{L}_{\Delta}[n-2]
\] notice that $\pi_{X} \circ \Delta$ is homotopic to $u$ which gives the exact triangle 
\[\Delta^{*}\mathbb{L}_{\pi_X}\longrightarrow \mathbb{L}_{\text{u}} \longrightarrow \mathbb{L}_{\Delta} \longrightarrow
\]
Now, because $u$ is \'{e}tale, $\mathbb{L}_{\text{u}}=0$ and so we get isomorphisms 
\[\mathbb{L}_{\Delta}[-1] \simeq \Delta^{*}\mathbb{L}_{\pi_X} \simeq \Delta^{*}\pi_{Y}^{*}\mathbb{L}_{g} \simeq v^*\mathbb{L}_{g}
\]
where the middle isomorphism follows from the fact that the square in (\ref{eqn:diag}) is Cartesian. 
By definition we have the exact triangle 
\[\mathbb{T}_{v}\longrightarrow \mathbb{T}_{W} \longrightarrow v^{*}\mathbb{T}_{Y} \longrightarrow.
\]
which implies that $\mathbb{T}_{W} \simeq v^{*} \mathbb{T}_{Y}$, since $v$ is \'{e}tale.
Putting together these equivalences, we obtain 
\begin{equation}\label{eqn:WgDel}
\mathbb{L}_{\Delta}[n-2]\simeq v^{*}\mathbb{L}_{g}[n-1] \stackrel{v^{*}\Theta_{g}}\longleftarrow v^{*} \mbT_{Y} \simeq \mathbb{T}_{W}.
\end{equation}
 where $\Theta_{g}$ is an equivalence because $g$ has a Lagrangian structure. One can see by diagram chasing that this chain of equivalences is precisely $\Theta_{\Delta}$. Tracing back the argument, if we start by assuming that $\Delta$ is Lagrangian and $u$ is \'{e}tale then (\ref{eqn:WgDel}) gives an equivalence $v^{*}\mathbb{T}_{Y} \simeq \mathbb{T}_{W}$ and so $v$ is \'{e}tale.
\end{proof}

\begin{rem} The reader may have wondered why $u^{*}\Theta_f$ and $v^{*}\Theta_g$ were not both used in the proof, but because we are assuming the existence of $H$, they do not really define different maps.
\end{rem}

As a simple corollary of Proposition \ref{prop:5} we have

\begin{cor}\label{cor:diag}
Let $(S, \omega)$ be a $n$-symplectic derived stack and $f:X \longrightarrow S$ a Lagrangian in $S$. Then the diagonal $\Delta_{X}:X \longrightarrow X \times_{S}X$ has a Lagrangian structure where $X\times_{S}X$ has the symplectic structure from Corollary \ref{cor:Const}.
\end{cor}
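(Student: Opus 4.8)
The plan is to apply Proposition \ref{prop:5} directly, specialized to the case where the second Lagrangian coincides with the first. Concretely, I would take $Y=X$, $g=f$ (so that $h_g=h_f$), $W=X$, and let $\Delta=\Delta_X:X\to X\times_S X$ be the diagonal. With $\pi_X,\pi_Y:X\times_S X\to X$ the two projections, the maps $u=\pi_X\circ\Delta_X$ and $v=\pi_Y\circ\Delta_X$ appearing in the hypotheses of Proposition \ref{prop:5} are both canonically homotopic to $\mathrm{id}_X$.

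First I would verify the \'{e}tale hypothesis. Since $\mathbb{L}_{\mathrm{id}_X}=0$, the identity is (formally) \'{e}tale, and hence both $u$ and $v$ are \'{e}tale; this is precisely the input needed to invoke the non-degeneracy half of Proposition \ref{prop:5}. Next I would supply the required homotopy $H$. Because $u\simeq\mathrm{id}_X\simeq v$ and $g=f$, the pulled-back isotropic structures $u^*h_f$ and $v^*h_g=v^*h_f$ are both identified with $h_f$ itself, so I can take $H$ to be the constant homotopy at $h_f$ (transported along these identifications). The endpoint compatibility demanded in Proposition \ref{prop:5} — that evaluating $H$ at one end yields a path from $u^*f^*\omega$ to $v^*g^*\omega$ homotopic to the one induced by $f\circ u\simeq g\circ v$ — holds because here $u^*f^*\omega$ and $v^*g^*\omega$ both equal $f^*\omega$ and the comparison homotopy $f\circ u\simeq g\circ v$ is the constant homotopy of $f$. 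Proposition \ref{prop:5} then produces a Lagrangian structure on $\Delta_X$ relative to the symplectic structure on $X\times_S X$ from Corollary \ref{cor:Const}, which is exactly the assertion.

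There is essentially no hard step: the entire content is the recognition that the diagonal is the case $(Y,g)=(X,f)$ of Proposition \ref{prop:5} with $u$ and $v$ both the identity, whence the \'{e}tale hypothesis is automatic and $H$ may be chosen constant. The only point deserving a moment's care — and the closest thing to an obstacle — is confirming that this constant choice of $H$ genuinely meets the endpoint condition of Proposition \ref{prop:5}, i.e. that it is compatible with the canonical $2$-simplex filling the base of diagram (\ref{eqn:diag}); this is immediate once one observes that all the relevant forms and homotopies there degenerate to $f^*\omega$ and its constant path.
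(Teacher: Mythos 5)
Your proposal is correct and is essentially identical to the paper's own proof: the paper also specializes Proposition \ref{prop:5} with $Y=X$, $g=f$, $\Delta=\Delta_X$, $u=v=\mathrm{id}_X$, and the constant homotopy $H$. Your spelled-out verification of the \'etale hypothesis and the endpoint condition just makes explicit what the paper calls ``an obvious choice.''
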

\begin{proof}
We take $X=Y$ and $\Delta$ to be the diagonal and $u$ and $v$ the identity morphisms in Proposition \ref{prop:5}. This gives an obvious choice for the (constant) homotopy $H$. 
\end{proof}

\begin{prop}\label{prop:6}
Let $(S, \omega)$ be a $n$-symplectic derived stack and $X_0, X_1$ and $X_2$ be Lagrangians in $S$. Consider the $(n-1)$-symplectic derived stacks $X_{01} = X_{0}\times_{S}X_{1}$, $X_{12} = X_{1}\times_{S}X_{2}$, and $X_{02} = X_{0}\times_{S}X_{2}$ determined by Corollary \ref{cor:Const} and let $M_{0}$ and $M_{1}$ be Lagrangians in $X_{01}$ and  $N_{0}$ and $N_{1}$ be Lagrangians in $X_{12}$. 
Corollary \ref{cor:Comp} defines two new Lagrangians $P_{0} = M_{0} \times_{X_1} N_{0} \To X_{02}$ and $P_{1} = M_{1} \times_{X_1} N_{1}\To X_{02}.$ 

Given morphisms $\alpha: U \To M_0 \times_{X_{01}} M_1$ and $\beta: V \To N_0 \times_{X_{12}} N_1$ there is a map
\[\mathcal{L}ag(\alpha, \omega_{M_{01}}) \times \mathcal{L}ag(\beta, \omega_{N_{01}})  \To \mathcal{L}ag(\alpha \times_{X_1}\beta, \omega_{P_{01}}),
\]
where $\omega_{M_{01}}, \omega_{N_{01}}$ and $ \omega_{P_{01}}$ are the $(n-2)$-shifted symplectic structures on  $M_{0} \times_{X_{01}} M_{1}$, $N_{0} \times_{X_{12}} N_{1}$ and $P_{0} \times_{X_{02}} P_{1}$, respectively, determined by Corollary \ref{cor:Const} and $\alpha \times_{X_1}\beta$ is the induced map
$$\alpha \times_{X_1}\beta: U \times_{X_1} V \To P_0 \times_{X_{02}} P_1.$$
\end{prop}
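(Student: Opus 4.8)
The plan is to reduce the statement to Proposition \ref{prop:Lag2Lag}, exactly as in Corollaries \ref{cor:LagInt} and \ref{cor:Comp}, by exhibiting a single canonical Lagrangian correspondence that governs the horizontal composition of the two-morphisms $\alpha$ and $\beta$. Concretely, I would set $W := M_{01}\times_{X_1}N_{01}$, using the projections $M_{01}\to X_{01}\to X_1$ and $N_{01}\to X_{12}\to X_1$, and build a morphism
\[\Phi = (\rho,\sigma)\colon W \To (M_{01}\times N_{01})^-\times P_{01},\]
where $\rho\colon W\to M_{01}\times N_{01}$ is the pair of projections and $\sigma\colon W\to P_{01}$ is induced by the four projections to $M_0,M_1,N_0,N_1$ together with the identifications $P_k = M_k\times_{X_1}N_k$; one first checks that $\sigma$ indeed lands in the fibre product $P_0\times_{X_{02}}P_1$. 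Granting that $\Phi$ carries a canonical Lagrangian structure, applying $C_\Phi$ from Proposition \ref{prop:Lag2Lag} to $g=\alpha\times\beta\colon U\times V\to M_{01}\times N_{01}$ produces a Lagrangian structure on $c_\Phi(\alpha\times\beta)\colon (U\times V)\times_{M_{01}\times N_{01}}W\to P_{01}$, and a fibre-product manipulation of the type closing the proof of Corollary \ref{cor:Comp} identifies the source with $U\times_{X_1}V$ and the morphism with $\alpha\times_{X_1}\beta$, as required.

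The first task is the isotropic structure on $\Phi$. Following the description in Remark \ref{rem:comp}, I would obtain it by concatenating the pullbacks to $W$ of the isotropic structures underlying $\omega_{M_{01}}$ and $\omega_{N_{01}}$ (themselves loops built from the Lagrangian structures of $M_0,M_1$ and of $N_0,N_1$ as in Corollary \ref{cor:Const}) with the homotopies coming from the commutativity of the diagram relating $W$, the $M_k$, $N_k$ and $P_k$ over $X_1$; the outcome is a path exhibiting $\Phi^*(-\omega_{M_{01}}\boxplus-\omega_{N_{01}}\boxplus\omega_{P_{01}})$ as homotopic to $0$. This is the same kind of two-simplex bookkeeping in $\mathcal{A}^{2,cl}(W,n-2)$ already performed for Theorem \ref{thm:ThreeLag} and Corollary \ref{cor:Comp}.

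The crux, and the step I expect to be the main obstacle, is non-degeneracy: that $\Theta_\Phi\colon\mbT_W\to\mbL_\Phi[n-3]$ is a quasi-isomorphism. I would run the diagram-chase template of Theorem \ref{thm:ThreeLag}: the presentation $W = M_{01}\times_{X_1}N_{01}$ gives the exact triangle $\mbT_W\to\mbT_{M_{01}}\boxplus\mbT_{N_{01}}\to\pi^*\mbT_{X_1}$, while the composite $\sigma = \pi_{P_{01}}\circ\Phi$ gives a triangle expressing $\mbL_\Phi$ through $\mbL_{M_{01}}\boxplus\mbL_{N_{01}}$ and $\mbL_\sigma$. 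One then assembles the map of triangles whose outer vertical maps are the quasi-isomorphisms $\Theta_{\omega_{M_{01}}}\boxplus\Theta_{\omega_{N_{01}}}$ and the comparison along the $X_1$-direction, and concludes by the two-out-of-three property once the remaining vertical map is shown to be an equivalence.

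The delicate point is precisely this last vertical map. The Cartesian square $W\cong P_{01}\times_{X_1\times X_1,\Delta_{X_1}}X_1$ computes $\mbL_\sigma\cong\pi^*\mbL_{X_1}[1]$ in terms of the absolute cotangent complex of $X_1$, whereas non-degeneracy must be fed by the Lagrangian structure $h_1$ of $f_1\colon X_1\to S$, which controls $\mbT_{X_1}\simeq\mbL_{f_1}[n-1]$ through the relative complex $\mbL_{f_1}$. Reconciling these — via the triangle $f_1^*\mbL_S\to\mbL_{X_1}\to\mbL_{f_1}$ together with the symplectic comparisons already built into $\omega_{M_{01}},\omega_{N_{01}},\omega_{P_{01}}$ — is where the Lagrangian hypothesis on $X_1$ enters decisively; numerically this is the identity $\vdim X_1 = \tfrac12\vdim S$, which is exactly what balances $\vdim W = \tfrac12\vdim\big((M_{01}\times N_{01})^-\times P_{01}\big)$. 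An attractive alternative that would sidestep this computation is to derive $\Phi$ and its Lagrangian structure directly from the triple-intersection Lagrangian $\varphi$ of Theorem \ref{thm:ThreeLag} for $X_0,X_1,X_2$, by proving that the assignment $C_\varphi$ of Corollary \ref{cor:Comp} is functorial for intersections of Lagrangians; I would attempt this route first, since it promises to reduce non-degeneracy of $\Phi$ to that already established for $\varphi$.
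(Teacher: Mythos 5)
Your scaffolding is sound and in fact parallels material the paper develops anyway: the direct two-simplex construction of the isotropic structure on $\Phi$ is exactly what the paper carries out in the discussion preceding Lemma \ref{lem:LagUniqP01} (diagram (\ref{eqn:Po1Fig})), and your closing fibre-product identification of $c_\Phi(\alpha\times\beta)$ with $\alpha\times_{X_1}\beta$ is the same computation that ends the paper's proof. The genuine gap is in the non-degeneracy step, at precisely the point you flag as delicate: your Cartesian square is the wrong one. The two $X_1$-coordinates of a point of $P_{01}=P_0\times_{X_{02}}P_1$ agree a priori only over $S$ (through the identifications over $X_0$ and $X_2$ built into $P_{01}$, together with the maps $M_i\to X_{01}$ and $N_i\to X_{12}$), so the correct square is $W\simeq P_{01}\times_{X_1\times_S X_1}X_1$, base-changed along the \emph{relative} diagonal $\Delta\colon X_1\to X_1\times_S X_1$, not along $X_1\to X_1\times X_1$. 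Hence $\mbL_\sigma\cong\pi^*\mbL_{\Delta}\cong\pi^*\mbL_{f_1}[1]$, not $\pi^*\mbL_{X_1}[1]$. With your square the third vertical arrow of the chase would have to be an equivalence $\mbT_{X_1}\simeq\mbL_{X_1}[n-1]$, i.e.\ $X_1$ itself would have to be $(n-1)$-symplectic, which is false in general, and no reconciliation via the triangle $f_1^*\mbL_S\to\mbL_{X_1}\to\mbL_{f_1}$ can repair a wrong computation of $\mbL_\sigma$. With the corrected square the arrow is $\pi^*$ of the map induced by the Lagrangian structure on $\Delta$ (Corollary \ref{cor:diag}), which under $\mbL_{\Delta}[-1]\simeq\mbL_{f_1}$ is exactly $\Theta_{h_1}\colon\mbT_{X_1}\to\mbL_{f_1}[n-1]$, and two-out-of-three then closes the diagram
\[\mbT_W\to\mbT_{M_{01}}\boxplus\mbT_{N_{01}}\to\pi^*\mbT_{X_1}, \qquad \mbL_\Phi[n-3]\to(\mbL_{M_{01}}\boxplus\mbL_{N_{01}})[n-2]\to\pi^*\mbL_{\Delta}[n-2].\]

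Once you make this correction you have essentially rediscovered the paper's proof, which packages the whole computation by working one shift down: it observes that $M_0\times_{X_0}M_1$ and $N_0\times_{X_2}N_1$ are Lagrangians in the $(n-1)$-symplectic stack $X_1\times_S X_1$ (Corollary \ref{cor:LagInt}), that $\Delta_{X_1}$ is a third Lagrangian there (Corollary \ref{cor:diag}), and applies Theorem \ref{thm:ThreeLag} to this triple — the Cartesian square over $X_1\times_S X_1$ with the diagonal as the middle Lagrangian is exactly what that theorem's non-degeneracy argument uses — before invoking Proposition \ref{prop:Lag2Lag} and identifying fibre products as you do. By contrast, I would abandon your proposed ``attractive alternative'': deriving $\Phi$ from the triple-intersection Lagrangian of Theorem \ref{thm:ThreeLag} for $X_0,X_1,X_2$ in $S$ by proving that $C_\varphi$ is functorial for intersections of Lagrangians is circular, since that functoriality statement is essentially Proposition \ref{prop:6} itself (it is what makes horizontal composition compatible with the rest of the $2$-categorical structure). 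The workable shift of viewpoint is not Theorem \ref{thm:ThreeLag} in $S$ but Theorem \ref{thm:ThreeLag} in $X_1\times_S X_1$.
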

\begin{proof}
The proof is analogous to previous ones, first we claim that the natural map
\[\varphi: P_{0} \times_{X_0 \times_S X_{1} \times_{S} X_{2}} P_{1} \To (M_{0} \times_{X_{01}} M_{1}) \times (N_{0} \times_{X_{12}} N_{1}) \times (P_{1} \times_{X_{02}} P_{0}),
\]
has a Lagrangian structure.
To see this note that Corollary \ref{cor:LagInt} implies that $M_{0} \times_{X_0} M_{1}$ and  $N_{0} \times_{X_2} N_{1}$ are Lagrangians in $X_{1} \times_{S} X_{1}$. Also the diagonal $\Delta: X_{1} \To X_{1} \times_{S} X_{1}$ has a Lagrangian structure according to Corollary \ref{cor:diag}. Applying Theorem \ref{thm:ThreeLag} to these three Lagrangians we conclude that the triple intersection:
\begin{equation}
\begin{split}(& M_{0} \times_{X_0} M_{1})\times_{X_1 \times_S X_1} (N_{0} \times_{X_2} N_{1}) \times_{X_1 \times_S X_1}  X_{1}\\
 & \cong (M_0\times_{X_1}N_0)\times_{X_0 \times_{S} X_2} (M_1 \times_{X_1} N_1) \times_{X_1 \times_{S} X_1} X_1\\ 
 & \cong (M_0 \times_{X_1} N_0) \times_{X_0 \times_{S} X_1 \times_{S} X_2}(M_1 \times_{X_1} N_1) \\ 
 & = P_{0} \times_{X_0 \times_S X_{1} \times_{S} X_{2}} P_{1}
\end{split}
\end{equation}
is Lagrangian in the product 
\begin{equation*}
\begin{split}
& ((M_{0} \times_{X_0} M_{1})\times_{X_{11}} X_1) \times (X_1\times_{X_{11}}(N_{0} \times_{X_2} N_{1})) \times((N_{0}\times_{X_2} N_{1})\times_{X_{11}} (M_{0} \times_{X_0} M_{1})) \\
& \cong (M_{0} \times_{X_{01}}M_1) \times (N_{0} \times_{X_{12}} N_1) \times ((M_{1} \times_{X_1} \times N_1)\times_{X_0 \times_{S} X_2} (M_{0} \times_{X_1} N_0)) \\
& \cong (M_0 \times_{X_{01}} M_1) \times (N_{0} \times_{X_{12}} N_1) \times (P_1 \times_{X_{02}} P_0).
\end{split}
\end{equation*}
This proves the claim once we establish that the above equivalences preserve the symplectic structures, that is they are symplectomorphic, in next section notation. We omit the details of this. In Lemma \ref{lem:LagUniqP01} we will give an alternative description of this Lagrangian.

Now we apply Proposition \ref{prop:Lag2Lag} to this Lagrangian correspondence and obtain a map
\[C_\varphi: \mathcal{L}ag(\alpha, \omega_{M_{01}}) \times \mathcal{L}ag(\beta, \omega_{N_{01}})  \To \mathcal{L}ag(c_\varphi(\alpha \times \beta), \omega_{P_{01}}).
\]
To complete the proof we just need to check that $c_\varphi(\alpha \times \beta)= \alpha \times_{X_1} \beta$, for this note:
\begin{equation}
\begin{split}& (U \times V) \times_{(M_{0} \times_{X_{01}} M_{1})\times (N_{0} \times_{X_{12}} N_{1})} (P_0 \times_{X_0 \times_{S} X_1 \times_{S} X_2} P_1) \\
& \cong (U \times V) \times_{(M_{0} \times_{X_{01}} M_{1})\times (N_{0} \times_{X_{12}} N_{1})} ((M_0 \times_{X_{0}}N_0) \times_{X_0 \times_{S} X_1 \times_{S} X_2} (M_1 \times_{X_{1}}N_1) ) \\ 
& \cong (U \times V) \times_{(M_{0} \times_{X_{01}} M_{1})\times (N_{0} \times_{X_{12}} N_{1})} ((M_0 \times_{X_{01}}M_1) \times_{X_1} (N_0 \times_{X_{12}}N_1))
\\& \cong U\times_{X_1} V.
\end{split}
\end{equation}
\end{proof}

\begin{rem}\label{rem:fillingboxes}
We now explain the operation in Proposition \ref{prop:6} in a way that will be helpful later. Consider the following commutative diagram
\begin{equation}\label{eqn:2MorExplain}
\xymatrix{& & M_0 \ar[dr]^{\phi_{1}} \ar[dl]_{\phi_0}& & \\
& X_0  & U \ar[u]^(.4){\alpha_0} \ar[d]^(.4){\alpha_1} & X_1 & \\
  & & M_1\ar[ur]_{\psi_{1}} \ar[ul]^{\psi_0}  &  &  
}
\end{equation}
determining a map $\alpha:U \to M_{01}$.
Recall from Remark \ref{rem:FirstLoop} that a Lagrangian structure in $\phi$ is given by an appropriate path $h_0$ in $\mathcal{P}_0(\mathcal{A}^{2,cl}(M_0,n))$. Using this interpretation, an isotropic structure on $\alpha:U \to M_{01}$ is equivalent to a filling $H_{U}$ of the square
\begin{equation}\label{eqn:firstfill}
\xymatrixcolsep{5pc}\xymatrix{{\alpha_{0}^{*}\phi_{0}^{*} h_0} \ar @{} [dr] |{\boldsymbol{H_{U}}} \ar@{-}[r]|-*=0@{>}^{\alpha_{0}^{*}h_{M_0}} \ar@{-}[d]|-*=0@{>} & {\alpha_{0}^{*}\phi_{1}^{*} h_1} \ar@{-}[d]|-*=0@{>} \\
{\alpha_{1}^{*}\psi_{0}^{*} h_0} \ar@{-}[r]|-*=0@{>}_{\alpha_{1}^{*} h_{M_1}}  & {\alpha_{1}^{*}\psi_{1}^{*} h_1} }
\end{equation}
in the path space $\mathcal{P}_0(\mathcal{A}^{2,cl}(U,n))$ satisfying an additional requirement. Evaluating at the endpoint $H_U$ determines a $2$-simplex in $\mathcal{A}^{2,cl}(U,n)$ interpolating between the four ways of pulling-back $\omega$ to $U$, we require that this is homotopic to the $2$-simplex induced by the commutativity of (\ref{eqn:2MorExplain}). 

If we also consider 
\begin{equation}\label{eqn:2MorExplain2}
\xymatrix{& & N_0 \ar[dr]^{\tau_{2}} \ar[dl]_{\tau_1}& & \\
& X_1  & V \ar[u]^(.4){\beta_0} \ar[d]^(.4){\beta_1} & X_2 & \\
  & & N_1\ar[ur]_{\kappa_{2}} \ar[ul]^{\kappa_1}  &  &  
}
\end{equation}
the isotropic structure on $\alpha \times_{X_1} \beta$ constructed in Proposition \ref{prop:6} is the concatenation of the three squares in the diagram
\begin{equation}\label{eqn:threesquares}
\xymatrix@R=1.7pc{\pi_{U}^{*}\alpha_{0}^{*}\phi_{0}^{*} h_0 \ar @{} [ddrr] |{\pi_{U}^{*}\boldsymbol{H_{U}}} \ar@{-}[rr]^{\pi_{U}^{*}\alpha_{0}^{*}h_{M_0}}|-*=0@{>} \ar@{-}[dd]|-*=0@{>}
 && \pi_{U}^{*}\alpha_{0}^{*}\phi_{1}^{*} h_1 \ar@{-}[dd]|-*=0@{>} \ar@{-}[rr]|-*=0@{>} 
&& \pi_{V}^{*}\beta_{0}^{*}\tau_{1}^{*} h_1 \ar @{} [ddrr] |{\pi_{V}^{*}\boldsymbol{H_{V}}} \ar@{-}[rr]|-*=0@{>}^{\pi_{V}^{*}\beta_{0}^{*}h_{N_0}} \ar@{-}[dd]|-*=0@{>}
&& \pi_{V}^{*}\beta_{0}^{*}\tau_{2}^{*} h_2 \ar@{-}[dd]|-*=0@{>}
\\
&&&&&&  \\
\pi_{U}^{*}\alpha_{1}^{*}\psi_{0}^{*} h_0  \ar@{-}[rr]|-*=0@{>}_{\pi_{U}^{*}\alpha_{1}^{*}h_{M_1}}  
&& \pi_{U}^{*}\alpha_{1}^{*}\psi_{1}^{*} h_1 \ar@{-}[rr]|-*=0@{>}
&&\pi_{V}^{*}\beta_{1}^{*}\kappa_{1}^{*} h_1 \ar@{-}[rr]|-*=0@{>}_{\pi_{V}^{*}\beta_{1}^{*}h_{N_1}}  
&&\pi_{V}^{*}\beta_{1}^{*}\kappa_{2}^{*} h_2}
\end{equation}
where the filling of the middle square comes from the homotopy given by pulling $h_1$ back to $U \times_{X_1} V$ in the four different ways from $X_1$.
\end{rem}

\section{Symplectomorphisms and Lagrangeomorphisms}

In this section we will introduce the notions of equivalence of $n$-symplectic derived stacks and Lagrangians, which we will call \emph{symplectomorphism} and \emph{Lagrangeomorphism} respectively. We will then show that the Lagrangians constructed in Theorem \ref{thm:ThreeLag}  and Proposition \ref{prop:6} are unique up to Lagrangeomorphism and the operation defined in Corollary \ref{cor:Comp} is associative up to Lagrangeomorphism.

\begin{defn}
Let $S_0$ and $S_1$ be $n$-symplectic derived stacks. A \emph{symplectomorphism} is a  pair consisting of an equivalence $\phi:S_0 \longrightarrow S_1$ of derived stacks and a Lagrangian structure on the graph of $\phi$,
\[\Gamma_{\phi}:S_{0}\To S_{0}\times S_{1}.
\]
\end{defn}

\begin{defn}\label{defn:IsomOfLags}
Let $(S,\omega)$ be an $n$-symplectic derived stack and let $f_0:X_0 \longrightarrow S$ and $f_1:X_1\longrightarrow S$ be Lagrangians. A \emph{Lagrangeomorphism} consists of an  equivalence $\phi:X_0 \longrightarrow X_1$ of derived stacks together with a homotopy $f_{1}\circ \phi \cong f_0$ and a Lagrangian structure on the morphism 
\[\Gamma_{\phi}: X_{0}\longrightarrow X_{0}\times_{S} X_{1},
\]
induced by the graph of $\phi$ and the homotopy. Here we are using the $(n-1)$-symplectic structure on $X_{0}\times_{S} X_{1}$ from Corollary \ref{cor:Const}.
\end{defn}

\begin{rem}If we take $S=\bullet_{n}$ in Definition \ref{defn:IsomOfLags} then $X_0$ and $X_1$ are $(n-1)$-shifted symplectic derived Artin stacks and an isomorphism $\phi$ is a Lagrangeomorphism of these Lagrangians in $\bullet_{n}$ if and only if it is a symplectomorphism. 
\end{rem}

We now give two corollaries of Proposition \ref{prop:5}.

\begin{cor}\label{cor:EtaleHLag}Let $S_0$ and $S_1$ be $n$-symplectic derived stacks and let $\phi:S_0 \To S_1$ be an equivalence of derived stacks. A path $h$ in $\mathcal{A}^{2,cl}(S_0, n)$ between $\phi^{*} \omega_1$ and $\omega_0$ determines a Lagrangian structure on $\Gamma_{\phi}$ and so a symplectomorphism. On the other hand, any symplectomorphism determines such data $(\phi,h)$. 
\end{cor}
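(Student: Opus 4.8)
The plan is to deduce the corollary directly from Proposition \ref{prop:5}, realizing the product $S_0 \times S_1$ as a derived intersection over the $(n+1)$-shifted symplectic point. Following Example \ref{point}, I would regard $S_0$ and $S_1$ as Lagrangians $f:S_0 \To \bullet_{n+1}$ and $g:S_1 \To \bullet_{n+1}$, with isotropic structures $h_f$ and $h_g$ the loops based at $0$ in $\mathcal{A}^{2,cl}(-,n+1)$ corresponding to $\omega_0$ and $\omega_1$ under the isomorphism $\pi_1(\mathcal{A}^{2,cl}(-,n+1)) \simeq \pi_0(\mathcal{A}^{2,cl}(-,n))$. By Remark \ref{rem:InCase} together with Corollary \ref{cor:Const}, the intersection $S_0 \times_{\bullet_{n+1}} S_1$ is exactly $S_0^- \times S_1$ with the $n$-shifted symplectic form $-\omega_0 \boxplus \omega_1$, which is the symplectic structure on the product implicit in the definition of symplectomorphism.

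With this identification I would apply Proposition \ref{prop:5} taking $W = S_0$ and $\Delta = \Gamma_\phi : S_0 \To S_0 \times_{\bullet_{n+1}} S_1$. The composites with the two projections are $u = \pi_{S_0}\circ \Gamma_\phi \cong \id_{S_0}$ and $v = \pi_{S_1}\circ \Gamma_\phi \cong \phi$, both of which are equivalences and hence étale, so the non-degeneracy hypotheses of Proposition \ref{prop:5} hold automatically. It then remains to match the given datum $h$ with the homotopy $H$ required there: under the shift isomorphism of Example \ref{point}, a path in $\mathcal{A}^{2,cl}(S_0,n)$ from $\phi^{*}\omega_1$ to $\omega_0$ is precisely a homotopy between the loops $u^{*}h_f$ and $v^{*}h_g$ in $\mathcal{A}^{2,cl}(S_0,n+1)$. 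The endpoint-compatibility condition of Proposition \ref{prop:5} is automatically satisfied, since $f^{*}\omega$ and $g^{*}\omega$ are pulled back from the point and hence vanish, so the relevant path is constant at $0$. Proposition \ref{prop:5} then turns $h$ into a Lagrangian structure on $\Gamma_\phi$, yielding the symplectomorphism.

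For the converse I would run this identification backwards. A symplectomorphism is by definition a Lagrangian structure on $\Gamma_\phi$; since $u \cong \id_{S_0}$ is étale, the final clause of Proposition \ref{prop:5} confirms that $v \cong \phi$ is étale, consistent with $\phi$ being an equivalence. Reading off the Lagrangian structure through the loop description of Remark \ref{rem:FirstLoop} then recovers a path $h$ from $\phi^{*}\omega_1$ to $\omega_0$ in $\mathcal{A}^{2,cl}(S_0,n)$, producing the desired data $(\phi,h)$. The step I expect to require the most care is the degree bookkeeping in this translation between paths in $\mathcal{A}^{2,cl}(-,n)$ and homotopies of based loops in $\mathcal{A}^{2,cl}(-,n+1)$; once that correspondence is pinned down, the rest is a direct specialization of Proposition \ref{prop:5}.
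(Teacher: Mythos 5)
Your proposal is correct and is exactly the paper's argument: the paper proves Corollary \ref{cor:EtaleHLag} by the one-line instruction ``take $S$ to be a point in Proposition \ref{prop:5} and let $\Delta=\Gamma_\phi$,'' which is precisely your specialization with $u\cong\id_{S_0}$, $v\cong\phi$ \'etale and the loop--path correspondence of Example \ref{point} identifying $H$ with $h$. Your elaboration of the endpoint-compatibility (trivial since all forms pulled back from $\bullet_{n+1}$ vanish) and of the converse via Remark \ref{rem:FirstLoop} just makes explicit what the paper leaves implicit.
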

\begin{proof} Take $S$ to be a point in Proposition \ref{prop:5} and let $\Delta = \Gamma_{\phi}$.
\end{proof}

\begin{cor}\label{cor:HIsomLag} Let $(S,\omega)$ be an $n$-symplectic derived stack and $f:X\longrightarrow S$ and $g:Y\longrightarrow S$  be Lagrangians in $S$. Let $\phi:X \To Y$ be an equivalence of derived Artin stacks such that $g\circ \phi \cong f$. Let $H$ be  a homotopy in $\mathcal{P}_0(\mathcal{A}^{2,cl}(X_0,n))$ between $h_f$ and $\phi^{*}h_g$, which evaluates at the endpoint to a path homotopic in $\mathcal{A}^{2,cl}(X_0,n)$ to the path between $f^*\omega$ and $(g\circ\phi)^*\omega$ induced by $g\circ\phi\cong f$. Then $H$ induces a Lagrangian structure on $\Gamma_{\phi}: X \To X \times_{S} Y$, that is, a Lagrangeomorphism. Moreover any Lagrangeomorphism is determined in this way. 
\end{cor}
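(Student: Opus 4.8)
The plan is to obtain this corollary as the special case of Proposition \ref{prop:5} in which $W = X$ and $\Delta = \Gamma_\phi \colon X \To X \times_S Y$ is the graph of $\phi$. First I would identify the two legs of the construction. Writing $u = \pi_X \circ \Gamma_\phi$ and $v = \pi_Y \circ \Gamma_\phi$, the defining property of the graph supplies canonical homotopies $u \cong \id_X$ and $v \cong \phi$. Consequently both $u$ and $v$ are \'etale: we have $\mathbb{L}_{\id_X} \simeq 0$, and since $\phi$ is an equivalence of derived stacks $\mathbb{L}_\phi \simeq 0$ as well. Under these two homotopies the pullbacks $u^{*}h_f$ and $v^{*}h_g$ are identified with $h_f$ and $\phi^{*}h_g$, so the homotopy $H$ hypothesized in the corollary is precisely a homotopy between $u^{*}h_f$ and $v^{*}h_g$. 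The endpoint condition also transports verbatim: evaluating $H$ produces a path between $f^{*}\omega = u^{*}f^{*}\omega$ and $(g\circ\phi)^{*}\omega = v^{*}g^{*}\omega$ which is homotopic to the one induced by $f \circ u \cong g \circ v$, exactly the compatibility required by Proposition \ref{prop:5}.

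With the dictionary in place, the forward direction is immediate. Because $u$ and $v$ are \'etale, Proposition \ref{prop:5} manufactures from $H$ an honest Lagrangian structure $h_{\Gamma_\phi}$ on $\Gamma_\phi$ relative to the $(n-1)$-symplectic form on $X \times_S Y$ of Corollary \ref{cor:Const}. Packaged together with the equivalence $\phi$ and the homotopy $g \circ \phi \cong f$, this is by Definition \ref{defn:IsomOfLags} exactly the data of a Lagrangeomorphism, so $H$ induces one.

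For the \emph{moreover} clause I would argue that the assignment $H \mapsto h_{\Gamma_\phi}$ is invertible. A Lagrangeomorphism already carries a Lagrangian structure on $\Gamma_\phi$ together with $u \cong \id_X$ \'etale, so the final assertion of Proposition \ref{prop:5} forces $v \cong \phi$ to be \'etale (which is automatic here). To recover $H$ I would reread the construction in the proof of Proposition \ref{prop:5}: the isotropic structure on $\Gamma_\phi$ is the filling of the front face of the pyramid over diagram (\ref{eqn:diag}), whose side faces are fixed by the (homotopy) commutativity of that diagram and whose back face is the homotopy $H$. Since every face except the front and the back is pinned down, a filling of the front is equivalent to a filling of the back; reading this equivalence in reverse extracts from $h_{\Gamma_\phi}$ a homotopy $H$ between $h_f$ and $\phi^{*}h_g$ with the prescribed endpoint behaviour, and non-degeneracy is then automatic since $\Theta_{\Gamma_\phi}$ is already an equivalence.

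The one genuinely delicate point, and the step I expect to be the main obstacle, is this last invertibility. One must check that the $2$-simplex filling the front triangle and the homotopy $H$ filling the back triangle determine one another up to contractible choice, i.e. that the space of admissible $H$ and the space of isotropic structures on $\Gamma_\phi$ with the prescribed endpoint are equivalent. The forward implication is a verbatim instance of Proposition \ref{prop:5} and needs no new input; it is only this bijection-up-to-homotopy, implicit in the pyramid argument, that requires care.
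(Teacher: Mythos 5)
Your proposal is correct and takes exactly the paper's route: the paper's entire proof of this corollary is the one-line specialization of Proposition \ref{prop:5} with $\Delta=\Gamma_{\phi}$, $u=\id$ and $v=\phi$ (both \'etale since $\mathbb{L}_{\id}=0$ and $\mathbb{L}_{\phi}=0$ for the equivalence $\phi$), declared to prove the statement immediately. The ``delicate point'' you isolate for the converse --- that with all side faces of the pyramid pinned by the commutativity of diagram (\ref{eqn:diag}), fillings of the front triangle correspond to fillings of the back triangle --- is precisely what the paper's ``immediately'' silently invokes, so your write-up is if anything more careful than the source on the \emph{moreover} clause.
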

\begin{proof}
In Proposition \ref{prop:5}, take $\Delta=\Gamma_\phi$, $u=id$ and $v=\phi$. This immediately proves the statement.
\end{proof}

\begin{lem}\label{lem:Isom2equiv} Lagrangeomorphism is an equivalence relation for Lagrangians in a $n$-symplectic derived stack $(S,\omega)$.
\end{lem}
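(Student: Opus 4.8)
The plan is to reduce everything to the concrete description of Lagrangeomorphisms furnished by Corollary \ref{cor:HIsomLag}. That corollary identifies a Lagrangeomorphism from $(f_0,h_{f_0})$ to $(f_1,h_{f_1})$ with a pair $(\phi,H)$ consisting of an equivalence $\phi:X_0\To X_1$ together with a homotopy $f_1\circ\phi\cong f_0$, and a homotopy $H$ in $\mathcal{P}_0(\mathcal{A}^{2,cl}(X_0,n))$ from $h_{f_0}$ to $\phi^*h_{f_1}$ whose endpoint reproduces, up to homotopy, the path between $f_0^*\omega$ and $(f_1\circ\phi)^*\omega$ induced by $f_1\circ\phi\cong f_0$. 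Since $\phi$ is an equivalence, the non-degeneracy built into that corollary is automatic, so in each case it will be enough to exhibit the underlying equivalence together with the homotopy data and to verify the endpoint condition.

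For \emph{reflexivity} I would take $\phi=\id_{X_0}$ with the constant homotopy $f_0\circ\id\cong f_0$ and let $H$ be the constant homotopy at $h_{f_0}$; the endpoint condition is then trivially satisfied. Equivalently, this is exactly the Lagrangian structure on the diagonal $\Delta_{X_0}:X_0\To X_0\times_S X_0$ produced in Corollary \ref{cor:diag}, so reflexivity is already in hand.

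For \emph{transitivity}, given Lagrangeomorphisms $(\phi,H):X_0\to X_1$ and $(\psi,H'):X_1\to X_2$, I would take the composite equivalence $\psi\circ\phi$, with the evident composite homotopy $f_2\circ\psi\circ\phi\cong f_1\circ\phi\cong f_0$, and let the new homotopy be the concatenation $H\bullet\phi^*H'$: here $H$ runs from $h_{f_0}$ to $\phi^*h_{f_1}$ and $\phi^*H'$ runs from $\phi^*h_{f_1}$ to $\phi^*\psi^*h_{f_2}=(\psi\circ\phi)^*h_{f_2}$, so the concatenation is a homotopy from $h_{f_0}$ to $(\psi\circ\phi)^*h_{f_2}$. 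The endpoint condition for the concatenation should follow by concatenating the endpoint $2$-simplices of $H$ and $\phi^*H'$ and using functoriality of pullback along $\phi$.

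The case I expect to be the main obstacle is \emph{symmetry}. Given $(\phi,H)$ from $X_0$ to $X_1$, I would take $\phi^{-1}$ (which exists since $\phi$ is an equivalence) together with the induced homotopy $f_0\circ\phi^{-1}\cong f_1$, and define the new homotopy as the reversal of $(\phi^{-1})^*H$. Indeed $(\phi^{-1})^*H$ runs from $(\phi^{-1})^*h_{f_0}$ to $(\phi^{-1})^*\phi^*h_{f_1}\cong h_{f_1}$, so its reversal runs from $h_{f_1}$ to $(\phi^{-1})^*h_{f_0}$, as required by Corollary \ref{cor:HIsomLag} for a Lagrangeomorphism from $X_1$ to $X_0$. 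The delicate point is the endpoint condition: one must check that reversing $(\phi^{-1})^*H$ produces precisely the path between $f_1^*\omega$ and $(f_0\circ\phi^{-1})^*\omega$ coming from $f_0\circ\phi^{-1}\cong f_1$, which amounts to tracking the coherence of the chosen homotopy $f_1\circ\phi\cong f_0$ and its inverse under pullback and path reversal. This bookkeeping in the path and loop spaces of $\mathcal{A}^{2,cl}$ is where the real work lies; everything else is formal.
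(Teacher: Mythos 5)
Your proof is correct, and your reflexivity and symmetry arguments are essentially the paper's: reflexivity via the diagonal of Corollary \ref{cor:diag} (noting $\Gamma_{\id_X}=\Delta_X$), and symmetry by choosing a homotopy inverse $\psi$, pulling back the given homotopy, and correcting by the path induced by $\phi\circ\psi\cong\id$ --- your ``$(\phi^{-1})^*\phi^*h_{f_1}\cong h_{f_1}$'' is exactly that correction, which the paper makes explicit, so the ``delicate point'' you flag is indeed the only content there. Where you genuinely diverge is transitivity. The paper does not concatenate homotopies in $\mathcal{P}_0(\mathcal{A}^{2,cl}(X_0,n))$; instead it invokes the composition of relative Lagrangian correspondences (Corollary \ref{cor:Comp}, resting on the triple-intersection Theorem \ref{thm:ThreeLag}) to make $X_0\times_{X_1}X_1\to X_0\times_S X_2$ Lagrangian, and then transports this structure along the equivalence $X_0\times_{X_1}X_1\simeq X_0$ to $\Gamma_{\phi_1\circ\phi_0}$. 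Your route --- composing $H\bullet\phi^*H'$ directly and feeding the result into Corollary \ref{cor:HIsomLag}, with non-degeneracy automatic since $u=\id$ and $v=\psi\circ\phi$ are \'etale as in Proposition \ref{prop:5} --- is more elementary and self-contained, at the price of doing the endpoint bookkeeping (concatenating endpoint $2$-simplices and using the composite homotopy $f_2\circ\psi\circ\phi\cong f_1\circ\phi\cong f_0$) by hand, which you correctly identify. The paper's route buys something the lemma is later used for: it exhibits composition of Lagrangeomorphisms as a special case of the $2$-categorical composition $\circ$, which is what makes statements like Proposition \ref{prop:IdOriginal} (graphs compose to graphs, inverses give the diagonal) fall out uniformly. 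One small caveat for your version: $\phi^*\psi^*=(\psi\circ\phi)^*$ only up to coherent homotopy, so strictly you are concatenating with one more canonical path --- harmless at the level of rigor of the paper, but worth noting alongside your endpoint analysis.
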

\begin{proof}
Let $f:X \To S$ be a Lagrangian, Corollary \ref{cor:diag} shows that the diagonal $\Delta:X \To X \times_{S} X$ is a Lagrangian which implies reflexivity, since $\Gamma_{\text{id}_X} = \Delta_X$. 

Next we show symmetry, let $g:Y\To S$ be another Lagrangian and suppose we have a Lagrangeomorphism $(\phi, H_\phi)$ from $X$ to $Y$. By definition $\phi:X \to Y$ is an equivalence of derived stacks so we can choose an inverse $\psi:Y \To X$. Then we have homotopies $f \circ \psi \cong g$ and $\phi\circ\psi\cong id$. This last homotopy induces a path from $\psi^{*}\phi^{*}h_g$ to $h_g$ which we concatenate with $\psi^{*}h_\phi$ to obtain a path from $h_g$ to $\psi^* h_f$. Corollary \ref{cor:HIsomLag} now shows that this data determines a Lagrangeomorphism from $Y$ to $X$. 

Consider two Lagrangeomorphisms $\phi_0: X_0 \longrightarrow X_1$ and $\phi_1:X_1 \longrightarrow X_2$
over $S$ given by Lagrangian structures on
\[\Gamma_{\phi_0}: X_0 \longrightarrow X_0 \times_{S} X_1 \ \ \ \ \ \text{and} \ \ \ \ \ \ \Gamma_{\phi_1}: X_1 \longrightarrow X_1 \times_{S} X_2.
\]
Corollary \ref{cor:Comp} implies that 
\[X_{0}\times_{X_1} X_1 \stackrel{q}\longrightarrow X_0 \times_{S} X_2
\]
is Lagrangian where $q$ is induced by $id_{X_0}\times\phi_1: X_0 \times X_1 \longrightarrow X_0 \times X_2$. Because there is an equivalence between $X_0 \times_{X_1} X_1$  and $X_0$ commuting up to homotopy with the morphisms $q$ and $\Gamma_{\phi_1 \circ \phi_0}$ over $X_0 \times_{S} X_2$ we can pullback this Lagrangian structure to $\Gamma_{\phi_1 \circ \phi_0}:X_0 \to X_0 \times_{S} X_2$. This gives a Lagrangeomorphism $X_0 \to X_2$ and hence proves transitivity.
\end{proof}

The next two propositions show that the operation defined in Corollary \ref{cor:Comp} is associative up to Lagrangeomorphism. Moreover the diagonal serves as a unity and Lagrangeomorphism are invertible with respect to this unit, again up to Lagrangeomorphism. From now on we refer to this operation as composition of relative Lagrangian correspondences.

\begin{prop}\label{prop:Associativity}  Let $X_i$, for $i=0,1,2,3$ be Lagrangians in a $n$-symplectic derived stack $S$ and consider Lagrangians $N_{1} \to X_{01}$, $N_{2} \to X_{12}$, $N_{3} \to X_{23}$. Applying Corollary \ref{cor:Comp} we obtain  Lagrangians $N_1 \times_{X_1} (N_2 \times_{X_2} N_3)$ and $(N_1 \times_{X_1} N_2) \times_{X_2} N_3$ in $X_{03}$. There is a canonical Lagrangeomorphism between them.
\end{prop}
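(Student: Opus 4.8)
The plan is to produce the Lagrangeomorphism by appealing to Corollary \ref{cor:HIsomLag}, feeding it the explicit path-concatenation description of the two isotropic structures furnished by Remark \ref{rem:comp}.

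First I would fix the underlying equivalence of derived stacks. By associativity of homotopy fiber products there is a canonical equivalence
\[
\phi: (N_1 \times_{X_1} N_2) \times_{X_2} N_3 \xrightarrow{\ \sim\ } N_1 \times_{X_1} (N_2 \times_{X_2} N_3),
\]
both sides being canonically identified with the triple fiber product $N_1 \times_{X_1} N_2 \times_{X_2} N_3$. Tracing through the structure maps to $X_{03}$ produced by the two applications of Corollary \ref{cor:Comp}, one checks that $\phi$ intertwines them up to a canonical homotopy; this provides the equivalence $\phi$ together with the homotopy over $X_{03}$ demanded by Corollary \ref{cor:HIsomLag}. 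It then remains to exhibit a homotopy $H$ between the two isotropic structures, pulled back to the common domain, satisfying the endpoint compatibility condition of that corollary.

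For the isotropic structures I would invoke Remark \ref{rem:comp}, which presents the isotropic structure on a composite $N_i \times_{X_i} N_j$ as a concatenation in $\mathcal{P}_0(\mathcal{A}^{2,cl}(-,n))$ of $h_{N_i}$, a middle path $m$ induced by the homotopy commutativity of the defining diagram (i.e.\ the pullback of the Lagrangian structure on the intermediate $X$), and $h_{N_j}$. Applying this twice, the isotropic structure on $(N_1 \times_{X_1} N_2) \times_{X_2} N_3$ is (after the appropriate pullbacks) the concatenation
\[
\big((h_{N_1} \bullet m_{X_1} \bullet h_{N_2}) \bullet m_{X_2}\big) \bullet h_{N_3},
\]
where $m_{X_1}$ and $m_{X_2}$ are the middle paths coming from the Lagrangian structures on $X_1$ and $X_2$, while the isotropic structure on $N_1 \times_{X_1} (N_2 \times_{X_2} N_3)$ is the very same string of elementary paths associated the other way,
\[
h_{N_1} \bullet \big(m_{X_1} \bullet (h_{N_2} \bullet m_{X_2} \bullet h_{N_3})\big).
\]
Since concatenation of paths is associative up to a canonical reparametrization homotopy, these two concatenations are canonically homotopic, and that homotopy is the desired $H$.

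The step I expect to be the main obstacle is verifying that $H$ meets the endpoint compatibility condition of Corollary \ref{cor:HIsomLag}: evaluating $H$ at the terminal endpoint must recover the $2$-simplex interpolating the various pullbacks of $\omega$ that is induced by the homotopy commutativity of the underlying diagram of the $X_i$. This forces one to check that the middle paths $m_{X_1}$ and $m_{X_2}$ — each governed by the homotopy coherence of the relevant Cartesian square (as in the middle-square filling of Remark \ref{rem:FirstLoop}) — assemble compatibly over the triple fiber product, so that both associations induce the \emph{same} $2$-simplex at the endpoint. The bookkeeping of these pullbacks and coherences along the $X_1$- and $X_2$-directions is the real content; once it is pinned down, Corollary \ref{cor:HIsomLag} delivers the Lagrangeomorphism, which is canonical by construction and an honest equivalence by Lemma \ref{lem:Isom2equiv}.
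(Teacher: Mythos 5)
Your overall route coincides with the paper's (choose the canonical equivalence supplied by the universal property of homotopy limits, describe both isotropic structures via Remark \ref{rem:comp}, and conclude with Corollary \ref{cor:HIsomLag}), but the central step has a genuine gap: the two concatenations are \emph{not} ``the very same string of elementary paths associated the other way,'' and consequently $H$ is not a reparametrization homotopy. After transporting one side along your $\phi$ (the paper's $\rho$), each elementary segment is the pullback of $h_{N_i}$ (or of a middle path) along a map that is only \emph{homotopic}, not equal, to the map used on the other side: for instance $h_{N_1}$ is pulled back along $\pi_1^{1(23)}$ on one side and along $\pi_1^{12}\circ\pi_{12}^{(12)3}\circ\rho$ on the other, and the two middle paths over $X_1$ arise from two \emph{different} homotopies, furnished by the two different fiber-product presentations (likewise over $X_2$). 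Note also that in this simplicial setting concatenation is already associative up to contractible choice, so reassociation cannot be where the discrepancy lies; it lies entirely in the legs. The homotopy $H$ must therefore be \emph{built}, as the paper does, as a ladder of five squares patched together, each filled by a coherence datum of the homotopy limit: the homotopy between $\pi_1^{12}\circ\pi_{12}^{(12)3}\circ\rho$ and $\pi_1^{1(23)}$ fills the $h_{N_1}$-square, the compatibility of the four homotopic maps to $X_1$ fills the first middle square, and so on.

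You do acknowledge that ``the bookkeeping of these pullbacks and coherences'' is the real content, but you mislocate it in the endpoint verification. In the paper the emphasis is inverted: once $H$ is assembled from the limit coherences, the endpoint condition of Corollary \ref{cor:HIsomLag} is essentially immediate from the construction (the paper dispatches it in one sentence before the diagram), whereas without those coherences your $H$ does not exist at all. In short, your final paragraph contains the correct ingredients, but they belong in the construction of $H$ itself, not in a posterior check; as written, the proposal asserts $H$ exists for a reason (reparametrization of one string of paths) that does not produce it.
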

\begin{proof}  
Let \[\rho: N_1 \times_{X_1} (N_2 \times_{X_2} N_3) \longrightarrow (N_1 \times_{X_1} N_2) \times_{X_2} N_3
\]
be one of the canonical equivalences coming from the universality of homotopy limits. Then $\rho$ (homotopy) commutes with the induced morphisms of the two sides to $X_0 \times_{S} X_3$. According to Corollary \ref{cor:HIsomLag} to determine a Lagrangeomorphism we need to construct a homotopy between the isotropic structure on $N_{1} \times_{X_1} (N_{2} \times_{X_3} N_{3})$ and the pullback by $\rho$ of the isotropic structure on $(N_1 \times_{X_1} N_2)\times_{X_3}N_3$. It will be clear from our construction that our homotopy will satisfy the additional requirement stated in the corollary.
Consider the commutative diagram:

\[
\xymatrix{&& N_{1} \times_{X_1} (N_2 \times_{X_2} N_3) \ar[d]^{\rho} \ar@/^2.0pc/@[black][ddrr]_(.7){\pi^{1(23)}_{23}}  \ar@/_3.0pc/@[black][dddl]_{\pi_{1}^{1(23)}}&&& \\
&& (N_{1} \times_{X_1} N_2) \times_{X_2} N_3 \ar[d]^{\pi^{(12)3}_{12}} \ar@/^5.0pc/@[black][ddrrr]^(.6){\pi^{(12)3}_{3}} &&& \\
&& N_{1}\times_{X_1} N_2 \ar[dl]_{\pi^{12}_1}\ar[dr]^{\pi^{12}_{2}} && N_{2} \times_{X_2} N_3  \ar[dl]^{\pi_{2}^{23}} \ar[dr]_{\pi_{3}^{23}} &\\
& N_1 \ar[dl]_{\phi_0} \ar[dr]^{\phi_1}&& N_2 \ar[dr]^{\psi_2} \ar[dl]_{\psi_1}& &N_3\ar[dr]^{\tau_3}\ar[dl]_{\tau_2} \\
X_0 && X_1 &&X_2 && X_3
}
\]

Applying Remark \ref{rem:comp}, and working in the path space $\mathcal{P}_0(\mathcal{A}^{2,cl}(N_{1} \times_{X_1}(N_{2}\times_{X_2}N_{3}),n))$ the Lagrangian structure on $N_{1} \times_{X_1} (N_{2} \times_{X_3} N_{3})$ is given by the top row of the following diagram while the bottom row is the pullback by $\rho$ of the Lagrangian structure on $(N_{1} \times_{X_1} N_{2}) \times_{X_3} N_{3}$. 

\[\xymatrix{\pi^{1(23)*}_{1}\phi_{0}^{*}h_0  \ar@{-}[r]|-*=0@{>}\ar@{-}[d]|-*=0@{>}_{\pi^{1(23)*}_{1}h_{N_1}} 
 & \rho^{*}\pi^{(12)3*}_{12}\pi^{12*}_{1}\phi_{0}^{*}h_0  \ar@{-}[d]|-*=0@{>}^{\rho^{*}\pi^{(12)3*}_{12}\pi^{12*}_{1}h_{N_1}} \\ \pi^{1(23)*}_{1}\phi_{1}^{*}h_1  \ar@{-}[r]|-*=0@{>}\ar@{-}[d]|-*=0@{>}_{} 
 & \rho^{*}\pi^{(12)3*}_{12}\pi^{12*}_{1}\phi_{1}^{*}h_1  \ar@{-}[d]|-*=0@{>}^{} \\ \pi^{1(23)*}_{23}\pi^{23*}_{2}\psi_{1}^{*}h_1  \ar@{-}[r]|-*=0@{>}\ar@{-}[d]|-*=0@{>}_{\pi^{1(23)*}_{23}\pi^{23*}_{2} h_{N_2}} 
 & \rho^{*}\pi^{(12)3*}_{12}\pi^{12*}_{2} \psi^{*}_{1}h_1  \ar@{-}[d]|-*=0@{>}^{\rho^{*}\pi^{(12)3*}_{12}\pi^{12*}_{2}h_{N_2}} \\\pi^{1(23)*}_{23}\pi^{23*}_{2}\psi_{2}^{*}h_2  \ar@{-}[r]|-*=0@{>}\ar@{-}[d]|-*=0@{>}_{} 
 & \rho^{*}\pi^{(12)3*}_{12}\pi^{12*}_{2} \psi^{*}_{2}h_2  \ar@{-}[d]|-*=0@{>}^{} \\  \pi^{1(23)*}_{23}\pi^{23*}_{3} \tau^{*}_2 h_2   \ar@{-}[r]|-*=0@{>}\ar@{-}[d]|-*=0@{>}_{\pi^{1(23)*}_{23}\pi^{23*}_{3} h_{N_3}} 
 & \rho^{*}\pi^{(12)3*}_{3} \tau_{2}^{*}h_2  \ar@{-}[d]|-*=0@{>}^{\rho^{*}\pi^{(12)3*}_{3} h_{N_3}}   \\\pi^{1(23)*}_{23}\pi^{23*}_{3} \tau^{*}_3 h_3 \ar@{-}[r]|-*=0@{>} &\rho^{*}\pi^{(12)3*}_{3} \tau_{3}^{*}h_3 }
\]

The homotopy is given by patching together homotopies which fill in the squares in this diagram. The top square is filled in using the homotopy between $\pi_{1}^{12}\circ \pi_{12}^{(12)3} \circ \rho$ and $\pi_{1}^{1(23)}$. The next square to the down comes from the fact that there are four homotopic maps in the diagram from $N_{1} \times_{X_1}(N_{2}\times_{X_{2}}N_3)$ to $X_1$. The fourth one from the top is analogous to the second one and third and the fifth squares from the top are analogous to the top square. This completes the proof of the proposition.
\end{proof}

\begin{prop}\label{prop:IdOriginal}
Let $X_0$ and $X_1$ be Lagrangians in a $n$-symplectic derived stack $S$ and consider a Lagrangian $\phi:N \to X_{01}$. Then the Lagrangians $N \times_{X_1} \Delta_{X_1}$ and $\Delta_{X_0} \times_{X_0} N$ in $X_{01}$ are Lagrangeomorphic to $N$ by canonical Lagrangeomorphisms. 

If $N=\Gamma_\varphi$ is a Lagrangeomorphism then $M=\Gamma_\psi$, the graph of $\psi$ a homotopy inverse of $\varphi$, is a Lagrangeomorphism. Moreover $N \times_{X_1} M$ is Lagrangeomorphic to $\Delta_{X_0}$ and $M\times_{X_1}N $ is Lagrangeomorphic to $\Delta_{X_1}$.
\end{prop}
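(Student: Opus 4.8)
The plan is to treat the unit and the inverse statements by a single mechanism. In each case the underlying derived stacks are manifestly equivalent through the canonical equivalences coming from a homotopy fibre product with an identity morphism, so by Corollary \ref{cor:HIsomLag} it suffices to produce, in the relevant path space $\mathcal{P}_0(\mathcal{A}^{2,cl}(-,n-1))$, a homotopy between the two isotropic structures together with the required endpoint compatibility. The explicit description of the composite isotropic structure in Remark \ref{rem:comp} is exactly what we compare against, and the computations will reduce to the fact that the Lagrangian structure on a diagonal (Corollary \ref{cor:diag}) is the constant one produced in Proposition \ref{prop:5}.

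For unitality, consider first $N \times_{X_1} \Delta_{X_1}$. The composition of Corollary \ref{cor:Comp} is formed over the middle $X_1$, and the second factor $\Delta_{X_1}\colon X_1 \to X_{11}$ projects to $X_1$ by the identity; hence $N \times_{X_1} \Delta_{X_1} \cong N$ canonically and the induced map to $X_{01}$ is $\phi$. Take $\rho$ to be this canonical equivalence. By Remark \ref{rem:comp} the composite isotropic structure is the concatenation of $\rho^{*}h_N$ with the segments built from $h_{\Delta_{X_1}}$; since $\Delta_{X_1}$ carries the constant isotropic structure from Proposition \ref{prop:5}, the segments coming from $\Delta_{X_1}$ are constant and the middle segment is the canonical homotopy filler, so the whole concatenation is homotopic to $\rho^{*}h_N$. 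This is precisely the data required by Corollary \ref{cor:HIsomLag}, and the endpoint condition holds because both sides evaluate to the same pullback of the symplectic form on $X_{01}$. The case $\Delta_{X_0} \times_{X_0} N \cong N$ is entirely symmetric, composing on the left.

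For the inverse statement, first note that $M=\Gamma_\psi$ is a Lagrangeomorphism: this is precisely the symmetry established in the proof of Lemma \ref{lem:Isom2equiv}, which builds the Lagrangian structure $h_M$ on $\Gamma_\psi$ by concatenating $\psi^{*}h_N$ with the path induced by $\varphi \circ \psi \cong \mathrm{id}$. To identify $N \times_{X_1} M$, observe that $\Gamma_\varphi$ projects to the middle $X_1$ by $\varphi$ while $\Gamma_\psi=(\mathrm{id}_{X_1},\psi)$ projects by the identity, so $N \times_{X_1} M \cong X_0$ with induced map $(\mathrm{id}_{X_0},\psi\circ\varphi)\colon X_0 \to X_{00}$; the homotopy $\psi\circ\varphi \cong \mathrm{id}_{X_0}$ then identifies this map with $\Delta_{X_0}$. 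Applying Remark \ref{rem:comp} to this composite, the resulting isotropic structure is the concatenation of the $h_N$-segment with the contribution of $h_M$; because $h_M$ was constructed from $\psi^{*}h_N$ corrected by the inversion homotopy, these two segments telescope and are homotopic to the constant isotropic structure on $\Delta_{X_0}$. Corollary \ref{cor:HIsomLag} then yields $N \times_{X_1} M \simeq \Delta_{X_0}$, and $M \times_{X_1} N \simeq \Delta_{X_1}$ follows by exchanging the roles of $\varphi$ and $\psi$.

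The main obstacle is the bookkeeping of the path-space homotopies in the invertibility case: one must verify that the concatenation of the $h_N$-segment with the reversed $h_M$-segment, together with the middle homotopy coming from the various ways of pulling the form on $X_1$ back to the fibre product, is genuinely null-homotopic relative to endpoints, and that under $\rho$ this null-homotopy is compatible with the canonical homotopy $\psi\circ\varphi \cong \mathrm{id}$ used to identify the target with $\Delta_{X_0}$. Tracking the precise form of $h_M$ from the construction in Lemma \ref{lem:Isom2equiv} is what makes this cancellation work, and it is also what confirms the endpoint compatibility demanded by Corollary \ref{cor:HIsomLag}.
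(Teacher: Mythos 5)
Your proof is correct and takes essentially the same route as the paper's: the same canonical equivalences of derived stacks ($N\times_{X_1}\Delta_{X_1}\simeq N$ and $\Gamma_\varphi\times_{X_1}\Gamma_\psi\simeq\Gamma_{\psi\circ\varphi}\simeq\Delta_{X_0}$), the same path-space comparison of the composite isotropic structure via Remark \ref{rem:comp} exploiting the constant structure on the diagonal from Corollary \ref{cor:diag}, the same appeal to Corollary \ref{cor:HIsomLag}, and the same use of the symmetry step of Lemma \ref{lem:Isom2equiv} to handle $M=\Gamma_\psi$. The telescoping cancellation you flag as the main bookkeeping point in the invertibility case is treated no more explicitly in the paper (``an argument analogous to the above''), so your proposal matches the published argument in both structure and level of detail.
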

\begin{proof}
By definition of fiber product we can choose an equivalence of derived stacks $\rho: N \times_{X_1} \Delta_{X_1} \To N$. Now consider the following diagram 
\begin{equation}\label{eqn:IDspaces}
\xymatrix{& & N \times_{X_1} \Delta_{X_1} \ar[dl]_{\rho} \ar[dr]^{\pi_{X_1}}& & \\
& N \ar[dl]_{\phi_0} \ar[dr]^{\phi_1}& & \Delta_{X_1}\ar[dl]_{\text{id}} \ar[dr]^{\text{id}}& \\
X_0 && X_1 && X_1 }
\end{equation}
In the path space $\mathcal{P}_{0}(\mathcal{A}^{2,cl}(N \times_{X_1} \Delta_{X_1}, n))$ we have
\begin{equation}\label{eqn:IDPaths}
\xymatrix{\rho^{*}\phi^{*}_{0}h_0\ar@{-}[rr]|-*=0@{>}^{\rho^{*}h_{N}}  \ar@{-}[ddrr]|-*=0@{>}
 && \rho^{*}\phi_{1}^{*} h_1 \ar@{-}[ddrr]|-*=0@{>}\ar@{-}[rr]|-*=0@{>}
&& \pi_{X_1}^{*} h_1  \ar@{-}[rr]|-*=0@{>}^{} \ar@{-}[dd]|-*=0@{>}
&& \pi_{X_1}^{*}h_1 \ar@{-}[ddll]|-*=0@{>}
\\
&&&&&& \\
&& \rho^{*} \phi_{0}^{*}h_0  \ar@{-}[rr]|-*=0@{>}_{\rho^{*}h_{N}}
&&\rho^{*}\phi_{1}^{*} h_1   
&&}.
\end{equation}
where the unlabelled edges are homotopies determined by the commutativity of (\ref{eqn:IDspaces}). Then it follows from the definitions that the top path is the Lagrangian structure on $N\times_{X_1} \Delta_{X_1}$ while the bottom path is the Lagrangian structure on $N$. Again commutativity of the previous diagram provides homotopies filling the square and the triangles. This homotopy, together with $\rho$, determines the required Lagrangeomorphism.
The proof for $\Delta_{X_0} \times_{X_0}N$ is similar. 

For the second part of the statement we proceed as follows. The proof of Lemma \ref{lem:Isom2equiv} shows that $M$ is a Lagrangeomorphism. Then notice that $\Gamma_{\phi} \times_{X_1} \Gamma_{\psi}$ is equivalent as a derived stack over $X_{00}$ to $\Gamma_{\psi \circ \phi}$ which is equivalent to the diagonal $\Delta_{X_{0}}$. An argument analogous to the above shows that their Lagrangian structures are homotopic via this equivalence and so we get a Lagrangeomorphism between  $N \times_{X_1} M$ and $\Delta_{X_0}$. 
Finally the Lagrangeomorphism between $M\times_{X_1}N $ and $\Delta_{X_1}$ is constructed in the same way.
\end{proof}

The next few propositions characterize, up to Lagrangeomorphism, the Lagrangians we constructed in Theorem \ref{thm:ThreeLag} and in the proof of Proposition \ref{prop:6}, as well as a few other Lagrangians that we construct using the results from Section 2.

\begin{prop}\label{prop:Characterization} 
Let $(S, \omega)$ be $n$-symplectic derived stack and let $X_{0}, \dots, X_{m}$ be Lagrangians in $S$. Denote by $X_{ij}$ be the $(n-1)$-symplectic derived stacks $X_{i} \times_{S} X_j$ and consider the $(n-1)$-symplectic derived stack $W= X_{01} \times X_{12} \times \cdots \times X_{(m-1)m} \times X_{m0}$ with the product $(n-1)$-symplectic form $\omega_{W}$. We have the following
\begin{itemize}
\item[\textbf{(a)}] The canonical morphism
\[\phi:X_{01...m}= X_{0} \times_{S} X_{1} \times_{S} \cdots \times_{S} X_{m} \to W
\] 
has a Lagrangian structure
\item[\textbf{(b)}] The Lagrangian from (a) can be uniquely characterized as follows: any Lagrangian $\psi:N \to W$ satisfying conditions (1) and (2)  below is Lagrangeomorphic to $X_{01...m}$.
\begin{enumerate}
\item As a derived stack, $N$ is a homotopy limit of the following diagram 
\begin{equation}\label{eqn:Xij}
\xymatrix{& X_{01} \ar[dl]\ar[dr]&& X_{12}\ar[dl]\ar[dr] & &\cdots & X_{(m-1)m} \ar[dr]&& X_{m0} \ar[dl] \ar@/_0.5pc/[dllllllll]\\
X_0 \ar[drrrr] && X_1\ar[drr] &&X_2\ar[d]&\cdots && X_m \ar[dlll] \\ && && S &&&&}
\end{equation}
\item The isotropic structure on $\psi$, considered as a $2$-simplex in $\mathcal{A}^{2, cl}(N, n)$ with boundary the pullback of the loop defining $\omega_{W}$, is homotopic (relative to its boundary) to the $2$-simplex $\Theta_N:=\Theta+ \sum^{n}_{i=0} \Theta_{i}$. Here $\Theta_N$ is defined as follows: each of the isotropic structures $h_i$ in $X_i$ pulls back in two different ways to $N$, by definition of $N$ there is homotopy between these which we call $\Theta_{i}$. Note that since $h_i$ is a path in $\mathcal{A}^{2, cl}(X_i, n)$, $\Theta_i$ is a $2$-simplex in $\mathcal{A}^{2, cl}(N, n)$. Also, because $N$ is a homotopy limit, there is  a $2$-simplex $\Theta$ in $\mathcal{A}^{2, cl}(N, n)$ providing a homotopy between the $2(m+1)$ ways of pulling back $\omega$, along all the morphisms in the diagram, from $S$ to one of the $X_i$ and then to one of the $X_{ij}$ and finally to $N$. 
\end{enumerate}
\end{itemize}
\end{prop}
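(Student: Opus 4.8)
The plan is to construct, for part \textbf{(a)}, the isotropic structure on $\phi$ explicitly as the simplex $\Theta_N$ of condition (2) — this is the ``natural isotropic structure'' invoked in the proof of Theorem \ref{thm:ThreeLag} — and then to establish non-degeneracy; for part \textbf{(b)} I would reduce the uniqueness statement to Corollary \ref{cor:HIsomLag}. Write $Z := X_{01\dots m}$ and $\phi_{i,i+1}\colon Z \To X_{i,i+1}$ for the canonical projections, so $\phi = \prod \phi_{i,i+1}$. By Remark \ref{rem:FirstLoop} each $\omega_{i,i+1}$ is the loop at $0$ in $\mathcal{A}^{2,cl}(X_{i,i+1},n)$ obtained by concatenating $h_i$, the homotopy coming from the two maps to $S$, and the reverse of $h_{i+1}$; hence $\phi^{*}\omega_W$ is the cyclic sum of the pulled-back loops in $\mathcal{A}^{2,cl}(Z,n)$. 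The essential observation is that this sum telescopes: each $h_i$ pulls back to $Z$ along the two edges $X_{(i-1)i}$ and $X_{i(i+1)}$ meeting at the vertex $X_i$, these two pullbacks occur with opposite orientations and so cancel up to the homotopy $\Theta_i$, while the residual homotopies coming from the projections to $S$ assemble, using that $Z$ is a homotopy limit of (\ref{eqn:Xij}), into the coherence simplex $\Theta$. Thus $\Theta_N = \Theta + \sum_{i=0}^{m}\Theta_i$ is a nullhomotopy of $\phi^{*}\omega_W$ whose boundary is exactly the pullback of the loop defining $\omega_W$, i.e. an isotropic structure on $\phi$ satisfying condition (2).

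The real work is the non-degeneracy of $\Theta_N$, namely that $\Theta_H\colon \mbT_Z \To \mbL_{\phi}[n-2]$ is a quasi-isomorphism, and I would prove it by induction on $m$, paralleling Theorem \ref{thm:ThreeLag}. The base cases are available: for $m=1$ one has $W = X_{01}\times X_{10}\cong X_{01}^{-}\times X_{01}$ with $\phi$ the diagonal, so Proposition \ref{prop:Diag} applies, and $m=2$ is Theorem \ref{thm:ThreeLag}. For the inductive step I single out the closing edge $X_{m0}$ and use the identity of iterated homotopy fibre products
\[
X_{01\dots m} \;\cong\; X_{01\dots(m-1)} \times_{X_{(m-1)0}} X_{(m-1)m0},
\]
where $X_{(m-1)m0}$ is the triple Lagrangian of Theorem \ref{thm:ThreeLag} for $X_{m-1},X_m,X_0$ and the gluing uses $X_{0(m-1)}\cong X_{(m-1)0}^{-}$ (Proposition \ref{product}) to supply the requisite opposite. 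This exhibits $\phi_m$ as the composition, in the sense of Corollary \ref{cor:LagInt}, of the inductively Lagrangian $\phi_{m-1}$ with the triple Lagrangian over the $(n-1)$-symplectic stack $X_{(m-1)0}$; since Proposition \ref{prop:Lag2Lag} preserves non-degeneracy and the composite target is precisely $W_m = X_{01}\times\cdots\times X_{(m-1)m}\times X_{m0}$, the map $\phi_m$ is Lagrangian. The main obstacle is to check that the isotropic structure produced by this composition agrees with the explicit $\Theta_N$ above; this is a bookkeeping of concatenated paths governed by Remark \ref{rem:comp}, in which one verifies that the edge-by-edge cancellation of the $h_i$ reproduces the telescoping. (Equivalently, and avoiding the induction, one can argue directly by assembling the tangent fibre sequence $\mbT_Z \To \bigoplus_i p_i^{*}\mbT_{X_i} \To \bigoplus_{i=1}^{m} q^{*}\mbT_S$ and the cotangent sequence coming from $\phi$ into a commutative ladder whose outer vertical arrows are the quasi-isomorphisms $\Theta_{\omega_{i,i+1}}$, $\Theta_{h_i}$, $\Theta_{\omega}$, and concluding by the two-out-of-three property; this is the literal generalization of the diagram in Theorem \ref{thm:ThreeLag}, and unrolls to the same content.)

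For part \textbf{(b)}, suppose $\psi\colon N \To W$ is Lagrangian and satisfies (1) and (2). Since $Z = X_{01\dots m}$ is itself a homotopy limit of the diagram (\ref{eqn:Xij}), condition (1) supplies a canonical equivalence $\rho\colon X_{01\dots m}\To N$ of derived stacks, unique up to homotopy, commuting with the structure maps to every edge $X_{i,i+1}$; as $\phi$ and $\psi$ are the canonical limit-to-edges maps into $W = \prod X_{i,i+1}$, this gives $\psi\circ\rho \cong \phi$. Condition (2) states precisely that the isotropic structure of $\psi$, transported along $\rho$, is homotopic rel boundary to $\Theta_N$, i.e. to the isotropic structure of $\phi$ from part \textbf{(a)}; this is exactly the homotopy of isotropic structures demanded by Corollary \ref{cor:HIsomLag}, applied with the $(n-1)$-symplectic stack $W$ in the role of $S$ and $\phi,\psi$ in the roles of the two Lagrangians. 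Hence $\rho$ upgrades to a Lagrangeomorphism $X_{01\dots m}\To N$, and inverting it (Lagrangeomorphisms are invertible by Lemma \ref{lem:Isom2equiv}) yields the asserted Lagrangeomorphism. The only genuinely delicate point in this part is that both the equivalence of underlying stacks and the matching of isotropic data are forced, not merely possible, by conditions (1) and (2); the universal property of homotopy limits and Corollary \ref{cor:HIsomLag} together make this automatic.
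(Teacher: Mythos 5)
Your proposal is correct, and on part \textbf{(b)} it is essentially the paper's own argument: the paper likewise observes that $X_{01\dots m}$ is a homotopy limit of the diagram (\ref{eqn:Xij}) carrying the isotropic structure described in condition (2), and then concludes by Corollary \ref{cor:HIsomLag}. Where you genuinely diverge is part \textbf{(a)}: the paper does not prove it internally at all, but cites \cite{B}, remarking that the general case is entirely analogous to Theorem \ref{thm:ThreeLag} --- i.e.\ the intended argument is the direct one, a ladder of exact triangles with two of the three vertical arrows known to be quasi-isomorphisms. You instead give a self-contained induction on $m$, using the equivalence $X_{01\dots m}\simeq X_{01\dots(m-1)}\times_{X_{(m-1)0}}X_{(m-1)m0}$ (which does hold: the closing-edge path together with its coherence cell is contractible data once the linear chain is fixed) and composing the inductively Lagrangian $\phi_{m-1}$ with the triple Lagrangian of Theorem \ref{thm:ThreeLag} via Corollary \ref{cor:LagInt}. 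This buys automatic non-degeneracy, since Proposition \ref{prop:Lag2Lag} outputs genuine Lagrangian structures, at the price of the bookkeeping needed to match the composed isotropic structure with the explicit simplex $\Theta_N$ --- a verification you correctly flag, and which is needed anyway because condition (2) of part (b) refers to that specific structure; it is the same sort of check the paper elsewhere calls ``long but straightforward.'' Your parenthetical non-inductive ladder is, in spirit, the argument of \cite{B} that the paper defers to.

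Two minor corrections. First, the identification $X_{0(m-1)}\simeq X_{(m-1)0}^{-}$ needed to read the triple intersection as a correspondence out of $X_{(m-1)0}$ is not Proposition \ref{product} (which reverses the ambient form globally and takes products); it is the swap symplectomorphism, visible from Remark \ref{rem:FirstLoop} and Corollary \ref{cor:Const}, since exchanging the two factors reverses the defining loop. Second, your sketched ladder is not the literal generalization of the diagram in Theorem \ref{thm:ThreeLag}: there the bottom row involves $\mbL_{X_{01}}\boxplus\mbL_{X_{12}}$ and $\mbL_{f_1}$, whereas your row with the terms $\mbL_{f_i}$ and copies of $\mbL_{S}$ would require a separate proof of exactness, which does not come for free; as written the vertical arrows you list (in particular $\Theta_{\omega_{i,i+1}}$ on the left, where $\Theta_H\colon\mbT_Z\to\mbL_\phi[n-2]$ should stand) do not quite parse. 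Since your committed route is the induction, neither point affects the correctness of the proposal.
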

\begin{proof}  Part (a) was the main theorem in \cite{B} and this general case is entirely analogous to the special case discussed in Theorem \ref{thm:ThreeLag}. To prove (b) one must first observe that $X_{01...m}$ is a homotopy limit of the diagram (\ref{eqn:Xij}) and the isotropic structure on $\phi$ certainly satisfies these requirements as that is how it was constructed in \cite{B}.  The existence of the required Lagrangeomorphism now follows from Corollary \ref{cor:HIsomLag}.
\end{proof}

\begin{cor}\label{cor:NotationX}
Let $X_0,X_1,X_2,X_3$ be Lagrangians in $S$. Then we have the following Lagrangian correspondences
\begin{align*}
X_{012}\times \Delta_{X_{23}} \To (X_{01}\times X_{12}\times X_{23})^- \times (X_{02}\times X_{23})\\
\Delta_{X_{01}}\times X_{123}  \To (X_{01}\times X_{12}\times X_{23})^- \times (X_{01}\times X_{13}).
\end{align*}
The following two Lagrangian correspondences (obtained by composition) are Lagrangeomorphic 
\begin{equation}\label{eqn:ComplicatedXThing}
X_{023} \bullet (X_{012}\times \Delta_{X_{23}}) \cong X_{013} \bullet (\Delta_{X_{01}}\times X_{123})
\end{equation}
as Lagrangians in $(X_{01}\times X_{12}\times X_{23})^- \times X_{03}$.
\end{cor}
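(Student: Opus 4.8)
The plan is to reduce everything to the uniqueness statement in Proposition~\ref{prop:Characterization}(b). Both sides of (\ref{eqn:ComplicatedXThing}) are Lagrangians in the single $(n-1)$-symplectic stack $W=(X_{01}\times X_{12}\times X_{23})^-\times X_{03}$, so I would first identify $W$ with the stack $X_{01}\times X_{12}\times X_{23}\times X_{30}$ appearing in Proposition~\ref{prop:Characterization} for the cyclic sequence $X_0,X_1,X_2,X_3$. This identification is up to the sign conventions handled by Proposition~\ref{product} (replacing $\omega$ by $-\omega$) and the swap symplectomorphisms $X_{ij}^-\cong X_{ji}$ that come from the loop description of the form in Remark~\ref{rem:FirstLoop}. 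Proposition~\ref{prop:Characterization}(a) then supplies a distinguished Lagrangian $\phi\colon X_{0123}=X_0\times_S X_1\times_S X_2\times_S X_3\to W$, and part (b) asserts that any Lagrangian into $W$ satisfying conditions (1) and (2) is Lagrangeomorphic to it. Thus it suffices to check that both composites satisfy (1) and (2); the claim then follows from transitivity of Lagrangeomorphism (Lemma~\ref{lem:Isom2equiv}).

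Before that I would record the first assertion of the corollary, namely that $X_{012}\times\Delta_{X_{23}}$ and $\Delta_{X_{01}}\times X_{123}$ are genuine Lagrangian correspondences with the stated sources and targets. This only assembles facts already available: Theorem~\ref{thm:ThreeLag} makes $X_{012}$ and $X_{123}$ Lagrangian, Corollary~\ref{cor:diag} makes the diagonals $\Delta_{X_{ij}}$ Lagrangian, and Proposition~\ref{product} handles the external products. Corollary~\ref{cor:LagInt} then forms the composites $X_{023}\bullet(X_{012}\times\Delta_{X_{23}})$ and $X_{013}\bullet(\Delta_{X_{01}}\times X_{123})$ as Lagrangians in $W$.

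For condition (1) I would compute the two underlying derived stacks. Using the composition formula of Corollary~\ref{cor:LagInt} together with repeated use of the universal property of homotopy fibre products — exactly the manipulations carried out in the proofs of Corollary~\ref{cor:Comp} and Proposition~\ref{prop:6} — both $X_{023}\bullet(X_{012}\times\Delta_{X_{23}})$ and $X_{013}\bullet(\Delta_{X_{01}}\times X_{123})$ collapse to $X_{0123}$, with the diagonal factors $\Delta_{X_{23}}$ and $\Delta_{X_{01}}$ supplying precisely the identifications that force the two bracketings onto the same fibre product. One then checks directly that $X_{0123}$ is a homotopy limit of diagram (\ref{eqn:Xij}) for $m=3$, which is condition (1).

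The real work is condition (2), and this is where I expect the main obstacle. Each composite carries an isotropic structure built, via Proposition~\ref{prop:Lag2Lag} and the explicit concatenation of Remark~\ref{rem:comp}, by splicing together the pulled-back structures on $X_{012}$ (resp.\ $X_{123}$), on $X_{023}$ (resp.\ $X_{013}$) and on the relevant diagonal, along with the homotopies forced by the commuting squares. I would unwind each concatenation in the path space $\mathcal{P}_0(\mathcal{A}^{2,cl}(X_{0123},n))$ and show it is homotopic, relative to its boundary, to the canonical $2$-simplex $\Theta_N=\Theta+\sum_{i=0}^{3}\Theta_i$ of Proposition~\ref{prop:Characterization}(2), where the $\Theta_i$ record the two pullbacks of each $h_i$ and $\Theta$ comes from the homotopy-limit structure. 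Since both concatenations are assembled from exactly these ingredients, differing only in the order dictated by the two bracketings, the verification is a homotopy-coherence bookkeeping argument that reorganises the spliced paths and faces into the symmetric form $\Theta_N$. This diagram chase, tracking every pullback of the $h_i$ and of $\omega$ across the maps of diagram (\ref{eqn:Xij}), is the technical crux; once completed, Proposition~\ref{prop:Characterization}(b) identifies both composites with $\phi$ up to Lagrangeomorphism, and (\ref{eqn:ComplicatedXThing}) follows.
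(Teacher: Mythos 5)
Your proposal is correct and follows essentially the same route as the paper: the paper's proof likewise observes that both composites are equivalent to $X_{0123}$ as derived stacks by general properties of fiber products, checks (without spelling out the ``long but straightforward'' homotopy bookkeeping you also defer) that both isotropic structures are homotopic to the canonical one of Proposition~\ref{prop:Characterization}(2), and then invokes the uniqueness statement of Proposition~\ref{prop:Characterization}(b). Your extra care about identifying $W$ with the cyclic product (the $X_{03}$ versus $X_{30}$ swap and sign conventions) is a point the paper passes over silently but does not change the argument.
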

\begin{proof} 
To prove this we apply Proposition \ref{prop:Characterization} for $m=4$. It follows from general properties of fiber products that both sides of (\ref{eqn:ComplicatedXThing}) are equivalent to $X_{0123}$ as derived stacks. A long but straightforward check then shows that the Lagrangian structures are homotopic to the one described in Proposition \ref{prop:Characterization}. Therefore we can apply Proposition \ref{prop:Characterization} and prove the claim.
\end{proof}

We now give a characterization of the Lagrangian which appears in the proof of Proposition \ref{prop:6}. Recall the situation, we have Lagrangians $X_0,X_1, X_2$ in $S$ and Lagrangians $M_0, M_1$ in $X_{01}$ and $N_0, N_1$ in $X_{12}$. We denote by $P_i=M_i\times_{X_1}N_i$ the Lagrangians in $X_{02}$ obtained by composition of relative Lagrangian correspondences. In the proof of Proposition \ref{prop:6} we constructed a Lagrangian:
\begin{equation}\label{eqn:P01Lag}
P_{0} \times_{X_{012}} P_{1} \to
(M_{01} \times N_{01})^- \times P_{01}
\end{equation}
We will show this is unique in the appropriate sense. Consider the diagram:
\begin{equation}\label{eqn:P01Dag}
\xymatrix{ & M_{01}\ar[dl]\ar[d] & P_{01} \ar[dl] \ar[dll] \ar[dr] \ar[drr]& N_{01} \ar[dr]\ar[d]  & \\
M_0\ar[dr] \ar[drr]& M_1 \ar[d] \ar[dr]&&  N_0 \ar[d] \ar[dl]& N_1\ar[dl] \ar[dll] \\
&X_0 \ar[dr]& X_1 \ar[d] & X_2 \ar[dl] \\
&& S &&
}
\end{equation}
Let $K$ be a homotopy limit of the above diagram. The universal property of homotopy limit gives a map 
\begin{equation}\label{eqn:KLag}
K \to
(M_{01} \times N_{01})^- \times P_{01}
\end{equation}
We now construct directly an isotropic structure on the morphism (\ref{eqn:KLag}). Recall that the $X_i$ come with isotropic structures $h_i$, the $M_i$ with isotropic structures $h_{M_i}$ and the $N_i$ with isotropic structures $h_{N_i}$. Let $\Theta_{M_0}$ be the $2$-simplex in $\mathcal{P}_{0}(\mathcal{A}^{2,cl}(K,n))$ giving the homotopy between the two pullbacks of $h_{M_0}$ along $K \to P_{01} \to M_0$ and $K \to M_{01} \to M_0$ and similarly for $\Theta_{N_0}$, $\Theta_{M_1}$ and $\Theta_{N_1}$. Additionally denote by $\Theta_{i}$ the homotopy between  the four different pullbacks of $h_i$ to $K$.  

In the space $\mathcal{P}_{0}(\mathcal{A}^{2,cl}(K,n))$ (and suppressing pullbacks) we get the diagram
\begin{equation}\label{eqn:Po1Fig}
\resizebox{!}{.5cm}{
\xymatrix{&h_0 \ar@{}[ddr]^(.6){\Theta_{M_0}} \ar@/_3pc/[dddddd]|-*=0@{>} \ar@{-}[dd]|-*=0@{>}\ar@{-}[rr]|-*=0@{>}^{h_{M_0}}&&h_1 \ar@{-}[r]|-*=0@{>}\ar@{-}[dd]|-*=0@{>}& h_1  \ar@{}[ddr]^(.6){\Theta_{N_0}}\ar@{-}[dd]|-*=0@{>} \ar@{-}[rr]|-*=0@{>}^{ h_{N_0}}&&h_2 \ar@/^3pc/[dddddd]|-*=0@{>} \ar@{-}[dd]|-*=0@{>}\\
&&&&&&&&&&
\\
&h_0 \ar@{}[dl]^(.7){\Theta_{0}}  \ar@{-}[dd]|-*=0@{>}\ar@{-}[rr]|-*=0@{>}_{h_{M_0}}&&h_1 \ar@{}[dr]_(.7){\Theta_{1}} \ar@{-}[dd]|-*=0@{>}& h_1\ar@{-}[dd]|-*=0@{>} \ar@{-}[rr]|-*=0@{>}_{h_{N_0}}&&h_2  \ar@{}[dr]_(.7){\Theta_{2}}  \ar@{-}[dd]|-*=0@{>}
\\
&&&&&&&&&&
\\
&h_0  \ar@{}[ddr]^(.6){\Theta_{M_1}} \ar@{-}[dd]|-*=0@{>}\ar@{-}[rr]|-*=0@{>}_{h_{M_1}}&&h_1\ar@{-}[dd]|-*=0@{>}& h_1  \ar@{}[ddr]^(.6){\Theta_{N_1}} \ar@{-}[dd]|-*=0@{>}\ar@{-}[rr]|-*=0@{>}_{ h_{N_1}}&&h_2 \ar@{-}[dd]|-*=0@{>}
\\
&&&&&&&&&&
\\
&h_0 \ar@{-}[rr]|-*=0@{>}_{h_{M_1}}&&h_1 \ar@{-}[r]|-*=0@{>}& h_1 \ar@{-}[rr]|-*=0@{>}_{h_{N_1}}&&h_2
}}
\end{equation}

Note that, by definition, the boundary of the two unlabelled squares are (the pullback of) the $(n-2)$-shifted symplectic structures on $M_{01}$ and $N_{01}$, respectively. Also (the pullback of) the $(n-2)$-shifted symplectic structure on $P_{01}$ is the outside boundary of the diagram. The sum $\Theta_{0} + \Theta_{1} + \Theta_{2} + \Theta_{M_0} + \Theta_{N_0} + \Theta_{M_1}+ \Theta_{N_1}$ therefore gives a homotopy between $\omega_{M_{01}} + \omega_{N_{01}}$ and $\omega_{P_{01}}$ (suppressing pullbacks to $K$), that is an isotropic structure on (\ref{eqn:KLag}).

\begin{lem}\label{lem:LagUniqP01} 
Up to Lagrangeomorphism, there is a unique Lagrangian $K$ whose underlying derived stack is the homotopy limit of (\ref{eqn:P01Dag}) and whose isotropic structure is homotopic to the one explained above. Furthermore, the Lagrangian $P_{0} \times_{X_{012}} P_{1} \to
(M_{01} \times N_{01})^- \times P_{01}$ which appears in the proof of Proposition \ref{prop:6} has these properties.
\end{lem}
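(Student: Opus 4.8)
The plan is to argue exactly as in Proposition~\ref{prop:Characterization}(b), characterizing the Lagrangian by its underlying derived stack together with the homotopy class of its isotropic structure, and then to exhibit the Lagrangian of~(\ref{eqn:P01Lag}) as an instance. For uniqueness, suppose $K$ and $K'$ both satisfy the two stated conditions. Since the homotopy limit of~(\ref{eqn:P01Dag}) is unique up to a canonical equivalence, there is an equivalence $\rho: K \To K'$ commuting up to homotopy with every projection in~(\ref{eqn:P01Dag}), in particular with the projections to $M_{01}$, $N_{01}$ and $P_{01}$; hence $\rho$ is compatible with the two structure maps to $(M_{01}\times N_{01})^- \times P_{01}$. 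By hypothesis both isotropic structures are homotopic, relative to their common boundary, to the reference structure $\Theta_0 + \Theta_1 + \Theta_2 + \Theta_{M_0} + \Theta_{N_0} + \Theta_{M_1} + \Theta_{N_1}$ of~(\ref{eqn:Po1Fig}), and therefore to one another. Corollary~\ref{cor:HIsomLag} then upgrades $\rho$ to a Lagrangeomorphism.

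It remains to check that $\varphi: P_0 \times_{X_{012}} P_1 \To (M_{01}\times N_{01})^- \times P_{01}$ from the proof of Proposition~\ref{prop:6} meets both conditions; this simultaneously supplies existence and proves the final assertion. For condition~(1), the chain of equivalences of derived stacks computed in that proof rearranges the iterated homotopy fiber products and identifies $P_0\times_{X_{012}}P_1$ with the homotopy limit of~(\ref{eqn:P01Dag}): both are the simultaneous intersection of $M_0,M_1,N_0,N_1$ over the $X_i$ and $S$, and the identification is compatible with the projections to $M_{01}$, $N_{01}$ and $P_{01}$ by the universal property of homotopy fiber products, just as in Corollary~\ref{cor:NotationX}.

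For condition~(2), I would unwind the construction of the isotropic structure on $\varphi$. In Proposition~\ref{prop:6} it is produced by applying Theorem~\ref{thm:ThreeLag} to the three Lagrangians $M_0\times_{X_0}M_1$, $N_0\times_{X_2}N_1$ and $\Delta_{X_1}$ in $X_1\times_S X_1$, so by the recipe of Proposition~\ref{prop:Characterization}(b)(2) it is, by construction, a sum of the elementary $2$-simplex $\Theta$ coming from the defining homotopy-limit diagram together with the pulled-back isotropic structures of those three Lagrangians. Pulling these back to $K$ and expanding the isotropic structures of $M_0\times_{X_0}M_1$ and $N_0\times_{X_2}N_1$ by means of Remark~\ref{rem:comp}, which writes the isotropic structure of a composition as the concatenation of the pieces $\Theta_{M_i}$, $\Theta_{N_i}$ and $\Theta_i$, one recovers term by term the decomposition displayed in~(\ref{eqn:Po1Fig}); one also checks that the boundary $2$-simplices are the pullbacks of the loops defining $\omega_{M_{01}}$, $\omega_{N_{01}}$ and $\omega_{P_{01}}$. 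The main obstacle is precisely this last matching: organizing the many concatenations of paths and $2$-simplices in $\mathcal{P}_0(\mathcal{A}^{2,cl}(K,n))$ so that the structure transported from Theorem~\ref{thm:ThreeLag} is identified, by a homotopy fixing the boundary, with the explicitly assembled one. As in the proof of Corollary~\ref{cor:NotationX}, this is a long but essentially mechanical bookkeeping of pullbacks along the projections of~(\ref{eqn:P01Dag}), the conceptual content residing entirely in Proposition~\ref{prop:Characterization} and Remark~\ref{rem:comp}.
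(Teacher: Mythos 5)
Your proposal is correct and takes essentially the same route as the paper, whose proof is simply a terse version of yours: uniqueness directly from the definition of Lagrangeomorphism (your Corollary \ref{cor:HIsomLag} step applied to the canonical equivalence of homotopy limits), and existence by tracing back through the construction in Proposition \ref{prop:6} to see that its Lagrangian structure is homotopic to the one assembled in (\ref{eqn:Po1Fig}). The paper also records explicitly, as your existence check does implicitly, that non-degeneracy of the explicit isotropic structure is inherited from Proposition \ref{prop:6}.
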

\begin{proof} The uniqueness follows immediately from the definition of Lagrangeomorphism. Tracing back through the construction in Proposition \ref{prop:6} we can see that the Lagrangian structure defined there is homotopic to the one just described above. As a result of this and Proposition \ref{prop:6} the isotropic structure on $K$ is in fact non-degenerate and so $K$ is Lagrangian.
\end{proof}

Consider now the same situation as described above but with extra Lagrangians $M_{2}$ in $X_{01}$ and $N_{2}$ in $X_{12}$. 
We will now prove one more uniqueness result for composition of the Lagrangians obtained in Lemma \ref{lem:LagUniqP01}. Consider the diagram

\begin{equation}\label{eqn:P02Dag}
\xymatrix{ & M_{01} \ar[d] \ar[dl] & M_{12}\ar[dl]\ar[d] & P_{02} \ar[dl] \ar[dlll] \ar[dr] \ar[drrr]&N_{01} \ar[d] \ar[dr]& N_{12} \ar[dr]\ar[d]  & \\
M_0 \ar[drr] \ar[drrr] & M_1\ar[dr] \ar[drr]& M_2 \ar[d] \ar[dr]&&  N_0 \ar[d] \ar[dl]& N_1\ar[dl] \ar[dll] & N_2  \ar[dll] \ar[dlll] \\
&&X_0 \ar[dr]& X_1 \ar[d] & X_2 \ar[dl]& \\
&& &S &&&
}
\end{equation}
A homotopy limit $K$ of this diagram has a natural morphism 
\begin{equation}\label{eqn:HoLimKP02}
K \to (M_{01} \times M_{12} \times N_{01} \times N_{12})^- \times P_{02}.
\end{equation}
We now construct an isotropic structure on this morphism. This is very similar to the discussion before Lemma \ref{lem:LagUniqP01}.  In the space $\mathcal{P}_{0}(\mathcal{A}^{2,cl}(K,n))$ we have $2$-simplices $\Theta_{M_i}$ for $i=0,1,2$, $2$-simplices $\Theta_{N_i}$ for $i=0,1,2$ and also $2$-simplices $\Theta_i$ for $i=0,1,2$. A diagram similar to (\ref{eqn:Po1Fig}) and similar considerations show that $\sum_{i=0}^{2} (\Theta_{M_{i}}+ \Theta_{N_{i}}+ \Theta_{{i}})$ determines an isotropic structure on the morphism (\ref{eqn:HoLimKP02}).

\begin{lem}\label{lem:LagUniqP02} 
Up to Lagrangeomorphism, there is a unique Lagrangian $K$ whose underlying stack is the homotopy limit of (\ref{eqn:P02Dag}) and whose isotropic structure is homotopic to the one explained above. 
One such Lagrangian $K$ can be constructed as the composition
\[
(P_0 \times_{X_{012}}P_2)\bullet(M_{012}\times N_{012})
\]
\end{lem}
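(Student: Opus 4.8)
The plan is to follow the proof of Lemma~\ref{lem:LagUniqP01} closely, treating uniqueness and the explicit construction separately. For uniqueness I would argue exactly as there: if two Lagrangians both have underlying stack the homotopy limit of (\ref{eqn:P02Dag}) and isotropic structures homotopic to the prescribed $\sum_{i=0}^{2}(\Theta_{M_i}+\Theta_{N_i}+\Theta_i)$, then the canonical equivalence of homotopy limits commutes with the structure maps to $(M_{01}\times M_{12}\times N_{01}\times N_{12})^-\times P_{02}$, and Corollary~\ref{cor:HIsomLag} promotes it to a Lagrangeomorphism once we supply a homotopy between the two isotropic structures. Since such a homotopy exists by hypothesis, uniqueness is immediate.

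For existence, the key point is that the candidate is manifestly a composition of Lagrangian correspondences, so its non-degeneracy comes for free. Indeed $M_{012}$ is Lagrangian in $(M_{01}\times M_{12})^-\times M_{02}$ and $N_{012}$ in $(N_{01}\times N_{12})^-\times N_{02}$ by Proposition~\ref{prop:Characterization}, whence $M_{012}\times N_{012}$ is Lagrangian by Proposition~\ref{product}; and $P_0\times_{X_{012}}P_2$ is Lagrangian in $(M_{02}\times N_{02})^-\times P_{02}$ by Lemma~\ref{lem:LagUniqP01}. Their composition under $\bullet$, over the intermediate factor $M_{02}\times N_{02}$ (where $M_{02}=M_0\times_{X_{01}}M_2$ and $N_{02}=N_0\times_{X_{12}}N_2$), is therefore again Lagrangian by Corollary~\ref{cor:LagInt}. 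It thus remains only to identify its underlying stack with $K$ and its isotropic structure with the prescribed one.

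The identification of the underlying stack is a computation with homotopy fiber products in the style of the chain of equivalences that ends the proof of Proposition~\ref{prop:6}. Unwinding $\bullet$ as $(M_{012}\times N_{012})\times_{M_{02}\times N_{02}}(P_0\times_{X_{012}}P_2)$ and expanding $M_{012}=M_0\times_{X_{01}}M_1\times_{X_{01}}M_2$, $N_{012}=N_0\times_{X_{12}}N_1\times_{X_{12}}N_2$ and $P_i=M_i\times_{X_1}N_i$, I would apply the universal property of homotopy limits repeatedly to see that a point of this fiber product is precisely a compatible system of points in $M_0,M_1,M_2,N_0,N_1,N_2$ lying over matching points of $X_0,X_1,X_2$ in $S$, i.e.\ a point of the homotopy limit of (\ref{eqn:P02Dag}). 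For the isotropic structure I would unwind the construction: the data on $M_{012}$ and $N_{012}$ are those of Proposition~\ref{prop:Characterization}, assembled from the $2$-simplices recording the two pullbacks of each $h_{M_i}$ and $h_{N_i}$ together with the pullbacks of the $h_i$; the data on $P_0\times_{X_{012}}P_2$ is the one described just before Lemma~\ref{lem:LagUniqP01}; and $\bullet$ concatenates them via Remark~\ref{rem:comp}, inserting the middle homotopies that pull the forms back in the several ways the homotopy limit permits. Patching this grid, as in the diagram~(\ref{eqn:Po1Fig}), and cancelling the inserted pieces against the concatenation data should yield exactly $\sum_{i=0}^{2}(\Theta_{M_i}+\Theta_{N_i}+\Theta_i)$.

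The main obstacle is this last matching. The work lies in tracking the nine $2$-simplices $\Theta_{M_i},\Theta_{N_i},\Theta_i$ through the nested fiber products and the $\bullet$-concatenation, and in checking that the homotopies inserted by the composition and by the intermediate equivalences of homotopy limits cancel correctly, rather than merely agreeing on their endpoints. As in Proposition~\ref{prop:6}, the delicate point is that these intermediate equivalences of derived stacks are symplectomorphisms, so that they transport one isotropic structure to the other; wherever possible I would sidestep an explicit homotopy computation by invoking the uniqueness clause of Proposition~\ref{prop:Characterization} applied to the large diagram.
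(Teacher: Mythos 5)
Your proposal is correct and takes essentially the paper's intended route: the paper states this lemma without a written proof, expecting exactly the argument of Lemma \ref{lem:LagUniqP01} transposed to this setting --- uniqueness immediately from the definition of Lagrangeomorphism via Corollary \ref{cor:HIsomLag}, and existence with automatic non-degeneracy from the displayed composition $(P_0 \times_{X_{012}}P_2)\bullet(M_{012}\times N_{012})$ of Lagrangian correspondences (Proposition \ref{prop:Characterization}, Proposition \ref{product}, Corollary \ref{cor:LagInt}), followed by the identification of the underlying stack and the matching of the isotropic structure with $\sum_{i=0}^{2}(\Theta_{M_i}+\Theta_{N_i}+\Theta_i)$. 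Your unwinding of the fiber products and your homotopy-matching remarks are precisely the bookkeeping the paper leaves to the reader, so nothing is missing.
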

 
We end this section by describing the behaviour of the operation $C_f$ defined in Proposition \ref{prop:Lag2Lag} under Lagrangeomorphism and composition of Lagrangian correspondences. 

\begin{prop}\label{prop:cXncYequiv}
Let $S_0$, $S_1$ and $S_2$ be $n$-symplectic derived stacks and let $f:X\longrightarrow S_0^{-} \times S_1$, $g:Y\longrightarrow S_0^{-} \times S_1$ and $h:Z \longrightarrow S_1^- \times S_2$ be Lagrangian correspondences and consider Lagrangians $e:N \longrightarrow S_0$ and $e':N'\longrightarrow S_0$. We have the following:
\begin{itemize}
\item[\textbf{(a)}] If $X$ is Lagrangeomorphic to $Y$ and $N$ is Lagrangeomorphic to $N'$ then $C_{X}(N)$ and $C_Y(N')$ are Lagrangeomorphic in $S_1$. 
\item[\textbf{(b)}] We have a Lagrangeomorphism
\[C_{Z}(C_{X}(N)) \cong C_{Z \bullet X}(N),\]
where $Z \bullet X$ is the Lagrangian constructed in Corollary \ref{cor:LagInt}.
\end{itemize}
\end{prop}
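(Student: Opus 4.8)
The plan is to prove both parts by exhibiting the underlying equivalence of derived stacks together with a homotopy of isotropic structures and then invoking Corollary \ref{cor:HIsomLag}, which reduces the construction of a Lagrangeomorphism to exactly this data.

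For \textbf{(a)}, write $f=f_0\times f_1$ and $g=g_0\times g_1$, and let $\phi:X\To Y$, $\psi:N\To N'$ be the underlying equivalences of the two given Lagrangeomorphisms, together with the homotopies $g\circ\phi\cong f$ and $e'\circ\psi\cong e$ and the accompanying homotopies of isotropic structures $h_f\cong\phi^{*}h_g$ and $e\cong\psi^{*}e'$. Since the homotopies $g_0\circ\phi\cong f_0$ and $e'\circ\psi\cong e$ are compatible over $S_0$, they induce an equivalence $\psi\times_{S_0}\phi:N\times_{S_0}X\To N'\times_{S_0}Y$, i.e. $C_X(N)\To C_Y(N')$ on underlying stacks; the homotopy $g_1\circ\phi\cong f_1$ shows it commutes (up to homotopy) with the projections to $S_1$. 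It remains to produce a homotopy between the isotropic structure on $C_X(N)$ and the pullback along $\psi\times_{S_0}\phi$ of the one on $C_Y(N')$. Recalling from Proposition \ref{prop:Lag2Lag} that the former is the concatenation $\pi_N^{*}e\bullet l\bullet\pi_X^{*}h_f$ (and likewise for the latter), the two component homotopies $e\cong\psi^{*}e'$ and $h_f\cong\phi^{*}h_g$, together with the compatibility of the middle paths coming from the two Cartesian squares over $S_0$, assemble into the required homotopy. Corollary \ref{cor:HIsomLag} then yields the Lagrangeomorphism.

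For \textbf{(b)}, the underlying equivalence is the canonical associativity equivalence
\[
(N\times_{S_0}X)\times_{S_1}Z\;\cong\;N\times_{S_0}(X\times_{S_1}Z)
\]
coming from the universal property of homotopy fiber products; both sides map compatibly to $S_2$, so by Corollary \ref{cor:HIsomLag} it again suffices to match isotropic structures. Unwinding the definitions, the isotropic structure on $C_Z(C_X(N))$ is obtained by applying Proposition \ref{prop:Lag2Lag} twice, giving (suppressing pullbacks) the concatenation
\[
\pi_N^{*}e\;\bullet\;l_1\;\bullet\;\pi_X^{*}h_f\;\bullet\;l_2\;\bullet\;\pi_Z^{*}h_h,
\]
where $l_1,l_2$ are the path-homotopies produced by the Cartesian squares over $S_0$ and $S_1$ respectively. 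On the other side, the isotropic structure on $Z\bullet X$ is, by construction in Corollary \ref{cor:LagInt} (following the pattern of Remark \ref{rem:FirstLoop}), the concatenation $\pi_X^{*}h_f\bullet m\bullet\pi_Z^{*}h_h$ with $m$ the homotopy coming from $S_1$; applying Proposition \ref{prop:Lag2Lag} once more for the Lagrangian $e$ then gives for $C_{Z\bullet X}(N)$ the concatenation
\[
\pi_N^{*}e\;\bullet\;l_3\;\bullet\;\pi_X^{*}h_f\;\bullet\;m\;\bullet\;\pi_Z^{*}h_h.
\]
Both expressions are concatenations of the same three basic paths $e$, $h_f$, $h_h$, separated by homotopies ($l_1,l_2$ versus $l_3,m$) that are all induced by the commutativity of the single large fiber-product diagram for $N\times_{S_0}X\times_{S_1}Z$. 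Identifying these intermediate homotopies under the associativity equivalence produces a homotopy between the two isotropic structures, and Corollary \ref{cor:HIsomLag} finishes the proof.

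I expect the main obstacle to be the bookkeeping in part \textbf{(b)}: one must carefully track the several concatenations of paths in the path spaces $\mathcal{P}_0(\mathcal{A}^{2,cl}(-,n))$ and verify that the square-filling homotopies $l_1,l_2$ really do agree, after transport along the associativity equivalence, with $l_3,m$. This is precisely the kind of diagram chase carried out in Remarks \ref{rem:FirstLoop} and \ref{rem:comp}, and no genuinely new phenomenon arises; non-degeneracy of the resulting isotropic structures is automatic, since being Lagrangeomorphic to honest Lagrangians they are obtained from non-degenerate ones.
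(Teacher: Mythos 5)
Your proposal is correct, but for part \textbf{(b)} it takes a genuinely different route from the paper. The paper's proof of (b) is a one-line specialization of Proposition \ref{prop:Associativity}: take $S=\bullet_{n+1}$, $X_0=\bullet_n$, $X_1=S_0$, $X_2=S_1$, $X_3=S_2$ and $N_1=N$, $N_2=X$, $N_3=Z$, using the observation of Example \ref{point} that Lagrangians in $\bullet_{n+1}$ are exactly $n$-symplectic stacks, so that $N$, $X$, $Z$ become ``relative'' Lagrangian correspondences and $C_Z(C_X(N))$, $C_{Z\bullet X}(N)$ become the two bracketings $(N_1\times_{X_1}N_2)\times_{X_2}N_3$ and $N_1\times_{X_1}(N_2\times_{X_2}N_3)$. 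Your direct argument --- unwinding both isotropic structures into concatenations $\pi_N^{*}e\bullet l_1\bullet\pi_X^{*}h_f\bullet l_2\bullet\pi_Z^{*}h_h$ versus $\pi_N^{*}e\bullet l_3\bullet\pi_X^{*}h_f\bullet m\bullet\pi_Z^{*}h_h$ and matching the square-filling homotopies along the associativity equivalence --- essentially reproves that instance of Proposition \ref{prop:Associativity} by hand; this is precisely the ladder-of-squares chase in that proposition's proof, and the paper's Remark \ref{rem:OtherWay} explicitly records that the two statements are interderivable in either order. What each approach buys: the paper's reduction is shorter and localizes all path bookkeeping in one place, while yours is self-contained and makes visible where every intermediate homotopy comes from, which is what one must do anyway if associativity has not already been established. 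For part \textbf{(a)}, which the paper leaves to the reader, your construction (the equivalence $\psi\times_{S_0}\phi$ plus concatenated homotopies, then Corollary \ref{cor:HIsomLag}) is the intended one; just note two small points of hygiene: you should verify the endpoint-compatibility condition in Corollary \ref{cor:HIsomLag} explicitly rather than only implicitly, and your closing remark about non-degeneracy is slightly misdirected --- the isotropic structures on $C_X(N)$, $C_Y(N')$, etc.\ are already Lagrangian by Proposition \ref{prop:Lag2Lag} and Corollary \ref{cor:LagInt}; the non-degeneracy that actually needs checking is that of the isotropic structure on the graph of the equivalence, and this is exactly what Corollary \ref{cor:HIsomLag} (via Proposition \ref{prop:5}, with $u=\id$ and $v$ the equivalence, both \'etale) supplies for free.
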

\begin{proof} 
The first part of the statement is easy and left to the reader. The second part follows from Proposition \ref{prop:Associativity} by taking $S= \bullet_{n+1}$, $X_{0} = \bullet_{n}$, $X_1=S_0$, $X_2=S_1$ and $X_3=S_2$, $N_1=N$, $N_2=X$ and $N_3=Z$ in that proposition.
\end{proof} 

\begin{rem}\label{rem:OtherWay}
A different but equivalent way to establish associativity, that is to prove Proposition \ref{prop:Associativity}, would be to first prove Proposition \ref{prop:cXncYequiv} and use part (b) to show that
\[
(N_1 \times_{X_1} N_2) \times_{X_2} N_3 \cong C_{X_{023} \bullet (X_{012}\times \Delta_{X_{23}})}(N_{1}\times N_{2}\times N_3)
\]
and
\[
N_1 \times_{X_1} (N_2 \times_{X_2} N_3) \cong C_{ X_{013} \bullet (\Delta_{X_{01}}\times X_{123})}(N_{1}\times N_{2}\times N_{3}).
\]
Then Corollary \ref{cor:NotationX} and part (a) of Proposition \ref{prop:cXncYequiv} imply associativity.
\end{rem}
 
\section{A $2$-category of Lagrangians}

Fix a $n$-symplectic derived stack $(S, \omega)$. In this section we will define a bicategory (or weak $2$-category) $\fkLag(S,\omega)$, whose objects are Lagrangians in $S$. 
But first we review some basics of bicategories (following \cite{S-P}) and setting up some notation.

\subsection{Basics of $2$-categories}\

We start by defining a weak $2$-category, also called a bicategory. From now on we adopt the name bicategory for simplicity.

\begin{defn}
A bicategory (or weak $2$-category) $C$ consists of the following data:
\begin{enumerate}
\item a collection of objects $C_0$; 
\item for each two objects $X,Y$ a category $C(X,Y)$;
\item for any three objects $X_0,X_1,X_2$, composition functors 
 \begin{equation*}
 \mu_{XYZ}: C(Y,Z) \times C(X,Y) \longrightarrow C(X,Z);  
 \end{equation*}
\item for each object $X$, an object $I_{X} \in C(X,X)$;
\item for any four objects $W,X,Y,Z$, natural isomorphisms 
\begin{align*}
a : & \ \mu_{WXZ} \circ (\mu_{XYZ} \times \id_{C(W,X)}) \to \mu_{WYZ} \circ(\id_{C(Y,Z)} \times \mu_{WXY}) & \textrm{(associator)} \\
l : & \ \mu_{XYY}\circ (I_{Y}\times \id_{C(X,Y)}) \to \id_{C(X,Y)} & \textrm{(left unitor)} \\
r : & \ \mu_{XXY}\circ (\id_{C(X,Y)}\times I_{X}) \to \text{id}_{C(X,Y)} & \textrm{(right unitor)}
\end{align*}
and therefore $2$-isomorphisms $a_{hgf}: (h \circ g)\circ f \to h \circ(g \circ f)$, $l_f : I_Y \circ f \to f$ and $r_f:f \circ I_X \to f$.
\end{enumerate}
These are required to satisfy the pentagon and triangle axioms which we will write explicitly in Lemmas \ref{lem:weakAssoc1mor} and \ref{lem:UnitorsSatisfy}.
\end{defn} 

Given objects $X,Y$ in $C$, the objects of $C(X,Y)$ are called $1$-morphisms of $C$, this collection is denoted $C_1(X,Y)$. For each object $X$ in $C$ we call $I_{X}$ the identity $1$-morphism. The morphisms of $C(X,Y)$ are referred to as $2$-morphisms of $C$. Given $f,g \in C_{1}(X,Y)$, we write $C_{2}(f,g)$ for the set of morphisms from $f$ to $g$ in $C(X,Y)$. For any $1$-morphism $f$ we call $1_f \in C_{2}(f,f)$ the identity $2$-morphism. We will use the notation $\mu_{XYZ}(g,f) =g \circ f$ for $f$ an object of $C(X,Y)$ and $g$ an object of $C(Y,Z)$. For $\alpha$ a morphism in  $C(X,Y)$ and $\beta$ a morphism in  $C(Y,Z)$ we write  $\mu_{XYZ}(\beta,\alpha) =\beta * \alpha$, this is called the horizontal composition of $\alpha$ and $\beta$. The composition in the categories $C(X,Y)$ is called vertical composition and for $\xi, \eta$ composable morphisms in $C(X,Y)$ we write their composition as $\eta \odot \xi$.

\begin{defn}\label{def:functor}
Let $C$ and $D$ be bicategories. A homomorphism of bicategories (or weak $2$-functor) $F:C\to D$ consists of the following data
\begin{enumerate}
\item a function $F: C_0 \To D_0$; 
\item for each two objects $X,Y$, a functor $F_{XY}: C(X,Y) \To D(F(X),F(Y))$;
\item for any three objects $X,Y,Z$, natural isomorphisms
\begin{align*}
F_{XYZ} : & \ \mu^D_{F(X)F(Y)F(Z)} \circ (F_{YZ} \times F_{XY}) \to F_{XZ} \circ \mu^C_{XYZ}, \\
F_X : & \ F_{XX}\circ I^C_X\to I^D{F(X)}. 
\end{align*}
and therefore $2$-isomorphisms $F_{g,f}: F(g)\circ F(f) \to F(g\circ f)$ and $F_X: F(I^C_X) \to I^D_{F(X)}$.
\end{enumerate}
These must satisfy the following identities
\begin{align*}
F_{g,f\circ e}\odot (1_{F(g)}*F_{f,e})\odot a^D_{F(g)F(f)F(e)} & = F(a^C_{gfe})\odot F_{g\circ f, e} \odot (F_{g,f}*1_{F(e)}), \\
F(r^C_f)\odot F_{f,I_X} & = r^D_{F(f)}\odot (1_{F(f)}*F_X), \\
F(l^C_f)\odot F_{I_Y,f} & = l^D_{F(f)}\odot (F_Y*1_{F(f)}).
\end{align*}
\end{defn}

One can also define bicategories enriched in some symmetric monoidal category $\textbf{M}$. We require that, for each pair of objects $X,Y$ in $C$, the category $C(X,Y)$ is enriched over $\textbf{M}$, the functors $\mu_{012}$ are functors of $\textbf{M}$-enriched categories and finally $a, r$ and $l$ are $\textbf{M}$-natural transformations.

In this article, we will construct bicategories enriched over two different symmetric monoidal categories (other than the category of sets). 
First we will take $\textbf{M}$ to be the category  $\textbf{gr-Vect}$ of graded vector spaces (and degree preserving homomorphisms), with the monoidal structure given by the tensor product and symmetric structure given by $a\otimes b \to (-1)^{|a||b|} b \otimes a$, where $|\cdot|$ stands for degree.

We will also consider a less common symmetric monoidal category, that we will denote by $\textbf{gr-Inv}$. Objects are sets $S$ equipped with an involution $-1:S\to S$ and a degree function $|\cdot|:S \to \mathbb{Z}$, invariant under $-1$, and whose morphism are maps that preserve both structures. The monoidal structure is defined as
\[S\otimes T = S\times T/\sim,
\]
where we take the equivalence relation $(-s,t)\sim(s,-t)$ and define the degree in the product as the sum of the degrees on each factor. The symmetric structure is defined as $(s,t)\to(-1)^{|s||t|}(t,s)$.

The most visible difference between a standard bicategory and a bicategory enriched over one of the symmetric monoidal categories just described is in the {\em compatibility between the horizontal and vertical composition} of $2$-morphisms. This is equivalent to the statement that $\mu_{XYZ}$ is a functor, and for $1$-morphisms $f_j \in C_1(X,Y)$ and $g_j \in C_1(Y,Z)$ (for $j=0,1,2$) it states
\[(\eta_{2} * \xi_{2}) \odot (\eta_{1} * \xi_{1}) = (\eta_{2} \odot \eta_{1}) * (\xi_{2} \odot \xi_{1})
\]
for any $2$-morphisms $\xi_i \in C_2(f_{i-1},f_i)$ and $\eta_i \in C_2(g_{i-1},g_i)$ for $i=1,2$. While in the enriched case it reads
\[(\eta_{2} * \xi_{2}) \odot (\eta_{1} * \xi_{1}) = (-1)^{|\eta_1||\xi_2|}(\eta_{2} \odot \eta_{1}) * (\xi_{2} \odot \xi_{1}).
\]

Similarly we can easily define homomorphism of bicategories enriched over    $\textbf{M}$. Or, if we are given a symmetric monoidal functor $\textbf{M}_1 \to \textbf{M}_2$, we can also consider homomorphisms from a $\textbf{M}_1$-enriched bicategory to a $\textbf{M}_2$-enriched bicategory.

For us the relevant examples are the following functors. First we have $\textbf{gr-Inv} \to \textbf{Sets}$ which sends $(S, -1, |-|)$ to the quotient $S/-1$ and forgets the grading. Secondly we have $\textbf{gr-Inv} \to \textbf{gr-Vect}$ which sends $(S, -1, |-|)$ to the graded vector space generated by the set $S/-1$.

Finally we will also need the definitions of a {\em symmetric monoidal bicategory} and homomorphisms of these. We refer the reader to Chapter 2.2 of \cite{S-P} for these.

\subsection{The $2$-category $\fkLag(S)$}\

We now define the objects, morphisms and compositions in what will become our bicategory of Lagrangians. We will denote the bicategory by $\fkLag(S,\omega)$, or $\fkLag$, when $(S,\omega)$ is fixed. 

\begin{defn}\label{defn:THECAT}
Let $(S, \omega)$ be an $n$-symplectic derived stack. The objects of $\fkLag(S,\omega)$ are Lagrangians in $(S, \omega)$.
Given two  Lagrangians $X_{0}$ and $X_{1}$ in $S$ the $1$-morphisms are defined to be 
\[\fkLag_1(X_0,X_1)= \mathcal{L}ag(X_0 \times_{S} X_{1}).\] 
Given two Lagrangians $N_{0},N_{1}$ in $X_0 \times_{S} X_{1}$, we define the set of $2$-morphisms between them as \[\fkLag_2(N_0,N_1)=\mathcal{L}ag(N_{0} \times_{(X_{0}\times_{S} X_{1})} N_{1})/\sim,\] 
that is, Lagrangeomorphism equivalence classes of Lagrangians.
In the definitions of $1$-morphisms and $2$-morphisms we chose a model for the homotopy fiber products and use the $(n-1)$-symplectic derived stack $X_{01}=(X_0 \times_{S} X_{1}, \omega_{01})$ and $(n-2)$-symplectic derived stack $N_{01}=(N_{0} \times_{(X_{0}\times_{S} X_{1})} N_{1},\omega_{N_{01}})$ provided by Corollary \ref{cor:Const}.

The composition of $1$-morphisms is defined by 
\begin{equation}
\fkLag_1(X_1, X_2) \times \fkLag_1(X_0, X_1) \stackrel{\circ}\longrightarrow \fkLag_1(X_0,X_2)
\end{equation}
\[(N_1, N_0) \mapsto N_{1}\circ N_{0}  
\]
where $N_{1}\circ N_{0}$ is the pair consisting of the map $N_{0} \times_{X_1} N_{1} \to X_{02}$ and the Lagrangian structure discussed in Corollary \ref{cor:Comp}. Using the notation from Definition \ref{def:bullet} $N_{1}\circ N_{0}=C_{X_{012}}(N_0\times N_1)$, where $X_{012}$ is the Lagrangian constructed in Theorem \ref{thm:ThreeLag}. Again here we choose a representative for the homotopy fiber product. 

The vertical composition of $2$-morphisms 
\begin{equation}\label{eqn:vert}
\fkLag_2(N_1, N_2) \times \fkLag_2(N_0,N_1) \stackrel{\odot}\longrightarrow \fkLag_2(N_0, N_2)
\end{equation}
is defined as
\[(U_1,U_0) \mapsto U_{1} \odot U_{0} 
\]
where $U_{1} \odot U_{0}$ is the pair consisting of the natural morphism $U_{0}\times_{N_1} U_{1} \to N_{02}$ along with the Lagrangian structure constructed in Corollary \ref{cor:Comp}, therefore  $ U_{1} \odot U_{0}=C_{N_{012}}(U_0\times U_1)$.

Denote $P_0=N_0 \circ M_0$ and $P_1=N_1 \circ M_1$, we define horizontal composition of $2$-morphisms as  
\begin{equation}\label{eqn:horiz}
\fkLag_2(N_0,N_1) \times \fkLag_2(M_0,M_1) \stackrel{*}\longrightarrow \fkLag_2(N_0 \circ M_0,N_1 \circ M_1)
\end{equation}
by 
\[(V,U) \mapsto V * U =C_{P_0 \times_{X_{012}} P_1}(U \times V)
\]
where $P_0 \times_{X_{012}} P_1$ is the Lagrangian constructed in in Proposition \ref{prop:6}. Recall that the underlying morphism of derived stacks of $V*U$ is the induced morphism $U \times_{X_1} V\to P_{0} \times_{X_{02}} P_{1}$.

For each object $X$ define the identity $1$-morphism $I_X$ to be the diagonal $\Delta_X: X \to X \times_S X$ constructed in Corollary \ref{cor:diag}.
The identity $2$-morphisms will be introduced in the proof of Lemma \ref{lem:2-morAssoc}. 
\end{defn}

We will now prove a sequence of lemmas which show that the data 
\[\{\circ, *,  \odot, \fkLag(S,\omega)_1(-,-), \fkLag(S,\omega)_2(-,-)\}\] along with the unitors and associators which we define in Definitions \ref{defn:Assoc} and \ref{defn:unitors} define a bicategory.

\begin{lem}\label{lem:2-morAssoc}
The vertical composition of $2$-morphisms defined in (\ref{eqn:vert}) is associative and has units. 
\end{lem}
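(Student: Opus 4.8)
The plan is to recognize that the vertical composition $\odot$ is nothing but the composition of relative Lagrangian correspondences from Corollary \ref{cor:Comp}, applied one categorical level down. Under the dictionary that replaces the ambient $n$-symplectic stack $S$ by the $(n-1)$-symplectic stack $X_{01}$ (which is genuinely $(n-1)$-symplectic by Corollary \ref{cor:Const}), replaces the Lagrangians $X_i\to S$ by the $1$-morphisms $N_i\in\mathcal{L}ag(X_{01})$, and replaces the composed correspondences by the $2$-morphisms $U_i$, every structural result of Sections 2--3 applies verbatim with $n$ lowered to $n-1$, since each was proved for an \emph{arbitrary} $n$-symplectic derived stack. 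This self-similarity is the real content; the remainder is bookkeeping. First I would verify that $\odot$ descends to Lagrangeomorphism classes: if $U_0\sim U_0'$ and $U_1\sim U_1'$, then $U_0\times U_1\sim U_0'\times U_1'$ by Proposition \ref{product}, and then $C_{N_{012}}(U_0\times U_1)\sim C_{N_{012}}(U_0'\times U_1')$ by part (a) of Proposition \ref{prop:cXncYequiv}; independence of the chosen model for the fiber product is subsumed in the same way, so $U_1\odot U_0$ is well defined on $\fkLag_2$.

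For associativity, given $2$-morphisms $U_a\in\fkLag_2(N_0,N_1)$, $U_b\in\fkLag_2(N_1,N_2)$, $U_c\in\fkLag_2(N_2,N_3)$, the two bracketings $(U_c\odot U_b)\odot U_a$ and $U_c\odot(U_b\odot U_a)$ have, as underlying maps of derived stacks, the iterated fiber products $(U_a\times_{N_1}U_b)\times_{N_2}U_c$ and $U_a\times_{N_1}(U_b\times_{N_2}U_c)$ mapping to $N_0\times_{X_{01}}N_3$. Under the dictionary above these are exactly the two Lagrangians compared in Proposition \ref{prop:Associativity}, which produces a canonical Lagrangeomorphism between them together with a compatible homotopy of isotropic structures. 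Since $2$-morphisms are by definition Lagrangeomorphism classes, the two composites represent the same class, and hence $\odot$ is strictly associative.

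For the units, I would define the identity $2$-morphism $1_N$ on a $1$-morphism $N\in\mathcal{L}ag(X_{01})$ to be the class of the diagonal $\Delta_N:N\to N\times_{X_{01}}N$, which carries a canonical Lagrangian structure by Corollary \ref{cor:diag} applied with the symplectic stack taken to be $X_{01}$. For $U\in\fkLag_2(N_0,N_1)$ the composites $U\odot 1_{N_0}$ and $1_{N_1}\odot U$ have underlying derived stacks $\Delta_{N_0}\times_{N_0}U$ and $U\times_{N_1}\Delta_{N_1}$ respectively; Proposition \ref{prop:IdOriginal}, with its $X_0,X_1$ playing the roles of $N_0,N_1$, supplies canonical Lagrangeomorphisms of each with $U$. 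Passing to classes, both unit laws hold on the nose.

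The main obstacle, such as it is, is not any single estimate but making the recursive substitution airtight: one must check that the form $\omega_{01}$ on $X_{01}$ from Corollary \ref{cor:Const}, and the induced $(n-2)$-symplectic structures on $N_{01}$ and the Lagrangian on $N_{012}$, are precisely the data that Proposition \ref{prop:Associativity}, Proposition \ref{prop:IdOriginal} and Corollary \ref{cor:diag} take as input. Because all of these were established for a symplectic stack of arbitrary shift, this compatibility holds by construction, so no genuinely new computation is required beyond tracking the indices.
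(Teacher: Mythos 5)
Your proposal is correct and follows essentially the same route as the paper: associativity is obtained by applying Proposition \ref{prop:Associativity} with the ambient symplectic stack taken to be $X_{01}$, the identity $2$-morphism is the diagonal $\Delta_N$ from Corollary \ref{cor:diag}, and the unit laws come from Proposition \ref{prop:IdOriginal}. Your additional verification that $\odot$ descends to Lagrangeomorphism classes (via Proposition \ref{product} and Proposition \ref{prop:cXncYequiv}(a)) is left implicit in the paper but is a welcome extra check; note only the harmless bookkeeping slip that $(U_c\odot U_b)\odot U_a$ has underlying stack $U_a\times_{N_1}(U_b\times_{N_2}U_c)$, with the other bracketing giving $(U_a\times_{N_1}U_b)\times_{N_2}U_c$.
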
 
\begin{proof}
Associativity follows immediately from Proposition \ref{prop:Associativity}. Given $M\in \fkLag_{1}(X_0,X_1)$, we define the identity $2$-morphism $1_{M}$ to be the Lagrangian $\Delta: M \to M \times_{X_{01}} M$ constructed in Corollary \ref{cor:diag}. Finally Proposition \ref{prop:IdOriginal} shows that this is indeed an identity for $\odot$.
\end{proof}

\begin{lem}\label{lem:Compat}
Vertical and horizontal composition of $2$-morphism are compatible. This means that given objects $X_0, X_1$ and $X_2$, 1-morphisms $M_0, M_1, M_2 \in \fkLag_1(X_0, X_1)$ and $N_0, N_1, N_2\in$ $ \fkLag_1(X_1,X_2)$, and $2$-morphisms $U_1 \in\fkLag_2(M_0, M_1)$,  $U_2 \in\fkLag_2(M_1, M_2)$, $V_1 \in\fkLag_2(N_0, N_1)$ and $V_2 \in\fkLag_2(N_1, N_2)$ we have:
\[
(V_2 * U_2) \odot (V_1 * U_1) = (V_{2} \odot V_{1}) * (U_2 \odot U_1).
\]
\end{lem}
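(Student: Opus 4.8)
The plan is to recognise both iterated compositions as a single application of the operation $C_{(-)}$ of Proposition~\ref{prop:Lag2Lag} to the fourfold product $U_1\times U_2\times V_1\times V_2$, and then to invoke the uniqueness statement of Lemma~\ref{lem:LagUniqP02}. Concretely, unwinding the definitions in Definition~\ref{defn:THECAT}, the left-hand side is
$$(V_2*U_2)\odot(V_1*U_1)=C_{P_{012}}\bigl(C_{K_{01}}(U_1\times V_1)\times C_{K_{12}}(U_2\times V_2)\bigr),$$
where $K_{01}=P_0\times_{X_{012}}P_1$ and $K_{12}=P_1\times_{X_{012}}P_2$ are the correspondences produced by Proposition~\ref{prop:6} and $P_{012}$ is the triple-intersection correspondence of $P_0,P_1,P_2$ in $X_{02}$ from Theorem~\ref{thm:ThreeLag}. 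Using that $C_{(-)}$ is compatible with external products of correspondences (a direct consequence of Proposition~\ref{product}) together with the associativity $C_Z(C_X(-))\cong C_{Z\bullet X}(-)$ of Proposition~\ref{prop:cXncYequiv}(b), I would rewrite this, after reordering the product factors, as $C_{Z_L}(U_1\times U_2\times V_1\times V_2)$ with $Z_L=P_{012}\bullet(K_{01}\times K_{12})$. The same manipulation applied to the right-hand side, using $U_2\odot U_1=C_{M_{012}}(U_1\times U_2)$, $V_2\odot V_1=C_{N_{012}}(V_1\times V_2)$ and the horizontal-composition correspondence $\tilde K=P_0\times_{X_{012}}P_2$ of Proposition~\ref{prop:6}, gives $C_{Z_R}(U_1\times U_2\times V_1\times V_2)$ with $Z_R=\tilde K\bullet(M_{012}\times N_{012})$.

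Next I would observe that $Z_R$ is precisely the Lagrangian correspondence $(P_0\times_{X_{012}}P_2)\bullet(M_{012}\times N_{012})$ exhibited in Lemma~\ref{lem:LagUniqP02}, so by that lemma its underlying stack is the homotopy limit of~(\ref{eqn:P02Dag}) and its isotropic structure is the canonical sum $\sum_{i=0}^{2}(\Theta_{M_i}+\Theta_{N_i}+\Theta_i)$. The substance of the argument is then to check that $Z_L$ satisfies these same two characterising properties. For the underlying derived stack this is a Fubini-type rearrangement of homotopy fiber products of exactly the kind carried out in the proof of Proposition~\ref{prop:6}: iterating the universal property shows that $(K_{01}\times K_{12})\times_{P_{01}\times P_{12}}P_{012}$ is equivalent to the homotopy limit of~(\ref{eqn:P02Dag}). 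For the isotropic structure I would use the filling-box description of Remark~\ref{rem:fillingboxes}: the structure on $V_1*U_1$ (resp. $V_2*U_2$) is the horizontal concatenation of squares as in diagram~(\ref{eqn:threesquares}) for the top (resp. bottom) row, and applying the vertical-composition recipe of Remark~\ref{rem:comp} stacks these two rows. This produces a concatenation of $\Theta$-squares that, after the evident reindexing, is the canonical isotropic structure $\sum_{i=0}^{2}(\Theta_{M_i}+\Theta_{N_i}+\Theta_i)$ of Lemma~\ref{lem:LagUniqP02} (an array analogous to diagram~(\ref{eqn:Po1Fig})). Thus $Z_L$ and $Z_R$ both satisfy the hypotheses of Lemma~\ref{lem:LagUniqP02}.

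Finally, the uniqueness clause of Lemma~\ref{lem:LagUniqP02} yields a Lagrangeomorphism $Z_L\cong Z_R$, and then Proposition~\ref{prop:cXncYequiv}(a) shows that $C_{Z_L}(U_1\times U_2\times V_1\times V_2)$ and $C_{Z_R}(U_1\times U_2\times V_1\times V_2)$ are Lagrangeomorphic, which is exactly the asserted identity of $2$-morphisms (recall these are Lagrangeomorphism classes). I expect the genuine obstacle to be the middle step, namely matching the two isotropic structures: the bookkeeping that the order in which the $\Theta$-squares are assembled (rows-first versus columns-first) does not change the resulting homotopy class of the $2$-simplex. This is the same ``long but straightforward'' diagram chase deferred elsewhere in this section, and it is precisely what Lemma~\ref{lem:LagUniqP02} is designed to absorb.
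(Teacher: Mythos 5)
Your proposal is correct and follows essentially the same route as the paper's proof: reduce both iterated compositions, via Proposition~\ref{prop:cXncYequiv}(b) and compatibility of $C_{(-)}$ with products, to a single $C_{Z}$ applied to $U_1\times U_2\times V_1\times V_2$, identify the two resulting correspondences through the uniqueness clause of Lemma~\ref{lem:LagUniqP02} (both being homotopy limits of~(\ref{eqn:P02Dag}) with the canonical isotropic structure), and conclude with Proposition~\ref{prop:cXncYequiv}(a). The only cosmetic difference is that where you write ``after reordering the product factors'' the paper makes the reordering explicit by composing with the graph $\Gamma_\rho$ of the middle-factor-swap symplectomorphism, which the same uniqueness lemma then absorbs.
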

\begin{proof}
From the definitions we have
\begin{align*}
 (V_2 * U_2) \odot (V_1 * U_1)&= C_{P_{012}}\big(C_{P_0\times_{X_{012}}P_1}(U_1\times V_1)\times C_{P_1\times_{X_{012}}P_2}(U_2\times V_2)\big)\\
 &\cong C_{P_{012} \bullet ( (P_0\times_{X_{012}}P_1) \times (P_1\times_{X_{012}} P_2) )} \big(U_1\times U_2 \times V_1 \times V_2 \big),
\end{align*}
using Proposition \ref{prop:cXncYequiv}(b). 
If we denote by $\rho$ the symplectomorphism
\[\rho:M_{01}\times M_{12}\times N_{01}\times N_{12} \To M_{01}\times N_{01}\times M_{12}\times N_{12},
\]
which interchanges the two middle factors, then we have the Lagrangeomorphism 
\[C_{\Gamma_\rho}\big( U_1\times U_2 \times V_1 \times V_2 \big) \cong U_1\times   V_1\times U_2 \times V_2. 
\]
Therefore, by a similar argument we have 
\begin{align}
(V_{2} \odot V_{1}) * (U_2 \odot U_1) & \cong C_{(P_0\times_{X_{012}}P_2)\bullet(M_{012}\times N_{012})}\big (U_1\times   V_1\times U_2 \times V_2 \big).\\
 & \cong C_{(P_0\times_{X_{012}}P_2)\bullet(M_{012}\times N_{012})\bullet \Gamma_\rho}\big(U_1\times U_2 \times V_1 \times V_2 \big).
\end{align}
Hence, by Proposition \ref{prop:cXncYequiv}(a), the proof will be complete once we establish a Lagrangeomorphism
\[P_{012} \bullet ( (P_0\times_{X_{012}}P_1) \times (P_1\times_{X_{012}} P_2) ) \cong (P_0\times_{X_{012}}P_2)\bullet(M_{012}\times N_{012})\bullet \Gamma_\rho.
\]
In turn, the existence of such Lagrangeomorphism follows from Lemma \ref{lem:LagUniqP02}. For this note that both Lagrangians are homotopy limits of the diagram (\ref{eqn:P02Dag}) as derived stacks. Inspecting the constructions of the Lagrangian structures in both cases one can see that they are homotopic to the one described in Lemma \ref{lem:LagUniqP02}. Hence we can apply the lemma to conclude the proof.
\end{proof}

\begin{defn} \label{defn:Assoc}
Consider a sequence of $1$-morphisms
\[X_0 \stackrel{N_1}\longrightarrow X_{1} \stackrel{N_2}\longrightarrow X_{2} \stackrel{N_3}\longrightarrow X_{3}
\]
We define the associator 
\[W_{N_3 N_2 N_1} \in \fkLag_2((N_{3} \circ N_2)\circ N_1, N_{3} \circ (N_{2} \circ N_1))\] 
as the Lagrangian constructed in Proposition \ref{prop:Associativity}. 
It follows from Proposition \ref{prop:IdOriginal} that this is invertible and hence a 2-isomorphism. 
\end{defn}

\begin{lem}\label{lem:naturalAssoc}
The associator is natural, meaning that given $2$-morphisms $U_i \in \fkLag_2(M_{i}, N_{i})$, for $i=1,2,3$, we have
\[(U_{3}*(U_2 *U_1 )) \odot W_{M_3 M_2 M_1} =  W_{N_3 N_2 N_1} \odot ((U_3 * U_2) * U_1).
\]
\end{lem}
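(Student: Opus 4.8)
The plan is to follow the same strategy as in the proof of Lemma~\ref{lem:Compat}: rewrite each side of the asserted identity as a single operation $C_{(-)}$ applied to the product $U_1\times U_2\times U_3$, and then reduce the equality of these Lagrangeomorphism classes to a Lagrangeomorphism between the two Lagrangian correspondences that occur, via Proposition~\ref{prop:cXncYequiv}(a). Note first that both sides are $2$-morphisms from $(M_3\circ M_2)\circ M_1$ to $N_3\circ(N_2\circ N_1)$, so the comparison makes sense; since the $2$-morphisms of $\fkLag(S)$ are plain Lagrangeomorphism classes (no orientation or sign data enters at this stage) the identity to be proved is a genuine equality in $\fkLag_2$.

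First I would unwind the left-hand side. By Definition~\ref{defn:THECAT} the horizontal composition $U_2*U_1$ has the form $C_{(-)}(U_1\times U_2)$ for the structural correspondence of Proposition~\ref{prop:6}, and iterating, $U_3*(U_2*U_1)$ becomes, after repeated use of Proposition~\ref{prop:cXncYequiv}(b) together with a permutation symplectomorphism $\Gamma_\rho$ reordering the product factors, an expression $C_{(-)}(U_1\times U_2\times U_3)$ with correspondence built from the triple-intersection Lagrangians $X_{ijk}$, the diagonals, and the Lagrangians of the form $P_0\times_{X_{012}}P_1$ of Proposition~\ref{prop:6}. The subsequent vertical composition with the associator $W_{M_3M_2M_1}$, which by Definition~\ref{defn:Assoc} is the Lagrangian produced by Proposition~\ref{prop:Associativity}, is again an instance of $C_{(-)}$ in one structural slot; absorbing it by one further application of Proposition~\ref{prop:cXncYequiv}(b) yields $(U_3*(U_2*U_1))\odot W_{M_3M_2M_1}\cong C_{\mathcal{F}_L}(U_1\times U_2\times U_3)$ for an explicit structural correspondence $\mathcal{F}_L$. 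Treating the right-hand side identically gives $W_{N_3N_2N_1}\odot((U_3*U_2)*U_1)\cong C_{\mathcal{F}_R}(U_1\times U_2\times U_3)$.

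By Proposition~\ref{prop:cXncYequiv}(a) it then suffices to produce a Lagrangeomorphism $\mathcal{F}_L\cong\mathcal{F}_R$. Both $\mathcal{F}_L$ and $\mathcal{F}_R$ have as underlying derived stack the homotopy limit of one and the same large diagram, namely the two-level analogue of~(\ref{eqn:P02Dag}) involving $X_0,\dots,X_3$ together with all of $M_1,M_2,M_3$ and $N_1,N_2,N_3$, as follows from the universal property of homotopy fiber products exactly as in Corollary~\ref{cor:NotationX} and Lemma~\ref{lem:LagUniqP02}. Inspecting the two constructions, each isotropic structure is a concatenation of the elementary $2$-simplices $\Theta_{X_i}$, $\Theta_{M_i}$, $\Theta_{N_i}$ determined by the homotopy-limit data, and hence each is homotopic, relative to its boundary, to the single canonical isotropic structure supplied by Proposition~\ref{prop:Characterization}. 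The uniqueness clause of that characterisation, in the form already used in Lemma~\ref{lem:LagUniqP02}, then furnishes the Lagrangeomorphism $\mathcal{F}_L\cong\mathcal{F}_R$, which completes the proof.

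The main obstacle is this last step: assembling the large homotopy-limit diagram and verifying that the isotropic structures produced by the two different bracketings are each homotopic to the canonical one of Proposition~\ref{prop:Characterization}. This is the three-fold analogue of the bookkeeping already carried out for Lemma~\ref{lem:Compat} through Lemma~\ref{lem:LagUniqP02}, with a correspondingly larger diagram. The associator contributes only the reassociating canonical equivalences $\rho$, whose graphs $\Gamma_\rho$ carry no isotropic data beyond the reordering homotopies; they therefore leave the underlying $2$-simplices unchanged but must be tracked carefully through the successive identifications of fiber products.
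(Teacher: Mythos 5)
Your proposal follows essentially the same route as the paper's proof: both rewrite each side as a single $C_{(-)}(U_1\times U_2\times U_3)$ using Proposition~\ref{prop:cXncYequiv}(b) (with the associators $W$ absorbed as graphs $\Gamma_\rho$ viewed as correspondences from a point), reduce via Proposition~\ref{prop:cXncYequiv}(a) to a Lagrangeomorphism between the two structural correspondences, and settle that by a uniqueness-of-Lagrangian-structure argument in the style of Proposition~\ref{prop:Characterization} and Lemmas~\ref{lem:LagUniqP01}--\ref{lem:LagUniqP02}. The only cosmetic difference is that the paper composes with $W_{M_3M_2M_1}^{-1}$ to move the associator to one side before comparing, whereas you keep it on the left-hand side, which is an equivalent manipulation given the invertibility from Proposition~\ref{prop:IdOriginal}.
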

\begin{proof}
We will denote by $L_i=M_i \times_{X_{i-1, i}}N_i$ the $(n-2)$-symplectic derived stack and by $M_{12}=M_1\times_{X_1}M_2$ the Lagrangian in $X_{02}$. Recall from definition and the properties of $C_{-}$ we have
\[U_{3}*(U_2 *U_1 )=C_{(M_{(12)3}\times_{X_{023}}N_{(12)3})\bullet ((M_{12}\times_{X_{012}}N_{12}) \times \Delta_{L_3})}(U_1 \times U_2 \times U_3).
\]
Also, by the definition of $\odot$ and Proposition \ref{prop:Characterization} we have 
\[W_{N_3 N_2 N_1} \odot ((U_3 * U_2) * U_1) \odot  W_{M_3 M_2 M_1}^{-1}= C_{J}(W_{M_3 M_2 M_1}^{-1} \times ((U_3 * U_2) * U_1) \times W_{N_3 N_2 N_1} ),
\]
where $J=M_{(12)3} \times_{X_{03}} M_{1(23)} \times_{X_{03}} N_{1(23)} \times_{X_{03}} N_{(12)3}$.
Now as above, $(U_3 * U_2) * U_1= C_{K}(U_1 \times U_2 \times U_3)$, with $K=(M_{1(23)}\times_{X_{013}}N_{1(23)})\bullet(\Delta_{L_1}\times(M_{23}\times_{X_{123}}N_{23}))$.
Next we observe that the associator $W_{N_3 N_2 N_1}$, which was defined as the graph of a Lagrangeomorphism $\Gamma_{\rho_N}$, can equivalently be described as $C_{\Gamma_{\rho_N}}(\bullet)$ where we consider $\Gamma_{\rho_N}$ as a Lagrangian correspondence from a point to $N_{1(23)}\times_{X_{03}}N_{(12)3}$. Therefore we have
\[W_{N_3 N_2 N_1} \odot ((U_3 * U_2) * U_1) \odot  W_{M_3 M_2 M_1}^{-1}= C_{J\bullet(\Gamma_{\rho_M^{-1}}\times K \times \Gamma_{\rho_N})}(U_1 \times U_2 \times U_3).
\]

Hence the lemma will be a consequence of the following Lagrangeomorphism
\begin{equation}\label{naturality}
(M_{(12)3}\times_{X_{023}}N_{(12)3})\bullet ((M_{12}\times_{X_{012}}N_{12}) \times \Delta_{L_3})
\cong J\bullet(\Gamma_{\rho_M^{-1}}\times K \times \Gamma_{\rho_N}),
\end{equation}
between Lagrangians in $(L_1 \times L_2 \times L_3)^- \times (M_{(12)3}\times_{X_{03}}N_{(12)3})$.
The existence of such Lagrangeomorphism follows from a statement analogous to Proposition \ref{prop:Characterization} and Lemma \ref{lem:LagUniqP01}, that is we can show that there is a unique Lagrangian in $(L_1 \times L_2 \times L_3)^- \times (M_{(12)3}\times_{X_{03}}N_{(12)3})$ satisfying a natural condition that we will not spell out. Then one checks that both Lagrangians in (\ref{naturality}) satisfy this requirement.
\end{proof}

\begin{lem}\label{lem:weakAssoc1mor}
The associator satisfies the pentagon axiom. This states that given a sequence of $1$-morphisms
\[X_0 \stackrel{N_1} \longrightarrow X_1  \stackrel{N_2} \longrightarrow X_2   \stackrel{N_3} \longrightarrow X_3  \stackrel{N_4} \longrightarrow X_4,
\]
we have
\[W_{43(21)} \odot W_{(43)21} = (1_{N_4} * W_{321}) \odot W_{4(32)1} \odot (W_{432}*1_{N_1}) 
\]
where we have simplified the notation so that $ W_{(43)21}$ stands for $W_{(N_4\circ N_3) N_2 N_1}$.
\end{lem}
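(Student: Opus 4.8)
\section*{Proof proposal for Lemma \ref{lem:weakAssoc1mor}}

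The plan is to reduce the pentagon identity to the coherence of the canonical associativity equivalences of iterated homotopy fibre products, together with the uniqueness principle for associator Lagrangians. Both sides of the asserted equation are $2$-morphisms from $((N_4\circ N_3)\circ N_2)\circ N_1$ to $N_4\circ(N_3\circ(N_2\circ N_1))$, that is, Lagrangeomorphism classes of Lagrangians in the relevant $(n-2)$-symplectic stack; their common underlying derived stack is (a model for) the iterated fibre product $N_1\times_{X_1}N_2\times_{X_2}N_3\times_{X_3}N_4$. By Definition \ref{defn:Assoc} and Proposition \ref{prop:Associativity}, each associator $W$ is the graph $\Gamma_\rho$ of the canonical equivalence $\rho$ comparing two bracketings of such an iterated fibre product, produced by the universal property of homotopy limits.

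First I would record how the bicategory operations act on graph-Lagrangeomorphisms. By the transitivity construction in the proof of Lemma \ref{lem:Isom2equiv} (and Proposition \ref{prop:IdOriginal}), the vertical composition $\odot$ of two graph-Lagrangeomorphisms is again the graph of the composite equivalence; and whiskering by an identity $2$-morphism, $1_{N}\ast(-)$ or $(-)\ast 1_{N}$, corresponds to forming the fibre product of the underlying equivalence with $\id_N$ over the appropriate base. Applying these two facts to each side, I would identify the underlying equivalence of $W_{43(21)}\odot W_{(43)21}$ and of $(1_{N_4}\ast W_{321})\odot W_{4(32)1}\odot(W_{432}\ast 1_{N_1})$ as two composites of canonical associativity equivalences, each running from the fully left-bracketed to the fully right-bracketed model of $N_1\times_{X_1}N_2\times_{X_2}N_3\times_{X_3}N_4$.

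Next I would invoke coherence of homotopy limits: since every canonical associativity equivalence is induced by a universal property, the space of equivalences between any two fixed bracketings is contractible, so the two composites above are homotopic as equivalences of derived stacks. It then remains to match the Lagrangian (isotropic) structures. Rather than constructing this homotopy of isotropic structures by hand --- a lengthy concatenation of $2$-simplices in $\mathcal{A}^{2,cl}$ of exactly the type appearing in Lemmas \ref{lem:Compat} and \ref{lem:naturalAssoc} --- I would appeal to a uniqueness statement in the spirit of Proposition \ref{prop:Characterization} and Lemma \ref{lem:LagUniqP01}: among Lagrangeomorphisms whose underlying map is the canonical comparison equivalence, there is a unique class whose isotropic structure is assembled from the canonical $2$-simplices $\Theta_i$ (the homotopies between the two pullbacks of each $h_i$) filling the squares of the defining diagram. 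Both sides of the pentagon, being built entirely from these canonical data, satisfy this characterization, hence they agree. Alternatively, following Remark \ref{rem:OtherWay}, one may rewrite both composites as a single $C_{(-)}$ via Proposition \ref{prop:cXncYequiv}(b) and so reduce to one Lagrangeomorphism between two big Lagrangians over the same homotopy limit.

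The main obstacle is precisely this last step: verifying that the two assemblies of isotropic data are homotopic relative to boundary. The substance lies not in the underlying equivalence (which is forced by universality) but in checking that the Lagrangian structures produced by the two bracketing schemes coincide; the safest route is to phrase and prove the required uniqueness lemma (the four-fold analogue of Lemma \ref{lem:LagUniqP01}) and then verify, by the same diagram-chasing as in Corollary \ref{cor:NotationX}, that both sides meet its hypotheses.
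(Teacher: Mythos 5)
Your proposal is correct and takes essentially the same route as the paper's proof: reduce each associator to the graph of a canonical comparison equivalence, use that $\odot$ of graphs is the graph of the composite and that whiskering by $1_{N}$ yields $\Gamma_{\id_{N}\times_{X}(-)}$, obtain a homotopy between the two composite equivalences from the universal property of the homotopy limit (both being limit cones over the same system), and then reduce via Corollary \ref{cor:HIsomLag} to matching the two isotropic structures. The only, harmless, divergence is at that final step, which the paper leaves as omitted diagram-chasing, whereas you discharge it by a four-fold uniqueness lemma in the style of Proposition \ref{prop:Characterization} and Lemma \ref{lem:LagUniqP01} --- precisely the device the paper itself deploys in the neighbouring Lemma \ref{lem:naturalAssoc}.
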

\begin{proof}
To prove the pentagon axiom, we first notice that the underlying space of $W_{321}$ is $\Gamma_{\rho_{321}}$ where $\rho_{321}$ is the morphism $\rho$ appearing in the proof of Proposition \ref{prop:Associativity}. Notice that the $\odot$-composition of two graphs of morphisms is the graph of the composition of these morphisms. Also, we can see that $W_{432} * 1_{N_1}$ is Lagrangeomorphic to $\Gamma_{id_{N_1} \times_{X_1} \rho_{432}}$, where
\[id_{N_1} \times_{X_1} \rho_{432}: N_1 \times_{X_1} (N_2 \times_{X_2} (N_3 \times_{X_3} N_4))\To N_1 \times_{X_1} ((N_2 \times _{X_2} N_3 )\times_{X_3} N_4).
\]
Similarly $1_{N_4} * W_{321}$ is Lagrangeomorphic to $\Gamma_{\rho_{321}\times_{X_3} id_{N_4}}$. 
Therefore the equation we are trying to show, follows from establishing a Lagrangeomorphism between the graphs of $\rho_{43(21)} \circ \rho_{(43)21} $ and $ (\rho_{321}\times_{X_3}id_{N_4} ) \circ \rho_{4(32)1} \circ (id_{N_1}\times_{X_1} \rho_{432})$. 

First we can choose a homotopy equivalence 
\begin{equation}\label{eqn:rhoIsom}
\rho_{43(21)} \circ \rho_{(43)21} \to  (\rho_{321}\times_{X_3}id_{N_4} ) \circ \rho_{4(32)1} \circ (id_{N_1}\times_{X_1} \rho_{432}).
\end{equation}
This is because (1) both sides are equivalences between $N_1 \times_{X_1} (N_2 \times_{X_2} (N_3 \times_{X_3} N_4))$ and $((N_1 \times_{X_1} N_2) \times_{X_2} N_3) \times_{X_3} N_4$ which homotopy commute with the system given by the projections to the $N_i$ and $X_j$ and (2) both $N_1 \times_{X_1} (N_2 \times_{X_2} (N_3 \times_{X_3} N_4))$ and $((N_1 \times_{X_1} N_2) \times_{X_2} N_3) \times_{X_3} N_4$ are homotopy limits of the same system. 

The equivalence of graphs induced by (\ref{eqn:rhoIsom}) homotopy commutes with the projections of both graphs to $(((N_4 \circ N_3) \circ N_2) \circ N_1) ))  \times_{X_{04}} (N_4 \circ (N_3 \circ (N_2 \circ N_1) ))  $. According to Corollary \ref{cor:HIsomLag} what remains is to show that that there is a homotopy between the two isotropic structures (one of which is pulled back by this equivalence of graphs). We do not include the details of the diagrams needed to establish this homotopy as similar proofs appear throughout this article.
\end{proof}

\begin{defn}\label{defn:unitors}
Fix objects $X_0$ and $X_1$ and consider $M\in \fkLag_{1}(X_0,X_1)$. We define the unitors $l_{M} \in \fkLag_{2}(I_{X_1}\circ M, M)$ and $r_{M} \in \fkLag_{2}(M \circ I_{X_0},M)$ to be the graphs of the Lagrangeomorphisms $\rho_{l}:M \times_{X_1} \Delta_{X_1} \to M$ and $\rho_{r}:\Delta_{X_0} \times_{X_0} M \to M$ constructed in Proposition \ref{prop:IdOriginal}.
\end{defn}

We leave the proof of the following lemma to the reader as it can be proved using the same techniques as the previous lemmas.

\begin{lem}\label{lem:UnitorsSatisfy}
The unitors are natural, which means given objects $X_0$ and $X_1$ and $1$-morphisms $M, N \in \fkLag_1(X_0, X_1)$ we have $U \odot r_{M}= r_{N} \odot (U* 1_{I_{X_0}})$ and $U \odot l_{M}= l_{N} \odot (1_{I_{X_1}}* U)$, for any $U\in \fkLag_{2}(M,N)$. 

Moreover they satisfy the triangle axiom which states that
\[({1}_{P}*l_M ) \circ W_{P,I_{X_1},M} = r_P* 1_{M},
\]
for any $1$-morphism $P\in \fkLag_1(X_1,X_2)$.
\end{lem}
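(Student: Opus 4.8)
The plan is to prove both statements by the exact techniques of the two preceding lemmas: naturality of the unitors runs parallel to the naturality of the associator (Lemma \ref{lem:naturalAssoc}), and the triangle axiom runs parallel to the pentagon axiom (Lemma \ref{lem:weakAssoc1mor}). Throughout, the governing principle is the same one used in Lemma \ref{lem:Compat} and Lemma \ref{lem:naturalAssoc}: rewrite each side of the asserted identity in the form $C_{(-)}$ of a common input by repeatedly collapsing nested applications of $C$ via Proposition \ref{prop:cXncYequiv}(b), thereby reducing the identity (by Proposition \ref{prop:cXncYequiv}(a)) to a single Lagrangeomorphism between the two resulting ``kernel'' Lagrangian correspondences, which one then produces by a uniqueness argument of the type in Proposition \ref{prop:Characterization} and Lemma \ref{lem:LagUniqP01}.

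For the naturality of the right unitor, fix $U \in \fkLag_2(M,N)$. I would first recall from Definition \ref{defn:unitors} that $r_M$ and $r_N$ are the graphs $\Gamma_{\rho_r}$ of the Lagrangeomorphisms $\rho_r$ of Proposition \ref{prop:IdOriginal}, so that, exactly as in Lemma \ref{lem:naturalAssoc}, each can be written in the form $C_{\Gamma_{\rho_r}}(-)$ by viewing the graph as a Lagrangian correspondence from a point. Expanding the vertical and horizontal compositions through their definitions and applying Proposition \ref{prop:cXncYequiv}(b), both $U \odot r_M$ and $r_N \odot (U * 1_{I_{X_0}})$ become $C_{(-)}$ applied to the same datum built from $U$ together with the canonical identity $2$-morphism $1_{I_{X_0}} = \Delta_{I_{X_0}}$ of Corollary \ref{cor:diag}. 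Thus the identity reduces to a Lagrangeomorphism between the two kernel Lagrangians; these have the same underlying homotopy limit, and in both the isotropic structure is homotopic to the canonical one, so a uniqueness statement of the form proved in Proposition \ref{prop:Characterization} and Lemma \ref{lem:LagUniqP01} supplies the Lagrangeomorphism. The identity $U \odot l_M = l_N \odot (1_{I_{X_1}} * U)$ is entirely analogous, using $\rho_l$ and $l_M = \Gamma_{\rho_l}$.

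For the triangle axiom, all three nonidentity $2$-morphisms $l_M = \Gamma_{\rho_l}$, $r_P = \Gamma_{\rho_r}$ and $W_{P,I_{X_1},M} = \Gamma_{\rho}$ are graphs of Lagrangeomorphisms, while $1_P$ and $1_M$ are identity $2$-morphisms (diagonals of Corollary \ref{cor:diag}). Hence, exactly as in the pentagon proof, I would use that the vertical composite of two graphs is the graph of the composite morphism, and that $1_P * l_M$ and $r_P * 1_M$ are Lagrangeomorphic to the graphs $\Gamma_{\rho_l \times_{X_1} \id_P}$ and $\Gamma_{\id_M \times_{X_1} \rho_r}$ of the induced equivalences. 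The triangle axiom then reduces to a Lagrangeomorphism between the graphs of the two composite equivalences $(\rho_l \times_{X_1}\id_P)\circ \rho$ and $\id_M \times_{X_1}\rho_r$, both maps $(P\circ I_{X_1})\circ M \to P\circ M$. Since source and target are two standard models of the same iterated homotopy fiber product and both composites homotopy-commute with the full system of projections to $P$, $M$ and the intermediate intersections, the two composites are homotopic, giving an equivalence of the underlying graphs compatible with the projections; by Corollary \ref{cor:HIsomLag} it then remains only to match isotropic structures.

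In every case the genuinely technical point, and the main obstacle, is this final verification that, once the underlying equivalences have been matched, the two isotropic (Lagrangian) structures are homotopic relative to their common boundary. This is the bookkeeping of $2$-simplices in $\mathcal{A}^{2,cl}(-,n)$ that recurs throughout the paper; the role of the uniqueness lemmas (Proposition \ref{prop:Characterization}, Lemmas \ref{lem:LagUniqP01} and \ref{lem:LagUniqP02}) is precisely to package it, reducing the obstacle to checking that each structure coincides with the single canonical structure characterized there, after which the natural transformations and triangle identity follow formally.
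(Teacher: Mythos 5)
Your proposal is correct and follows exactly the route the paper intends: the paper leaves this lemma to the reader, stating it is proved by the same techniques as the preceding lemmas, and your argument is precisely that---reducing the naturality identities via Proposition \ref{prop:cXncYequiv} and uniqueness statements in the style of Proposition \ref{prop:Characterization} and Lemma \ref{lem:LagUniqP01} (parallel to Lemma \ref{lem:naturalAssoc}), and handling the triangle axiom through graphs of Lagrangeomorphisms and Corollary \ref{cor:HIsomLag} (parallel to Lemma \ref{lem:weakAssoc1mor}). Your identification of the remaining $2$-simplex bookkeeping in $\mathcal{A}^{2,cl}(-,n)$ as the one genuinely technical step matches the paper's own practice of eliding exactly that verification.
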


Summarizing the results in this section, we have proven the following 

\begin{thm}\label{thm:Main2Cat} 
Let $(S, \omega)$ be a $n$-symplectic derived stack. Then $\fkLag(S, \omega)$ as defined above is a bicategory.
\end{thm}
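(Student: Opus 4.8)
The plan is to assemble the theorem from the lemmas established throughout this section, checking that the data of Definition \ref{defn:THECAT} together with the associators and unitors of Definitions \ref{defn:Assoc} and \ref{defn:unitors} verify every clause in the definition of a bicategory. Since essentially all of the substantive work has been distributed into the preceding results, the proof is an organizational one, and the only genuine care required is to confirm that each construction descends to Lagrangeomorphism classes and is independent of the chosen models for the homotopy fiber products.

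First I would verify that for each pair of objects $X_0,X_1$ the hom-category is a genuine category: its objects are the $1$-morphisms $\fkLag_1(X_0,X_1)$, its morphisms the $2$-morphisms $\fkLag_2$, and composition is vertical composition $\odot$. That $\odot$ is associative and admits the identity $2$-morphisms $1_M$ is exactly Lemma \ref{lem:2-morAssoc}. Because $2$-morphisms are defined only up to Lagrangeomorphism, every identity to be checked here is an identity of equivalence classes, so the associativity furnished by Proposition \ref{prop:Associativity} and the unit property of Proposition \ref{prop:IdOriginal} suffice.

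Next I would promote horizontal composition to the composition functors $\mu_{X_0 X_1 X_2}$. On objects $\mu$ is the composition $\circ$ of $1$-morphisms from Corollary \ref{cor:Comp}, and on morphisms it is the horizontal composition $*$ from Proposition \ref{prop:6}. To see that $\mu$ is a functor of the hom-categories one must check (i) that it preserves identity $2$-morphisms, i.e. $1_{N}*1_{M}=1_{N\circ M}$, and (ii) that it respects vertical composition, which is precisely the interchange law $(V_2*U_2)\odot(V_1*U_1)=(V_2\odot V_1)*(U_2\odot U_1)$ of Lemma \ref{lem:Compat}. Part (ii) is the substantive point and is already established; part (i) follows by tracing the construction of Proposition \ref{prop:6} applied to diagonals, using Proposition \ref{prop:IdOriginal}. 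I would then record that the associator $W$ of Definition \ref{defn:Assoc} and the unitors $l,r$ of Definition \ref{defn:unitors} are natural isomorphisms: invertibility is the remark following Definition \ref{defn:Assoc} (coming from Proposition \ref{prop:IdOriginal}), while naturality is Lemma \ref{lem:naturalAssoc} for $W$ and Lemma \ref{lem:UnitorsSatisfy} for $l$ and $r$. Finally the two coherence conditions — the pentagon for $W$ and the triangle relating $W$, $l$ and $r$ — are supplied by Lemmas \ref{lem:weakAssoc1mor} and \ref{lem:UnitorsSatisfy} respectively. Collecting these verifications against the clauses of the definition of a bicategory completes the proof.

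The main obstacle, already absorbed into the lemmas, is the interchange law of Lemma \ref{lem:Compat}: establishing functoriality of $\mu$ is where the geometry of triple and quadruple fiber products of Lagrangians — through the uniqueness statements of Lemmas \ref{lem:LagUniqP01} and \ref{lem:LagUniqP02} — is genuinely needed. At the level of the theorem itself, the only point demanding attention is that every displayed relation is an equality of $2$-morphisms (that is, of Lagrangeomorphism classes) rather than a mere equivalence, which is exactly what the passage to equivalence classes in Definition \ref{defn:THECAT} guarantees.
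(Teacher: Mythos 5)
Your proposal is correct and takes essentially the same route as the paper, whose proof of this theorem is precisely the assembly you describe: Lemma \ref{lem:2-morAssoc} for the hom-categories, Lemma \ref{lem:Compat} for functoriality of $\mu$, Lemmas \ref{lem:naturalAssoc} and \ref{lem:UnitorsSatisfy} for naturality, and Lemmas \ref{lem:weakAssoc1mor} and \ref{lem:UnitorsSatisfy} for the pentagon and triangle axioms. Your explicit remark that $1_N * 1_M = 1_{N\circ M}$ must also be checked (via Propositions \ref{prop:6} and \ref{prop:IdOriginal}) is a point the paper leaves implicit, and your treatment of it is sound.
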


In the case of $S=\bullet_{n+1}$ we use the notation $\fkSymp^{n}= \fkLag(\bullet_{n+1},\omega)$. In this case the theorem gives the following

\begin{cor}\label{cor:Symp0}
There exists a bicategory $\fkSymp^{n}$ whose objects are $n$-symplectic derived stacks, the $1$-morphisms are Lagrangian correspondences, and the $2$-morphisms are relative Lagrangian correspondences up to Lagrangeomorphism.
\end{cor}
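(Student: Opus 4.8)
The plan is to obtain this corollary directly from Theorem \ref{thm:Main2Cat} by specializing $S$ to the point $\bullet_{n+1}$ equipped with its canonical $(n+1)$-shifted symplectic structure. In particular, no bicategory axiom needs to be re-established: the pentagon and triangle axioms, together with the existence and naturality of the associator and unitors, are already supplied by Theorem \ref{thm:Main2Cat}. The entire task is therefore to reinterpret the three layers of $\fkLag(\bullet_{n+1})$ in the language of shifted symplectic geometry, and the conceptual engine for doing so is Example \ref{point}, which identifies a Lagrangian structure on the canonical map $X \to \bullet_{n+1}$ with an $n$-shifted symplectic structure on $X$.

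First I would translate the objects: by Example \ref{point}, the objects of $\fkLag(\bullet_{n+1})$, namely the Lagrangians in $\bullet_{n+1}$, are precisely $n$-symplectic derived stacks. Next I would translate the $1$-morphisms. By Definition \ref{defn:THECAT}, $\fkLag_1(X_0,X_1) = \mathcal{L}ag(X_0 \times_{\bullet_{n+1}} X_1)$, where $X_0 \times_{\bullet_{n+1}} X_1$ carries the $(n-1)$-symplectic structure of Corollary \ref{cor:Const}. At this step I would invoke Remark \ref{rem:InCase}, which records that under the identification of Example \ref{point} this abstractly-constructed structure is exactly the opposite-product form on $X_0^- \times X_1$; hence the $1$-morphisms are Lagrangian correspondences from $X_0$ to $X_1$. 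Finally, the $2$-morphisms between two such correspondences $N_0, N_1$, being by Definition \ref{defn:THECAT} the Lagrangeomorphism classes of Lagrangians in $N_0 \times_{X_0^- \times X_1} N_1$, are exactly relative Lagrangian correspondences up to Lagrangeomorphism.

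The main, and essentially only, substantive point is the matching of symplectic structures at the $1$-morphism layer: one must check that the $(n-1)$-symplectic structure produced by Corollary \ref{cor:Const} on the fiber product over the point coincides with the concrete $-\omega_0 \boxplus \omega_1$ on $X_0^- \times X_1$, and that this identification is compatible with the later definitions of composition in Definition \ref{defn:THECAT}. This is precisely the content of Remark \ref{rem:InCase}. Once that dictionary is in place, the compositions, the associator and the unitors are all inherited verbatim from $\fkLag(\bullet_{n+1})$, so the structure is a bicategory by Theorem \ref{thm:Main2Cat} and the corollary follows.
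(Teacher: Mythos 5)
Your proposal is correct and matches the paper's own route: the corollary is obtained by specializing Theorem \ref{thm:Main2Cat} to $S=\bullet_{n+1}$, with Example \ref{point} translating objects into $n$-symplectic derived stacks and Remark \ref{rem:InCase} identifying $X_0\times_{\bullet_{n+1}}X_1$ with $X_0^-\times X_1$ so that $1$- and $2$-morphisms become (relative) Lagrangian correspondences. No further verification is needed beyond what you give.
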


The bicategory $\fkSymp^{n}$ has an additional structure, namely that of a symmetric monoidal bicategory (see Definition 2.1 \cite{S-P}). 

\begin{thm}
The bicategory $\fkSymp^{n}$ is a symmetric monoidal bicategory. The monoidal structure 
\[
\fkSymp^{n} \times \fkSymp^{n} \to \fkSymp^{n},
\]
at the level of objects, sends $((S_1\omega_1), (S_2, \omega_2))$ to $(S_1 \times S_2, \omega_1 \boxplus \omega_2)$ and has the point $\bullet_{n}$ as the unit.
\end{thm}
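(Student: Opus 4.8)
The plan is to unwind the definition of symmetric monoidal bicategory (Definition 2.1 of \cite{S-P}) and to produce each piece of structure from the corresponding canonical equivalence of the underlying product of derived stacks, verifying the coherence axioms by appealing to uniqueness of Lagrangians up to Lagrangeomorphism. First I would define the tensor homomorphism $\otimes \colon \fkSymp^n \times \fkSymp^n \to \fkSymp^n$: on objects it is the product $((S_1,\omega_1),(S_2,\omega_2)) \mapsto (S_1 \times S_2, \omega_1 \boxplus \omega_2)$; on $1$-morphisms it sends a pair of Lagrangian correspondences $L_1 \colon X_0 \to X_1$ and $L_2 \colon Y_0 \to Y_1$ to their external product, which is a Lagrangian in $(X_0 \times Y_0)^- \times (X_1 \times Y_1)$ by Proposition \ref{product} together with the reshuffling symplectomorphism $(X_0^-\times X_1)\times(Y_0^-\times Y_1)\cong (X_0\times Y_0)^-\times(X_1\times Y_1)$; and on $2$-morphisms it does the same on the relative correspondences. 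To see that $\otimes$ is a homomorphism of bicategories one needs the compositor equivalences $(L_1 \times L_2) \circ (L_1' \times L_2') \cong (L_1 \circ L_1') \times (L_2 \circ L_2')$ and $I_{X_1 \times X_2} \cong I_{X_1} \times I_{X_2}$; these arise from the evident equivalences of the defining homotopy fiber products, and the homomorphism axioms of Definition \ref{def:functor} then follow because in each case both sides are Lagrangians with the same underlying derived stack and homotopic isotropic structure, hence Lagrangeomorphic by Corollary \ref{cor:HIsomLag}.

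Next I would produce the invertible structure $1$-morphisms. The associativity equivalence is the graph of the canonical symplectomorphism $(S_1 \times S_2) \times S_3 \cong S_1 \times (S_2 \times S_3)$, the unit equivalences come from $\bullet_n \times S \cong S \cong S \times \bullet_n$, and the braiding from the swap symplectomorphism $S_1 \times S_2 \cong S_2 \times S_1$; each of these is an equivalence in $\fkSymp^n$ by Corollary \ref{cor:EtaleHLag}, which identifies symplectomorphisms with invertible Lagrangian correspondences. The remaining higher data---the pentagonator, the two hexagonators, and the syllepsis---are invertible modifications, that is, Lagrangeomorphism classes of relative correspondences, and I would construct each as the graph of the canonical coherence equivalence of the underlying derived stacks (for example the Mac Lane pentagon equivalence between the five bracketings of an iterated product, and the Yang--Baxter equivalence for the braiding).

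The main obstacle is purely organizational: Definition 2.1 of \cite{S-P} demands a long list of coherence axioms, each an equation between certain $2$-morphisms of $\fkSymp^n$. The key simplification, which I would emphasize, is that since $2$-morphisms are taken up to Lagrangeomorphism and all the relevant Lagrangians are characterized up to Lagrangeomorphism by their underlying derived stack together with the homotopy class of the isotropic structure (Proposition \ref{prop:Characterization} and Lemma \ref{lem:LagUniqP01}), every coherence equation reduces to a homotopy-commutativity statement about canonical equivalences in the $\infty$-category of derived stacks. All such statements hold because the product endows derived stacks with a symmetric monoidal structure, so no genuinely new computation is required beyond the naturality and compatibility results already established in Proposition \ref{prop:cXncYequiv} and Proposition \ref{prop:Characterization}; the task for each axiom is to exhibit the relevant equivalence of fiber products and invoke uniqueness. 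I would write out one representative axiom in full (say the pentagonator coherence, following the pattern of the pentagon argument in Lemma \ref{lem:weakAssoc1mor}) and indicate that the hexagonator and syllepsis axioms are verified identically.
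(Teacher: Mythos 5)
Your proposal is correct and follows essentially the same route as the paper, which defines the monoidal structure on morphisms by the product of Lagrangians from Proposition \ref{product}, takes the structure equivalences and compatibility transformations to be the obvious canonical ones, and declares the coherence checks ``straightforward but tedious.'' Your write-up is simply a more explicit roadmap of the same argument---products on $1$- and $2$-morphisms, graphs of canonical symplectomorphisms for the associativity, unit and braiding data, and coherence axioms reduced via Corollary \ref{cor:HIsomLag} and the uniqueness results (Proposition \ref{prop:Characterization}, Lemma \ref{lem:LagUniqP01}) to homotopy-commutativity of canonical equivalences of fiber products---so no genuinely different idea is involved.
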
 
\begin{proof}
We define the monoidal structure on morphisms by product of Lagrangians, as defined in Proposition \ref{product}. Together with some natural isomorphisms which we do not write down, this defines a morphism of bicategories. This morphism of bicategories along with several obvious compatibility natural transformations defines the structure of a symmetric monoidal bicategory in the sense of Definition 2.1 of \cite{S-P}. The details are straightforward but tedious.
\end{proof}

\section{Orientations and Perverse sheaves}\label{Sec:OrPerv}

In this section we will discuss some facts about perverse sheaves that are needed to linearize the bicategory $\fkSymp^{0}$. The starting point is the construction in \cite{BBBJ} and \cite{BBDJS} of a canonical perverse sheaf on \emph{oriented} $(-1)$-symplectic derived stacks. The second ingredient is a conjecture by Joyce that an oriented Lagrangian in a $(-1)$-symplectic derived stack determines a section of the perverse sheaf. Here we give a more refined version of this conjecture and provide a local construction of the section.

But for all of these constructions we need to impose some orientability requirements, so we cannot linearize $\fkSymp^{0}$ directly. 
So we will rather linearize an oriented version of it which we denote by $\fkSymp^{or}$. 

\subsection{Orientations on Lagrangians}\label{Ssec:Or} \

We will define two distinct notions of orientations for Lagrangians. One for Lagrangians in $0$-symplectic derived stacks and another for Lagrangians in $(-1)$-symplectic derived stacks.

First we establish some notation. For a derived Artin stack $Q$, we define its canonical bundle $K_{Q}$ as the line bundle $\det ( \mathbb{L}_{Q})$. If the derived Artin stack $Q$ has a $0$-symplectic structure $\omega_{Q}$ then this can be used to trivialize $K_{Q} = \det(\mathbb{L}_{Q})$ and we always use this trivialization. 

We start with the definition of (relatively) oriented Lagrangian in a $0$-symplectic derived stack. It is inspired in the notion of relatively spin Lagrangian introduced in Lagrangian Floer cohomology. 

\begin{defn}\label{defn:Or0Lag}
Let $(S, \omega)$ be a $0$-symplectic derived stack and let $E$ be a line bundle on $S$. An $E$-oriented Lagrangian in $S$ is a triple consisting of a Lagrangian $f: L\to S$, a line bundle $R_{L}$ on $L$ and an isomorphism 
\[\gamma_{L}: R_{L}^{\otimes 2} \to K_{L}\otimes f^{*}E.
\] 

When $S$ is a point, $L$ is $(-1)$-symplectic and this recovers the notion of orientation on a $(-1)$-symplectic derived stack $X$ introduced in \cite{BBDJS}. Concretely it consists of line bundle $R_X$ and an isomorphism
\[\gamma_{X}: R_{X}^{\otimes 2} \longrightarrow K_{X}
\] 
\end{defn}

\begin{example}
Given a smooth scheme $U$ and a regular function $f \in \mathcal{O}(U)$ consider the derived critical locus 
\[Crit(f):= U \times_{df, T^{*}U,0} U \stackrel{\iota}\to U,
\] 
which is a $(-1)$-symplectic derived scheme. Denoting by $\alpha:Crit(f) \to T^{*}U$ the induced morphism, we have $K_{Crit(f)} \cong \iota^{*} K_{U}^{\otimes 2} \otimes \alpha^{*} K_{T^{*}U}^{-1} \cong \iota^{*} K_{U}^{\otimes 2}$, since $T^*U$ is symplectic. This defines a canonical orientation on $Crit(f)$, with $R_{Crit(f)} = \iota^{*}K_{U}$.
\end{example}

Let $X$ be a $(-1)$-symplectic derived stack and $\phi:M \to X$ be a Lagrangian. Then we have by definition a quasi-isomorphism $\mathbb{T}_{M} \cong \mathbb{L}_{\phi}[-2]$. 
Using the exact triangle $\phi^{*}\mathbb{L}_{X}\to \mathbb{L}_{M}\to \mathbb{L}_{\phi}$ we get 
\[\det(\mathbb{L}_{M})^{-1} \cong \det(\mathbb{T}_{M}) \cong \det(\mathbb{L}_{\phi}) \cong (\det(\mathbb{L}_{M})) \otimes \phi^{*}\det(\mathbb{L}_{X})^{-1}.
\]
Therefore, this determines a canonical isomorphism 
\begin{equation}\label{eqn:alpha} 
\alpha_{M}: (\det \mathbb{L}_{M})^{\otimes 2} \to \phi^{*} (\det \mathbb{L}_{X}).
\end{equation}

\begin{defn}\label{defn:OrLag}
Consider a $(-1)$-symplectic derived stack $X$ with an orientation $R_X$. An oriented Lagrangian in $X$ is a pair consisting of a Lagrangian $\phi:M \to X$ and an isomorphism 
\[\beta_{M}: K_{M} \longrightarrow \phi^{*}R_{X}
\]
such that $\gamma_{X} \circ \beta_{M}^{\otimes 2} = \alpha_{M}$.
\end{defn}

\begin{rem}
Note that the space of orientations on a $(-1)$-Lagrangian $M$ has a $\mathbb{Z}_2$ action, given by multiplying the orientation $\beta_{M}$ by $\pm 1$. If $M$ is a $(-1)$-Lagrangian with some orientation $\beta_{M}$, we will denote by $-M$ the same Lagrangian with orientation $-\beta_{M}$, which we refer to as the reverse orientation.
\end{rem}

We now prove several lemmas establishing some properties of orientations. 

\begin{lem}\label{lem:OrientedProduct}
Let $S_0$ and  $S_1$ be $0$-symplectic derived stacks with line bundles $E_i$ in $S_i$, for $i=0,1$.  Given an $E_{0}$-oriented Lagrangian $X_0 \to S_0$ and an $E_1$-oriented Lagrangian $X_1 \to S_1$, there is an induced $E_{0}\boxtimes E_1$-orientation on the product Lagrangian $X_0 \times X_1 \to S_0 \times S_1$ discussed in Proposition \ref{product}.

Let $Y_0$ and  $Y_1$ be oriented $(-1)$-symplectic derived stacks. Given oriented Lagrangians $M_0 \to Y_0$ and $M_1 \to Y_1$, there is an induced orientation on the product Lagrangian $M_0 \times M_1 \to Y_0 \times Y_1$.
\end{lem}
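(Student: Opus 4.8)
The plan is to orient the product Lagrangians by taking external tensor products of the given orientation data and then, in the $(-1)$-symplectic case, to check the compatibility condition of Definition \ref{defn:OrLag}. Throughout I write $\boxtimes$ for the external tensor product of line bundles and $\boxplus$ for the external direct sum of complexes, and I use two standard facts: the cotangent complex is additive for products, $\mathbb{L}_{A_0\times A_1}\cong \mathbb{L}_{A_0}\boxplus \mathbb{L}_{A_1}$, so that on top determinants $K_{A_0\times A_1}\cong K_{A_0}\boxtimes K_{A_1}$; and pullback commutes with $\boxtimes$. For the first statement, given the $E_i$-orientations $(R_{X_i},\gamma_{X_i})$ on $f_i:X_i\to S_i$, I take $R_{X_0}\boxtimes R_{X_1}$ as the line bundle on $X_0\times X_1$ and define $\gamma_{X_0\times X_1}$ as the composite
\begin{align*}
(R_{X_0}\boxtimes R_{X_1})^{\otimes 2} &\cong R_{X_0}^{\otimes 2}\boxtimes R_{X_1}^{\otimes 2} \xrightarrow{\gamma_{X_0}\boxtimes \gamma_{X_1}} (K_{X_0}\otimes f_0^*E_0)\boxtimes (K_{X_1}\otimes f_1^*E_1)\\
&\cong K_{X_0\times X_1}\otimes (f_0\times f_1)^*(E_0\boxtimes E_1),
\end{align*}
using $K_{X_0\times X_1}\cong K_{X_0}\boxtimes K_{X_1}$ and $(f_0\times f_1)^*(E_0\boxtimes E_1)\cong f_0^*E_0\boxtimes f_1^*E_1$. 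As Definition \ref{defn:Or0Lag} imposes no further constraint in the $0$-symplectic case, this settles the first statement.

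For the second statement I first note that $Y_0\times Y_1$ is again $(-1)$-symplectic and, by the special case of the first statement in which $S_0=S_1=\bullet_0$ (so the $E_i$ are trivial), it carries the orientation $R_{Y_0}\boxtimes R_{Y_1}$ with structure isomorphism $\gamma_{Y_0\times Y_1}$ built exactly as above. I then define the candidate orientation on $\phi_0\times \phi_1:M_0\times M_1\to Y_0\times Y_1$ by
\[\beta_{M_0\times M_1}:K_{M_0\times M_1}\cong K_{M_0}\boxtimes K_{M_1}\xrightarrow{\beta_{M_0}\boxtimes \beta_{M_1}} \phi_0^*R_{Y_0}\boxtimes \phi_1^*R_{Y_1}\cong (\phi_0\times \phi_1)^*(R_{Y_0}\boxtimes R_{Y_1}).\]
What must be checked is the compatibility $\gamma_{Y_0\times Y_1}\circ \beta_{M_0\times M_1}^{\otimes 2}=\alpha_{M_0\times M_1}$. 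Since both $\gamma_{Y_0\times Y_1}$ and $\beta_{M_0\times M_1}$ were defined as external products, the left-hand side factors as $(\gamma_{Y_0}\circ\beta_{M_0}^{\otimes 2})\boxtimes(\gamma_{Y_1}\circ\beta_{M_1}^{\otimes 2})$, which by the hypotheses on the two factors equals $\alpha_{M_0}\boxtimes \alpha_{M_1}$. Hence the whole statement reduces to the single identity
\[\alpha_{M_0\times M_1}=\alpha_{M_0}\boxtimes \alpha_{M_1}\]
under the determinant identifications above.

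The main obstacle is this last identity, that is, showing the canonical isomorphism $\alpha_M$ of (\ref{eqn:alpha}) is multiplicative under products. The key point is that, for the product Lagrangian structure of Proposition \ref{product}, every ingredient entering the construction of $\alpha$ splits as an external product: the Lagrangian quasi-isomorphism $\mathbb{T}_{M_0\times M_1}\cong \mathbb{L}_{\phi_0\times\phi_1}[-2]$ is the $\boxplus$ of the factor quasi-isomorphisms, and the defining exact triangle $(\phi_0\times\phi_1)^*\mathbb{L}_{Y_0\times Y_1}\to \mathbb{L}_{M_0\times M_1}\to \mathbb{L}_{\phi_0\times\phi_1}$ is the external direct sum of the corresponding triangles for $M_0$ and $M_1$. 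Since the determinant functor is monoidal and sends exact triangles to tensor products of determinants, applying $\det$ throughout and retracing the chain of isomorphisms defining $\alpha$ in (\ref{eqn:alpha}) yields $\alpha_{M_0\times M_1}=\alpha_{M_0}\boxtimes\alpha_{M_1}$. Combining this with the reduction of the previous paragraph completes the proof; the only genuinely delicate verification is the bookkeeping that these determinant identifications are compatible rather than off by a sign or a transposition, which is routine but must be done with care.
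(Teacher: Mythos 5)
Your proposal is correct and follows essentially the same route as the paper: the paper's proof also takes external tensor products of the orientation data, derives the first statement from $K_{X_0\times X_1}\cong K_{X_0}\boxtimes K_{X_1}$, and reduces the second statement to the identity $\alpha_{M_0\times M_1}=\alpha_{M_0}\boxtimes\alpha_{M_1}$, which is precisely your key step. The only difference is that you supply the justification for this identity (additivity of cotangent complexes and monoidality of the determinant) which the paper simply asserts.
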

\begin{proof}
The first statement easily follows from the fact that $K_{X_0 \times X_1}\cong K_{X_0}\boxtimes K_{X_1}$. For the second part note that the map $\alpha$ defined in \ref{eqn:alpha} satisfies
\[\alpha_{M_0\times M_1}=\alpha_{M_0}\boxtimes\alpha_{M_1},
\]
for a product Lagrangian. This implies the result.
\end{proof}

\begin{lem}\label{lem:OrienConv}
Let $S_0, S_1, S_2$ be $0$-symplectic derived stacks with line bundles $E_i$ on $S_i$ for $i=0,1,2$. Given $f: N_1\to S^{-}_0\times S_1$, a $(E^{-1}_{0}\boxtimes E_{1})$-oriented Lagrangian and 
$g: N_2\to S^{-}_1\times S_2$, a  $(E^{-1}_{1}\boxtimes E_{2})$-oriented Lagrangian. Then the Lagrangian  $N_{2} \bullet N_{1}$ has a natural  $(E^{-1}_{0}\boxtimes E_{2})$-orientation.
\end{lem}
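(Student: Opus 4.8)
The plan is to build the orientation data on $N_2 \bullet N_1$ by pulling back and tensoring the given orientations, after identifying the canonical bundle of the underlying fibre product. Recall from Corollary~\ref{cor:LagInt} that the underlying derived stack of $N_2 \bullet N_1$ is $W := N_1 \times_{S_1} N_2$, with projections $\pi_1 : W \to N_1$ and $\pi_2 : W \to N_2$, and that the structure map is $\Phi = (\Phi_0,\Phi_2) : W \to S_0^- \times S_2$ with $\Phi_0 = f_0 \circ \pi_1$ and $\Phi_2 = g_2 \circ \pi_2$, where $f = (f_0,f_1)$ and $g = (g_1,g_2)$. Write $p := f_1 \circ \pi_1 \simeq g_1 \circ \pi_2 : W \to S_1$ for the common composition to $S_1$.

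First I would compute $K_W$ from the defining Cartesian square
\[
\xymatrix{
W \ar[r]^{\pi_2} \ar[d]_{\pi_1} & N_2 \ar[d]^{g_1} \\
N_1 \ar[r]_{f_1} & S_1.
}
\]
Base change gives $\mathbb{L}_{\pi_1} \simeq \pi_2^* \mathbb{L}_{g_1}$, and the triangles $\pi_1^* \mathbb{L}_{N_1} \to \mathbb{L}_W \to \mathbb{L}_{\pi_1}$ together with $g_1^* \mathbb{L}_{S_1} \to \mathbb{L}_{N_2} \to \mathbb{L}_{g_1}$ yield, on passing to determinants,
\[
K_W \cong \pi_1^* K_{N_1} \otimes \pi_2^* K_{N_2} \otimes p^* K_{S_1}^{-1}.
\]
Since $S_1$ is $0$-symplectic, its canonical bundle $K_{S_1} = \det(\mathbb{L}_{S_1})$ is canonically trivialized, and using this trivialization I obtain a canonical isomorphism $K_W \cong \pi_1^* K_{N_1} \otimes \pi_2^* K_{N_2}$.

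Next I would set $R_{N_2 \bullet N_1} := \pi_1^* R_{N_1} \otimes \pi_2^* R_{N_2}$ and define $\gamma_{N_2 \bullet N_1}$ as the tensor product $\pi_1^* \gamma_{N_1} \otimes \pi_2^* \gamma_{N_2}$. Pulling back the two given isomorphisms and tensoring produces a map
\begin{align*}
(\pi_1^* R_{N_1} \otimes \pi_2^* R_{N_2})^{\otimes 2} \to {} & \pi_1^* K_{N_1} \otimes \pi_2^* K_{N_2} \otimes \pi_1^* f_0^* E_0^{-1} \\
& {} \otimes p^* E_1 \otimes p^* E_1^{-1} \otimes \pi_2^* g_2^* E_2,
\end{align*}
where I have already rewritten $\pi_1^* f_1^* E_1 \cong p^* E_1$ and $\pi_2^* g_1^* E_1^{-1} \cong p^* E_1^{-1}$ using $\pi_1^* f_1 \simeq p \simeq \pi_2^* g_1$. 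The key observation is that the two middle factors $p^* E_1 \otimes p^* E_1^{-1}$ cancel canonically, while the outer factors assemble to $\Phi^*(E_0^{-1} \boxtimes E_2) = \pi_1^* f_0^* E_0^{-1} \otimes \pi_2^* g_2^* E_2$. Combined with the identification of $K_W$ above, this is exactly an isomorphism $R_{N_2 \bullet N_1}^{\otimes 2} \to K_W \otimes \Phi^*(E_0^{-1} \boxtimes E_2)$, i.e.\ an $(E_0^{-1} \boxtimes E_2)$-orientation on $N_2 \bullet N_1$ in the sense of Definition~\ref{defn:Or0Lag}.

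The construction is manifestly functorial in the pullbacks, so the resulting orientation is natural. The only point requiring care—and the main, though routine, obstacle—is the bookkeeping: one must verify that the determinant identifications coming from the cotangent-complex triangles, the symplectic trivialization of $K_{S_1}$, and the cancellation of the $E_1$-factors are all compatible as \emph{canonical} isomorphisms, so that no spurious sign or twist is introduced. This is handled by keeping track of the homotopy $\pi_1^* f_1 \simeq p \simeq \pi_2^* g_1$ throughout the identifications.
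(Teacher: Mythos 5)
Your proposal is correct and takes essentially the same route as the paper's proof: both set $R_{N_2\bullet N_1}=\pi_1^{*}R_{N_1}\otimes\pi_2^{*}R_{N_2}$, define $\gamma_{N_2\bullet N_1}$ as the tensor product of the pulled-back $\gamma_{N_i}$, cancel the middle $E_1$-factors using the homotopy $f_1\circ\pi_1\simeq g_1\circ\pi_2$, and invoke the symplectic trivialization of $K_{S_1}$. The only difference is presentational: you derive the identification $K_W\cong \pi_1^{*}K_{N_1}\otimes\pi_2^{*}K_{N_2}\otimes p^{*}K_{S_1}^{-1}$ explicitly from the base-change and cotangent-complex triangles, whereas the paper compresses this into the remark that $K_{S_1}$ and $f_1^{*}(E_1)\boxtimes g_1^{*}(E_1^{-1})$ have canonical trivializations.
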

\begin{proof} 
Let $(R_{N_i}, \gamma_{N_i})$ be the orientations of the $N_i$ for $i=1,2$.  
Recall the Lagrangian  $N_{2} \bullet N_{1}$ is defined as a Lagrangian structure on the map $h: N_{1} \times_{S_1} N_2\to S_{0}^{-} \times S_{2}$, induced by $f_0$ and $g_2$. We  define an $(E^{-1}_{0}\boxtimes E_{2})$-orientation by taking $R_{N_{1} \times_{S_1} N_2}= R_{N_{1}} \boxtimes R_{N_{2}}$ and $\gamma_{N_{1} \times_{S_1} N_2}$ equal to the composition
\begin{equation}
\begin{split}R^{\otimes 2}_{N_{1} \times_{S_1} N_2} \cong R^{\otimes 2}_{N_{1}} \boxtimes R^{\otimes 2}_{N_{2}}\xrightarrow{\gamma_{N_1} \boxtimes \gamma_{N_2}} & (K_{N_1} \otimes f^{*}(E^{-1}_{0}\boxtimes E_{1})) \boxtimes (K_{N_2} \otimes g^{*}(E^{-1}_{1}\boxtimes E_{2})) \\
& \cong (K_{N_1} \boxtimes K_{N_2})  \otimes h^{*}(E^{-1}_{0}\boxtimes E_{2}) \\ & \cong K_{N_{1} \times_{S_1} N_2} \otimes h^{*}(E^{-1}_{0}\boxtimes E_{2}).
\end{split}
\end{equation}
Here we have use the fact that $K_{S_1}$ and $f_1^*(E_1)\boxtimes g_1^*(E_1^{-1})$ have canonical trivializations.
\end{proof}

\begin{lem}\label{lem:OrienProds}
Let $S$ be a $0$-symplectic derived stack with line bundle $E$ and consider $N_0, N_1, N_2$, $E$-oriented Lagrangians in $S$. 
\begin{itemize}
\item[{\bf(a)}]
The $E$-orientations on $N_0,N_1$ induce an orientation on the $(-1)$-symplectic derived stack $N_{01}= N_0 \times_{S} N_1$. 
\item[\bf(b)] Using the orientations on $N_{01}, N_{12}, N_{20}$  from part (a) and the orientation on their product discussed in Lemma \ref{lem:OrienConv}, there is a natural orientation on the Lagrangian 
\[\varphi:N_{0} \times_{S} N_{1} \times_{S} N_{2} \to N_{01} \times N_{12} \times N_{20}
\]
defined in Theorem \ref{thm:ThreeLag}.
\end{itemize}
\end{lem}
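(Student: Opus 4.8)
The plan is to build both orientations explicitly from the cotangent complexes of the relevant homotopy fibre products together with the given data $\gamma_{N_i}$, and then for part (b) to verify the single compatibility condition of Definition \ref{defn:OrLag}.

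For part (a), write $p_0,p_1$ for the projections of $N_{01}=N_0\times_S N_1$ and $\pi=f_0\circ p_0\simeq f_1\circ p_1$ for the structure map to $S$. The Mayer--Vietoris triangle for the cotangent complex of a homotopy fibre product,
\[\pi^*\mathbb{L}_S\To p_0^*\mathbb{L}_{N_0}\oplus p_1^*\mathbb{L}_{N_1}\To \mathbb{L}_{N_{01}}\To,\]
gives on determinants $K_{N_{01}}\cong p_0^*K_{N_0}\otimes p_1^*K_{N_1}\otimes\pi^*K_S^{-1}$. Since $S$ is $0$-symplectic, $K_S$ is canonically trivialised, so $K_{N_{01}}\cong p_0^*K_{N_0}\otimes p_1^*K_{N_1}$. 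I would then set $R_{N_{01}}:=p_0^*R_{N_0}\otimes p_1^*R_{N_1}\otimes\pi^*E^{-1}$ and define $\gamma_{N_{01}}$ as the composite
\[R_{N_{01}}^{\otimes2}\xrightarrow{p_0^*\gamma_{N_0}\otimes p_1^*\gamma_{N_1}}(p_0^*K_{N_0}\otimes\pi^*E)\otimes(p_1^*K_{N_1}\otimes\pi^*E)\otimes\pi^*E^{-2}\cong K_{N_{01}},\]
using $p_i^*f_i^*E\simeq\pi^*E$; the $\pi^*E^{\pm1}$ factors cancel, giving the required $\gamma_{N_{01}}\colon R_{N_{01}}^{\otimes2}\to K_{N_{01}}$, hence an orientation on the $(-1)$-symplectic stack $N_{01}$.

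For part (b), first orient $Y:=N_{01}\times N_{12}\times N_{20}$ by Lemma \ref{lem:OrientedProduct}, so that $R_Y=R_{N_{01}}\boxtimes R_{N_{12}}\boxtimes R_{N_{20}}$ and $\gamma_Y=\gamma_{N_{01}}\boxtimes\gamma_{N_{12}}\boxtimes\gamma_{N_{20}}$. Writing $q_0,q_1,q_2$ for the projections of $Z=N_0\times_S N_1\times_S N_2$ and $\pi$ for its map to $S$, the same triangle computation gives $K_Z\cong q_0^*K_{N_0}\otimes q_1^*K_{N_1}\otimes q_2^*K_{N_2}$ (again with $K_S$ trivial). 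Composing $\varphi$ with the three projections of $Y$ and substituting the part (a) formula for each $R_{N_{ij}}$ shows $\varphi^*R_Y\cong\bigotimes_i(q_i^*R_{N_i})^{\otimes2}\otimes\pi^*E^{-3}$; applying $q_i^*\gamma_{N_i}$ to each squared factor and cancelling the $E$-powers produces a canonical isomorphism $\delta\colon\varphi^*R_Y\to K_Z$. I would define $\beta_Z:=\delta^{-1}\colon K_Z\to\varphi^*R_Y$ as the candidate orientation of $\varphi$.

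It then remains to check the compatibility $\varphi^*\gamma_Y\circ\beta_Z^{\otimes2}=\alpha_Z$ of Definition \ref{defn:OrLag}, and I expect this to be the main obstacle. The key simplifying observation is that each $q_i^*\gamma_{N_i}$ occurs twice in $\varphi^*\gamma_Y$ (once in each of the two pairs containing $i$) and twice, with opposite exponent, in $\beta_Z^{\otimes2}=(\delta^{\otimes2})^{-1}$; hence all of the orientation data cancels and the left-hand side collapses to a canonical isomorphism $c\colon K_Z^{\otimes2}\to\varphi^*K_Y$ built solely from the fibre-product determinant identifications and the trivialisation of $K_S$. The task is thus to identify $c$ with $\alpha_Z$. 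Since $\alpha_Z$ is defined through the Lagrangian quasi-isomorphism $\mathbb{T}_Z\simeq\mathbb{L}_\varphi[-2]$ together with the triangle $\varphi^*\mathbb{L}_Y\to\mathbb{L}_Z\to\mathbb{L}_\varphi$, I would take determinants of the commutative diagram appearing in the proof of Theorem \ref{thm:ThreeLag}, in which $\Theta_H$ is expressed in terms of $\Theta_{\omega_{01}}\boxplus\Theta_{\omega_{12}}$ and $\pi^*\Theta_{h_1}$; tracing these determinant lines through the exact rows should show that the isomorphism induced by $\Theta_H$ is precisely the one repackaged by the fibre-product identifications, so that $c=\alpha_Z$ up to the overall $\mathbb{Z}_2$-sign inherent in any orientation, which is then fixed by the natural choice. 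The delicate points are the bookkeeping of the determinant-of-shift signs $\det(C[1])=(\det C)^{-1}$ and the verification that the squares in that diagram commute on determinants, which is where I would concentrate the effort.
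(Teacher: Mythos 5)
Your construction coincides with the paper's proof: for (a) your $R_{N_{01}}=p_0^*R_{N_0}\otimes p_1^*R_{N_1}\otimes\pi^*E^{-1}$ is exactly the paper's $(R_{N_0}\otimes f_0^*E^{-1})\boxtimes R_{N_1}$ with $\gamma_{N_{01}}$ built from $K_{N_{01}}\cong K_{N_0}\boxtimes K_{N_1}$, and for (b) you perform precisely the paper's cyclic regrouping of the three factors $R_{N_i}^{\otimes 2}\otimes E^{-1}$ to identify $K_{N_{012}}$ with $\varphi^*\bigl(R_{N_{01}}\boxtimes R_{N_{12}}\boxtimes R_{N_{20}}\bigr)$. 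The only divergence is one of detail: the paper dismisses the compatibility $\varphi^*\gamma_Y\circ\beta_Z^{\otimes 2}=\alpha_Z$ with ``it's easy to check,'' while you correctly observe that all $\gamma_{N_i}$-factors cancel and sketch the remaining determinant-level check against the diagram in Theorem \ref{thm:ThreeLag} --- a sound elaboration of the same argument, not a different route.
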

\begin{proof} Denote by  $f_{i}:N_i \to S$ the Lagrangian morphisms and by $(R_{N_i}, \gamma_{N_i})$ the orientations. For part (a) we define $R_{N_{01}}= (R_{N_0}\otimes f_0^{*} E^{-1})\boxtimes R_{N_1}$. The composition defining $\gamma_{N_{01}}$ can be easily constructed using the isomorphism $K_{N_{01}}\cong K_{N_0}\boxtimes K_{N_1}$.

For part (b) we define the orientation on the triple fiber product as the composition:
\begin{align*}
 K_{N_{012}}  & \cong K_{N_0} \boxtimes K_{N_1} \boxtimes K_{N_2} \cong (R^{\otimes 2}_{N_0}\otimes E^{-1})\boxtimes (R^{\otimes 2}_{N_1}\otimes E^{-1})\boxtimes (R^{\otimes 2}_{N_2}\otimes E^{-1}) \\
 & \cong (R_{N_0} \otimes  E^{-1}\otimes R_{N_1})\boxtimes (R_{N_1} \otimes E^{-1} \otimes R_{N_2})\boxtimes (R_{N_2} \otimes E^{-1} \otimes R_{N_0}) \\
 &  \cong \varphi^{*}(R_{N_{01} \times N_{12} \times N_{20}}),
\end{align*}
where we have omitted the pullbacks from the notation. It's easy to check that this map satisfies the required property.
\end{proof}

\begin{lem}\label{lem:ConvOrien}
Given $X_0, X_1$ oriented $(-1)$-symplectic derived stacks and oriented Lagrangians 
$g:N\longrightarrow X_{0}$ and $(f_0,f_1):M \longrightarrow X_{0}^{-} \times X_1$ the Lagrangian $c_{f}(g): N \times_{X_0} M\longrightarrow X_{1}$ from Proposition \ref{prop:Lag2Lag} has an induced orientation 
\end{lem}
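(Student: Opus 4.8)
The plan is to produce exactly the orientation data demanded by Definition \ref{defn:OrLag}. Write $P = N\times_{X_0}M$ for the homotopy fiber product, let $\pi_N,\pi_M$ be its projections and $c\simeq g\circ\pi_N\simeq f_0\circ\pi_M : P\to X_0$ the common composite, so that $c_f(g)=f_1\circ\pi_M : P\to X_1$. Since $X_1$ carries its orientation $(R_{X_1},\gamma_{X_1})$, what I must construct is an isomorphism $\beta_P : K_P\to c_f(g)^*R_{X_1}$ satisfying $\gamma_{X_1}\circ\beta_P^{\otimes 2}=\alpha_P$, where $\alpha_P$ is the canonical isomorphism \eqref{eqn:alpha} attached to the Lagrangian $c_f(g)$ in the $(-1)$-symplectic stack $X_1$ (whose Lagrangian structure was built in Proposition \ref{prop:Lag2Lag}).

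First I would record the determinant identity for the fiber product. The cotangent triangle $c^*\mathbb{L}_{X_0}\to \pi_N^*\mathbb{L}_N\oplus\pi_M^*\mathbb{L}_M\to\mathbb{L}_P$ of the homotopy pullback, on taking top exterior powers, gives a canonical isomorphism
\[
D : K_P \xrightarrow{\ \sim\ } \pi_N^*K_N\otimes\pi_M^*K_M\otimes c^*K_{X_0}^{-1}.
\]
Because $g\circ\pi_N\simeq f_0\circ\pi_M\simeq c$, the given orientation $\beta_N : K_N\to g^*R_{X_0}$ pulls back to $\pi_N^*\beta_N : \pi_N^*K_N\to c^*R_{X_0}$, and the orientation $\beta_M : K_M\to f_0^*R_{X_0}\otimes f_1^*R_{X_1}$ of $M$ in $X_0^-\times X_1$ (whose orientation is $R_{X_0}\boxtimes R_{X_1}$ by Lemma \ref{lem:OrientedProduct}, as $K_{X_0^-}=K_{X_0}$) pulls back to $\pi_M^*\beta_M : \pi_M^*K_M\to c^*R_{X_0}\otimes c_f(g)^*R_{X_1}$. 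Composing $D$ with $\pi_N^*\beta_N\otimes\pi_M^*\beta_M\otimes c^*\gamma_{X_0}^{-1}$ and the evident contraction of the $c^*R_{X_0}^{\pm}$ factors (whose exponents sum to zero) defines
\[
\beta_P : K_P \to c_f(g)^*R_{X_1}.
\]

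The substantive step is the identity $\gamma_{X_1}\circ\beta_P^{\otimes 2}=\alpha_P$. Squaring $\beta_P$ and inserting the defining relations $\beta_N^{\otimes 2}=\gamma_{X_0}^{-1}\circ\alpha_N$ and $\beta_M^{\otimes 2}=(\gamma_{X_0}\boxtimes\gamma_{X_1})^{-1}\circ\alpha_M$, the factor $\gamma_{X_1}^{-1}$ arising from $\beta_M^{\otimes 2}$ cancels the outer $\gamma_{X_1}$, and every copy of $\gamma_{X_0}^{\pm}$ cancels against the $c^*\gamma_{X_0}^{-1}$ factors. The statement then reduces to the purely determinantal assertion, free of any orientation,
\[
\alpha_P \;=\; (\mathrm{contr})\circ\bigl(\pi_N^*\alpha_N\otimes\pi_M^*\alpha_M\otimes\mathrm{id}_{c^*K_{X_0}^{-2}}\bigr)\circ D^{\otimes 2}.
\]
I would prove this by taking determinants of the non-degeneracy square of Proposition \ref{prop:Lag2Lag} — the commuting diagram relating $\Theta_H$, $\Theta_e\boxplus\Theta_h$ and $\Theta_{\omega_0}$ — and matching the resulting line-bundle isomorphisms against the exact triangle \eqref{eqn:Tri3} computing $\alpha_P$ together with the triangles defining $\alpha_N$ and $\alpha_M$.

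I expect this last determinantal identity to be the main obstacle. Organizing the many exact triangles so that their determinants assemble into the single displayed equation is bookkeeping-heavy, and one must watch signs: passing to $X_0^-$ replaces $\Theta_{\omega_0}$ by $-\Theta_{\omega_0}$, whose determinant differs by $(-1)^{\mathrm{rk}}$, and \eqref{eqn:alpha} itself carries a shift by $[-2]$. Any residual global sign is harmless for the \emph{existence} claim, since it can be absorbed into the $\mathbb{Z}_2$ ambiguity of orientations noted after Definition \ref{defn:OrLag}, but I would pin it down so that $\beta_P$ as defined satisfies the compatibility on the nose. Finally I would note that this construction is manifestly compatible with the product and convolution orientations of Lemmas \ref{lem:OrientedProduct}–\ref{lem:OrienProds}, which is precisely what allows this primitive operation $C_f$ to feed consistently into the orientation data underlying $\fkSymp^{or}$.
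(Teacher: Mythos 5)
Your proposal is correct and follows essentially the same route as the paper: the paper also defines $\beta_{N\times_{X_0}M}$ via the canonical isomorphism $K_{N\times_{X_0}M}\cong (K_N\otimes g^*K_{X_0}^{-1})\boxtimes K_M$, applies $\beta_N\boxtimes\beta_M$, and cancels the $R_{X_0}$-factors using $\gamma_{X_0}$. The only difference is that the paper dismisses the compatibility $\gamma_{X_1}\circ\beta^{\otimes 2}=\alpha$ as ``easy to check,'' whereas you sketch the determinantal verification explicitly; your sketch is a reasonable filling-in of exactly that step.
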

\begin{proof} 
We have isomorphisms $\beta_{N}:K_{N} \to g^{*}R_{X_0}$ and $\beta_{M}:K_{M} \to (f_{0}^{*} R_{X_0})\otimes (f_{1}^{*} R_{X_1})$ and $\gamma_{X_0}:R_{X_0}^{\otimes 2} \to K_{X_0}$ . Define $\beta_{N\times_{X_0}M}$ as the composition 
\[K_{N\times_{X_0}M}\cong (K_{N} \otimes g^{*}K_{X_0}^{-1}) \boxtimes K_{M}
\xrightarrow{(\beta_{N} \otimes \text{id})\boxtimes \beta_{M}} g^{*}(R_{X_0} \otimes K_{X_0}^{-1})\boxtimes ((f_{0}^{*} R_{X_0})\otimes (f_{1}^{*} R_{X_1})) \cong c_{f}(g)^{*}R_{X_1}
\]
where in the last isomorphism we use $\gamma_{X_0}$. It is easy to check that $\gamma_{X_1} \circ \beta^{\otimes 2}_{N\times_{X_0}M} = \alpha_{N\times_{X_0}M}.$
\end{proof}

As in the unoriented case, we will use the notation $C_M(N)$ for the oriented Lagrangian constructed in the above lemma. 

\begin{defn}\label{defn:OrientedLagrangeomorphism} 
Let $S$ be a $0$-symplectic derived stack with a line bundle $E$. An oriented Lagrangeomorphism between $E$-oriented Lagrangians $(X_0, R_{X_0}, \gamma_{X_0})$ and $(X_1, R_{X_1}, \gamma_{X_1})$ is a pair consisting of a Lagrangeomorphism $\rho:X_0 \to X_1$ and an isomorphism $\zeta: \rho^{*} R_{X_1} \to R_{X_0}$.

Let $Y$ be an oriented $(-1)$-symplectic derived stack and $(f_0:N_0\To Y, \beta_{N_0})$, $(f_1:N_1 \To Y, \beta_{N_1})$ be oriented $(-1)$-Lagrangians. An oriented Lagrangeomorphism between $N_0$ and $N_1$ consists of a Lagrangeomorphism $\psi: N_0 \To N_1$ such that $\beta_{N_0}$ equals
\[ K_{N_0}\cong \psi^*(K_{N_1})\xrightarrow{\psi^* \beta_{N_1}} \psi^*(f_1^*(R_Y))\cong f_0^*(R_Y).
\]
\end{defn}

\begin{rem}
Notice that the condition of oriented Lagrangeomorphism between $0$-Lagrangians gives the following isomorphism of line bundles 
\[K_{X_0} \cong R_{X_{0}}^{\otimes 2}\otimes f_{0}^{*} (E^{-1}) \cong R_{X_{0}} \otimes \rho^{*} (R_{X_{1}}) \otimes f_{0}^{*} (E^{-1}) \cong \Gamma_{\rho}^{*}(R_{X_{01}}).
\] 
We can easily see that this determines an orientation on the Lagrangian $\Gamma_{\rho}:X_{0} \to X_{01}$, and in fact it is equivalent to it.
\end{rem}

From now on we will use $\vdim M$ to denote the virtual dimension of a derived Artin stack, that is the Euler characteristic of its tangent complex $\mathbb{T}_M$.

The operation $C_M(\cdot)$ satisfies similar properties to the unoriented one, stated in Proposition \ref{prop:cXncYequiv}. We collect the most important ones in the following lemma whose proof is elementary. 

\begin{lem}\label{lem:comporientation}
Let $X_0$, $X_1$ and $X_2$ be oriented $(-1)$-symplectic derived stacks and let $M_0, M_0' \longrightarrow X_0^{-} \times Y_0$, $M_1 \longrightarrow X_1^{-} \times Y_1$ and $N_0 \longrightarrow Y_0^{-} \times Z_0$ be oriented Lagrangian correspondences and consider oriented Lagrangians $U_1, U_1' \longrightarrow X_0$ and $U_1\longrightarrow X_1$. We have the following:
\begin{itemize}
\item[\textbf{(a)}] If $M_0$ is oriented Lagrangeomorphic to $M_0'$ and $U_0$ is oriented Lagrangeomorphic to $U_0'$ then $C_{M_0}(U_0)$ and $C_{M_0'}(U_0')$ are oriented Lagrangeomorphic. 
\item[\textbf{(b)}] We have an oriented Lagrangeomorphism
\[C_{N_0}(C_{M_0}(U_0)) \cong C_{N_0 \bullet M_0}(U_0),\]
where $N_0 \bullet M_0$ is the oriented Lagrangian constructed in Lemma \ref{lem:OrienConv}.
 \item[\textbf{(c)}] We have an oriented Lagrangeomorphism
\[C_{M_0\times M_1}(U_0\times U_1)\cong (-1)^{m_0 u_1} C_{M_0}(U_0) \times C_{M_1}(U_1),
\]
where $m_0=\vdim M_0$ and $u_1=\vdim U_1$.
\end{itemize}
\end{lem}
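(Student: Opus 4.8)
The plan is to recognize that in each of the three parts the underlying \emph{unoriented} Lagrangeomorphism has already been produced, so that the only content is to check compatibility of the orientation data in the sense of Definition \ref{defn:OrientedLagrangeomorphism}. For parts (a) and (b) the underlying equivalences are exactly those of Proposition \ref{prop:cXncYequiv}(a) and (b); for part (c) it is the canonical reshuffling equivalence
\[
(U_0\times U_1)\times_{X_0\times X_1}(M_0\times M_1)\;\cong\;(U_0\times_{X_0}M_0)\times(U_1\times_{X_1}M_1).
\]
In all three cases I would verify compatibility by writing out the orientation isomorphism $\beta$ on each side as the explicit composite furnished by Lemma \ref{lem:ConvOrien} (together with Lemma \ref{lem:OrienConv} for $N_0\bullet M_0$ in (b) and Lemma \ref{lem:OrientedProduct} for the products in (c)) and comparing the two composites factor by factor.

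For part (a), the orientation $\beta_{C_{M_0}(U_0)}$ is assembled from the pulled-back isomorphisms $\beta_{U_0}$, $\beta_{M_0}$, $\gamma_{X_0}$ and the canonical determinant isomorphism identifying $K_{U_0\times_{X_0}M_0}$ with $K_{U_0}\boxtimes K_{M_0}$ modulo $K_{X_0}$. Every ingredient is natural under oriented Lagrangeomorphisms: Definition \ref{defn:OrientedLagrangeomorphism} forces $\beta_{U_0},\beta_{U_0'}$ to agree through the given equivalence, likewise $\beta_{M_0},\beta_{M_0'}$, and the determinant isomorphisms commute with pullback. Chasing these naturalities through the composite shows that the orientation on $C_{M_0}(U_0)$ pulls back to that on $C_{M_0'}(U_0')$. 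Part (b) is the associativity of the same construction: expanding $\beta_{C_{N_0}(C_{M_0}(U_0))}$ and $\beta_{C_{N_0\bullet M_0}(U_0)}$, both are built from the same data $\beta_{U_0},\beta_{M_0},\beta_{N_0},\gamma_{X_0},\gamma_{Y_0}$ with the $K_{X_0}$ and $K_{Y_0}$ factors cancelling in the same pattern, and the two bracketings differ only by the coherent associativity isomorphisms of determinant lines. Neither (a) nor (b) involves a sign, matching the unoriented statements of Proposition \ref{prop:cXncYequiv}.

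The substance of the lemma is the sign in part (c), and this is where I expect the only genuine bookkeeping. Writing $K$ of a fiber product as $K_{A\times_X B}\cong K_A\otimes K_B\otimes K_X^{-1}$ (suppressing pullbacks) and using $K_{A\times B}\cong K_A\boxtimes K_B$, the source determinant line of the left-hand stack is presented, after expansion, with factors in the order
\[
K_{U_0},\,K_{U_1},\,K_{M_0},\,K_{M_1},\,K_{X_0}^{-1},\,K_{X_1}^{-1},
\]
whereas that of $C_{M_0}(U_0)\times C_{M_1}(U_1)$ is presented in the order
\[
K_{U_0},\,K_{M_0},\,K_{X_0}^{-1},\,K_{U_1},\,K_{M_1},\,K_{X_1}^{-1}.
\]
Matching the two requires permuting these factors inside the symmetric monoidal category of $\mathbb{Z}$-graded (super) determinant lines, in which the parity of $\det(\mathbb{L}_Q)$ is $\vdim Q \bmod 2$. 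The decisive observation is that every symplectic base occurring here, being $(-1)$-symplectic, satisfies $\mathbb{T}\simeq\mathbb{L}[-1]$ and hence has $\vdim=0$, so that $K_{X_0}^{-1}$ and $K_{X_1}^{-1}$ have even parity and may be moved freely. Consequently the only transposition of odd-parity factors that reverses their relative order is that of $K_{U_1}$ past $K_{M_0}$, which by the Koszul sign rule contributes exactly $(-1)^{\vdim U_1\cdot \vdim M_0}=(-1)^{m_0 u_1}$; all remaining isomorphisms ($\gamma_{X_0},\gamma_{X_1}$, the cancellations of $K_{X_0},K_{X_1}$, and the product identification of the target orientation $R_{Y_0}\boxtimes R_{Y_1}$, which is identically ordered on both sides) match on the nose.

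The hard part is therefore purely to fix a linear order of the graded determinant factors, confirm that the virtual dimensions of the $(-1)$-symplectic bases vanish, and check that this single transposition is the unique source of a sign. Once this is done the factor $(-1)^{m_0 u_1}$ emerges and identifies the orientation on $C_{M_0\times M_1}(U_0\times U_1)$ with $(-1)^{m_0 u_1}$ times the product orientation on $C_{M_0}(U_0)\times C_{M_1}(U_1)$, which is precisely the required oriented Lagrangeomorphism and completes the proof.
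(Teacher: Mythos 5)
Your proposal is correct and is exactly the elementary orientation bookkeeping the paper leaves implicit: the paper states this lemma ``whose proof is elementary'' and gives no argument, and your route --- reducing to the unoriented equivalences of Proposition \ref{prop:cXncYequiv} for (a)--(b) and the canonical reshuffling for (c), then comparing the composite orientation isomorphisms from Lemmas \ref{lem:OrienConv}, \ref{lem:OrienProds} and \ref{lem:ConvOrien} --- is the intended one. In particular your two key observations are sound and consistent with the paper's conventions: $(-1)$-symplectic stacks have $\vdim = 0$ (the paper itself uses this in Conjecture \ref{con:OurConj}(c)), so the $K_{X_i}^{-1}$ factors are even, and the single odd transposition of $K_{U_1}$ past $K_{M_0}$ yields precisely the stated sign $(-1)^{m_0 u_1}$, with no sign arising in (a) or (b).
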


Now we have the tools to carry out all the constructions of Section 4 in the oriented setting. This gives the following

\begin{thm}\label{thm:sympor}
There exists a symmetric monoidal bicategory $\fkSymp^{or}$  enriched over $\textbf{gr-Inv}$. The objects are pairs consisting of a $0$-symplectic derived stack $S$ and a line bundle $E$ on $S$.  The $1$-morphisms in $\fkSymp^{or}_1((S_0, E_0),(S_1,E_1))$ consist of $(E^{-1}_{0}\boxtimes E_{1})$-oriented Lagrangians in $S_{0}^{-} \times S_{1}$ and the $2$-morphisms $\fkSymp^{or}_2(X_0, X_1)$ are oriented Lagrangeomorphism classes of oriented Lagrangians in $X_0 \times_{S} X_1$. 

Moreover there is a symmetric monoidal homomorphism $\fkSymp^{or} \to \fkSymp^{0}$ which forgets the orientation data. 
\end{thm}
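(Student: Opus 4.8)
The plan is to re-run the entire construction of Section 4 at the level of underlying derived stacks and Lagrangian structures, but with every Lagrangian now decorated with orientation data, so that the forgetful passage to $\fkSymp^{0}=\fkLag(\bullet_{1})$ becomes a homomorphism essentially by fiat. Concretely, I would take the objects to be pairs $(S,E)$, the $1$-morphisms from $(S_0,E_0)$ to $(S_1,E_1)$ to be $(E_0^{-1}\boxtimes E_1)$-oriented Lagrangians in $S_0^-\times S_1$, and the $2$-morphisms to be oriented-Lagrangeomorphism classes of oriented Lagrangians in $X_0\times_S X_1$. The compositions are the operations $C_{(-)}$ of Section 4, now upgraded to the oriented operations $C_M(N)$ of Lemma \ref{lem:ConvOrien}: composition of $1$-morphisms and vertical composition of $2$-morphisms use the oriented triple-intersection Lagrangian of Lemma \ref{lem:OrienProds}(b), while horizontal composition uses the oriented lift of the Lagrangian from Proposition \ref{prop:6}. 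The identity $1$- and $2$-morphisms are the diagonals of Corollary \ref{cor:diag}, which inherit canonical orientations since $K_S$ and the relevant pullbacks of $E^{-1}\boxtimes E$ are both canonically trivial.

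The bulk of the verification is then inherited: forgetting orientations sends all of this data to the corresponding data of $\fkSymp^{0}$, and Theorem \ref{thm:Main2Cat} already establishes that the underlying operations form a bicategory. Hence associativity, unitality, and the pentagon and triangle coherences hold on underlying stacks, and what remains is to check that the associators and unitors of $\fkSymp^{0}$ lift to \emph{oriented} Lagrangeomorphisms, and that these lifts again satisfy the coherence identities. Each such check reduces, via Lemma \ref{lem:comporientation}(a)--(b), to tracing the line bundles $R_{(-)}$ and the isomorphisms $\gamma_{(-)}$ through the same fiber-product equivalences used in Section 4; since the orientations are built from explicit tensor products of the $R_{N_i}$ together with the canonical trivializations of the $K_{S}$, these matchings are routine bookkeeping.

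The genuinely new input is the enrichment over $\textbf{gr-Inv}$. The grading on $2$-morphisms is the virtual dimension $\vdim$, and the involution is the $\mathbb{Z}_2$-action reversing the sign of $\beta_M$; I would first confirm that the composition operations respect these, which holds because $\vdim$ is additive under the relevant fiber products and orientation reversal is carried along functorially by $C_M(\cdot)$. The crucial point is the enriched interchange law
\[(\eta_{2} * \xi_{2}) \odot (\eta_{1} * \xi_{1}) = (-1)^{|\eta_1||\xi_2|}(\eta_{2} \odot \eta_{1}) * (\xi_{2} \odot \xi_{1}),\]
whose sign is furnished precisely by the factor $(-1)^{m_0 u_1}$ of Lemma \ref{lem:comporientation}(c). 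I would therefore re-run the proof of Lemma \ref{lem:Compat}, which established the unsigned interchange on underlying stacks, now tracking orientations: the same chain of Lagrangeomorphisms goes through, but the permutation $\rho$ interchanging the two middle factors $M_{12}$ and $N_{01}$ forces an application of Lemma \ref{lem:comporientation}(c), producing exactly the Koszul sign above. This sign computation is the main obstacle, since it is where the enriched structure is genuinely forced and where the orientation conventions must be pinned down so that the signs match consistently with the definition of $\textbf{gr-Inv}$.

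Finally, the forgetful homomorphism $\fkSymp^{or}\to\fkSymp^{0}$ is induced by the symmetric monoidal functor $\textbf{gr-Inv}\to\textbf{Sets}$ of Section 4.1 (quotient by $-1$, forget the grading); it is a homomorphism because the oriented operations were defined as lifts of the unoriented ones sharing the same underlying stacks, so its structure isomorphisms $F_{g,f}$ and $F_X$ are identities and the identities of Definition \ref{def:functor} hold trivially. The symmetric monoidal structure is given by the product of oriented Lagrangians from Lemma \ref{lem:OrientedProduct}, with unit $(\bullet_{0},\mathcal{O})$; its coherence data is the orientation refinement of that produced in the unoriented symmetric monoidal theorem, and the forgetful homomorphism is symmetric monoidal because the product orientation of Lemma \ref{lem:OrientedProduct} forgets to the plain product Lagrangian of Proposition \ref{product}.
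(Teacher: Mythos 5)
Your overall route coincides with the paper's: decorate the Section 4 constructions with the orientation data of Section 5 (Lemmas \ref{lem:OrienConv}, \ref{lem:OrienProds}, \ref{lem:ConvOrien}, \ref{lem:OrientedProduct}), lift the coherence cells to oriented Lagrangeomorphisms, and obtain the forgetful homomorphism essentially by construction. However, there is a genuine gap at exactly the two places you wave off as bookkeeping. First, you take the compositions to be the bare oriented operations $C_{(-)}$. The paper instead defines $N\odot M=(-1)^{\vdim N\,(n_0+n_1)}C_{X_{012}}(M\times N)$ and $P*M=(-1)^{(n_0+n_1)(n_1+n_2)}C_{Z_0\times_{S_0\times S_1\times S_2}Z_1}(M\times P)$, with $n_i=\tfrac12\vdim S_i$, and these sign twists are not cosmetic: tracing orientations through the two ways of associating yields the \emph{oriented} Lagrangeomorphism $X_{023}\bullet(X_{012}\times\Delta_{X_{23}})\cong(-1)^{n_0+n_1}\,X_{013}\bullet(\Delta_{X_{01}}\times X_{123})$, so with your unsigned definitions vertical composition is associative only up to orientation reversal. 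In $\textbf{gr-Inv}$ this is a genuine failure, since $M$ and $-M$ are distinct $2$-morphisms; your claim that associativity is ``inherited'' from Theorem \ref{thm:Main2Cat} via Lemma \ref{lem:comporientation}(a)--(b) is precisely what this computation contradicts, and the definitional signs must be engineered in advance to cancel it.

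Second, your grading $|M|=\vdim M$ is not the paper's $|M|=n-\vdim M$ (with $n=\vdim X_0$), and the shift by half the ambient dimension is forced. The total interchange discrepancy is not just the factor $(-1)^{m_0u_1}$ from Lemma \ref{lem:comporientation}(c) that you cite: it aggregates that factor, the definitional twists above, and the swap sign $(-1)^{\vdim N_1\vdim M_2}$ arising from $C_{\Gamma_\rho}$ when the middle factors are interchanged. The paper's bookkeeping organizes this aggregate as $(-1)^{|M_2||N_1|}$ exactly for the shifted degree; with plain $\vdim$ as degree, the identity $(\eta_2*\xi_2)\odot(\eta_1*\xi_1)=(-1)^{|\eta_1||\xi_2|}(\eta_2\odot\eta_1)*(\xi_2\odot\xi_1)$ generically fails. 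You do flag the sign-matching as ``the main obstacle,'' but you leave it unresolved, and resolving it --- the shifted grading together with the sign-twisted compositions, fixed before any coherence check --- is where the actual content of the theorem lies.
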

\begin{proof}
We first discuss the enrichment over $\textbf{gr-Inv}$. Given $M \in\fkSymp^{or}_2(X_0, X_1)$ we define its degree as $|M|= n-\vdim M$, where $n=\vdim X_0$. The involution sends $M$ to the same Lagrangian with the reverse orientation. 

The composition of $1$-morphism is defined as Corollary \ref{cor:Symp0}, using Lemma \ref{lem:OrienConv} to define the orientations. Using Lemmas \ref{lem:OrienProds} and \ref{lem:ConvOrien} we define the compositions of $2$-morphisms as follows. Consider $X_0, X_1, X_2 \in \fkSymp^{or}_1(S_0, S_1)$ and $Y_0, Y_1 \in \fkSymp^{or}_1(S_1, S_2)$. Given $M \in \fkSymp^{or}_2(X_0, X_1)$, $N \in \fkSymp^{or}_2(X_1, X_2)$ and $P \in \fkSymp^{or}_2(Y_0, Y_1)$ we take
\[N \odot M = (-1)^{\vdim N (n_0+n_1)}C_{X_{012}}(M\times N) \ \ \textrm{and} \ \ 
P * M = (-1)^{(n_0+n_1)(n_1+n_2)}C_{Z_0 \times_{S_0\times S_1 \times S_2} Z_1}(M \times P),
\]
where $n_i = (\vdim S_i)/2$. Recall that the Lagrangian $Z_0 \times_{S_0\times S_1 \times S_2} Z_1$ can be described as a triple intersection of oriented $0$-Lagrangians and hence Lemma \ref{lem:OrienProds} (b) assigns it an orientation. 

Using Lemma \ref{lem:comporientation} we can easily show that we have
\[
M_2 \odot (M_1 \odot M_0) = (-1)^{\vdim M_1 (n_0 + n_1)} C_{X_{023}\bullet (X_{012} \times \Delta_{X_{23}})}(M_0 \times M_1 \times M_2)
\]
and
\[(M_2 \odot M_1)\odot M_0 =(-1)^{\vdim M_1 + n_0 +n_1} C_{X_{013}\bullet (\Delta_{X_{01}} \times X_{123})}(M_0 \times M_1 \times M_2).
\]
Now following our conventions for orientations we can easily check that there is an oriented Lagrangeomorphism
$X_{023}\bullet (X_{012} \times \Delta_{X_{23}}) \cong (-1)^{n_0 +n_1}X_{013}\bullet (\Delta_{X_{01}} \times X_{123})$. Putting these facts together we conclude that $\odot$ is associative.
We proceed similarly and compute
\begin{align}\label{comporient1}
(N_2 *&M_2)\odot(N_1*M_1)= \\
&=(-1)^{(n_0+n_2)(\vdim M_2 +\vdim N_2)}C_{Z_{012}\bullet\big((Z_0 \times_{S_{012}} Z_1)\times(Z_1 \times_{S_{012}} Z_2)\big)}(M_1 \times N_1 \times M_2 \times N_2).\nonumber 
\end{align}
Next using the symplectomorphism $\rho$ as in the proof of Lemma \ref{lem:Compat} and using our conventions for orientations we have the oriented Lagrangeomorphism
\[C_{\Gamma_\rho}(M_1 \times M_2 \times N_1 \times N_2) \cong (-1)^{\vdim N_1 \vdim M_2} M_1 \times N_1 \times M_2 \times N_2.
\]
Therefore we have
\begin{equation}\label{comporient2}
(N_2 \odot N_1)*(M_2 \odot M_1)=(-1)^{\epsilon}C_{(Z_0 \times_{S_{012}} Z_2)\bullet (X_{012}\times Y_{012})\bullet \Gamma_\rho}(M_1 \times N_1 \times M_2 \times N_2),
\end{equation}
where $\epsilon=(n_0+n_1)(n_1 +n_2 +\vdim N_1 +\vdim N_2)+(n_0+n_2)(\vdim M_2 +\vdim N_2)+ \vdim N_1 \vdim M_2$. Tracing back through our conventions for orientations we can check that the Lagrangeomorphism
\[Z_{012}\bullet\big((Z_0 \times_{S_{012}} Z_1)\times(Z_1 \times_{S_{012}} Z_2)\big) \cong
(Z_0 \times_{S_{012}} Z_2)\bullet (X_{012}\times Y_{012})\bullet \Gamma_\rho,
\]
constructed in Lemma \ref{lem:Compat} is in fact an oriented Lagrangeomorphism. Therefore we conclude that (\ref{comporient1}) and (\ref{comporient2}) differ by 
\[(-1)^{(n_0+n_1)(n_1+n_2)+\vdim M_2(n_1 +n_2)+\vdim N_1(n_1 +n_2)+ \vdim N_1 \vdim M_2}=(-1)^{|M_2||N_1|},
\]
which finishes the proof of the compatibility of vertical and horizontal composition of $2$-morphisms.

The rest of the proof of the theorem doesn't differ from Corollary \ref{cor:Symp0}. Recall that the identity $1$-morphisms in $\fkSymp^{0}$ are given by $\Delta:S \to S^{-} \times S$.  This has the canonical orientation $R_{S}= \mathcal{O}_{S}$ since there is a canonical isomorphism $K_{S} \otimes_{\mathcal{O}_{S}} \Delta^{*} (E^{-1} \boxtimes E) \cong \mathcal{O}_{S} \otimes_{\mathcal{O}_{S}} \mathcal{O}_{S} \cong  \mathcal{O}_{S}$. The identity $2$-morphisms, associators and unitors in $\fkSymp^{0}$ were all described as the graphs of certain Lagrangeomorphisms. To assign them orientations in the sense of Definition \ref{defn:OrientedLagrangeomorphism} is simply a matter of choosing the obvious $\zeta$-morphisms needed in that definition.

The monoidal structure can be constructed as in Corollary \ref{cor:Symp0} using Lemma \ref{lem:OrientedProduct} to define the necessary orientations. Finally the existence of the forgetful homomorphism is obvious.
\end{proof}

\subsection{Constructible sheaves}\label{subsec:sheaves}\ 

In the remainder of this chapter and in the next one, we take the ground field $k$ to be $\mathbb{C}$ for simplicity. We will work in the context of algebraically constructible sheaves of $\mathbb{F}$-vector spaces on higher algebraic Artin stacks over $\mathbb{C}$. This theory itself has two approaches. The first is the theory of constructible sheaves on the lisse-\'{e}tale topos of an algebraic Artin stack for which $\mathbb{F}$ can be taken to be $\mathbb{Z}/l\mathbb{Z}$, for some prime $l$ or closely related categories using $l$-adic coefficients of various types with some slight technical difficulties (for example restriction on the values of $l$).  The second (see for instance \cite{Sun}) is the theory of constructible sheaves on the Lisse-analytic topos of the analytification of an Artin stack over $\mathbb{C}$. In this case, we can take $\mathbb{F}=\mathbb{Z}/l\mathbb{Z}$ again but now also $\mathbb{Q}$ or $\mathbb{C}$ or any Noetherian ring is fine. Rigid (or Berkovich or Huber) geometry allows us to consider analytifications of stacks defined over fields other than $\mathbb{C}$ if the field is equipped with a valuation and there are also theories of algebraically constructible sheaves on those analytifications but we do not pursue this here. For a derived algebraic Artin stack $X$ over $\mathbb{C}$ we use the notation $D_{c}(X)$, $D^{+}_{c}(X)$, $D^{-}_{c}(X)$, $D^{b}_{c}(X)$ to denote the (triangulated) categories of algebraically constructible sheaves of $\mathbb{F}$-modules on the underlying Artin stack of $X$, and its bounded below, above, and bounded versions. Categories of constructible sheaves on stacks in the (algebraic) \'{e}tale topology are defined in the work \cite{LZ} of Y. Liu and W. Zheng, following Lurie and the work of Laszlo and Olsson \cite{LO1}, \cite{LO2}.   Categories of algebraically constructible sheaves on analytic stacks in the classical analytic topology are discussed in \cite{Sun} (see also  \cite{Pau}).  In the case of rings $\mathbb{F}$ where both theories make sense, such as $\mathbb{F}=\mathbb{Z}/l\mathbb{Z}$ there is no ambiguity in this notation as shown in the comparison theorem proven in \cite{Sun}. 

We consider only morphisms between derived Artin stacks which are locally of finite type, quasi-compact and quasi-separated. For all types of pullback and pushforward functors for morphisms of derived Artin stacks, we work with the associated morphisms on the reduced Artin stacks.  For a morphism $f:X \to Y$ of derived Artin stacks we have a functor $f_{*}:D^{+}_{c}(X) \to D^{+}_{c}(Y)$ with left adjoint the restriction of a functor $f^{*}:D_{c}(Y) \to D_{c}(X)$ to $D^{+}_{c}(X)$.  There is also a duality functor $D_{X}: D_{c}(X) \to D_{c}(X)$. We also sometimes use the pushforward with proper support, $f_{!}=D_{Y}\circ f_{*} \circ D_{X} :D^{-}_{c}(X) \to D^{-}_{c}(Y)$, which is used in the definition of the perverse sheaf of vanishing cycles.  We denote by $f^{!}:D^{-}_{c}(Y) \to D^{-}_{c}(X) $ the right adjoint of $f_{!}$ which is actually the restriction of a functor $D_{c}(Y) \to D_{c}(X) $. See Lemma 6.3.3 and Proposition 6.3.4 of \cite{LZ} for the existence of these functors and their adjointness properties. 

We define the graded vector space
\[\Gamma^{\bullet}(\mS, \mT)= \bigoplus^{\infty}_{i=-\infty}\Hom_{D^{b}_{c}(X)}(\mS, \mT[i]),\] 
graded by $i$.
%$\Hom^{i}(\mS, \mT)= \Hom_{D^{b}_{c}(X)}(\mS, \mT[i])$. 
We will denote by $\mathbb{H}^{\bullet}(X,\mS)$ the graded vector space      $\Gamma^{\bullet}(\mathbb{F}_{X}, \mS)$ and call it the hypercohomology of $\mS$. 

Consider derived Artin stacks $X,Y$ together with morphisms $\pi_{X}$ from $X$ to a point and $\pi_{Y}$ from $Y$ to a point. Then for any $S \in D_{c}(X)$ and $T\in D_{c}(Y)$ we know by the K\"unneth Formula (Proposition 6.1.3 of \cite{LZ}) and by the compatibility of the duality functor with the derived box-product (see Proposition 5.6.4 of \cite{LO1}) that \[(\pi_{X} \times \pi_{Y})_{!}(D_{X\times Y} (S \boxtimes T))  \cong (\pi_{X} \times \pi_{Y})_{!}(D_{X} S \boxtimes D_{Y}T)) \cong (\pi_{X!}D_{X}S) \otimes (\pi_{Y!}D_{Y}T)
\] 
and therefore, 
\[(\pi_{X} \times \pi_{Y})_{*} (S \boxtimes T) \cong  (\pi_{X*}S) \otimes (\pi_{Y*}T)
\]
and so we have 
\begin{equation}\label{eqn:Kunneth}\mathbb{H}^{\bullet}(X \times Y, S \boxtimes T) \cong \mathbb{H}^{\bullet}(X , S)\otimes \mathbb{H}^{\bullet}(Y , T).
\end{equation}

\begin{lem} \label{lem:FunFacts}
If $f:X \to Y$ is a proper morphism of derived Artin stacks then $f_{*}= f_{!}$ as a functor $D^{b}_{c}(X) \to D^{b}_{c}(Y)$.  If $f$ is a closed embedding of derived schemes then $f^{*}f_{*}= \text{id}=f^{!}f_{!}$. Given any Cartesian diagram, 
\begin{equation}\xymatrix{ X\times_{S} Y \ar[r]^{\pi_{Y}} \ar[d]_{\pi_{X}} & Y \ar[d]^{g}   \\
X  \ar[r]_{f}&S }
\end{equation}
of derived Artin stacks there are base change natural isomorphisms 
\[f^{*}g_{!} \cong (\pi_{X})_{!} \pi_{Y}^{*} \  \ \text{and} \ \ g^{*}f_{!} \cong (\pi_{Y})_{!}\pi_{X}^{*}.\] 
Additionally there is a natural transformation
\[c_{f,g}: \pi_{X}^{*}f^{!} \Longrightarrow \pi_{Y}^{!}g^{*}
\]
\end{lem}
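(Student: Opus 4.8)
The plan is to deduce every assertion formally from the six-functor formalism for (derived) Artin stacks developed in \cite{LZ} (building on \cite{LO1,LO2,Sun}), using only the adjunctions and base-change isomorphisms already recorded above. The first observation is that, by our standing convention that all six operations are computed on the underlying reduced Artin stacks, the derived structure plays no role in any of these statements; each one therefore reduces to the corresponding assertion for classical Artin stacks, where it is part of the established formalism. Accordingly, none of the four claims requires a genuinely new argument, only a careful citation and a short formal manipulation.

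First I would treat the proper case. Since $f_{!}$ is \emph{defined} as $D_{Y}\circ f_{*}\circ D_{X}$, the identity $f_{*}=f_{!}$ for proper $f$ is equivalent (applying $D_{X}$ on the right and using biduality $D_{X}D_{X}\cong\text{id}$) to the statement that $f_{*}$ commutes with Verdier duality for proper morphisms, which is part of the formalism of \cite{LZ}; I would simply cite this. For a closed embedding $f$ of derived schemes, $f_{*}$ is fully faithful, so the counit of $f^{*}\dashv f_{*}$ gives $f^{*}f_{*}\cong\text{id}$. The companion identity $f^{!}f_{!}\cong\text{id}$ then follows by conjugating with duality: from $f^{!}=D_{X}f^{*}D_{Y}$ and $f_{!}=D_{Y}f_{*}D_{X}$, together with biduality, one obtains $f^{!}f_{!}\cong D_{X}f^{*}f_{*}D_{X}\cong D_{X}D_{X}\cong\text{id}$.

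For the two base-change isomorphisms, I would invoke the base-change theorem of \cite{LZ}, which is valid for arbitrary Cartesian squares in this formalism: applied to the given square it yields $g^{*}f_{!}\cong(\pi_{Y})_{!}\pi_{X}^{*}$, and applied to the same square read along the transposed pair of parallel edges (interchanging the roles of $f,\pi_{Y}$ with those of $g,\pi_{X}$) it yields $f^{*}g_{!}\cong(\pi_{X})_{!}\pi_{Y}^{*}$. No separate argument is needed, since the two statements are the single base-change theorem applied to one and the same commutative square.

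Finally, the natural transformation $c_{f,g}$ would be constructed as the adjunction mate of the base-change isomorphism $\beta\colon g^{*}f_{!}\xrightarrow{\ \sim\ }(\pi_{Y})_{!}\pi_{X}^{*}$ under the adjunctions $f_{!}\dashv f^{!}$ and $(\pi_{Y})_{!}\dashv\pi_{Y}^{!}$. Writing $\eta\colon\text{id}\Rightarrow\pi_{Y}^{!}(\pi_{Y})_{!}$ for the unit of $(\pi_{Y})_{!}\dashv\pi_{Y}^{!}$ and $\varepsilon\colon f_{!}f^{!}\Rightarrow\text{id}$ for the counit of $f_{!}\dashv f^{!}$, I would define $c_{f,g}$ as the composite
\[
\pi_{X}^{*}f^{!}\xrightarrow{\ \eta\ }\pi_{Y}^{!}(\pi_{Y})_{!}\pi_{X}^{*}f^{!}\xrightarrow{\ \beta^{-1}\ }\pi_{Y}^{!}g^{*}f_{!}f^{!}\xrightarrow{\ \varepsilon\ }\pi_{Y}^{!}g^{*},
\]
which is manifestly natural in its argument. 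The only point requiring care --- and the step I expect to be the main, though still routine, obstacle --- is verifying that the four functors involved land in the triangulated subcategories on which the relevant adjunctions are defined (the boundedness constraints distinguishing $D^{+}_{c}$ from $D^{-}_{c}$), so that the unit $\eta$ and counit $\varepsilon$ above are legitimate. This is handled by restricting to the bounded categories $D^{b}_{c}$, where all of $f_{*},f_{!},f^{*},f^{!}$ and the duality functors are simultaneously available, exactly as in the setup preceding the lemma.
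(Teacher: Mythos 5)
Your proposal is correct and follows essentially the same route as the paper: the first three assertions are dispatched by citing the six-functor formalism (the paper invokes \cite{LZ}, \cite{Nic}, \cite{Mas}, with the proper case dismissed as immediate from the definition $f_{!}=D_{Y}f_{*}D_{X}$), and your mate construction of $c_{f,g}$ via the unit of $(\pi_{Y})_{!}\dashv\pi_{Y}^{!}$, the inverse base-change map, and the counit of $f_{!}\dashv f^{!}$ is exactly the paper's construction, which exhibits the same composite as the canonical element of $\Hom(g^{*}f_{!}f^{!}\mathcal{S},g^{*}\mathcal{S})$ given by $g^{*}$ of the counit $f_{!}f^{!}\mathcal{S}\to\mathcal{S}$. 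Your self-contained derivations of $f^{*}f_{*}\cong\mathrm{id}$ from full faithfulness and of $f^{!}f_{!}\cong\mathrm{id}$ by conjugating with duality are sound minor elaborations of steps the paper handles by citation.
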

\begin{proof}
The first statement is trivial. The second can be found on page 12 of \cite{Mas} in the complex analytic context. The third statement can be found in Proposition 3.2 of \cite{Nic} in the complex analytic context or in Proposition 6.1.1 of \cite{LZ} in the algebraic \'{e}tale context. For the last statement see Proposition 3.1.9 of \cite{KS} in the complex analytic context or in either context simply notice that 
\[\Hom(\pi_{X}^{*}f^{!}\mathcal{S},\pi_{Y}^{!}g^{*}\mathcal{S}) \cong \Hom(\pi_{Y!}\pi_{X}^{*}f^{!}\mathcal{S},g^{*}\mathcal{S}) \cong \Hom(g^{*}f_{!}f^{!}\mathcal{S},g^{*}\mathcal{S}) 
\]
and the right hand side has a canonical element corresponding to the pullback by $g$ of the canonical morphism $f_{!}f^{!}\mathcal{S}\to \mathcal{S}$.
\end{proof}

We will now review the construction, and some properties, of the perverse sheaf of vanishing cycles of a regular function.
Recall that given a regular function $f$ on a variety $U$ over $\mathbb{C}$, we can define a sheaf of nearby cycles of $\mathcal{F} \in D^{b}_{c}(U)$ in $D^{b}_{c}(U^{0})$ where $U^{0}= f^{-1}(0)$. It is defined using the commutative diagram 
\begin{equation}\label{eqn:VanCyc}
\xymatrix{ U_{0} \ar[r]^{\iota} \ar[d] & U \ar[d]_{f}  & T(U_{0})\setminus U_{0}\ar[d] \ar[l]^(.6){j} &E \ar[d] \ar[l]^(.3){\pi} \\
\{0\}  \ar[r]& \mathbb{C}& D_{\epsilon}\setminus \{0\} \ar[l] & \widetilde{D}_{\epsilon}\ar[l] }
\end{equation}
where $D_{\epsilon}$ is a disk of radius $\epsilon$ at $0$ in $\mathbb{C}$, $\widetilde{D_{\epsilon}}$ the universal cover of $D_{\epsilon}\setminus \{0\} $, $T(U_{0})= f^{-1}(D_{ \epsilon})$ and each square is Cartesian. Then the sheaf of nearby cycles of $\mathcal{F} \in D^{b}_{c}(U)$ is defined as 
\[\psi_{f} \mathcal{F} = \iota^{*}(j \circ \pi)_{*}(j \circ \pi)^{*}\mathcal{F}.
\]
The sheaf of vanishing cycles of $\mathcal{F} \in D^{b}_{c}(U)$ is defined as the object $\phi_{f}\mathcal{F}$ making 
\begin{equation}\label{eqn:NearVan} \iota^{*} \mathcal{F} \to \psi_{f} \mathcal{F} \to \phi_{f}\mathcal{F} \to
\end{equation}
into an exact triangle where we have used the natural morphism $\mathcal{F} \to (j \circ \pi)_{*}(j \circ \pi)^{*}\mathcal{F}$. 

We will be interested on the sheaf of vanishing cycles not only of $f$ but also all translations $f-c$ where $c$ is a critical value of $f$, so we introduce the following perverse sheaf
\begin{equation}
\PV_{U,f}=\bigoplus_{c \ \in \ f(Crit(f))} \phi_{f-c}(\mathbb{F}_U [\dim U])|_{Crit(f)}
\end{equation}
on the critical locus of $f$ which we will simply call the sheaf of vanishing cycles of $f$.

Suppose now that we are given a morphism $\Phi:V \to U.$ We can consider the morphism
\begin{equation}\label{eqn:compareVanCyc}
\xymatrix{ V_{0} \ar[d]_{\Phi_{0}} \ar[r]^{\iota_{V}} & V \ar[d]_{\Phi}  & T(V_{0})\setminus V_{0}\ar[d] \ar[l]_(.6){j_{V}} &E_{V}  \ar[d]_{\tilde{\varphi}} \ar[l]_(.3){\pi_{V}}  
\\
U_{0} \ar[r]^{\iota_{U}} & U   & T(U_{0})\setminus U_{0}\ar[l]_(.6){j_{U}} &E_{U}  \ar[l]_(.3){\pi_{U}} }
\end{equation}
of diagrams living over the bottom row of (\ref{eqn:VanCyc}).  Each square in this diagram is Cartesian. Then we have natural equivalences $\tilde{\Phi}_{!}(j_{V}\circ \pi_{V})^{*}=(j_{U}\circ \pi_{U})^{*} \Phi_{!}$ and $\Phi_{0!}i_{V}^{*}= i_{U}^{*}\Phi_{!}$ using base change. Also there is natural morphism $\Phi_{!}(j_{V} \circ \pi_{V})_* \to (j_{U} \circ \pi_{U})_{*} \tilde{\Phi}_{!}$ (which is a natural equivalence if $\Phi$ is proper). This morphism is constructed in (2.5.7) of Proposition 2.5.11 of \cite{KS}. Let $g = f \circ \Phi$. Putting these all together we get for any $\mathcal{F} \in D^{b}_{c}(V)$ morphisms 

\begin{equation*}
\varphi_{0!}\psi_{g} \mathcal{F}= \Phi_{0!}\iota_{V}^{*}(j_{V} \circ \pi_{V})_{*}(j_{V} \circ \pi_{V})^{*}\mathcal{F}= i_{U}^{*}\Phi_{!}(j_{V} \circ \pi_{V})_{*}(j_{V} \circ \pi_{V})^{*}\mathcal{F}\to  i_{U}^{*}(j_{U} \circ \pi_{U})_{*} \tilde{\Phi}_{!}(j_{V} \circ \pi_{V})^{*}\mathcal{F}
\end{equation*}
and 
\begin{equation} i_{U}^{*}(j_{U} \circ \pi_{U})_{*} \tilde{\Phi}_{!}(j_{V} \circ \pi_{V})^{*}\mathcal{F}= i_{U}^{*}(j_{U} \circ \pi_{U})_{*} (j_{U}\circ \pi_{U})^{*} \Phi_{!}\mathcal{F}= \psi_{f} \Phi_{!} \mathcal{F}.
\end{equation}
So for any $\mathcal{F} \in D^{b}_{c}(V)$ we have a canonical morphism
\[\Phi_{0!}(\psi_{f \circ \Phi} \mathcal{F})\to \psi_{f} (\Phi_{!} \mathcal{F}) \]
and hence by (\ref{eqn:NearVan}) we also have a canonical morphism 
\begin{equation}\label{eqn:nonProppush}
\Phi_{0!}(\phi_{f \circ \Phi} \mathcal{F})\to \phi_{f} (\Phi_{!} \mathcal{F}).
\end{equation}
If $\Phi$ is proper these are isomorphisms.

In the next lemma, we construct a ``pull-push" map between the hypercohomology of sheaves and prove some useful properties.
 
\begin{lem}\label{lem:XWdiag}
Consider a diagram 
\[\xymatrix@C=60pt{ X_0 & W \ar[l]_(0.4){ \Phi_0} \ar[r]^{\Phi_1} & X_1 }\] 
of Artin stacks and suppose that $\Phi_1$ is proper and we have objects $\mS_0 \in D^{b}_{c}(X_0)$, and $\mS_1 \in D^{b}_{c}(X_1)$. The following holds:
\begin{itemize}
\item[{\bf (a)}] A morphism $\mu \in \Hom_{D^{b}_{c}(W)}(\Phi_0^{*}\mS_0, \Phi_{1}^{!}\mS_1)$ induces a map 
\[\mu_{*}:\mathbb{H}^{\bullet}(X_0,\mS_0) \to \mathbb{H}^{\bullet}(X_1,\mS_1).
\]
\item[{\bf (b)}] Given another diagram \[\xymatrix@C=60pt{ X_0 & U \ar[l]_(0.4){ \Tau_0}
\ar[r]^{\Tau_1} & X_1 }\] and an equivalence $\Theta:W \to U$ such that $\Tau_i \circ \Theta$ is equivalent to $\Phi_i$, along with morphisms $\mu:\Phi_{0}^{*}\mS_{0} \to \Phi_{1}^{!}\mS_{1}$ and $\eta:\Tau_{0}^{*}\mS_{0} \to \Tau_{1}^{!}\mS_{1}$ such that $\Theta^{*}\eta= \mu$ then $\mu_{*} = \eta_{*}$.
\item[{\bf (c)}] The maps $\mu_*$ compose correctly: given a diagram 
\begin{equation}\label{eqn:CompositionPullPush}
\xymatrix{& & W \times_{X_1} V \ar[dl]_{\pi_W} \ar[dr]^{\pi_V}& & \\
& W \ar[dl]_{\Phi_0} \ar[dr]^{\Phi_1}& & V\ar[dl]_{\Psi_1} \ar[dr]^{\Psi_2}& \\
X_0 && X_1 && X_2 }
\end{equation}
with $\Phi_1$ and $\Psi_2$ proper and morphisms $\mu:\Phi_{0}^{*}\mS_0\to \Phi_{1}^{!}\mS_1$ and $\eta:\Psi_{1}^{*}\mS_1\to \Psi_{2}^{!}\mS_2$ then 
\begin{equation}\label{eqn:SheafComp}(\pi_{V}^{!}(\eta)\circ c \circ \pi_{W}^{*}(\mu))_{*}= \eta_{*} \circ \mu_{*}
\end{equation}
where $c$ comes from the natural transformation $\pi_{W}^{*}\phi_{1}^{!} \Longrightarrow \pi_{V}^{!}\Psi^{*}_{1}$ discussed in Lemma \ref{lem:FunFacts}.
\item[{\bf (d)}] When $X_0=X=X_1$ and $W$ is the diagonal $X \to X \times X$ and $\mS_0=\mS=\mS_1$ and $\Phi_0 = \id_{X} = \Phi_1$ we have $\Phi_{0}^{*}\mS= \mS= \Phi_{1}^{!}\mS$, $\mu=\textnormal{id}_{\mS}$ and using these identifications, $\mu_{*}= \textnormal{id}_{\mathbb{H}^{\bullet}(X,\mS)}$. 
\item[{\bf (e)}] 
Given another diagram \[\xymatrix@C=60pt{ Y_0 & V \ar[l]_(0.4){ \Psi_0}
\ar[r]^{\Psi_1} & Y_1 }\] morphisms $\mu:\Phi_{0}^{*}\mS_0\to \Phi_{1}^{!}\mS_1$ and $\eta:\Psi_{0}^{*}\mT_0\to \Psi_{1}^{!}\mT_1$, consider 
\[\mu\boxtimes \eta:(\Phi_0 \times \Psi_0)^{*}(\mS_0 \boxtimes \mT_0)\to (\Phi_1 \times \Psi_1)^{!}(\mS_1 \boxtimes \mT_1).
\]
Then we have $(\mu\boxtimes \eta)_* = \mu_{*} \otimes \eta_{*}$ via the natural isomorphism $\mathbb{H}^{\bullet}(X_i \times Y_i, \mS_{i} \boxtimes \mT_{i}) \cong \mathbb{H}^{\bullet}(X_i, \mS_{i}) \otimes \mathbb{H}^{\bullet}(Y_i, \mT_{i})$ for $i=0,1$ from equation (\ref{eqn:Kunneth}).
\end{itemize}
\end{lem}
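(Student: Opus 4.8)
The plan is to give an explicit formula for $\mu_*$ in terms of the units and counits of the relevant adjunctions, and then to deduce the five properties (a)--(e) from the formal compatibilities of these (co)units with base change and box products. For part (a), a class in $\mathbb{H}^{\bullet}(X_0, \mS_0) = \bigoplus_i \Hom_{D^{b}_{c}(X_0)}(\mathbb{F}_{X_0}, \mS_0[i])$ is a morphism $s\colon \mathbb{F}_{X_0} \to \mS_0[i]$. Since $\Phi_1$ is proper, $\Phi_{1*} = \Phi_{1!}$ by Lemma \ref{lem:FunFacts}, so both the unit $\mathrm{id} \To \Phi_{1*}\Phi_1^*$ of $(\Phi_1^*, \Phi_{1*})$ and the counit $\Phi_{1*}\Phi_1^! = \Phi_{1!}\Phi_1^! \To \mathrm{id}$ of $(\Phi_{1!}, \Phi_1^!)$ are available. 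I would then define $\mu_*(s)$ as the composite
\[
\mathbb{F}_{X_1} \To \Phi_{1*}\Phi_1^*\mathbb{F}_{X_1} = \Phi_{1*}\Phi_0^*\mathbb{F}_{X_0} \xrightarrow{\Phi_{1*}\Phi_0^* s} \Phi_{1*}\Phi_0^*\mS_0 \xrightarrow{\Phi_{1*}\mu} \Phi_{1*}\Phi_1^!\mS_1 \To \mS_1,
\]
using $\Phi_0^*\mathbb{F}_{X_0} = \mathbb{F}_W = \Phi_1^*\mathbb{F}_{X_1}$. Linearity and preservation of the grading are immediate from additivity of the functors involved.

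Parts (b) and (d) are then quick. For (d), taking $\Phi_0 = \Phi_1 = \mathrm{id}$ collapses every unit and counit to an identity and $\mu = \mathrm{id}$, so the composite is just $s$; hence $\mu_* = \mathrm{id}$. For (b), since $\Theta$ is an equivalence we have $\Theta^! \cong \Theta^*$ with $\Theta_*\Theta^* \cong \mathrm{id}$ and $\Theta_*\Theta^! \cong \mathrm{id}$; substituting $\Phi_i = \Tau_i\circ\Theta$ and $\mu = \Theta^*\eta$ into the formula and using functoriality of the (co)units, the composite for $\mu$ collapses to the one for $\eta$. Both are routine diagram chases.

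The main obstacle is part (c). Writing out $\eta_*\circ\mu_*$ and the left-hand side of (\ref{eqn:SheafComp}) with the formula above, the comparison reduces to the Cartesian square of (\ref{eqn:CompositionPullPush}) with vertical maps $\pi_W, \Psi_1$ and horizontal maps $\pi_V, \Phi_1$. Since $\Phi_1$ is proper so is its base change $\pi_V$, and the proper base change isomorphism of Lemma \ref{lem:FunFacts} gives $\Psi_1^*\Phi_{1*} \cong \pi_{V*}\pi_W^*$. I would then verify, cell by cell, that this base-change isomorphism is compatible with (i) the unit of $(\Phi_1^*, \Phi_{1*})$ pulled back along $\Psi_1$, and (ii) the natural transformation $c\colon \pi_W^*\Phi_1^! \Rightarrow \pi_V^!\Psi_1^*$ of Lemma \ref{lem:FunFacts} together with the counits $\Phi_{1!}\Phi_1^!\to\mathrm{id}$ and $\Psi_{2!}\Psi_2^!\to\mathrm{id}$. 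Feeding these compatibilities into the composite makes the two expressions coincide. The content is entirely formal, but the bookkeeping of the base-change $2$-cells is the delicate point; properness of $\Phi_1$ (hence of $\pi_V$) and of $\Psi_2$ is what guarantees that $\Psi_2\circ\pi_V$ is proper, so that the left-hand pushforward of (\ref{eqn:SheafComp}) is defined in the first place.

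Finally, for part (e) I would use that $(-)^*$, $(-)^!$, pushforward, and the adjunction (co)units all commute with $\boxtimes$, so that the composite defining $(\mu\boxtimes\eta)_*$ is literally the external product of the composites defining $\mu_*$ and $\eta_*$. The K\"unneth isomorphism (\ref{eqn:Kunneth}), established just before the lemma, then intertwines this external product with $\mu_*\otimes\eta_*$, which yields the claim.
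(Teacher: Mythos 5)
Your definition of $\mu_*$ as the composite of the unit $\mathbb{F}_{X_1}\to\Phi_{1*}\Phi_1^*\mathbb{F}_{X_1}$, $\Phi_{1*}\Phi_0^*s$, $\Phi_{1*}\mu$, and the counit $\Phi_{1!}\Phi_1^!\to\mathrm{id}$ is precisely the adjoint transcription of the paper's composite (\ref{eqn:firstTransform}) (pullback by $\Phi_0$, post-composition with $\mu$, adjunction using $\Phi_{1*}=\Phi_{1!}$ for proper $\Phi_1$, pre-composition with $\mathbb{F}_{X_1}\to\Phi_{1*}\mathbb{F}_W$), so your (a), (b), (d) coincide with the paper's. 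Your handling of (c) via proper base change $\Psi_1^*\Phi_{1*}\cong\pi_{V*}\pi_W^*$ (noting $\pi_V$ inherits properness, so $\Psi_2\circ\pi_V$ is proper) together with the compatibility of $c$ and the (co)units, and of (e) via compatibility of all functors with $\boxtimes$ plus the K\"unneth isomorphism (\ref{eqn:Kunneth}), is exactly the bookkeeping the paper performs in its three large commutative diagrams, so the proposal is correct and essentially the same proof.
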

\begin{proof} We define the morphism $\mu_{*}:\Gamma^{\bullet}(\mathbb{F}_{X_0}, \mS_0) \to  \Gamma^{\bullet}(\mathbb{F}_{X_1},\mS_1)$ to be the following composition:
\begin{equation}\label{eqn:firstTransform}
\Gamma^{\bullet}(\mathbb{F}_{X_0}, \mS_0) \to \Gamma^{\bullet}(\mathbb{F}_{W}, \Phi_{0}^{*}\mS_0)\to \Gamma^{\bullet}(\mathbb{F}_{W}, \Phi_{1}^{!}\mS_1) \cong \Gamma^{\bullet}(\Phi_{1*}\mathbb{F}_{W}, \mS_1) \to \Gamma^{\bullet}(\mathbb{F}_{X_1},\mS_1)
\end{equation}
Here the first map is simply $\Phi_{0}^{*}$ and the second is post-composition with $\mu$. The isomorphism is adjunction plus the fact that $(\Phi_1)_{*}=(\Phi_1)_{!}$, since $\Phi_1$ is proper, and the last is pre-composition with the canonical morphism $\mathbb{F}_{X_1} \to \Phi_{1*}\mathbb{F}_{W}$.

In order to prove (b) consider the commutative diagram 
\[\xymatrix{\Gamma^{\bullet}(\mathbb{F}_{X_0}, \mS_{0}) \ar[r]^{\Phi_{0}^{*}}  \ar[dr]_{\Psi_{0}^{*}}& \Gamma^{\bullet}(\mathbb{F}_{W}, \Phi_{0}^{*}\mS_0)\ar[r] & \Gamma^{\bullet}(\mathbb{F}_{W}, \Phi_{1}^{!}\mS_1) \ar[r]^{\cong}&  \Gamma^{\bullet}(\Phi_{1*}\mathbb{F}_{W}, \mS_1) \ar[r] \ar[d]& \Gamma^{\bullet}(\mathbb{F}_{X_1},\mS_1) \\   & \Gamma^{\bullet}(\mathbb{F}_{V}, \Psi_{0}^{*}\mS_0)\ar[u]^{\Theta^{*}}\ar[r] & \Gamma^{\bullet}(\mathbb{F}_{V}, \Psi_{1}^{!}\mS_1) \ar[u]^{\Theta^{*}}\ar[r]&  \Gamma^{\bullet}(\Psi_{1*}\mathbb{F}_{V}, \mS_1) \ar[ur] & 
}
\]
where the map from $\Gamma^{\bullet}(\Phi_{1*}\mathbb{F}_{W}, \mS_1) $ to $\Gamma^{\bullet}(\Psi_{1*}\mathbb{F}_{V}, \mS_1) $ is given by pre-composing by the pushforward by $\Psi_{1}$ by the canonical map $\mathbb{F}_{V} \to \Theta_{*}\mathbb{F}_{W}$. Since each square and triangle commutes the two paths from $\Gamma^{\bullet}(\mathbb{F}_{X_0}, \mS_{0})$ to $\Gamma^{\bullet}(\mathbb{F}_{X_1}, \mS_{1})$ agree. 

We now prove (c).
Let $U= W \times_{X_1}V$. A straightforward but tedious check shows that every sub-diagram of the following three diagrams commutes. Each arrow is some combination of a canonical adjunction, base change, pullback, and the morphisms $c$, $\mu$ and $\eta$.
\begin{equation}\xymatrix{\Gamma^{\bullet}(\mathbb{F}_{X_0}, \mS_0) \ar[r]\ar[dddr] & \Gamma^{\bullet}(\mathbb{F}_{W}, \Phi_{0}^{*}\mS_1)\ar[r] \ar[ddd] & \Gamma^{\bullet}(\mathbb{F}_{W}, \Phi_{1}^{!}\mS_1) \ar[ddd] \ar[r] & \Gamma^{\bullet}(\Phi_{1*}\mathbb{F}_{W},\mS_1)\ar[d]  \\ & & & \Gamma^{\bullet}(\Psi_{1}^{*}\Phi_{1*}\mathbb{F}_{W}, \Psi_{1}^{*}\mS_{1}) \ar[d] \\ & & & \Gamma^{\bullet}(\pi_{V*}\mathbb{F}_{U}, \Psi_{1}^{*}\mS_{1})  \\ 
& \Gamma^{\bullet}(\mathbb{F}_{U}, \pi_{W}^{*}\Phi^{*}_{0}\mS_{1}) \ar[r] & \Gamma^{\bullet}(\mathbb{F}_{U}, \pi_{W}^{*}\Phi^{!}_{1}\mS_{1}) \ar[r] & \Gamma^{\bullet}(\mathbb{F}_{U}, \pi_{V}^{!}\Psi^{*}_{1}\mS_{1})\ar[u]   }
\end{equation}

\begin{equation}
\xymatrix{\Gamma^{\bullet}(\Phi_{1*}\mathbb{F}_{W},\mS_1)\ar[d]  \ar[r] & \Gamma^{\bullet}(\mathbb{F}_{X_1}, \mS_{1}) \ar[r]  & \Gamma^{\bullet}(\mathbb{F}_{V}, \Psi_1^{*}\mS_1) \ar[r]  & \Gamma^{\bullet}(\mathbb{F}_{V}, \Psi_{2}^{!}\mS_2)  \\
\Gamma^{\bullet}(\Psi_{1}^{*}\Phi_{1*}\mathbb{F}_{W}, \Psi_{1}^{*}\mS_{1}) \ar[rru]\ar[d] \\
\Gamma^{\bullet}(\pi_{V*}\mathbb{F}_{U}, \Psi_{1}^{*}\mS_{1})  \ar[rrr] & & & \Gamma^{\bullet}(\pi_{V*}\mathbb{F}_{U}, \Psi_{2}^{!}\mS_{2}) \ar[uu] \\
\Gamma^{\bullet}(\mathbb{F}_{U}, \pi_{V}^{!}\Psi^{*}_{1}\mS_{1}) \ar[u]\ar[rrr] & & & \Gamma^{\bullet}(\mathbb{F}_{U}, \pi_{V}^{!}\Psi_{2}^{!}\mS_{2})\ar[u]
}
\end{equation}
\begin{equation}
\xymatrix{\Gamma^{\bullet}(\mathbb{F}_{V}, \Psi_{2}^{!}\mS_2) \ar[r] & \Gamma^{\bullet}(\Psi_{2*}\mathbb{F}_{V}, \mS_2) \ar[r] & \Gamma^{\bullet}(\mathbb{F}_{X_2}, \mS_2) \\
\\
\Gamma^{\bullet}(\pi_{V*}\mathbb{F}_{U}, \Psi_{2}^{!}\mS_{2}) \ar[uu] && \\
\Gamma^{\bullet}(\mathbb{F}_{U}, \pi_{V}^{!}\Psi_{2}^{!}\mS_{2}) \ar[u] \ar[r] & \Gamma^{\bullet}((\Psi_{2} \circ \pi_{V})_{*}\mathbb{F}_{U}, \mS_{2}) \ar[uuu] \ar[uuur]
}
\end{equation}
Putting together the above three diagrams we have the proof of (c). 
Item (d) is obvious since in this situation, all the maps in equation (\ref{eqn:firstTransform}) are the identity. 

In order to prove (e), notice that $(\mu \boxtimes \eta)_{*}$ can be decomposed into tensor product morphisms 
\begin{equation*}\begin{split}\Gamma^{\bullet}(\mathbb{F}_{X_0}, \mathcal{S}_{0}) \otimes \Gamma^{\bullet}(\mathbb{F}_{Y_0}, \mathcal{T}_{0})\to \Gamma^{\bullet}(\mathbb{F}_{W}, \Phi_{0}^{*}\mathcal{S}_{0})\otimes \Gamma^{\bullet}(\mathbb{F}_{V}, \Psi_{0}^{*}\mathcal{T}_{0}) \to \Gamma^{\bullet}(\mathbb{F}_{W}, \Phi_{1}^{!}\mathcal{S}_{0})\otimes \Gamma^{\bullet}(\mathbb{F}_{V}, \Psi_{1}^{!}\mathcal{T}_{0}) 
\end{split}
\end{equation*} 
followed by 
\begin{equation*}\begin{split}\Gamma^{\bullet}(\mathbb{F}_{W}, \Phi_{1}^{!}\mathcal{S}_{0})\otimes \Gamma^{\bullet}(\mathbb{F}_{V}, \Psi_{1}^{!}\mathcal{T}_{0}) \to \Gamma^{\bullet}(\Phi_{1*}\mathbb{F}_{W}, \mathcal{S}_{0})\otimes \Gamma^{\bullet}(\Psi_{1*}\mathbb{F}_{V}, \mathcal{T}_{0}) \to \Gamma^{\bullet}(\mathbb{F}_{X_1}, \mathcal{S}_{1}) \otimes \Gamma^{\bullet}(\mathbb{F}_{Y_1}, \mathcal{T}_{1}).
\end{split}
\end{equation*} 
\end{proof}

\subsection{Joyce's conjecture} \

The starting point for the linearization comes from the fact that on an oriented $(-1)$-symplectic derived stack $(X, \omega)$ there is (\cite{BBDJS}, \cite{BBBJ}) a perverse sheaf $\mathcal{P}_{(X,\omega)}$ on its underlying reduced Artin stack which is locally modelled on the perverse sheaf of vanishing cycles of a certain algebraic function appearing in the local Darboux model \cite{BBJ}. As usual, we continue to assume that $X$ is defined over $\mathbb{C}$ for simplicity, but as emphasized in \cite{BBBJ}, this perverse sheaf can be constructed in other contexts including the algebraic \'{e}tale context when $X$ is defined over a general field $k$. For simplicity however, we stick to the complex analytic context where techniques of classical topology are used. The following theorem is a rephrasing of a theorem which appeared in \cite{BBBJ}.

\begin{thm}\label{thm:Existence} 
Let\/ $(X,\omega)$ be a $(-1)$-symplectic derived stack with orientation $(S_X, \mu_{X})$. Then  we may define a perverse sheaf\/
$ \PV_{X,\om}$ on $X$ uniquely up to canonical isomorphism. It is characterized in the following way. The Darboux theorem implies the existence of  local models
\begin{equation*}
V \stackrel{(i, \varphi)}\longrightarrow Crit(f)^{-} \times X
\end{equation*}
where $U$ is a smooth scheme, $f \in \mathcal{O}(U)$, and $V$ is a derived scheme, $\varphi$ is smooth of dimension $n,$ and the morphism $(i, \varphi)$ is an oriented Lagrangian. Here we use the canonical orientation on $Crit(f)$ and the product orientation on $Crit(f)^{-} \times X$. 
The perverse sheaf $ \PV_{X,\om}$ satisfies the following condition: $\vp^*(
\PV_{X,\om})[n],$ is canonically isomorphic to $i^*(\PV_{U,f})$ where $\PV_{U,f}$ is the perverse sheaf of vanishing cycles of $f$.  
\end{thm}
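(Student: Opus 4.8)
This theorem is a reformulation, in the language of oriented Lagrangians, of the construction of \cite{BBDJS,BBBJ}, so I will only indicate the strategy and explain how the orientation data of Definitions \ref{defn:Or0Lag} and \ref{defn:OrLag} matches the sign data used there; the plan is to glue the local perverse sheaves of vanishing cycles furnished by the Darboux models, using the chosen orientation $(R_X,\gamma_X)$ to rigidify the gluing. First I would invoke the Darboux theorem of \cite{BBJ}: every $(-1)$-symplectic derived stack admits, smooth-locally, presentations of the stated form, that is an atlas of oriented Lagrangians $(i_a,\varphi_a)\colon V_a \To \Crit(f_a)^-\times X$ with $\varphi_a$ smooth of relative dimension $n_a$, $f_a\in\mathcal{O}(U_a)$ and $U_a$ smooth. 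On each chart I set $\mathcal{Q}_a := i_a^*(\PV_{U_a,f_a})$, a perverse sheaf on the reduced stack underlying $V_a$. The characterizing property forces $\varphi_a^*\PV_{X,\om}[n_a]\cong \mathcal{Q}_a$, so $\mathcal{Q}_a$ is the candidate for the descent of $\PV_{X,\om}$ along the smooth surjection $\varphi_a$.

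The heart of the matter is to produce canonical isomorphisms $\Theta_{ab}\colon \mathcal{Q}_a\cong \mathcal{Q}_b$ over the overlaps $V_a\times_X V_b$ that are compatible on triple overlaps. Over such an overlap the two Darboux presentations of the same $(-1)$-symplectic structure become comparable after stabilizing by nondegenerate quadratic forms, and the Thom--Sebastiani isomorphism for perverse sheaves of vanishing cycles yields an identification of the two local sheaves. This identification is, however, only canonical after trivializing a $\mathbb{Z}_2$-torsor of square roots of the determinants of the relevant cotangent complexes. This is exactly the point at which the orientation enters: the canonical isomorphism $\alpha_M\colon(\det\mathbb{L}_M)^{\otimes2}\To \varphi^*\det\mathbb{L}_X$ of \eqref{eqn:alpha}, together with the square root $\gamma_X\colon R_X^{\otimes2}\To K_X$ and the Lagrangian orientation $\beta$ of Definition \ref{defn:OrLag}, pins down a coherent choice of sign, so that the family $\{\Theta_{ab}\}$ satisfies the cocycle condition on $V_a\times_X V_b\times_X V_c$.

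Since perverse sheaves form a stack for the (analytic, or \'etale) topology --- they satisfy descent along smooth surjections and glue along a cocycle of isomorphisms --- the descent datum $(\mathcal{Q}_a,\Theta_{ab})$ then determines a single perverse sheaf $\PV_{X,\om}$ on $X$ with $\varphi_a^*\PV_{X,\om}[n_a]\cong i_a^*\PV_{U_a,f_a}$. Uniqueness up to canonical isomorphism follows by comparing any two such constructions on a common refinement of their atlases, where the gluing isomorphisms agree by the canonicity of $\Theta$; the characterizing property for an \emph{arbitrary} oriented local model $(i,\varphi)$, not only the atlas charts, then follows by the same stabilization comparison applied over a further common cover. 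The genuinely hard and technical content --- and the reason I would cite \cite{BBDJS,BBBJ} rather than reprove it --- is the construction of the comparison isomorphisms $\Theta_{ab}$ and the verification of the cocycle condition: one must control the behaviour of the vanishing cycle sheaf under smooth pullback, under stabilization by a quadratic form, and under change of Darboux chart, and check that the sign produced at each step is governed precisely by the orientation two-cocycle. The remaining steps --- invoking Darboux, defining the local sheaves, and descent --- are formal once this comparison is available.
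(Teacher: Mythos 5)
Your sketch is a fair account of the gluing machinery of \cite{BBDJS}, \cite{BBBJ}, but it misses what the paper actually has to prove for this statement, and the omission is a genuine gap. The paper does not reprove the gluing at all: Theorem \ref{thm:Existence} is explicitly a rephrasing of the theorem of \cite{BBBJ}, and the entire mathematical content the paper supplies (in the remark immediately following) is the dictionary showing that its oriented-Lagrangian description of the local models is equivalent to the two conditions under which \cite{BBBJ} states the result. Concretely, one must show (1) that the Lagrangian structure on $(i,\varphi)\colon V \To \Crit(f)^{-}\times X$ is equivalent to the quasi-isomorphism $\mathbb{L}_{V/\Crit(f)}\simeq \mathbb{T}_{V/X}[2]$ --- the paper does this by mapping the exact triangle $\mathbb{T}_{V/X}\to\mathbb{T}_{V}\to\mathbb{T}_{X}$ to the triangle $\mathbb{L}_{V/\Crit(f)}[-2]\to\mathbb{L}_{(i,\varphi)}[-2]\to\mathbb{L}_{X}[-1]$ and using two-out-of-three --- and (2) that the orientation of Definition \ref{defn:OrLag}, taken with the canonical orientation on $\Crit(f)$ and the product orientation, unwinds to the isomorphism $\varphi^{*}(S_X)\cong i^{*}(K_U)\otimes\Lambda^{n}\mathbb{T}_{V/X}$ required in \cite{BBBJ}. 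Your proposal never addresses (1) at all, and for (2) it only gestures at the orientation ``pinning down signs'' in the Thom--Sebastiani comparison via $\alpha_M$, $\gamma_X$ and $\beta$, without performing the identification.

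This matters because your strategy is circular as written: you propose to build $\PV_{X,\om}$ by descent from oriented-Lagrangian charts while deferring the comparison isomorphisms $\Theta_{ab}$ and the cocycle condition to \cite{BBDJS}, \cite{BBBJ} --- but those references construct and verify exactly these data for charts described by conditions (1)--(2) above, not in the oriented-Lagrangian language of the present theorem, so the deferral is only legitimate once the dictionary is in place. Once it is, nothing remains to be glued: the theorem follows verbatim from the citation, the characterizing property for an arbitrary oriented local model included, and your descent discussion, though correct in outline, duplicates the cited work rather than supplying the missing translation.
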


\begin{rem}
In \cite{BBBJ} it was written that $V$ is coisotropic but the fact that it is an oriented Lagrangian was not mentioned. Instead, two other properties were given. For the reader familiar with \cite{BBBJ} we now explain why our statement is equivalent to the one in \cite{BBBJ}. The Lagrangian condition is equivalent\footnote{Thank you to Chris Brav for verifying this suspicion.} to the fact  that $\mathbb{L}_{V/Crit(f)}  \cong \mathbb{T}_{V/X}[2]$. This is the first of two conditions on $V$ given in \cite{BBBJ}. Indeed we have a pair of exact triangles and a morphism between them 
\begin{equation}
\xymatrix{\mathbb{T}_{V/X}\ar[r] \ar[d] & \mathbb{T}_{V} \ar[r] \ar[d] & \mathbb{T}_{X} \ar[d]  \\\mathbb{L}_{V/Crit(f)}[-2] \ar[r]  & \mathbb{L}_{(i,\varphi)}[-2]  \ar[r] &\mathbb{L}_{X}[-1] .
}
\end{equation}
Since the rightmost and center downward arrows give isomorphisms in the homotopy category we can conclude that the leftmost downward arrow is also an isomorphism in the homotopy category. 
The orientation is a isomorphism 
\[\det (\mathbb{L}_{V})\to (i, \varphi)^{*}S_{Crit(f)^{-} \times X}
\]
inducing the canonical isomorphism $\det (\mathbb{L}_{V})^{\otimes 2} \cong \det(\mathbb{L}_{Crit(f)^{-} \times X})$. However, we can rewrite this as an isomorphism
\[K_{V/X}\otimes \varphi^{*} K_{X} \to i^{*}S_{Crit(f)} \otimes \varphi^{*}S_{X}\cong i^{*}K_{U}\otimes \varphi^{*}S_{X}
\]
or using $S_{X}^{\otimes 2} \cong K_{X}$ an  isomorphism 
\[K_{V/X}\otimes \varphi^{*} S_{X} \to i^{*}S_{Crit(f)} \cong i^{*}K_{U}
\]
or \/ $\varphi^*(S_X)\!\cong\!
i^*(K_U)\!\ot\!\La^n\bT_{V/X}$ which is the second condition of the two conditions given in \cite{BBBJ}.
\end{rem}

The theorem we are citing from \cite{BBBJ} was shown in the case of derived schemes in \cite{BBJ}, \cite{BBDJS}. In that case, one can take $\varphi$ to be smooth of dimension $0$ (in fact a Zariski open) and $i$ to be an isomorphism, which gives the following

\begin{cor}
Let\/ $(X,\omega)$ be a $(-1)$-symplectic derived scheme with orientation $(S_X, \mu_{X})$. 
For each closed point $p$ in $X$, there is an open neighbourhood $W$ symplectomorphic to $Crit(f)$ where $f$ is a regular function on a smooth scheme $U$.
Then the restriction of $ \PV_{X,\om}$ to $W$ is isomorphic to the pullback of the sheaf of vanishing cycles $\PV_{U,f}$.
\end{cor}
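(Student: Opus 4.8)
The plan is to recognize the corollary as the specialization of Theorem~\ref{thm:Existence} to the case of a derived scheme, in which the local model appearing there can be taken in its simplest possible shape. So the whole task reduces to producing the right chart and reading off the characterization.

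First I would invoke the $(-1)$-shifted Darboux theorem for derived schemes of \cite{BBJ}, in its orientation-compatible refinement \cite{BBDJS}: for the closed point $p\in X$ there is a Zariski-open neighbourhood $W\subseteq X$ and a symplectomorphism $i\colon W\xrightarrow{\sim} Crit(f)$, where $f$ is a regular function on a smooth scheme $U$ and $Crit(f)$ carries its canonical $(-1)$-symplectic structure and canonical orientation. This is exactly the geometric input the corollary records in its second sentence. Next I would feed this chart into Theorem~\ref{thm:Existence}, taking $V=W$, letting $\varphi\colon W\hookrightarrow X$ be the open embedding, and letting $i\colon W\xrightarrow{\sim} Crit(f)$ be the above symplectomorphism, so that $(i,\varphi)\colon W\to Crit(f)^-\times X$ is the relevant map. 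Here $\varphi$ is \'etale, hence smooth of relative dimension $n=0$, and $i$ is an isomorphism, matching the reduction noted in the paragraph preceding the corollary. One checks that $(i,\varphi)$ is an oriented Lagrangian: it is Lagrangian since $i$ is a symplectomorphism and $\varphi$ is an open embedding, and its orientation is obtained by comparing the given orientation $(S_X,\mu_X)$ on $X$, restricted to $W$, with the canonical orientation on $Crit(f)$ via $i$ --- this compatibility is precisely what the oriented Darboux theorem of \cite{BBDJS} supplies.

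Finally, the characterizing property in Theorem~\ref{thm:Existence} yields a canonical isomorphism $\varphi^*(\PV_{X,\om})[n]\cong i^*(\PV_{U,f})$. Since $n=0$ the shift is trivial, and since $\varphi$ is an open embedding $\varphi^*(\PV_{X,\om})$ is just the restriction $\PV_{X,\om}|_W$; hence $\PV_{X,\om}|_W\cong i^*(\PV_{U,f})$, which is the asserted identification of the restriction of $\PV_{X,\om}$ to $W$ with the pullback of the sheaf of vanishing cycles $\PV_{U,f}$.

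The proof carries essentially no genuine obstacle of its own, as all the real content sits in the two cited results: Theorem~\ref{thm:Existence} (from \cite{BBBJ}) for the existence and local characterization of $\PV_{X,\om}$, and \cite{BBJ,BBDJS} for the presentation of $X$ near $p$ as a critical locus. The only point that requires care is confirming that in the scheme case the local model of Theorem~\ref{thm:Existence} can indeed be arranged with $\varphi$ of relative dimension zero and $i$ an isomorphism, and that the orientation $(S_X,\mu_X)$ transports correctly to the canonical orientation on $Crit(f)$ under $i$; both are guaranteed by the oriented version of the Darboux theorem in \cite{BBDJS}.
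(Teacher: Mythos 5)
Your proposal is correct and follows exactly the paper's route: the paper likewise obtains the corollary by specializing Theorem \ref{thm:Existence} to derived schemes, where the Darboux theorem of \cite{BBJ,BBDJS} lets one take $\varphi$ to be a Zariski open immersion (smooth of dimension $n=0$) and $i$ an isomorphism onto $Crit(f)$. The only difference is presentational --- the paper states this in one sentence, while you spell out the verification that $(i,\varphi)$ is an oriented Lagrangian and that the shift vanishes, which is a faithful expansion of the same argument.
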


Joyce conjectured that there should exist a natural way to assign cycles in the cohomology of the perverse sheaf $\mP_{X}$ to Lagrangians in $X$. He made the following conjecture.

\begin{con}\label{con:Joyce}
Let $(X, \omega)$ be an oriented $(-1)$-symplectic derived stack and $i:L \longrightarrow X$ a proper oriented Lagrangian.  Let $\mathcal{P}_{(X,\omega)}$ be the perverse sheaf described in Theorem \ref{thm:Existence}. Then there is a morphism 
\[\mu_{L}: \mathbb{F}_{L}[\vdim  L]\longrightarrow i^{!}\mathcal{P}_{(X,\omega)}
\]
of constructible sheaves on $L$ with given local models in the Darboux charts.
\end{con}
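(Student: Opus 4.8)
The plan is to reduce the global assertion to a construction inside the smooth (or étale) local Darboux charts supplied by Theorem~\ref{thm:Existence}, to build $\mu_L$ there out of the functoriality of the vanishing cycle functor, and then to glue. The first point to observe is that both $i^!$ and the datum of a morphism $\mathbb{F}_L[\vdim L]\to i^!\mathcal{P}_{(X,\omega)}$ are local on $X$ and on $L$: by Theorem~\ref{thm:Existence} we may cover $X$ by charts $\varphi\colon V\to X$, smooth of dimension $n$, together with $i_V\colon V\to Crit(f)$ for which $\varphi^*\mathcal{P}_{(X,\omega)}[n]\cong i_V^*\PV_{U,f}$. It therefore suffices to produce, compatibly on overlaps, the corresponding morphism after restricting $i\colon L\to X$ to each chart and replacing $\mathcal{P}_{(X,\omega)}$ by the vanishing cycle sheaf $\PV_{U,f}$.

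Next I would fix a chart and put the oriented Lagrangian $i\colon L\to Crit(f)$ into a relative Darboux normal form: locally $L$ is presented by a smooth scheme $W$ and a smooth map $\Phi\colon W\to U$ through which $L$ is cut out from the derived critical locus of $f\circ\Phi$, the Lagrangian condition fixing the homotopy type of $\Phi$ and the shift, while the orientation $\beta_M$ of Definition~\ref{defn:OrLag} rigidifies the square roots $R$ of the canonical bundles. With such a model in hand, both $\mathbb{F}_L[\vdim L]$ and the restriction of $\PV_{U,f}$ are expressed through vanishing cycles of $f$ and of $f\circ\Phi$, and the desired morphism is assembled from the canonical comparison map \eqref{eqn:nonProppush}, namely $\Phi_{0!}(\phi_{f\circ\Phi}\mathcal F)\to\phi_f(\Phi_!\mathcal F)$ with $\mathcal F=\mathbb{F}_W[\dim W]$, reinterpreted via the $(f_!,f^!)$ adjunction of Lemma~\ref{lem:FunFacts} and the vanishing-cycle triangle \eqref{eqn:NearVan} as a morphism landing in $i^!\PV_{U,f}$. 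The orientation isomorphisms, together with $\alpha_M$ from \eqref{eqn:alpha}, are exactly what is needed to pin down the shift by $\vdim L$ and to remove the $\mathbb{Z}_2$-ambiguity inherent in $\PV$.

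The main obstacle is to show that this local morphism is independent of all choices made in the chart and that the local morphisms glue. Any two charts are related by stabilization --- replacing $f$ by $f\boxplus q$ for a nondegenerate quadratic form $q$ (Thom--Sebastiani), followed by a smooth change of coordinates --- so the crucial technical statement, which is precisely \emph{Joyce's conjecture in the most general local model}, is the invariance of the construction under this stabilization, carried out for $\varphi$ smooth of arbitrary dimension and for a general (not merely cleanly-intersecting) Lagrangian. Proving this amounts to comparing the morphisms \eqref{eqn:nonProppush} for $f$ and for $f\boxplus q$ and checking their compatibility with the Thom--Sebastiani isomorphism of vanishing cycle complexes and with the orientation data; the pull--push calculus of Lemma~\ref{lem:XWdiag} and the base-change transformation $c$ of Lemma~\ref{lem:FunFacts} are the natural bookkeeping tools. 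Once this local invariance is established, the local morphisms differ on overlaps only by the transition data that already defines $\mathcal{P}_{(X,\omega)}$, and should descend to the global $\mu_L$; assembling this descent, and verifying its behaviour under composition of Lagrangian correspondences, is the content of the refined Conjecture~\ref{con:OurConj}, and it is this global gluing step that lies beyond the purely local theorem one can prove directly.
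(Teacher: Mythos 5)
You should first note that the statement you were asked to prove is stated in the paper as a \emph{conjecture}: the paper does not prove it, and its own contribution is the local construction of $\mu_L$ in a Darboux chart (Proposition \ref{lem:general}, for derived schemes), with the gluing of these local maps explicitly left open. Your proposal matches that architecture in outline, and you rightly concede at the end that descent is exactly what cannot yet be done. But your local step has a genuine gap: the normal form you use is too special. You present $L$ in a chart by a smooth $W$ and a map $\Phi\colon W\to U$, with $L$ ``cut out from the derived critical locus of $f\circ\Phi$,'' and you build the map from \eqref{eqn:nonProppush} applied to $\mathcal{F}=\mathbb{F}_W[\dim W]$. That is only the paper's warm-up example (the conormal-type case $f\circ\Phi=0$). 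By the Joyce--Safronov Lagrangian neighbourhood theorem invoked in Proposition \ref{lem:general}, a general Lagrangian in $Crit(f)$ is locally given by a submersion $\Psi\colon V\to U$ \emph{together with} a vector bundle $E$ on $V$, a nondegenerate quadratic form $q$ and a section $s$ satisfying $q\circ s=f\circ\Psi$, with $L$ the derived zero locus $s^{-1}(0)$ --- not a derived critical locus of $f\circ\Psi$. The construction of $\mu_L$ must then pass through the identification $\mathbb{F}_{0_E}\cong \phi_{q}\mathbb{F}_{E}[\rank E]$ of the vanishing cycles of the nondegenerate quadratic form and the properness of $s_0$, and only afterwards through \eqref{eqn:nonProppush} and the map $\delta_\Psi$; moreover the orientation of Definition \ref{defn:OrLag} enters not merely to ``pin down the shift'' but as a trivialization of $\det E$, without which the quadratic-form step is ambiguous up to sign. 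As written, your recipe produces no morphism at all unless $f\circ\Phi=0$.

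Second, your opening reduction --- that a morphism $\mathbb{F}_{L}[\vdim L]\to i^{!}\mathcal{P}_{(X,\omega)}$ is a local datum, so that it ``suffices to produce, compatibly on overlaps, the corresponding morphism'' --- is false as stated, and this is precisely why the statement remains a conjecture. These are morphisms in $D^{b}_{c}(L)$ between objects that are not perverse; such morphisms form only a presheaf, and agreement on overlaps does not yield descent without higher coherence data. The paper makes exactly this point in the remark following Proposition \ref{lem:general}. Your final sentence concedes the difficulty, but it then contradicts the first paragraph's reduction. What survives of your argument is a local construction, and only in the conormal special case; the general local model is the content of Proposition \ref{lem:general}, and the global statement, together with its compatibility with composition of Lagrangian correspondences, is what is packaged (conjecturally) in Conjecture \ref{con:OurConj}.
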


In order to give some evidence for this conjecture, we first explain the construction of the map $\mu$ in a simple local model. 

\begin{example}
Let $U$ be a smooth variety over $\mathbb{C}$ equipped with a regular function $f$.  Consider the derived critical locus $X = Crit(f)$. It is equipped with the shifted symplectic structure coming from writing $Crit(f)=  U\times_{T^{*}U}U$ given by the pair of Lagrangians $df$ and $0$.  Let $\Psi:W \longrightarrow U$ be a smooth subvariety such that $f \circ \Psi = 0$. Consider the total space $N^{*}(W/U) \subset T^{*}U$ of the conormal bundle, the dual of $N_{W/U}$. Then $df \circ \Psi$ can be thought of as a section of $N^{*}(W/U) \to W$. Let $M$ be the derived zero locus of $df \circ \Psi$. Notice that $\vdim(L) = \dim W - (\dim U - \dim W) = 2 \dim W - \dim U$. 

Let us apply Corollary \ref{cor:Comp} taking $S= T^{*}U$, the three Lagrangians $X_0 = U \xrightarrow{df} T^{*}U$ and  $X_2=U \xrightarrow{0} T^{*}U$ and $X_1 = N^{*}(W/U)\xrightarrow{k} T^{*}U$ and $N_1 = W\xrightarrow{(\Psi,df\circ \Psi)} U \times_{df, T^{*}U, k} N^{*}(W/U) $ and  $N_2=W \xrightarrow{(0,\Psi)}  N^{*}(W/U) \times_{k, T^{*}U,0}U$. These are simply graphs of $df \circ \Psi$ and $0$ interpreted as sections of the shifted cotangent bundle $T^{*}[-1]W$. We conclude that natural morphism $\varphi$ from the derived zero locus 
\[L= (df \circ \Psi)^{-1}(0) = W \times_{df\circ \Psi, N^{*}(W/U), 0} W \To U \times_{df, T^{*}U,0} U = Crit(f)=X\] is Lagrangian. 

Consider the morphism 
\[j:Crit(f)\to f^{-1}(0)
\]
induced by $(j_1,j_2)$.
Let $U_{0} = f^{-1}(0)$. We have the diagram 
\begin{equation}\label{eqn:SquareVan}
\xymatrix{X \ar[r]^{j} & U_{0} \ar[r] & U \ar[r]^{f} & \mathbb{C} \\
L \ar[u]^{\varphi} \ar[r]^{i} & W \ar[u]^{\Psi} \ar[ur]_{\Psi}}
\end{equation}
Given a smooth algebraic variety $Z$, over $\mathbb{C}$, we let $\mathfrak{or}_{Z}= \mathbb{F}_{Z}[2 \dim Z]$ denote the orientation complex of its underlying topological space . For simplicity we assume that $Crit(f)$ is contained in $f^{-1}(0)$. In this case the perverse sheaf $\mathcal{P}_X$ is the pullback of the sheaf of vanishing cycles 
\[\mathcal{P}_{X} = PV_{f} = j^{*}\phi_{f}(\mathbb{F}_{U}[\dim U])
%= j_{i}^{*}\phi^{p}_{f_i}(\omega_{U_i}[-\dim U_{i}])
\]
where $j:Crit(f) \to f^{-1}(0)=U_{0}$ is the inclusion. In the general case we would sum $PV_{f-c}$ over all the critical values $c$ of $f$. 

We now construct the desired map, for this consider the canonical map $\delta_\Psi \in \Hom(\Psi_{!}\mathfrak{or}_{W}, \mathfrak{or}_{U})$. Applying the functor $\phi_{f}$ and pulling back by $j$ we obtain a morphism 
\[j^{*} \phi_{f}(\delta_\Psi): j^{*} \phi_{f} \Psi_{!}\mathfrak{or}_{W} \to  j^{*} \phi_{f} \mathfrak{or}_{U}.
\]
Noticing that $ j^{*} \phi_{f} \mathfrak{or}_{U} = \mathcal{P}_{X}[\dim U]$ and using Theorem 2.10 of \cite{BBDJS} we have since $\Psi$ is proper that
\[\phi_{f} \Psi_{!}\mathfrak{or}_{W} \cong \Psi_{!}\phi_{f\circ \Psi}(\mathfrak{or}_{W}) \cong \Psi_{!}\phi_{0}(\mathfrak{or}_{W}) \cong \Psi_{!} \mathfrak{or}_{W}.
\]
Hence, we can consider $j^{*}\phi_{f}(\delta_\Psi)$ as a morphism $j^{*}\Psi_{!}\mathfrak{or}_{W} \to \mathcal{P}_{X}[\dim U]$. Since the square (\ref{eqn:SquareVan}) is Cartesian, we have 
\[j^{*}\Psi_{!}(\mathfrak{or}_W) \cong  \varphi_{!}i^{*}(\mathfrak{or}_W)= \varphi_{!}(\mathbb{F}_{L}[2\dim W]).
\]
So we get a morphism $\varphi_{!}(\mathbb{F}_{L}) [2\dim W - \dim U]) \to \mathcal{P}_{X}$ or,
by adjunction, a morphism 
\begin{equation}\label{eqn:preMuM}
\mu_{L}: \mathbb{F}_{L}[\vdim L] \to \varphi^{!} \mathcal{P}_{X}.
\end{equation}
\end{example}

The preceding was a kind of warm-up to the general situation of $(-1)$-Lagrangians which we now consider. We will restrict ourselves to the case of derived schemes, that is both $X$ and $L$ will be derived schemes. In this situation the paper \cite{JS} provides a local description for $L$.

\begin{prop}\label{lem:general} 
Assume we have a Darboux chart $(U,f)$ for the $(-1)$-symplectic derived scheme $X$, that is, $X$ is locally equivalent to $Crit(f)$. Then \cite[Example 3.6]{JS} shows that any $(-1)$-Lagrangian $L$ in $Crit(f)$ is locally determined by the following data: a submersion $\Psi:V \to U$ of smooth varieties, a (trivial) vector bundle $E$ on $V$ equipped with an algebraic quadratic form $q$ which is non-degenerate on each fiber and a section $s$ of $E$ such that $q\circ s = f \circ \Psi$. The classical truncation of $L$ is locally isomorphic to $s^{-1}(0)$.

An orientation on $L$ determines a trivialization $\det E \cong \mathcal{O}_{V}$ and a morphism 
\[\mu_{L}: \mathbb{F}_{L}[\vdim L] \to \varphi^{!} \mathcal{P}_{X}\]
\end{prop}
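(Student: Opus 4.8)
\emph{Plan.} The idea is to imitate the warm-up computation of the preceding example, with one essential modification forced by the local model of \cite[Example 3.6]{JS}: in the warm-up the map relating the Lagrangian to the ambient smooth variety was a closed embedding $\Psi\colon W\hookrightarrow U$ (hence proper, so one could push forward via $\Psi_{!}=\Psi_{*}$ and use the fundamental class $\delta_{\Psi}$), whereas now $\Psi\colon V\to U$ is a submersion and is \emph{not} proper. I would therefore pull back along the smooth $\Psi$ rather than push forward, and use the fibrewise non-degenerate quadratic form $q$ together with the relation $q\circ s=f\circ\Psi$ to carry out a dimensional reduction of the vanishing cycles. The output is the desired $\mu_{L}\colon\mathbb{F}_{L}[\vdim L]\to\varphi^{!}\mathcal{P}_{X}$.

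First I would extract the promised trivialization of $\det E$. Because $q$ is non-degenerate on each fibre it induces an isomorphism $E\cong E^{\vee}$, hence a canonical isomorphism $(\det E)^{\otimes 2}\cong\mathcal{O}_{V}$; as $E$ is moreover trivial, an orientation should amount to a choice of square root of $\det q$, that is, a trivialization $\det E\cong\mathcal{O}_{V}$. I would confirm this by determinant-line bookkeeping: from $L=s^{-1}(0)$ one has $\mathbb{L}_{L/V}\cong E^{\vee}[1]|_{L}$ and so $K_{L}\cong K_{V}|_{L}\otimes\det E|_{L}$, and feeding this, together with the canonical orientation $R_{X}\cong K_{U}$ and $\gamma_{X}=\mathrm{id}$ on $\Crit(f)$, into the defining relation $\gamma_{X}\circ\beta_{L}^{\otimes 2}=\alpha_{L}$ of Definition \ref{defn:OrLag} identifies the orientation isomorphism $\beta_{L}$ with the sought square root. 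Reconciling the relative canonical $K_{V/U}$ of the submersion against $\det E$ is the delicate point here, and must be handled using the quadratic form.

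With this in hand I would construct $\mu_{L}$ in three moves. (i) A Thom--Sebastiani (quadratic stabilization) computation: setting $h:=q\circ s=f\circ\Psi$ on the smooth variety $V$, the non-degeneracy of $q$ identifies $\phi_{h}\bigl(\mathbb{F}_{V}[\dim V]\bigr)$ near $L$ with the constant sheaf on $L$ twisted by the orientation line of the quadratic form, and the trivialization $\det E\cong\mathcal{O}_{V}$ from the previous step rigidifies this twist, producing a \emph{canonical} morphism $\mathbb{F}_{L}[\vdim L]\to i^{*}\phi_{h}(\mathbb{F}_{V}[\dim V])$ with $i\colon L\to V$; this is the analogue of invoking \cite[Thm.~2.10]{BBDJS} in the warm-up, and is where the orientation is genuinely used. (ii) Smooth base change for vanishing cycles: since $\Psi$ is a submersion, $\Psi^{*}\phi_{f}\cong\phi_{f\circ\Psi}\Psi^{*}$ and $\Psi^{!}\cong\Psi^{*}[2d]$ with $d=\dim V-\dim U$, so that $\phi_{h}$ on $V$ computes $\Psi^{!}\mathcal{P}_{X}$ up to the appropriate shift. (iii) Assembly: forming the Cartesian square relating $L$, $V$, $X=\Crit(f)$ and $U_{0}=f^{-1}(0)$ as in (\ref{eqn:SquareVan}), I would splice the morphism of (i) with the base-change identification of (ii) and the factorization $\varphi=\Psi|_{L}$, and then pass to $\varphi^{!}$ by adjunction, exactly as $\mu_{L}$ was obtained in (\ref{eqn:preMuM}), using the functoriality recorded in Lemmas \ref{lem:FunFacts} and \ref{lem:XWdiag}. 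All shifts are pinned down by $\vdim L$.

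The main obstacle is step (i): proving the quadratic-form stabilization isomorphism for $\phi_{h}$ and, above all, checking that the square root of $\det E$ supplied by the orientation precisely cancels the sign ambiguity inherent in the Thom--Sebastiani isomorphism, so that $\mu_{L}$ is canonical (not merely defined up to $\pm 1$) and agrees with the local models demanded by Conjecture \ref{con:Joyce}. The bookkeeping of shifts and base-change isomorphisms in steps (ii)--(iii), and the reconciliation of $K_{V/U}$ with $\det E$ in the orientation step, are routine but must be carried out carefully to land in the correct degree $\vdim L$.
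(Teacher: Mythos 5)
There is a genuine gap in your orientation step. Your determinant bookkeeping treats $L$ as the derived zero locus of $s$, via $\mathbb{L}_{L/V}\cong E^{\vee}[1]|_{L}$ and hence $K_{L}\cong K_{V}|_{L}\otimes\det E|_{L}$. But only the \emph{classical truncation} of $L$ is $s^{-1}(0)$; as a $(-1)$-shifted Lagrangian, $L$ carries the three-term cotangent complex of \cite[Example 3.6]{JS}, $\mathbb{L}_{L}=[T_{V/U}\to E^{\vee}\to T_{V/U}^{\vee}\oplus T_{U}^{\vee}]|_{L}$ in degrees $-2,-1,0$, whose determinant is $\Psi^{*}K_{U}|_{L}\otimes\det E|_{L}$: the $T_{V/U}$-contributions in degrees $-2$ and $0$ cancel. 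Consequently, since $R_{X}=\iota^{*}K_{U}$ on $\Crit(f)$, an orientation $\beta_{L}\colon K_{L}\to\varphi^{*}R_{X}$ in the sense of Definition \ref{defn:OrLag} is \emph{exactly} a trivialization of $\det E|_{L}$, with no leftover factor. The ``delicate point'' you flag --- reconciling $K_{V/U}$ against $\det E$ ``using the quadratic form'' --- is an artifact of using the wrong complex, and the repair you propose cannot work: non-degeneracy of $q$ only supplies $(\det E)^{\otimes 2}\cong\mathcal{O}_{V}$ and carries no information whatsoever about $K_{V/U}$, so a genuine $K_{V/U}$ discrepancy would be fatal rather than delicate.

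Your step (i) is also false as stated: $\phi_{h}(\mathbb{F}_{V}[\dim V])$ is \emph{not} identified near $L$ with a twisted constant sheaf. For instance with $V=U\times\mathbb{A}^{n}$ and $\Psi$ the projection, smooth base change gives that $\phi_{f\circ\Psi}\mathbb{F}_{V}$ along $L$ is a shift of the pullback of $\mathcal{P}_{X}$, which is not constant in general. The correct mechanism, and the one the paper uses, lives on the total space of $E$: the well-known description $\mathbb{F}_{0_{E}}\cong\phi_{q}(\mathbb{F}_{E})[{\rm rk}\,E]$ of the vanishing cycles of a fibrewise non-degenerate quadratic function (this is where the trivialization $\det E\cong\mathcal{O}_{V}$ rigidifies the identification), transported along the section: base change (Lemma \ref{lem:FunFacts}) in the square defining $L=s^{-1}(0)$ gives $j_{!}\mathbb{F}_{L}\cong s_{0}^{*}\mathbb{F}_{0_{E}}$, and the canonical map $\mathbb{F}_{E}\to s_{*}\mathbb{F}_{V}$ together with (\ref{eqn:nonProppush}) for the \emph{proper} map $s_{0}$ yields only a \emph{morphism} $j_{!}\mathbb{F}_{L}[-{\rm rk}\,E]\to\phi_{q\circ s}\mathbb{F}_{V}$, as in (\ref{eq:mu1})--(\ref{eq:mu2}) --- a map out of the extension by zero of the constant sheaf, not an isomorphism near $L$. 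Relatedly, your morphism must land in $j^{!}\phi_{h}$ (equivalently start from $j_{!}\mathbb{F}_{L}$), not in $i^{*}\phi_{h}$ as you wrote, or the final adjunction to $\varphi^{!}\mathcal{P}_{X}$ fails, $\varphi$ being non-proper. Once (i) is repaired in this way, your steps (ii)--(iii) are sound and amount to an adjoint repackaging of the paper's (\ref{eq:mu3}): the paper pushes forward along $\Psi_{0}$ using (\ref{eqn:nonProppush}) and the Gysin map $\delta_{\Psi}\colon\Psi_{!}\mathfrak{or}_{V}\to\mathfrak{or}_{U}$, while you use $\Psi_{0}^{!}\cong\Psi_{0}^{*}[2d]$ and smooth base change for vanishing cycles; these correspond under $\Psi_{0!}\dashv\Psi_{0}^{!}$, and the shifts match, since $\dim U+2d=2\dim V-\dim U$ turns $j_{!}\mathbb{F}_{L}[-{\rm rk}\,E]\to\phi_{h}\mathbb{F}_{V}$ into precisely $\mathbb{F}_{L}[\vdim L]\to\varphi^{!}\mathcal{P}_{X}$.
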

\begin{proof}
Recall from Example 3.6 in \cite{JS} that the cotangent complex of $L$ has the form 
\[\mathbb{L}_{L} = {[T_{V/U} \to E^{\vee} \to T_{V/U}^{\vee}\oplus T^{\vee}_{U}]|}_{L}
\]
living in degrees $-2,-1,0$. In particular the virtual dimension of $L$ is \[\vdim(L) = 2 \dim V- \dim U -rk E.\] The cotangent complex of the derived critical locus $X$ of $f$ looks like
\[{[T_{U} \to T_{U}^{\vee}]|}_{X}
\]
in degrees $-1,0$.

These cotangent complexes give us natural isomorphisms $\det(\mathbb{L}_{L})\cong \det(T^{\vee}_{U})|_{L}\otimes \det(E)|_{L}$ and $\det(\mathbb{L}_{X}) \cong \det(T^{\vee}_{U})^{\otimes 2}|_{X}$. The morphism $\alpha: \det (\mathbb{L}_{L})^{\otimes 2} \to (\det \mathbb{L}_{X})|_{L}$ determined by the the Lagrangian structure  can be thought of therefore as a morphism $\det(T^{\vee}_{U})^{\otimes 2}|_{L} \otimes (\det E)|_{L}^{\otimes 2} \to \det(T^{\vee}_{U})^{\otimes 2}|_{L}$, the orientation $\beta$ is therefore a trivialization of $\det(E)$ along $L$ which comes from restricting the given isomorphism $\det(E) \cong \mathcal{O}_{V}$ to $L$.

Now we construct the morphism $\mu$. As in the example we will assume, for notational simplicity, that $Crit(f)$ is contained in $f^{-1}(0)$. We have the following diagram
\begin{equation}\label{eqn:SquareVect}
\xymatrix{X  \ar[r]^{i} & f^{-1}(0)\ar[r]^{} & U \ar[r]^{f} &  \mathbb{C} \\
L  \ar[u]^{\varphi} \ar[r]^{j} & (q \circ s)^{-1}(0) \ar[u]^{\Psi_{0}} \ar[r] & V \ar[u]^{\Psi} \ar[r]^{s}& E\ar[u]^{q}}
\end{equation}
Let $s_0$ denote the composition $(q \circ s)^{-1}(0) \to V \to E$. Since $L=s^{-1}(0)$ and $i$ is a proper closed embedding we get the following chain of isomorphisms
\[\varphi_{!}\mathbb{F}_{L} \cong i^{*}i_{!}\varphi_{!}\mathbb{F}_{L} \cong i^{*}\Psi_{0!}j_{!}\mathbb{F}_{L} \cong i^{*}\Psi_{0!} s_0^{*} \mathbb{F}_{0_E},
\]
(using Lemma \ref{lem:FunFacts}) where $\mathbb{F}_{0_E}$ is the pushforward of $\mathbb{F}$  from the zero section of $E$ to $E$.
Now the well known description of the sheaf of vanishing cycles of a non-degenerate quadratic function tells us that $\mathbb{F}_{0_E}\cong \phi_{q} \mathbb{F}_{E}[rk E]$. Composing this with the previous chain of isomorphisms we obtain:
\begin{equation}\label{eq:mu1}
\varphi_{!}\mathbb{F}_{L}[-rk E] \cong i^{*}\Psi_{0!} s_0^{*} \phi_{q } \mathbb{F}_{E}
\end{equation}

Now recall there is a canonical map $\mathbb{F}_{E} \to s_{*} \mathbb{F}_{V}$. Applying the functor $i^{*}\Psi_{0!} s_0^{*} \phi_{q }$ to this map we get a morphism
\begin{equation}\label{eq:mu2}
i^{*}\Psi_{0!} s_0^{*} \phi_{q } \mathbb{F}_{E} \to i^{*}\Psi_{0!} s_0^{*} \phi_{q }s_{*} \mathbb{F}_{V}   \cong  i^{*}\Psi_{0!} s_0^{*} s_{0*}\phi_{q \circ s}\mathbb{F}_{V} \cong  i^{*}\Psi_{0!} \phi_{q \circ s}\mathbb{F}_{V} .
\end{equation}
Here we have used (\ref{eqn:nonProppush}) and the fact that $s_0$ is proper.

As in the previous example, there is a canonical morphism $\delta_\Psi: \Psi_{!} \mathbb{F}_{V}[2\dim V] \cong \Psi_{!} \mathfrak{or}_{V} \to \mathfrak{or}_{U}\cong \mathbb{F}_{U}[2\dim U]$. We will apply the functor $\phi_{f}$ to this morphism and then pull back to $X$ via $i$. Then precomposing this map with (\ref{eqn:nonProppush}) we get the map
\begin{equation}\label{eq:mu3}
i^{*}\Psi_{0!} \phi_{q \circ s}\mathbb{F}_{V}=i^{*}\Psi_{0!} \phi_{f \circ \Psi}\mathbb{F}_{V} \to   i^{*} \phi_{f}\Psi_{!}\mathbb{F}_{V}  \to i^{*} \phi_{f} \mathbb{F}_{U}[2\dim U -2 \dim V],
\end{equation} 
where the equality follows from the assumption $q \circ s = f\circ \Psi$. 
Finally we compose (\ref{eq:mu1}), (\ref{eq:mu2}) and (\ref{eq:mu3}) and get 
\[\varphi_{!}\mathbb{F}_{L}[-rk E] \To i^{*} \phi_{f} \mathbb{F}_{U}[2\dim U -2 \dim V]=\mathcal{P}_{X}[\dim U -2 \dim V],
\]
and the equality follows from the definition of $\mathcal{P}_X$.
By adjunction, this corresponds to a morphism $\mu_{L} : \mathbb{F}_{L}[\vdim L] \to \varphi^{!} \mathcal{P}_{X}$.
\end{proof}

\begin{rem}
The previous proposition proves Joyce's conjecture locally (for derived schemes). The main difficulty in giving a complete proof of the conjecture is to glue these maps along a cover of $L$ by Darboux charts. Note that these are general maps in $D^{b}_{c}(X)$, not necessarily perverse and hence do not glue like sheaves.
\end{rem}

We now formulate a more detailed version of Joyce's conjecture which implies the phrasing in Conjecture \ref{con:Joyce} and includes the behavior of the map $\mu$ under composition of Lagrangian correspondences. In the next section we will use this conjecture to construct a bicategory which is a linear version of $\fkSymp^{or}$.

\begin{con}\label{con:OurConj}
Let $(X_0,\omega_0)$ and $(X_1, \omega_1)$ be oriented $(-1)$-symplectic derived stacks. Let $\varphi=\varphi_0 \times \varphi_1 : M \to X_{0}^{-} \times X_1$ be an oriented Lagrangian, in the sense of Definition \ref{defn:OrLag} such that $\varphi_1$ is proper.  Then
\begin{enumerate}
\item[{\bf (a)}]
There is a map of constructible sheaves
\[\mu_{M}: \varphi_{0}^{*} \mP_{X_0}[v] \To \varphi_{1}^{!}\mP_{X_1}
\] where $v= \vdim(M)$  with given local models in Darboux charts. Moreover, if we reverse the orientation of $M$ we change the map by $-1$.

\item[{\bf (b)}] Consider oriented Lagrangians $\varphi: M \to X_{0}^{-} \times X_1$ and $\psi: N \to X_{0}^{-} \times X_1$ and let $\rho:M \to N$ be an oriented Lagrangeomorphism. Then the morphism 

\[\varphi_{0}^{*} \mP_{X_0}[v] \cong \rho^{*}\psi_{0}^{*} \mP_{X_0}[v] \xrightarrow{\rho^{*}\mu_{N}} \rho^{*}\psi_{1}^{!} \mathcal{P}_{X_1} =  \rho^{!}\psi_{1}^{!} \mathcal{P}_{X_1} \cong \varphi_{1}^{!} \mP_{X_1}
\]
agrees with $\mu_{M}$ in $D^{b}_{c}(M)$.
\item[{\bf (c)}] Given oriented Lagrangians $\varphi: M \to  X_{0}^{-} \times X_1$ and $\psi: N \to  X_{1}^{-} \times X_2$ with $v_{M} = \vdim(M)$ and $v_{N}=\vdim(N)$ equip the Lagrangian $N \bullet M$  with the orientation constructed in Lemma \ref{lem:OrienConv}. Then the maps 
\[\mu_{N \bullet M}: (\varphi_{0} \circ \pi_{M})^{*}\mP_{X_0}[v_M + v_N]\longrightarrow (\psi_{2} \circ \pi_{N})^{!}\mP_{X_2}
\]
and the following composition,
\[\pi_{M}^{*}\varphi^{*}_{0}\mP_{X_0}[v_M+v_N]\xrightarrow{\pi_{M}^{*}\mu_{M}[v_N]} \pi_{M}^{*}\varphi_{1}^{!}\mP_{X_1}[v_N]\xrightarrow{c_{\varphi_1,\psi_1}} \pi_{N}^{!}\psi_{1}^{*}\mP_{X_1}[v_N] \xrightarrow{\pi_{N}^{!}\mu_{N}} \pi_{N}^{!}\psi_{2}^{!}\mP_{X_2}
\]
agree in $D^{b}_{c}(M \times_{X_1}N)$. Note that this statement makes sense because $\vdim(X_1)=0$ and so $\vdim(M \times_{X_1}N)= \vdim(M) + \vdim(N) $.
\item[{\bf (d)}] If $\varphi$ is the diagonal  $\Delta: X\to X^{-} \times X$ then since $\vdim X = 0 $ the resulting morphism $\mu_{X}: \mP_{X} \to \mP_{X}$ is the identity.
\item[{\bf (e)}] If we are given Lagrangians $\varphi: M \to X_{0}^{-} \times X_{1}$ and  $\psi: N \to Y_{0}^{-} \times Y_{1}$ the morphism 
\[\mP_{X_0 \times Y_0} [v_M+v_N ] \cong 
(\mP_{X_0} [v_M])\boxtimes ( \mP_{Y_0}[v_N]) \xrightarrow{\mu_{M} \boxtimes \mu_{N}} \mP_{X_1} \boxtimes  \mP_{Y_1} \cong \mP_{X_1 \times Y_1}
\]
agrees with the morphism $\mu_{M \times N}$ corresponding to the Lagrangian 
\[\phi \times \psi: M \times N\longrightarrow (X_0 \times Y_0)^{-} \times (X_1 \times Y_1).\]
where $v_M= \vdim M$ and $v_N= \vdim N$.
\end{enumerate}
\end{con}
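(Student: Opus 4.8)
The plan is to produce the morphism $\mu_M$ of part (a) by gluing the local maps of Proposition \ref{lem:general}, and then to deduce (b)--(e) by reducing each to an identity of vanishing--cycle morphisms inside a single pair of Darboux charts, where the pull--push formalism of Lemma \ref{lem:XWdiag} applies. First I would upgrade the local construction from derived schemes to stacks. Theorem \ref{thm:Existence} supplies, for $X_0$ and $X_1$, smooth local models $V_i \to Crit(f_i)^-\times X_i$ with $\varphi_i$ smooth; pulling $M$ back along the resulting smooth atlases and using that $\varphi_1$ is proper (so $\varphi_{1*}=\varphi_{1!}$, Lemma \ref{lem:FunFacts}) reduces the construction on each chart to the scheme case treated in Proposition \ref{lem:general}, and the smooth base--change transformation $c$ of Lemma \ref{lem:FunFacts} governs how the resulting map $\mu_M^{\mathrm{loc}}\colon \varphi_0^*\mathcal{P}_{X_0}[v]\to\varphi_1^!\mathcal{P}_{X_1}$ (with $v=\vdim M$) behaves along the atlas. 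The orientation $\beta_M$ of Definition \ref{defn:OrLag} rigidifies this local map exactly as in \cite{BBDJS}, fixing the sign whose reversal under $\beta_M\mapsto-\beta_M$ is the last assertion of (a).

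The essential difficulty --- and what I expect to be the main obstacle --- is the gluing. I would follow the scheme by which the perverse sheaves $\mathcal{P}_X$ are themselves assembled in \cite{BBDJS,BBBJ}: over a common refinement, two Darboux presentations differ by the addition of a nondegenerate quadratic form, and one must check that the induced Thom--Sebastiani (``stabilization'') isomorphisms of vanishing--cycle sheaves intertwine the two copies of $\mu_M^{\mathrm{loc}}$. As flagged in the remark after Proposition \ref{lem:general}, the obstruction is that $\mu_M$ is a morphism in $D^b_c(M)$ between objects that need not be perverse, so morphisms do not form a sheaf and agreement on a cover is not enough. Concretely one must control the local--to--global spectral sequence $H^p(M,\mathcal{H}^q(R\uHom(A,B)))\Rightarrow\Hom_{D^b_c(M)}(A,B[p+q])$ with $A=\varphi_0^*\mathcal{P}_{X_0}[v]$ and $B=\varphi_1^!\mathcal{P}_{X_1}$: the gluing obstruction sits in the terms with $q<0$, and the indeterminacy of the glued map is the image of $H^1(M,\mathcal{H}^{-1})$. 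The crux of a complete proof is to establish enough vanishing of $\mathcal{H}^{<0}(R\uHom(A,B))$ --- plausibly using the Verdier self--duality of $\mathcal{P}$ together with the Lagrangian constraint $\mathbb{T}_M\cong\mathbb{L}_\varphi[-2]$, so that after the shift by $v$ the complex is concentrated in nonnegative degrees --- to kill both the obstruction and the indeterminacy. This is precisely the step at which the statement remains conjectural.

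Granting the gluing, I would verify (b)--(e) chart by chart, where they become statements about the maps $\mu_*$ of Lemma \ref{lem:XWdiag} and about vanishing cycles. Part (b): an oriented Lagrangeomorphism identifies the two Darboux presentations compatibly with orientations, so the local models coincide and Lemma \ref{lem:XWdiag}(b) gives the equality. Part (c) is the relative form of Lemma \ref{lem:XWdiag}(c): on charts $N\bullet M$ is again of derived--critical--locus type, its local map is the composite of $\pi_M^*\mu_M$, the transformation $c_{\varphi_1,\psi_1}$, and $\pi_N^!\mu_N$ precisely as in \eqref{eqn:SheafComp}, and the degree bookkeeping $\vdim(M\times_{X_1}N)=v_M+v_N$ (valid since $\vdim X_1=0$) matches the shifts. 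Part (d) is immediate, since the diagonal corresponds to the trivial quadratic datum, for which the local construction returns the identity, as in Lemma \ref{lem:XWdiag}(d). Part (e) is Thom--Sebastiani, $\phi_{q\boxplus q'}\cong\phi_q\boxtimes\phi_{q'}$, which reproduces $\mu_{M\times N}\cong\mu_M\boxtimes\mu_N$ as in Lemma \ref{lem:XWdiag}(e). In (a), (c) and (e) the signs are forced by the Koszul rule built into the orientation conventions of Lemmas \ref{lem:OrienConv}, \ref{lem:OrienProds} and \ref{lem:comporientation}, and I would match them against the normalization of the Thom--Sebastiani isomorphism.
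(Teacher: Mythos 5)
The statement you set out to prove is, in the paper itself, only a \emph{conjecture}: the authors offer no global proof, only the local construction (the warm-up example for $Crit(f)$ and Proposition \ref{lem:general}, which builds $\mu_L$ in a single Darboux chart for derived \emph{schemes} from the data $(V,E,q,s)$ of \cite{JS}), together with the explicit remark that the obstacle to a full proof is gluing morphisms that live in $D^b_c$ but are not perverse and hence do not glue like sheaves. Your proposal reconstructs exactly that local evidence --- the quadratic-bundle presentation, the identification $\mathbb{F}_{0_E}\cong\phi_q\mathbb{F}_E[\rank E]$, the Gysin map $\delta_\Psi$, the orientation trivializing $\det E$ and fixing the sign --- and your reductions of parts (b)--(e) to Lemma \ref{lem:XWdiag} and Thom--Sebastiani are consistent with how the paper formulates and later uses Conjecture \ref{con:OurConj}. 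To that extent you have matched the paper's approach, and you have also correctly diagnosed where the difficulty lies.

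The genuine gap is that you diagnose the obstruction but do not close it, while presenting a mechanism that does not work as stated. The suggestion that Verdier self-duality of $\mathcal{P}$ plus the Lagrangian condition $\mathbb{T}_M\cong\mathbb{L}_\varphi[-2]$ ``plausibly'' forces $\mathcal{H}^{<0}\bigl(R\uHom(\varphi_0^*\mathcal{P}_{X_0}[v],\,\varphi_1^!\mathcal{P}_{X_1})\bigr)=0$ is unsubstantiated: $\varphi_0^*$ and $\varphi_1^!$ of perverse sheaves are in general not perverse, the shift by $v=\vdim M$ is virtual-dimension bookkeeping and enforces no concentration of the Hom complex, and neither the orientation nor the Lagrangian structure supplies the needed bound --- if such vanishing held in general, the gluing would be routine and the paper would not have left the statement conjectural. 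This gap then propagates: your chart-by-chart verification of (b)--(e) tacitly assumes that a morphism in $D^b_c(M\times_{X_1}N)$ (resp.\ $D^b_c(M)$) is determined by its restrictions to a Darboux cover, which is the same unproved vanishing again; without it, agreement in every chart does not yield agreement globally. Finally, your extension from derived schemes to derived Artin stacks by smooth atlases requires descent data for these non-perverse morphisms (compatibilities over double and triple overlaps of the atlas) that the paper deliberately sidesteps by restricting Proposition \ref{lem:general} to schemes. In short: correct local model, correct diagnosis of the obstruction, but the statement remains open in your write-up at exactly the point where the paper leaves it open.
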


\begin{rem}\label{rem:productP}
We observe that the statements of Conjecture \ref{con:Joyce} and Conjecture \ref{con:OurConj}\/(a) are equivalent using the fact that for a product of oriented $(-1)$-symplectic derived stacks we have $\mathcal{P}_{X_0 \times X_1} \cong \mathcal{P}_{X_0}\boxtimes \mathcal{P}_{X_1}$ when we take the product orientation on $X_0 \times X_1$. 

To check this is true first note the isomorphism can be checked locally as they are perverse sheaves. Examining the Darboux theorem in \cite{BBBJ} we can see that if, using the notation from Theorem \ref{thm:Existence}, $X_i$ has local Darboux data $V_i, U_i, f_i$ then for the product $X_1\times X_2$ we can take $V=V_1\times V_2\To Crit(f_1\boxplus f_2)^- \times X_1\times X_2$, with product morphisms. Then the claim follows from the Thom-Sebastiani isomorphism (see Theorem 2.1.3 of \cite{BBDJS}) for the perverse sheaf of vanishing cycles of $f_1\boxplus f_2$. 

%In the case that $X_0$ and $X_1$ are derived schemes, then f locally $X_i$ is given by $Crit(f_i)$ then $X_0 \times X_1$ is equivalent to $Crit(f_0\boxplus f_1)$. Finally the Thom-Sebastiani isomorphism (see Theorem 2.1.3 of \cite{BBDJS}) for the perverse sheaf of vanishing cycles implies the claim. In the case that the $X_i$ are derived Artin stacks, chose smooth atlases $u_{i}:A_i \to X_i$ of dimension $d_i$. Then $A = A_0 \times A_1 \to X_0 \times X_1=X$ is a smooth atlass. Now $\mathcal{P}_{X}$ is determined its pullback $\mathcal{P}_{A}[d_1+d_2]$ and the isomorphism of its two pullbacks to $A \times_{X}A$. The perverse sheaf  $\mathcal{P}_{A}$ is determined by the structure on $A$ of a $d$-critical locus.  Similarly, the perverse sheaves $\mathcal{P}_{A_i}$ are determined by considering on $X_i$ the natural structure of $d$-critical stacks and considering $A_i$ with their structure as $d$-critical loci, see Theorem 4.8 of \cite{BBBJ}.  We have $\mathcal{P}_{A}[d_1+d_2] \cong (\mathcal{P}_{A_1}[d_1])\boxtimes (\mathcal{P}_{A_2}[d_2])$ as can be seen by looking in $d$-critical charts on $A$ which are the product of charts for $A_1$ and $A_2$ and again applying Thom-Sebastiani. Similarly, the descent cocycles on $A \times_{X}A \cong (A_{0} \times_{X_0} A_{0})\times (A_{1} \times_{X_1} A_{1})$ agree and so the descent data which determines the sheaf $\mathcal{P}_{X_1}\boxtimes \mathcal{P}_{X_2}$, agrees with that which determines the sheaf $\mathcal{P}_{X}$ and so the natural morphism $\mathcal{P}_{X} \to \mathcal{P}_{X_1}\boxtimes \mathcal{P}_{X_2}$ is an isomorphism.
\end{rem}

\section{A linearization of $\fkSymp^0$ and $\fkLag(S)$}

In this section, we will first construct a bicategory $\fkLSymp$, whose objects and $1$-morphisms agree with those of $\fkSymp^{or}$, but it is linear at the level of $2$-morphisms. We will also construct a  homomorphism of bicategories $\fkSymp^{or}_c \To \fkLSymp$. In both cases we will use Conjecture \ref{con:OurConj}. In the last subsection we will see how the same construction gives, for any $1$-symplectic derived stack $S$, an oriented version $\fkLag^{or}(S)$ and a linearization $\fkLLag(S)$ of $\fkLag(S)$.

\subsection{A linearized $2$-category of 0-symplectic derived stacks}\label{Ssec:PL}\

Here we will define the bicategory $\fkLSymp$. Before we define the objects and morphism in this bicategory we make some useful observations. If $S$ is a $0$-symplectic derived stack then it has even virtual dimension and if $X$ is a Lagrangian in $S$, it follows from the definition of Lagrangian that $\vdim(X)=\frac{1}{2}\vdim(S)$.

\begin{defn}
The objects of $\fkLSymp$ are the same as the objects of $\fkSymp^{or}$, namely $0$-symplectic derived stacks $(S, \omega)$ together with a line bundle $E$ on $S$. The $1$-morphisms are the same as in $\fkSymp^{or}$, so we have $\fkLSymp_{1}(S_0, S_1):=\fkSymp^{or}_{1}(S_0, S_1)$. 

If $X_0$ and $X_1$ are $1$-morphisms, then by Lemma \ref{lem:OrienProds}, the $(-1)$-symplectic derived stack $X_{01}=X_0\times_{S_0\times S_1} X_1$ has an induced orientation. Theorem \ref{thm:Existence} then constructs a perverse sheaf $\mathcal{P}_{X_{01}}$. We define the graded vector space of $2$-morphisms as
\[\fkLSymp_{2}(X_0,X_1):=\mathbb{H}^{\bullet}(X_{01}, \mathcal{P}_{X_{01}}[-n_0-n_1]),
\]
where $n_i= \frac{1}{2} \vdim (S_i)$.
\end{defn}

We now define the different compositions and identities in $\fkLSymp$.

\begin{defn}
Composition of $1$-morphisms in $\fkLSymp$ is defined in the same way as in $\fkSymp^{or}$. Similarly, the identity $1$-morphisms $\text{id}_{X}$ are defined to be same as the ones in $\fkSymp^{or}$. 

Let $S_i$ be objects in $\fkLSymp$ with $\vdim(S_i)=2n_i$. Given $X_0, X_1, X_2 \in \fkLSymp_{1}(S_0, S_1)$, Lemma \ref{lem:OrienProds} implies that the inclusion 
\[\varphi:X_{012} \to ( X_{12} \times X_{01})^{-} \times X_{02}\] 
is a $(-1)$-Lagrangian of virtual dimension $-n_0-n_1$ equipped with an induced orientation. Here we reverse the order of the first two factors to respect the usual convention for compositions in a category.
Since $\varphi$ is proper, Conjecture \ref{con:OurConj}(a) combined with Lemma \ref{lem:XWdiag}(a) gives a morphism 
\[(\mu_{X_{012}})_{*}: \mathbb{H}^{\bullet}(\mathcal{P}_{X_{12} \times X_{01}}[-n_0-n_1]) \to \mathbb{H}^{\bullet}(\mathcal{P}_{X_{02}}).
\]
Applying the shift $[-n_0-n_1]$ and using the isomorphism $\mathcal{P}_{ X_{12} \times X_{01}} \cong  \mathcal{P}_{X_{12}}\boxtimes \mathcal{P}_{X_{01}}$, explained in Remark \ref{rem:productP},
 and the K\"unneth isomorphism we obtain a map
\[ (\mu_{X_{012}})_*[-n_0-n_1]: \mathbb{H}^{\bullet} (\mathcal{P}_{X_{12}}[-n_0-n_1]) \otimes \mathbb{H}^{\bullet}(\mathcal{P}_{X_{01}}[-n_0-n_1])  \to \mathbb{H}^{\bullet}(\mathcal{P}_{X_{02}}[-n_0-n_1]).
\]
We define the vertical composition of $2$-morphisms as 
\[ a_2 \odot a_1 = (-1)^{(|a_2|-n_0-n_1)(n_0+n_1)}(\mu_{X_{012}})_*[-n_0-n_1](a_2 \otimes a_1).\] 

Consider $Y_{0}, Y_1 \in \fkLSymp_{1}(S_1,S_2)$ and denote $Z_{i} = Y_{i} \circ X_{i} \in  \fkLSymp_{1}(S_0,S_2)$. By Lemma \ref{lem:LagUniqP01} there is a natural Lagrangian
\[Z_{0} \times_{S_0 \times S_1 \times S_2} Z_1 \to (Y_{01} \times X_{01})^{-} \times Z_{01}.
\]
As explained in the proof of Proposition \ref{prop:6} this Lagrangian can be described as a triple intersection of oriented $0$-Lagrangians and hence Lemma \ref{lem:OrienProds} (b) assigns it an orientation. As above, since this Lagrangian is proper, we obtain a map 
\[(\mu_{Z_{0} \times_{S_0 \times S_1 \times S_2} Z_1})_*: \mathbb{H}^{\bullet} (\mathcal{P}_{Y_{01} \times X_{01}}[v]) \to \mathbb{H}^{\bullet}(\mathcal{P}_{Z_{01}})
\]
where $v= \vdim(Z_{0} \times_{S_{0} \times S_{1} \times S_{2}} Z_{1}) = -2 n_1$. If we apply the shift $[-n_0-n_2]$ we obtain a map
\[(\mu_{Z_{0} \times_{S_0 \times S_1 \times S_2} Z_1})_*[-n_0 - n_2]:  \mathbb{H}^{\bullet}(\mathcal{P}_{Y_{01}}[-n_1 - n_2]) \otimes \mathbb{H}^{\bullet}(\mathcal{P}_{X_{01}}[-n_0 - n_1])  \to \mathbb{H}^{\bullet}(\mathcal{P}_{Z_{01}}[-n_0 - n_2]).
\]
We define the horizontal composition of $2$-morphisms as
\[b* a= (-1)^{(n_0+n_1)(n_1+n_2)}(\mu_{Z_{0} \times_{S_0 \times S_1 \times S_2} Z_1})_*[-n_0 - n_2](b \otimes a)
\]
\end{defn}

In order to define the identity $2$-morphisms, associators and unitors we need the following  

\begin{lem}\label{lem:eMors}
Let $X_0$, $X_1$ be $1$-morphisms. An oriented Lagrangeomorphism $\rho:X_0 \to X_1$ determines a $2$-morphism $e_{\rho} \in \fkLSymp_{2}(X_0, X_1) = \mathbb{H}^{\bullet}(\mathcal{P}_{X_{01}}[-n_0-n_1])$, sometimes we denote it by $e_M$, with $M=\Gamma_\rho$.

If $\rho= \text{id}_{X}$ then $e_{\rho}$ is an identity for the operation $ \odot $. Moreover for any $\rho$, the $2$-morphism $e_{\rho}$ is invertible with respect to $ \odot $.
\end{lem}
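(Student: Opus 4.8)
The plan is to define $e_\rho$ by transporting the fundamental class of the point through the canonical morphism attached to the graph of $\rho$, and then to read off both the identity and the invertibility statements from the functoriality of this construction, as encoded in Conjecture \ref{con:OurConj} and Lemma \ref{lem:XWdiag}, together with the structural identities for $\bullet$-composition in Proposition \ref{prop:IdOriginal}. First I would construct $e_\rho$. The graph $M=\Gamma_\rho\colon X_0\to X_{01}$ is a proper oriented $(-1)$-Lagrangian: properness holds because $\rho$ is an equivalence, and the orientation is the one described in the Remark following Definition \ref{defn:OrientedLagrangeomorphism}. Regarding $M$ as an oriented Lagrangian correspondence from the oriented $(-1)$-symplectic point $\bullet$ (with $\mathcal{P}_\bullet=\mathbb{F}$) to $X_{01}$, whose projection $\phi$ to $X_{01}$ is proper, Conjecture \ref{con:OurConj}(a) produces $\mu_M\colon \mathbb{F}_M[v]\to \phi^!\mathcal{P}_{X_{01}}$ with $v=\vdim M=n_0+n_1$. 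Applying the pull-push of Lemma \ref{lem:XWdiag}(a) to the diagram $\bullet\leftarrow M\xrightarrow{\phi}X_{01}$ and evaluating $(\mu_M)_*$ on the canonical generator of $\mathbb{H}^\bullet(\bullet,\mathbb{F})$ defines $e_\rho:=(\mu_M)_*(1)$. A degree count (using $v=n_0+n_1$) shows $e_\rho\in\mathbb{H}^0(\mathcal{P}_{X_{01}}[-n_0-n_1])\subset\fkLSymp_2(X_0,X_1)$, so in particular $|e_\rho|=0$.

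For the identity statement I would take a general $a\in\fkLSymp_2(X_0,X_1)$ and reduce $a\odot e_{\mathrm{id}_{X_0}}$ to $a$. The key observation is that $e_{\mathrm{id}_{X_0}}$ is itself the pushforward of the fundamental class along $\mu_{\Delta_{X_0}}$, so the composition-compatibility of pull-push (Lemma \ref{lem:XWdiag}(c),(e)) together with Conjecture \ref{con:OurConj}(c) rewrites $a\odot e_{\mathrm{id}_{X_0}}$ as the pull-push attached to the composite correspondence obtained by feeding the diagonal into the triple-intersection correspondence $X_{012}$, evaluated on $a$. By the unitor identity of Proposition \ref{prop:IdOriginal} this composite is oriented-Lagrangeomorphic to the identity (diagonal) correspondence on $X_{01}$; Conjecture \ref{con:OurConj}(b) then identifies the associated map with the one for the diagonal, which by Conjecture \ref{con:OurConj}(d) and Lemma \ref{lem:XWdiag}(d) is the identity on hypercohomology. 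The symmetric argument handles $e_{\mathrm{id}_{X_1}}\odot a$.

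For invertibility I would choose a homotopy-inverse Lagrangeomorphism $\psi=\rho^{-1}$, which is again an oriented Lagrangeomorphism by Lemma \ref{lem:Isom2equiv} and Proposition \ref{prop:IdOriginal}, and claim that $e_\psi$ is a two-sided $\odot$-inverse of $e_\rho$. Running the same reduction as above, $e_\psi\odot e_\rho$ equals the pull-push attached to the fiber product of the two graphs $\Gamma_\rho\times_{X_1}\Gamma_\psi$, which by Proposition \ref{prop:IdOriginal} is oriented-Lagrangeomorphic to $\Delta_{X_0}=\Gamma_{\mathrm{id}_{X_0}}$; hence $e_\psi\odot e_\rho=e_{\mathrm{id}_{X_0}}$ via Conjecture \ref{con:OurConj}(b),(d), and symmetrically $e_\rho\odot e_\psi=e_{\mathrm{id}_{X_1}}$. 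Combined with the identity statement, this gives invertibility of $e_\rho$.

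The main obstacle is not conceptual, since every reduction above is forced by the properties listed in Conjecture \ref{con:OurConj}: the work lies in the bookkeeping of the degree shifts $[-n_0-n_1]$ and, above all, of the sign prefactors. Concretely, the definition of $\odot$ carries factors such as $(-1)^{(|a|-n_0-n_1)(n_0+n_1)}$, and the reductions introduce further signs coming from the orientation conventions of Lemma \ref{lem:comporientation} and from Conjecture \ref{con:OurConj}(c); the hard part will be to verify that, because $|e_{\mathrm{id}}|=0$, all of these collapse to $+1$ so that $e_{\mathrm{id}}$ is a \emph{strict} identity and $e_\psi$ an exact inverse. This is a finite but delicate check analogous to the sign computations in the proof of Lemma \ref{lem:Compat} and Theorem \ref{thm:sympor}.
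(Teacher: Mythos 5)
Your proposal is correct and follows the paper's construction of $e_\rho$ verbatim: the graph $\Gamma_\rho\colon X_0\to X_{01}$ is a proper oriented $(-1)$-Lagrangian of virtual dimension $n_0+n_1$ viewed as a correspondence from the point, and $e_\rho=(\mu_{\Gamma_\rho})_*[-n_0-n_1](1)$, with the same ingredients throughout (Conjecture \ref{con:OurConj}(a)--(e), Lemma \ref{lem:XWdiag}, Proposition \ref{prop:IdOriginal}). The one genuine organizational difference is in how the unit and inverse laws are deduced. The paper first establishes the multiplicative law $e_{\rho_2}\odot e_{\rho_1}=e_{\rho_2\circ\rho_1}$ by the chain (e)$\to$(c)$\to$orientation-sign$\to$(b), and then specializes via Proposition \ref{prop:IdOriginal}; as displayed, this verifies the unit property only against the other $e$-classes ($e_\rho\odot e_{\mathrm{id}}=e_\rho$, etc.). You instead check $a\odot e_{\mathrm{id}_{X_0}}=a$ for an \emph{arbitrary} $2$-morphism $a$, by using (d),(e),(c) to collapse the pull-push to the one attached to the composite correspondence obtained by feeding the diagonal into the triple intersection, identifying that composite with $\Delta_{X_{01}}$ via the unitor Lagrangeomorphism of Proposition \ref{prop:IdOriginal}, and finishing with (b) and (d). Your version is literally what the statement ``$e_{\mathrm{id}}$ is an identity for $\odot$'' asserts and what the subsequent lemma on associativity and identities needs, whereas the paper's version buys the reusable homomorphism property $\rho\mapsto e_\rho$ (which in turn gives your invertibility claim instantly, since $\Gamma_\rho\odot\Gamma_{\tilde\rho}\cong\Gamma_{\mathrm{id}}$); either computation extends trivially to the other, so the gap is cosmetic. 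Two caveats you should keep in mind when writing this up: Proposition \ref{prop:IdOriginal} is an unoriented statement, so the oriented Lagrangeomorphisms you invoke require checking the orientation conventions (this is where Lemma \ref{lem:comporientation} and the sign conventions of Theorem \ref{thm:sympor} enter, exactly as you flag), and your sign analysis is made manageable precisely by the degree count $|e_\rho|=n_0+n_1-\vdim\Gamma_\rho=0$, which you correctly identify.
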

\begin{proof} 
By the definition of Lagrangeomorphism, its graph $\Gamma_{\rho}:X_{0} \to X_{01}$ is an oriented $(-1)$-Lagrangian of virtual dimension $n_0+n_1$. This can be thought of as a Lagrangian correspondence from a point to $X_{01}$, that is a Lagrangian $X_0 \to (\bullet_{(-1)})^-\times X_{01}$. Since it is proper, we can apply Conjecture \ref{con:OurConj}(a)  and Lemma \ref{lem:XWdiag} (a) to this Lagrangian and obtain a map 
\[(\mu_{\Gamma_{\rho}})_{*}:\mathbb{H}^{\bullet}(\mathcal{P}_{\bullet_{(-1)}}[n_0+n_1])\to \mathbb{H}^{\bullet}(\mathcal{P}_{X_{01}}).
\]
Applying the shift $[-n_0-n_1]$ and using the fact that $\mathbb{H}^{\bullet}(\mathcal{P}_{\bullet_{(-1)}}) \cong \mathbb{F}$, we obtain the map 
\[(\mu_{\Gamma_{\rho}})_{*}[-n_0-n_1]:\mathbb{F}\to \mathbb{H}^{\bullet}(\mathcal{P}_{X_{01}}[-n_0-n_1]) \cong \fkLSymp_{2}(X_0,X_1).
\]
We define $e_{\rho}= (\mu_{\Gamma_{\rho}})_{*}[-n_0-n_1](1)\in \fkLSymp_2(X_0,X_1)$. 

Let $\rho_{1}:X_{0} \to X_{1}$ and $\rho_{2}:X_{1} \to X_{2}$ be Lagrangeomorphisms, recall from the proof of Proposition \ref{prop:IdOriginal} that $\rho_2\circ\rho_1$ is also a Lagrangeomorphism. Moreover we have an oriented Lagrangeomorphism $\Gamma_{\rho_2} \odot \Gamma_{\rho_1}\cong \Gamma_{\rho_2\circ\rho_1}$, where $\odot$ is the vertical composition in the category $\fkSymp^{or}$. We claim that
\begin{equation}\label{eqn:RhoComp}
e_{\rho_2} \odot e_{\rho_1} = e_{\rho_2 \circ \rho_1}
\end{equation}
In order to prove this, we compute 
\begin{equation}
\begin{split}e_{\rho_2} \odot e_{\rho_1} & = (-1)^{n_0+n_1}(\mu_{X_{012}})_{*}[-n_0-n_1]\big((\mu_{\Gamma_{\rho_2}})_{*}[-n_0-n_1](1)\otimes (\mu_{\Gamma_{\rho_1}})_{*}[-n_0-n_1](1)\big) \\
&= (-1)^{n_0+n_1}(\mu_{X_{012}})_{*}[-n_0-n_1]\big((\mu_{\Gamma_{\rho_2} \times \Gamma_{\rho_1}})_{*}[-2n_0-2n_1](1)\big)
\\
&=(-1)^{n_0+n_1}(\mu_{X_{012} \bullet (\Gamma_{\rho_2} \times \Gamma_{\rho_1})})_{*}[-n_0-n_1](1)
\\
&=(\mu_{ \Gamma_{\rho_2} \odot\Gamma_{\rho_1}})_{*}[-n_0-n_1](1)
\\
&= (\mu_{ \Gamma_{\rho_2 \circ \rho_1}})_{*}[-n_0-n_1](1)=e_{{\rho_2} \odot {\rho_1}}.
\end{split}
\end{equation} 
Here the first, fourth and last equalities follow from the definitions, the second from Conjecture \ref{con:OurConj}(e) combined with Lemma \ref{lem:XWdiag}(e), the third equality follows from combining Conjecture \ref{con:OurConj}(c) with Lemma \ref{lem:XWdiag}(c) and finally the fifth equality follows Conjecture \ref{con:OurConj}(b) and Lemma \ref{lem:XWdiag}(b).

Now equation (\ref{eqn:RhoComp}) together with Proposition \ref{prop:IdOriginal} immediately implies the second half of the statement, namely: 
\[ e_{\rho}\odot e_{\text{id}_{X_0}}  = e_{\rho} = e_{\text{id}_{X_1}} \odot e_{\rho}
\]
and
\[e_{\rho} \odot e_{\tilde{\rho}}= e_{\text{id}_{X_0}},\ \  e_{\tilde{\rho}}\odot e_{\rho}= e_{\text{id}_{X_1}}
\]
where $\tilde{\rho}$ is a homotopy inverse of $\rho$. 
\end{proof}

The previous lemma allows us to make the following definition

\begin{defn} 
Let $S_0,S_1, S_2$ and $S_3$ be objects in $\fkLSymp$ and consider $X_{i} \in \fkLSymp_{1}(S_{i-1}, S_{i})$ for $i=1,2,3$.
The identity $2$-morphism of $X_1$ is defined as $1_{X_1} = e_{\text{id}_{X_1}}$. 
Let $W_{X_3 X_2 X_1}$ be the associator in $\fkSymp^{or}$, we define the associator in $\fkLSymp$, still denoted $W_{X_3 X_2 X_1}$, as $e_{W_{X_3 X_2 X_1}}$.
 
Next, we define the unitors in $\fkLSymp_{2}(I_{S_0}\circ X_1, X_1)$ and 
$\fkLSymp_{2}(X_1 \circ I_{S_1}, X_1)$ as $e_{l_{X_1}}$ and $e_{r_{X_1}}$ where $l_{X_1}$ and $r_{X_1}$ are the unitors in $\fkSymp^{or}$. 

By Lemma \ref{lem:eMors} all of these $2$-morphisms are $2$-isomorphisms as required. 
\end{defn}

We now have all the data needed to define a bicategory, we will now check the axioms.

\begin{lem}
The vertical composition $\odot$ is associative and the $2$-morphisms $1_X$ are identities.
\end{lem}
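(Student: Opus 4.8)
The plan is to prove both statements by transporting, via Conjecture~\ref{con:OurConj} and Lemma~\ref{lem:XWdiag}, the geometric facts already established for $\fkSymp^{or}$ to the level of the pull-push maps $(\mu_\bullet)_*$ that define $\odot$, and then to verify that the explicit Koszul signs in the definition of $\odot$ match. Throughout I fix objects $S_0,S_1$, write $n_i=\tfrac12\vdim S_i$, and take $1$-morphisms $X_0,\dots,X_3\in\fkLSymp_1(S_0,S_1)$ together with composable $2$-morphisms $a_i$. The strategy is a direct linear analogue of the associativity verification inside the proof of Theorem~\ref{thm:sympor}.

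For associativity I would first unwind $(a_3\odot a_2)\odot a_1$ and $a_3\odot(a_2\odot a_1)$ from the definition; each is an iterated application of two maps $(\mu_{X_{ijk}})_*$. Using Lemma~\ref{lem:XWdiag}(c) together with Conjecture~\ref{con:OurConj}(c) — and Lemma~\ref{lem:XWdiag}(e)/Conjecture~\ref{con:OurConj}(e) together with the K\"unneth isomorphism \eqref{eqn:Kunneth} to handle the reordering of tensor factors — each iterated composite collapses to a single pull-push applied to $a_1\otimes a_2\otimes a_3$. Explicitly, the left bracketing yields $(\mu_{L})_*$ with $L=X_{013}\bullet(\Delta_{X_{01}}\times X_{123})$ and the right bracketing yields $(\mu_{L'})_*$ with $L'=X_{023}\bullet(X_{012}\times\Delta_{X_{23}})$, exactly as in the proof of Lemma~\ref{lem:Compat}. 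The oriented refinement of Corollary~\ref{cor:NotationX} used in Theorem~\ref{thm:sympor} supplies an oriented Lagrangeomorphism $L'\cong(-1)^{n_0+n_1}L$; applying Conjecture~\ref{con:OurConj}(b) and Lemma~\ref{lem:XWdiag}(b), and recalling that reversing the orientation changes $\mu$ by $-1$ (Conjecture~\ref{con:OurConj}(a)), I obtain $(\mu_{L'})_*=(-1)^{n_0+n_1}(\mu_{L})_*$.

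It then remains to compare the definitional prefactors $(-1)^{(|a_i|-n_0-n_1)(n_0+n_1)}$ carried by the two bracketings, absorbing the intermediate degree shifts $[-n_0-n_1]$ and the Koszul sign introduced each time $(\mu_{X_{ijk}})_*$ is slid past a tensor factor. I expect this to be the main obstacle: the homotopical and sheaf-theoretic content is entirely provided by the cited results, and the real content of the lemma is that the sign $(-1)^{n_0+n_1}$ coming from the Lagrangeomorphism $L'\cong(-1)^{n_0+n_1}L$ is cancelled precisely by the difference of the definitional signs — the same cancellation that concludes the associativity computation in Theorem~\ref{thm:sympor}.

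For the identity statement I would take $a\in\fkLSymp_2(X_0,X_1)$ and expand $1_{X_1}\odot a$, writing $1_{X_1}=e_{\id_{X_1}}=(\mu_{\Delta_{X_1}})_*[-n_0-n_1](1)$ as in Lemma~\ref{lem:eMors}. Viewing $\Delta_{X_1}\colon X_1\to(\bullet_{(-1)})^-\times X_{11}$ as a Lagrangian correspondence and combining Lemma~\ref{lem:XWdiag}(c) with Conjecture~\ref{con:OurConj}(c), the operation $a\mapsto 1_{X_1}\odot a$ is realized as a single pull-push $(\mu_{L})_*$ for the correspondence $L$ from $X_{01}$ to itself obtained by feeding $\Delta_{X_1}$ into one slot of the triple-intersection Lagrangian $\varphi\colon X_0\times_S X_1\times_S X_1\to(X_{11}\times X_{01})^-\times X_{01}$. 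By Proposition~\ref{prop:IdOriginal} this composite is oriented Lagrangeomorphic to the diagonal $\Delta_{X_{01}}$, so Conjecture~\ref{con:OurConj}(b),(d) and Lemma~\ref{lem:XWdiag}(b),(d) identify $(\mu_{L})_*$ with the identity of $\mathbb{H}^\bullet(\mathcal{P}_{X_{01}}[-n_0-n_1])$. A final sign check — using $|1_{X_1}|=0$, so that the definitional prefactor is $(-1)^{(n_0+n_1)^2}=(-1)^{n_0+n_1}$, matching the sign absorbed in the reduction — yields $1_{X_1}\odot a=a$; the proof of $a\odot 1_{X_0}=a$ is identical, using the other half of Proposition~\ref{prop:IdOriginal}.
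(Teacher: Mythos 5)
Your proposal is correct, and for the associativity half it is essentially the paper's own argument: you collapse each bracketing to a single pull-push applied to $a_1\otimes a_2\otimes a_3$ via Conjecture \ref{con:OurConj}(c),(e) and Lemma \ref{lem:XWdiag}(c),(e) (the paper also explicitly inserts the diagonal via parts (d), which your $\Delta$-factors encode), arrives at the same pair of composite correspondences as in Corollary \ref{cor:NotationX} up to an ordering of product factors handled by the swap/K\"unneth bookkeeping, and then compares them through the oriented Lagrangeomorphism with sign $(-1)^{n_0+n_1}$ from the proof of Theorem \ref{thm:sympor}, checking that this sign cancels against the difference of the definitional prefactors --- exactly the computation the paper writes out, where the two bracketings carry prefactors $(-1)^{|a_2|n+n}$ and $(-1)^{|a_2|n}$ with $n=n_0+n_1$. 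Where you genuinely diverge is the unit law: the paper disposes of it in one line by citing the second part of Lemma \ref{lem:eMors}, whereas you verify $1_{X_1}\odot a=a$ directly for an \emph{arbitrary} $2$-morphism $a$, feeding $\Delta_{X_1}$ into the triple-intersection correspondence, identifying the composite with $\Delta_{X_{01}}$ via Proposition \ref{prop:IdOriginal}, and invoking Conjecture \ref{con:OurConj}(b),(d) with Lemma \ref{lem:XWdiag}(b),(d) plus the sign check $(-1)^{(n_0+n_1)^2}=(-1)^{n_0+n_1}$. This is arguably the more complete route: the computation displayed inside Lemma \ref{lem:eMors} only tests $e_{\id}$ against $2$-morphisms of the form $e_\rho$, so the general unit property implicitly requires precisely the argument you spell out; your version fills in that step rather than delegating it.
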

\begin{proof} 
Consider $X_0, X_1, X_2, X_3 \in \fkLSymp_1(S_0, S_1)$ and take $a_i \in \fkLSymp_1(X_{i-1}, X_i)$. We denote $n=\vdim X_i$ and compute
\begin{equation}
\begin{split}
a_3 \odot ( a_2 \odot a_1)&= (-1)^{(|a_2|+|a_3|)n}(\mu_{X_{023}})_{*}[-n]( (\mu_{\Delta_{X_{23}}})_*(a_3) \otimes (\mu_{X_{123}})_{*}[-n] (a_2 \otimes a_1) )\\
& =(-1)^{|a_2|n +n} (\mu_{X_{023}})_{*}[-n]((\mu_{\Delta_{X_{23}}\times X_{123}})_*[-n](a_3 \otimes a_2 \otimes a_1)) \\
& = (-1)^{|a_2|n +n} \big(\mu_{X_{023} \bullet (\Delta_{X_{23}} \times X_{012})}\big)_{*}[-n](a_3 \otimes a_2 \otimes a_1))
\end{split}
\end{equation}
where the first equality follows from the definitions, together with Conjecture \ref{con:OurConj}(d) and Lemma \ref{lem:XWdiag}(d); the second one follows from Conjecture \ref{con:OurConj}(e) and Lemma \ref{lem:XWdiag}(e) and the fact that $(\mu_{X_{123}})_{*}$ has degree $n$. Finally the third equality follows from Conjecture \ref{con:OurConj}(c) and Lemma \ref{lem:XWdiag}(c).

Similarly, 
\[(a_3 \odot a_2) \odot a_1= (-1)^{|a_2|n} \big(\mu_{X_{013} \bullet (X_{123}\times \Delta_{X_{01}})}\big)_{*}[-n](a_3 \otimes a_2 \otimes a_1),
\]
hence associativity follows from applying Conjecture \ref{con:OurConj}(b) to the Lagrangeomorphism 
$$X_{023} \bullet (\Delta_{X_{23}}\times X_{012}) \cong (-1)^n X_{013}\bullet ( X_{123}\times \Delta_{X_{01}} )$$ 
proven in Corollary \ref{cor:NotationX}, without considering the orientations, but that is elementary.
The statement about the identity $2$-morphisms follows from the second part of Lemma \ref{lem:eMors}.
\end{proof}

\begin{lem}
Consider $X_i \in\fkLSymp_{1}(S_0, S_1)$ and $Y_i \in\fkLSymp_{1}(S_1, S_2)$ for $i=0,1,2$ and denote $Z_i=Y_i\circ X_i$. For $a_{i} \in \fkLSymp_{2}(X_{i-1}, X_{i})$ and $b_{i} \in \fkLSymp_{2}(Y_{i-1}, Y_{i})$ for $i=1,2$, we have
\[(b_{2} * a_{2}) \odot (b_{1} * a_{1}) = (-1)^{|b_1||a_2|}(b_{2} \odot b_{1}) * (a_{2} \odot a_{1})
\]
\end{lem}
\begin{proof}
The proof follows the proof of the same statement for the bicategory $\fkSymp^{or}$ in Theorem \ref{thm:sympor}, using Conjecture \ref{con:OurConj} and Lemma \ref{lem:XWdiag}, instead of properties $C_{-}$. We have
\[(b_{2} * a_{2}) \odot (b_{1} * a_{1}) =(-1)^{\epsilon_1}\big(\mu_{Z_{012}\bullet(Z_1 \times_{S_{012}}Z_2 \times Z_0\times_{S_{012}} Z_1)}\big)_*[-n_0-n_2](b_2\otimes a_2\otimes b_1\otimes a_1)
\]
\[(b_{2} \odot b_{1}) * (a_{2} \odot a_{1})=(-1)^{\epsilon_2}\big(\mu_{(Z_0\times_{S_{012}}Z_2)\bullet(Y_{012} \times X_{012})\bullet \Gamma_\rho}\big)_*[-n_0-n_2](b_2\otimes a_2\otimes b_1\otimes a_1)
\]
where $\epsilon_1=(|b_2|+|a_2|+n_0+n_2)(n_0+n_2)$ and $\epsilon_2=(n_0+n_1)(n_1+n_2)+(|b_2|+n_1+n_2)(n_1+n_2)+(|a_2|+n_0+n_1)(n_0+n_1)+(|b_1|+n_1+n_2)(|a_2|+n_1+n_2)$. Here we have used the fact that 
$$(\mu_{\Gamma_\rho})_*(b_2\otimes b_1\otimes a_2\otimes\ a_1)=(-1)^{(|b_1|+n_1+n_2)(|a_2|+n_1+n_2)}b_2\otimes a_2\otimes b_1\otimes a_1,$$
where the sign corresponds to the unshifted degrees in $\mathbb{H}^{\bullet}(\mathcal{P}_{X_{12}})$ and $\mathbb{H}^{\bullet}(\mathcal{P}_{Y_{01}})$. 

Note that $\epsilon_1+\epsilon_2=|b_1||a_2| \pmod 2$, therefore the statement follows from the existence of the following oriented Lagrangeomorphism
\[Z_{012}\bullet(Z_1 \times_{S_{012}}Z_2 \times Z_0\times_{S_{012}} Z_1) \cong (Z_0\times_{S_{012}}Z_2)\bullet(Y_{012} \times X_{012})\bullet \Gamma_\rho,
\]
which is analogous to the one constructed in the proof of Theorem \ref{thm:sympor}.
\end{proof}

\begin{lem}
The associator satisfies the pentagon axiom and the unitors satisfy the triangle axiom.
\end{lem}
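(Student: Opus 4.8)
The plan is to transfer the pentagon and triangle identities from $\fkSymp^{or}$, where they are already known (they are part of the bicategory structure established in Theorem~\ref{thm:sympor}, proved exactly as the corresponding Lemmas~\ref{lem:weakAssoc1mor} and~\ref{lem:UnitorsSatisfy} for $\fkLag$), to $\fkLSymp$ along the assignment $\rho \mapsto e_{\rho}$ of Lemma~\ref{lem:eMors}. Recall that every structural $2$-morphism of $\fkLSymp$ — the identities $1_{X}=e_{\id_X}$, the associators $e_{W_{X_3X_2X_1}}$ and the unitors $e_{l_X}$, $e_{r_X}$ — is of the form $e_{\rho}$ for an oriented Lagrangeomorphism $\rho$ coming from $\fkSymp^{or}$, and that each such $e_{\rho}$ has degree $0$ (the image of $1\in\mathbb{F}$ after the shift $[-n_0-n_1]$). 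Consequently no Koszul sign from the compatibility of $\odot$ and $*$ will intervene.

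The key step is to promote $e_{-}$ to an assignment that is functorial for both compositions. For vertical composition this is precisely equation~\eqref{eqn:RhoComp} of Lemma~\ref{lem:eMors}, namely $e_{\rho_2}\odot e_{\rho_1}=e_{\rho_2\circ\rho_1}$. I would prove the horizontal analogue
\[
e_{\rho}*e_{\sigma}=e_{\rho*\sigma},
\]
where on the right $\rho*\sigma$ denotes the horizontal composite of graphs of Lagrangeomorphisms computed inside $\fkSymp^{or}$ (which is again the graph of a Lagrangeomorphism, exactly as in the identifications $W_{432}*1_{N_1}\cong \Gamma_{\id_{N_1}\times_{X_1}\rho_{432}}$ and $1_{N_4}*W_{321}\cong\Gamma_{\rho_{321}\times_{X_3}\id_{N_4}}$ used in Lemma~\ref{lem:weakAssoc1mor}). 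The proof runs parallel to that of~\eqref{eqn:RhoComp}: expand both $*$-operations via the $C_{-}$/$(\mu_{-})_{*}$ description of horizontal composition (Proposition~\ref{prop:6}, Lemma~\ref{lem:Compat}), then use Conjecture~\ref{con:OurConj}(c),(e) together with Lemma~\ref{lem:XWdiag}(c),(e) to merge the two $\mu$-maps into a single one, and finally invoke Conjecture~\ref{con:OurConj}(b) with Lemma~\ref{lem:XWdiag}(b) — i.e.\ that $e_{-}$ depends only on the oriented Lagrangeomorphism class — to recognize the result as $e_{\rho*\sigma}$. Here the matching sign conventions for $*$ in $\fkSymp^{or}$ (Theorem~\ref{thm:sympor}) and in $\fkLSymp$, combined with the degree-$0$ observation, ensure that all prefactors cancel.

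With these two functoriality statements in hand the axioms are immediate. For the pentagon, applying $e_{-}$ term by term to the pentagon identity of $\fkSymp^{or}$ and using $e_{\rho_2}\odot e_{\rho_1}=e_{\rho_2\circ\rho_1}$ and $e_{\rho}*e_{\sigma}=e_{\rho*\sigma}$ turns
\[
W_{43(21)}\odot W_{(43)21}
\quad\text{and}\quad
(1_{N_4}*W_{321})\odot W_{4(32)1}\odot(W_{432}*1_{N_1})
\]
into $e$ applied to the two sides of the $\fkSymp^{or}$ pentagon; since those sides coincide there, and $e_{-}$ is well defined on Lagrangeomorphism classes, the two $\fkLSymp$ expressions agree. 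The triangle axiom $(1_{P}*l_{M})\odot W_{P,I_{X_1},M}=r_{P}*1_{M}$ follows in the same way from the triangle identity in $\fkSymp^{or}$ (Lemma~\ref{lem:UnitorsSatisfy}).

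I expect the horizontal compatibility $e_{\rho}*e_{\sigma}=e_{\rho*\sigma}$ to be the main obstacle: it requires careful bookkeeping of the virtual-dimension shifts and signs appearing in the definition of $*$ in $\fkLSymp$, and a verification that the various oriented Lagrangeomorphisms between the relevant homotopy fiber products (of the type handled in Lemma~\ref{lem:Compat} and Corollary~\ref{cor:NotationX}) identify the composite Lagrangian correspondences precisely, so that Conjecture~\ref{con:OurConj}(b),(c) can be applied. Everything else is a formal consequence of the already-established coherence in $\fkSymp^{or}$.
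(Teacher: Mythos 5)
Your proposal is correct and follows essentially the same route as the paper: both transfer the pentagon and triangle identities from $\fkSymp^{or}$ along $e_{-}$, using the vertical multiplicativity $e_{\rho_2}\odot e_{\rho_1}=e_{\rho_2\circ\rho_1}$ from Lemma \ref{lem:eMors} together with Conjecture \ref{con:OurConj}(b) and Lemma \ref{lem:XWdiag}(b) to conclude. The only difference is that you explicitly isolate and propose to prove the horizontal compatibility $e_{\rho}*e_{\sigma}=e_{\rho*\sigma}$ (correctly, by the same mechanism as equation (\ref{eqn:RhoComp})), a step the paper's proof uses but leaves implicit in the identification $(1_{X_4}*W_{321})=e_{1_{X_4}*W_{321}}$.
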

\begin{proof}
Consider $X_i \in \fkLSymp_1(S_{i-1}, S_i)=\fkSymp^{or}(S_{i-1}, S_i)$, for $i=1,..4$, the pentagon axiom in $\fkSymp^{or}$ states that the following oriented $(-1)$-Lagrangians are Lagrangeomorphic
\[W_{43(21)} \odot W_{(43)21} \cong (1_{X_4} * W_{321}) \odot W_{4(32)1} \odot (W_{432}*1_{X_1}) 
\]
By definition and Lemma \ref{lem:eMors} , we have 
\[W_{43(21)} \odot W_{(43)21} = e_{W_{43(21)}} \odot e_{W_{(43)21}} = e_{W_{43(21)} \odot W_{(43)21} }. 
\]
Similarly we have
\begin{equation}
\begin{split} (1_{X_4} * W_{321}) \odot W_{4(32)1} \odot (W_{432}*1_{X_1}) & = e_{1_{X_4} * W_{321}} \odot e_{W_{4(32)1}} \odot e_{W_{432}*1_{X_1}} \\
& = e_{(1_{X_4} * W_{321}) \odot W_{4(32)1} \odot (W_{432}*1_{X_1})},
\end{split}
\end{equation}
an therefore the pentagon axiom in $\fkLSymp$ follows from the pentagon axiom in $\fkSymp^{or}$ combined with Conjecture \ref{con:OurConj}(b) and Lemma \ref{lem:XWdiag}(b). By an analogous argument we can see that the triangle axiom in $\fkSymp^{or}$ implies the triangle axiom in $\fkLSymp$.
\end{proof}

The proof of the next lemma is very similar to others in this section so we omit it.

\begin{lem}
The associator  and the unitors are natural, meaning that given $2$-morphisms $b_i \in \fkLSymp_2(X_{i}, Y_{i})$, for $i=1,2,3$, we have
\[(b_{3}*(b_2 *b_1 )) \odot W_{X_3 X_2 X_1} =  W_{Y_3 Y_2 Y_1} \odot ((b_3 * b_2) * b_1),
\]
and 
\[b_0 \odot r_{X_0}= r_{Y_0} \odot (b_0* 1_{I_{S_0}}) \ \ \textrm{and} \ \ b_0 \odot l_{X_0}= l_{Y_0} \odot (1_{I_{S_1}}* b_0). 
\]
\end{lem}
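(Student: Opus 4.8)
The plan is to mirror the proof of Lemma \ref{lem:naturalAssoc} (naturality of the associator in $\fkLag$), transporting it to $\fkLSymp$ through the dictionary supplied by Conjecture \ref{con:OurConj} and Lemma \ref{lem:XWdiag}, exactly as was done for associativity and for the pentagon and triangle axioms above. Recall that the associator $W_{X_3X_2X_1}$ in $\fkLSymp$ is by definition $e_{W_{X_3X_2X_1}}$, the class attached by Lemma \ref{lem:eMors} to the associator Lagrangeomorphism of $\fkSymp^{or}$, which in turn is the graph $\Gamma_{\rho}$ of the reassociation equivalence $\rho$ from Proposition \ref{prop:Associativity}. The first step is therefore to unwind both sides of the desired identity into push-pull maps, using the definitions of $*$ and $\odot$ in $\fkLSymp$ (each of which is a map $(\mu_{\bullet})_*$ of the form given in Lemma \ref{lem:XWdiag}, up to a sign), together with the composition rule obtained by combining Conjecture \ref{con:OurConj}(c) with Lemma \ref{lem:XWdiag}(c) and the product rule obtained by combining Conjecture \ref{con:OurConj}(e) with Lemma \ref{lem:XWdiag}(e).

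First I would rewrite the left-hand side $(b_3 * (b_2 * b_1)) \odot W_{X_3X_2X_1}$. Repeated application of the composition and product rules collapses the iterated horizontal compositions and the final vertical composition with $e_{\rho}$ into a single push-pull map $(\mu_{K_L})_*$ applied to $b_1 \otimes b_2 \otimes b_3$, where $K_L$ is a $\bullet$-composite of Lagrangians; since $e_{\rho} = (\mu_{\Gamma_{\rho}})_*(1)$, the associator contributes an extra $\bullet\,\Gamma_{\rho}$ factor, exactly as the associators did in the associativity lemma. The same procedure applied to the right-hand side $W_{Y_3Y_2Y_1} \odot ((b_3*b_2)*b_1)$ produces $(\mu_{K_R})_*(b_1\otimes b_2\otimes b_3)$ for a second $\bullet$-composite $K_R$. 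By construction these two Lagrangians $K_L$ and $K_R$ are precisely the two sides of the Lagrangeomorphism \eqref{naturality} established in the proof of Lemma \ref{lem:naturalAssoc}.

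It then remains to run two checks. The first is that \eqref{naturality} is in fact an \emph{oriented} Lagrangeomorphism: the underlying equivalence of derived stacks is already provided by Lemma \ref{lem:naturalAssoc}, and one verifies that the two induced orientations coincide by tracing through the conventions of Lemmas \ref{lem:OrienProds} and \ref{lem:OrienConv}, exactly as in the analogous orientation computations in the proof of Theorem \ref{thm:sympor}. Granting this, Conjecture \ref{con:OurConj}(b) combined with Lemma \ref{lem:XWdiag}(b) forces $(\mu_{K_L})_* = (\mu_{K_R})_*$. The second check is that the accumulated signs on the two sides agree modulo $2$; this is the same $\textbf{gr-Inv}$ bookkeeping carried out in the compatibility-of-compositions lemma above, and I expect the signs attached to $*$, $\odot$ and to the orientation shift in \eqref{naturality} to cancel in pairs.

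Finally, the naturality of the unitors is entirely parallel: writing the unitors in $\fkLSymp$ as $e_{l_{X}}$ and $e_{r_{X}}$, one unwinds $b_0 \odot r_{X_0}$ and $r_{Y_0}\odot(b_0 * 1_{I_{S_0}})$ (and likewise for $l$) into single push-pull maps and reduces the identity to the oriented refinement of the unitor-naturality statement of Lemma \ref{lem:UnitorsSatisfy}, whose underlying Lagrangeomorphisms come from Proposition \ref{prop:IdOriginal}, again via Conjecture \ref{con:OurConj}(b),(c),(e) and Lemma \ref{lem:XWdiag}(b),(c),(e). The main obstacle in the whole argument is not the formal manipulation but the orientation and sign bookkeeping: verifying that \eqref{naturality} (and its unitor analogues) upgrades to an oriented Lagrangeomorphism and that the $\textbf{gr-Inv}$ signs match, since the purely geometric unoriented content is already available from Section 3.
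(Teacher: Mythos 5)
Your proposal is correct and is essentially the argument the paper intends: the authors omit this proof entirely, remarking only that it is ``very similar to others in this section,'' and your write-up executes exactly that template---unwinding both sides into a single push-pull map $(\mu_{K})_*$ applied to $b_1\otimes b_2\otimes b_3$ via Conjecture \ref{con:OurConj}(b),(c),(e) combined with Lemma \ref{lem:XWdiag}, reducing the identity to the oriented upgrade of the Lagrangeomorphism \eqref{naturality} from Lemma \ref{lem:naturalAssoc} (and of Proposition \ref{prop:IdOriginal} for the unitors), and settling the signs by the same $\textbf{gr-Inv}$ bookkeeping as in Theorem \ref{thm:sympor} and Lemma \ref{lem:Compat}. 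The only points you leave unverified (the oriented refinement of \eqref{naturality} and the mod-$2$ sign cancellation) are precisely the details the paper itself leaves implicit, so there is no gap relative to the paper's standard.
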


Summarizing the results from this subsection we have the following

\begin{thm}
The definitions and lemmas above define a bicategory $\fkLSymp$ enriched over graded vector spaces. Moreover it has a symmetric monoidal structure.
\end{thm}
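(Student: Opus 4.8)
The plan is to observe that essentially every verification needed has already been carried out in the lemmas of this subsection, so the proof consists of assembling them into the axioms of a bicategory, recording the linearity that gives the enrichment, and then transporting the symmetric monoidal structure from $\fkSymp^{or}$ along the functor $e_{(-)}$ of Lemma \ref{lem:eMors}. For the bicategory axioms I would match each piece of data against the definition of a bicategory. The objects, $1$-morphisms, composition $\circ$ of $1$-morphisms and identity $1$-morphisms are by definition those of $\fkSymp^{or}$. The hom-categories $\fkLSymp_2(X_0,X_1)=\mathbb{H}^{\bullet}(\mathcal{P}_{X_{01}}[-n_0-n_1])$ with vertical composition $\odot$ were shown above to be associative with units $1_{X}=e_{\text{id}_X}$. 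The associator and the left and right unitors are the $2$-isomorphisms obtained by applying $e_{(-)}$ to the associator and unitors of $\fkSymp^{or}$, and Lemma \ref{lem:eMors} guarantees each such $e_{\rho}$ is invertible; naturality of these transformations together with the pentagon and triangle axioms are exactly the statements of the three lemmas immediately preceding this theorem.

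Next I would record the enrichment over graded vector spaces. Since both $\odot$ and $*$ are defined by composing the linear pull-push maps $(\mu_{\bullet})_{*}$ of Lemma \ref{lem:XWdiag} with the K\"unneth isomorphism (\ref{eqn:Kunneth}), the composition functors are bilinear, hence functors of \textbf{gr-Vect}-enriched categories, and the associator and unitors, being images of fixed classes under such maps, are \textbf{gr-Vect}-natural. The defining sign rule for a \textbf{gr-Vect}-enriched bicategory, namely $(\eta_2*\xi_2)\odot(\eta_1*\xi_1)=(-1)^{|\eta_1||\xi_2|}(\eta_2\odot\eta_1)*(\xi_2\odot\xi_1)$, is precisely the compatibility lemma proved above with the sign $(-1)^{|b_1||a_2|}$.

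Finally I would build the symmetric monoidal structure in parallel with Theorem \ref{thm:sympor}. On objects and $1$-morphisms the tensor product is inherited verbatim from $\fkSymp^{or}$. On $2$-morphisms, Remark \ref{rem:productP} supplies $\mathcal{P}_{X_{01}\times Y_{01}}\cong \mathcal{P}_{X_{01}}\boxtimes\mathcal{P}_{Y_{01}}$, and the K\"unneth isomorphism (\ref{eqn:Kunneth}) then identifies hypercohomologies, yielding a pairing $\fkLSymp_2(X_0,X_1)\otimes\fkLSymp_2(Y_0,Y_1)\to\fkLSymp_2(X_0\times Y_0,X_1\times Y_1)$; its compatibility with $\odot$ and $*$ follows from Conjecture \ref{con:OurConj}(e) together with Lemma \ref{lem:XWdiag}(e), exactly as the orientation bookkeeping was handled in Theorem \ref{thm:sympor}. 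The remaining coherence data of a symmetric monoidal bicategory in the sense of Definition 2.1 of \cite{S-P} are produced by applying $e_{(-)}$ to the corresponding data of $\fkSymp^{or}$, and the associated coherence equations reduce to those already established for $\fkSymp^{or}$ by means of the functoriality (\ref{eqn:RhoComp}) of $e_{(-)}$ and parts (b)--(c) of Conjecture \ref{con:OurConj}.

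The step I expect to be the main obstacle is this last one: organizing the full symmetric monoidal coherence of \cite{S-P} so that every constituent transformation and invertible modification is visibly the $e_{(-)}$-image of its counterpart in $\fkSymp^{or}$, and checking that $e_{(-)}$ is functorial enough --- through (\ref{eqn:RhoComp}) and its evident horizontal analogue --- for each coherence diagram to transport. As elsewhere in the paper, these verifications are tedious rather than conceptually difficult.
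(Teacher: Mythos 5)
Your proposal is correct and follows essentially the same route as the paper: the bicategory axioms and enrichment are exactly the assembled lemmas of this subsection, and the symmetric monoidal structure is obtained on $2$-morphisms through the canonical identification $\fkLSymp_2(X_0\times Y_0, X_1\times Y_1)\cong \mathbb{H}^{\bullet}(\mathcal{P}_{X_{01}}[-n_X])\otimes\mathbb{H}^{\bullet}(\mathcal{P}_{Y_{01}}[-n_Y])$ coming from Remark \ref{rem:productP} and the K\"unneth isomorphism (\ref{eqn:Kunneth}), with the remaining coherence declared straightforward. If anything, your account of transporting the coherence data of \cite{S-P} along $e_{(-)}$ via (\ref{eqn:RhoComp}) makes explicit what the paper leaves as ``constructed in a straightforward way.''
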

\begin{proof}
%The monoidal structure on $\fkLSymp$ is defined on objects by 
%\[((S_0, \omega_0), E_0) \otimes ((S_1,\omega_1), E_1) = ((S_0 \times S_1, \omega_0 \boxplus  %omega_1), E_{0} \boxtimes E_{1}).
%\]
%The point serves as the units. We can now describe the monoidal structure on $1$-morphisms. Given $X %\in \fkLSymp_{1}((S_0, E_0), (S_1, E_1))$ and $Y \in \fkLSymp_{1}((S_0, E_0), (S_1, E_1))$ then $X 
%\times Y$ is $(E_{0} \boxtimes F_{0})^{-1} \boxtimes (E_{1} \boxtimes F_{1})$-oriented in the way %discussed in Lemma \ref{lem:OrientedProduct}. 

The only point left to discuss is the symmetric monoidal structure. At the level of objects and $1$-morphisms it is the same as $\fkSymp^{or}$. In order to define the monoidal structure on $2$-morphisms
we use the following canonical isomorphisms
%say we are given 
%\[\alpha \in \fkLSymp_{2}(X_0, X_1) = \mathbb{H}^{\bullet}(\mathcal{P}_{X_{01}}[-n_X])
%\]
% and 
%\[\beta \in \fkLSymp_{2}(Y_0, Y_1) = \mathbb{H}^{\bullet}(\mathcal{P}_{X_{01}}[-n_X]).
%\] 
%Their product comes from the canonical isomorphisms 
\begin{equation}
\begin{split}
\fkLSymp_{2}(X_{0} \times Y_{0}, X_{1} \times Y_{1}) & \cong \mathbb{H}^{\bullet}(\mathcal{P}_{(X_0 \times Y_0)\times_{S_0 \times S_1 \times T_0 \times T_1} (X_1 \times Y_1)}[-n_X - n_Y]) \\ & \cong  \mathbb{H}^{\bullet}(\mathcal{P}_{X_{01} \times Y_{01}}[-n_X - n_Y]) \\ & \cong  \mathbb{H}^{\bullet}(\mathcal{P}_{X_{01}}[-n_X]) \otimes \mathbb{H}^{\bullet}(\mathcal{P}_{Y_{01}})[- n_Y]),
\end{split}
\end{equation}
where $n_X=\vdim X_0 \times X_1$ and $n_Y=\vdim Y_0 \times Y_1$. The structure of symmetric monoidal structure can then be constructed in a straightforward way.
\end{proof}

\subsection{The linearization functor}\label{subsec:linfunct}\ 

In the previous subsection we used Conjecture \ref{con:OurConj} to construct the  $2$-category $\fkLSymp$. In this subsection, again using Conjecture \ref{con:OurConj} we would like to construct a linearization functor, that is a (symmetric monoidal) homomorphism of bicategories $\fkSymp^{or}\to \fkLSymp$. This is not possible since in order to apply Conjecture \ref{con:OurConj} we need proper $(-1)$-Lagrangians. Because of this we will introduce a slightly modified version of $\fkSymp^{or}$.

\begin{prop}
There is a symmetric monoidal bicategory $\fkSymp^{or}_c$ defined as the subcategory of $\fkSymp^{or}$ with the same objects and $1$-morphisms and $2$-morphisms $(\fkSymp^{or}_c)_2(X_0,X_1)$ are (equivalence classes) of oriented Lagrangians $\psi: M \To X_{01}$, such that $\psi$ is a proper map.
\end{prop}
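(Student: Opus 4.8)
The plan is to verify that the class of proper oriented Lagrangians is closed under all the structural operations of $\fkSymp^{or}$, so that they assemble into a sub-bicategory. Concretely, the objects and $1$-morphisms are inherited unchanged from $\fkSymp^{or}$, so the only thing to check is that the $2$-morphisms — proper oriented Lagrangians $\psi: M \To X_{01}$ — are preserved by vertical composition $\odot$, horizontal composition $*$, and that the identity $2$-morphisms, associators, and unitors all lie in this subclass. Since these were all constructed via the operation $C_\varphi$ of Proposition \ref{prop:Lag2Lag} (equivalently via fiber products of the underlying derived stacks), the crux is a stability statement for properness under the relevant fiber products.

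First I would recall that properness of morphisms of derived Artin stacks is stable under base change and under composition. For vertical composition, given proper $\psi_0:U_0 \To N_{01}$ and $\psi_1:U_1 \To N_{12}$, the composite $U_1 \odot U_0$ has underlying morphism $U_0 \times_{N_1} U_1 \To N_{02}$; I would argue that the projection $U_0 \times_{N_1} U_1 \To N_{02}$ factors through maps obtained by base-changing the proper maps $\psi_0,\psi_1$ along the structural morphisms of the triple intersection, and hence is itself proper by the base-change and composition stability. The same reasoning applies to the horizontal composition $V * U$, whose underlying morphism $U \times_{X_1} V \To P_0 \times_{X_{02}} P_1$ is again built out of fiber products of the proper maps $\psi$ defining $U$ and $V$. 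For the identity $2$-morphism $1_M = \Delta_M$ and the associators and unitors, the underlying morphisms are graphs of (Lagrange)omorphisms $\Gamma_\rho$; a graph of a morphism of (quasi-compact, quasi-separated) derived stacks is a closed immersion, which is proper, so these too lie in the subclass.

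Next I would note that properness is a property of the underlying map and is invariant under oriented Lagrangeomorphism, so it descends to the equivalence classes that form the actual $2$-morphism sets; thus the subclass is well-defined on $2$-morphisms. Having checked closure under $\circ$ (inherited), $\odot$, $*$, and stability of the special $2$-isomorphisms, the associativity, interchange, pentagon, triangle, and naturality axioms for $\fkSymp^{or}_c$ hold automatically because they are the restrictions of the corresponding identities already proved for $\fkSymp^{or}$ in Theorem \ref{thm:sympor}. Finally, for the symmetric monoidal structure, I would observe that the monoidal product on $2$-morphisms is given by the product of Lagrangians $M \times N$, and properness is preserved under products, so the monoidal structure, the symmetry, and all the coherence data of $\fkSymp^{or}$ restrict to $\fkSymp^{or}_c$.

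I expect the main obstacle to be purely bookkeeping: confirming that the underlying derived-stack morphisms of $\odot$ and $*$ genuinely present as composites of base-changes of the given proper maps, so that the general stability properties of proper morphisms apply. This requires unwinding the fiber-product descriptions of $C_\varphi$ from Corollaries \ref{cor:Comp} and Proposition \ref{prop:6} and identifying each structural map as a base change or composition; once that identification is made the conclusion is immediate. No genuinely new mathematical input is needed beyond the standard stability properties of proper morphisms, so the proof is essentially a verification that the subclass is closed under the operations already constructed.
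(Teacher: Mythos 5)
Your proposal is correct and takes essentially the same approach as the paper, whose entire proof consists of the two observations you make: that properness is preserved by the horizontal and vertical compositions (which are built from fiber products and compositions of the given proper maps), and that the identity $2$-morphisms, associators and unitors are graphs of Lagrangeomorphisms, hence proper. One small caveat: your claim that a graph is a \emph{closed immersion} is stronger than needed and, for derived Artin stacks, holds only when the relevant diagonal is a closed immersion (the graph $\Gamma_\rho$ is a base change of the diagonal of the target over $S$); properness of these graphs is exactly what the paper asserts, so your argument is otherwise, if anything, more detailed than the paper's.
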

\begin{proof}
It easily follows from the definitions that being proper is preserved by both horizontal and vertical composition in $\fkSymp^{or}$. All the other data required in the definition of a symmetric monoidal bicategory (identities, associators, unitors,..) are defined as the graph of some Lagrangeomorphism which is necessarily proper.
\end{proof}

We can now state the main result of this subsection.

\begin{thm}\label{thm:prequant}
There is a symmetric monoidal homomorphism of bicategories, in the sense of Definition 2.2 of \cite{S-P}, 
\[F: \fkSymp^{or}_c \to \fkLSymp
\]
which is the identity on objects and $1$-morphisms.
\end{thm}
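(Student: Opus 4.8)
The plan is to define the homomorphism $F$ on $2$-morphisms using the map $e_{(-)}$ that was constructed in Lemma \ref{lem:eMors}, and then to verify the axioms of Definition 2.2 of \cite{S-P} by systematically transporting each coherence datum from $\fkSymp^{or}_c$ to $\fkLSymp$. Since $F$ is the identity on objects and $1$-morphisms, the content is entirely in the functors $F_{S_0 S_1}: \fkSymp^{or}_c(S_0,S_1) \to \fkLSymp(S_0,S_1)$ and in the required natural isomorphisms $F_{Y,X}$ and $F_S$.

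First I would define $F$ on $2$-morphisms. A $2$-morphism in $\fkSymp^{or}_c$ from $X_0$ to $X_1$ is (an oriented Lagrangeomorphism class of) a proper oriented Lagrangian $\psi: M \to X_{01}$, regarded as a Lagrangian correspondence from the point $\bullet_{(-1)}$ to $X_{01}$. Exactly as in the construction of $e_\rho$ in Lemma \ref{lem:eMors}, properness of $\psi$ lets me apply Conjecture \ref{con:OurConj}(a) together with Lemma \ref{lem:XWdiag}(a) and a shift by $[-n_0-n_1]$ to produce an element
\[
F(M) := (\mu_M)_*[-n_0-n_1](1) \in \mathbb{H}^\bullet(\mathcal{P}_{X_{01}}[-n_0-n_1]) = \fkLSymp_2(X_0,X_1).
\]
Conjecture \ref{con:OurConj}(b) with Lemma \ref{lem:XWdiag}(b) guarantees that this is well-defined on oriented Lagrangeomorphism classes, so $F_{S_0 S_1}$ is a well-defined map on $2$-morphisms. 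That $F_{S_0S_1}$ is a functor — i.e. that it respects vertical composition $\odot$ and sends identity $2$-morphisms to identity $2$-morphisms — is precisely the computation \eqref{eqn:RhoComp} generalized from graphs to arbitrary proper Lagrangians: the identity $2$-morphism $1_{X} = e_{\mathrm{id}_X}$ is visibly the image of the identity Lagrangeomorphism, and compatibility with $\odot$ follows by combining Conjecture \ref{con:OurConj}(c),(e) with Lemma \ref{lem:XWdiag}(c),(e) in the same pattern as in the proof of Lemma \ref{lem:eMors}, tracking the sign $(-1)^{(|a_2|-n_0-n_1)(n_0+n_1)}$ built into the definition of $\odot$ in $\fkLSymp$.

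Next I would supply the structure isomorphisms. Because the compositions of $1$-morphisms in $\fkLSymp$ are defined to be literally the same as in $\fkSymp^{or}_c$, the comparison $2$-morphisms $F_{Y,X}: F(Y)\circ F(X) \to F(Y\circ X)$ and $F_S: F(I^{\fkSymp}_S)\to I^{\fkLSymp}_S$ can be taken to be the identity (or the canonical $e_\rho$ attached to the tautological Lagrangeomorphism), since the images of composable Lagrangians under $F$ are computed, via Conjecture \ref{con:OurConj}(c) and Lemma \ref{lem:XWdiag}(c), from the same composite Lagrangian used to define horizontal composition in $\fkLSymp$. The three coherence identities of Definition \ref{def:functor} then reduce to already-established facts: the hexagon-type identity for $F_{g,f}$ follows from the associativity computation already carried out for $\odot$ in $\fkLSymp$ together with the naturality of the associator, while the two unit identities follow from the unitor definitions $l_X = e_{l_X}$, $r_X = e_{r_X}$ and the second half of Lemma \ref{lem:eMors}. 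The symmetric monoidal refinement is handled by Conjecture \ref{con:OurConj}(e) and Lemma \ref{lem:XWdiag}(e): the box-product formula $(\mu_M \boxtimes \mu_N)_* = (\mu_M)_* \otimes (\mu_N)_*$ shows that $F$ intertwines the monoidal products, and the required modifications are built from the same $e_\rho$'s and the Künneth isomorphism \eqref{eqn:Kunneth}.

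The main obstacle I anticipate is bookkeeping of signs and shifts rather than any conceptual difficulty. The grading conventions in $\fkLSymp$ (the shifts by $[-n_0-n_1]$, the degree $|M| = n - \vdim M$, and the signs decorating $\odot$ and $*$) were chosen precisely so that the $\fkSymp^{or}_c$ identities push forward correctly, and the hard part will be confirming that every coherence cell of the symmetric monoidal bicademory homomorphism — in particular the modifications governing the interchange of the monoidal product with composition — matches after applying $(\mu_{(-)})_*$, with all Koszul signs accounted for. Since each such check is of the same shape as the verifications in the proofs of the preceding lemmas (reduce the statement to an oriented Lagrangeomorphism of proper Lagrangians, then invoke Conjecture \ref{con:OurConj}(b)), I would present the argument by indicating the relevant Lagrangeomorphism in each case and noting that the sign computation is identical to the one in the corresponding statement for $\fkSymp^{or}$, omitting the routine but lengthy diagram chases.
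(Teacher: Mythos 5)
Your proposal matches the paper's proof essentially step for step: you define $F$ on $2$-morphisms by $F(M)=(\mu_M)_*[-n_0-n_1](1)$ via Conjecture \ref{con:OurConj}(a) and Lemma \ref{lem:XWdiag}(a), establish well-definedness on Lagrangeomorphism classes via parts (b), take $F_{g,f}$ and $F_S$ to be identities since $F$ is the identity on objects and $1$-morphisms, and verify $F(N\odot M)=F(N)\odot F(M)$ (and the horizontal analogue) by the same chain of parts (c) and (e) with the sign bookkeeping $|F(N)|=n_0+n_1-\vdim N$, which is exactly the computation the paper performs. This is the paper's argument, correctly reproduced.
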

\begin{proof} 
By definition we have that $F(Y\circ X)=F(Y)\circ(X)$ and $F(I_S)=I_{F(S)}$ for any object $S$ and $1$-morphisms $X,Y$. Hence we take the $2$-isomorphisms $F_{g,f}$ and $F_S$, in Definition \ref{def:functor}, to be the identity.

Now consider a $2$-morphism $N\in (\fkSymp^{or}_c)_{2}(X_0, X_1)$, this is a proper oriented $(-1)$-Lagrangian $N \to (\bullet_{(-1)})^-\times X_{01}$. Applying Conjecture \ref{con:OurConj}(a)  and Lemma \ref{lem:XWdiag}(a) to this Lagrangian and shifting we obtain a map 
\[(\mu_{N})_{*}[-n_0-n_1]:  \mathbb{H}^{\bullet}(\mathcal{P}_{\bullet_{-1}}[\vdim N-n_0-n_1]) \to \mathbb{H}^{\bullet}( \mathcal{P}_{X_{01}}[-n_0-n_1]).
\]
Since $\mathbb{F}\cong\mathbb{H}^{\bullet}(\mathcal{P}_{\bullet_{-1}})$, we define 
\[F(N) = (\mu_{N})_{*}[-n_0-n_1](1) \in \fkLSymp(X_0,X_1).
\]
%Note that $F(N)$ has degree $n_0+n_1-\vdim N$. 
This well defined since, by Conjecture \ref{con:OurConj} (b) together with Lemma \ref{lem:XWdiag} (b), if $N'$ is Lagrangeomorphic to $N$ then $(\mu_{N})_{*}=(\mu_{N'})_{*}$.

We observe that, by definition we have 
\[F(1_X)=1_{F(X)}, \ F(W_{X_{3}X_{2}X_{1}}) = W_{F(X_{3})F(X_{2})F(X_{1})}, \ F(r_{X})=r_{F(X)} \ \textrm{and} \  F(l_{X})=l_{F(X)}.
\]
The only conditions left to check are the following
\[ F(N \odot M)=  F(N) \odot F(M), \ \ F(N * M)=  F(N) * F(M).
\]
Since both can be proved in the same way, we check only the first one. We denote $\epsilon = \vdim N (n_0+n_1)$ and compute
\begin{align*}
F(N \odot M) & = (\mu_{N \odot M})_*[-n_0-n_1](1)\\
   &= (\mu_{(-1)^\epsilon X_{012}\bullet(M\times N)})_*[-n_0-n_1](1)\\
   &= (-1)^\epsilon\big((\mu_{X_{012}})_*\circ ((\mu_{M\times N})_*[-n_0-n_1])\big)[-n_0-n_1](1)\\
   &= (-1)^\epsilon(\mu_{X_{012}})_*[-n_0-n_1]((\mu_{M\times N})_*[-2n_0-2n_1](1))\\
   &= (-1)^\epsilon(\mu_{X_{012}})_*[-n_0-n_1]\big((\mu_{N})_*[-n_0-n_1](1)\otimes(\mu_{M})_*[-n_0-n_1](1)\big)\\
   &= (-1)^\epsilon(\mu_{X_{012}})_*[-n_0-n_1](F(N) \otimes F(M))= F(N) \odot F(M),
\end{align*}
where the third equality follows from Conjecture \ref{con:OurConj}(c) combined with Lemma \ref{lem:XWdiag}(c), the fifth equality follows from Conjecture \ref{con:OurConj}(e) together with Lemma \ref{lem:XWdiag}(e) and the other follow from the definitions. Finally, in the last equality, we used the fact that $|F(N)|= n_0+n_1-\vdim N$.
\end{proof}

\subsection{A linearization of $\fkLag(S)$}\

In this subsection we will see that the constructions in Sections 5 and 6 can be adapted, almost with no change, to linearize the bicategory $\fkLag(S)$ whenever $S$ is a $1$-symplectic derived stack. The previous constructions will then be the special case when $S$ is a point.

For the rest of the section fix a $1$-symplectic derived stack $S$. We start by first introducing $\fkLag^{or}(S)$, the oriented version of $\fkLag(S)$. Its objects will be pairs consisting of a Lagrangian $f: X \to S$ in $S$ and a line bundle $E$ on $X$. A $1$-morphism in  $\fkLag^{or}(S)_1((X_0, E_0),(X_1,E_1))$ is defined to be a Lagrangian $\varphi: N \to X_{01}$ in the $0$-symplectic derived stack $X_{01}:=X_0 \times_S X_1$ together with a $(K_{X_0}\otimes E_0)^{-1} \boxtimes E_1$ orientation. One can easily prove the following  

\begin{lem}
For $i=0,1,2$, consider Lagrangians $f_i: X_i \to S$ equipped with line bundles $E_i$. If $\varphi_0: N_0 \to X_{01}$ is a $(K_{X_0}\otimes E_0)^{-1} \boxtimes E_1$-oriented Lagrangian and $\varphi_1: N_{1} \to X_{12}$ is a $(K_{X_1}\otimes E_1)^{-1} \boxtimes E_2$-oriented Lagrangian, then the Lagrangian $N_0 \times_{X_1} N_1 \to X_{02}$, constructed in Corollary \ref{cor:Comp}, has an induced $(K_{X_0}\otimes E_0)^{-1} \boxtimes E_2$ orientation.

The diagonal Lagrangian $\Delta_{X_0}\to X_{00}$ has a natural $(K_{X_0}\otimes E_0)^{-1} \boxtimes E_0$ orientation.
\end{lem}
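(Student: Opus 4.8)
The plan is to follow the proof of Lemma \ref{lem:OrienConv} almost verbatim, carrying along the canonical-bundle twists that distinguish the $1$-morphisms of $\fkLag^{or}(S)$ from those of $\fkSymp^{or}$. Write $L_{01}=(K_{X_0}\otimes E_0)^{-1}\boxtimes E_1$, $L_{12}=(K_{X_1}\otimes E_1)^{-1}\boxtimes E_2$ and $L_{02}=(K_{X_0}\otimes E_0)^{-1}\boxtimes E_2$ for the three line bundles with respect to which the orientations are taken, and let $(R_{N_i},\gamma_{N_i})$ be the given orientations. Set $P=N_0\times_{X_1}N_1$ with structure map $h:P\to X_{02}$, and denote by $c:P\to X_1$ the common map to the middle factor. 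First I would take $R_P=R_{N_0}\boxtimes R_{N_1}$ and construct $\gamma_P$ as $\gamma_{N_0}\boxtimes\gamma_{N_1}$ followed by a chain of canonical isomorphisms identifying the target with $K_P\otimes h^*L_{02}$.

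The key computation is twofold. On the one hand, pulling $\varphi_0^*L_{01}\boxtimes\varphi_1^*L_{12}$ back to $P$ and using that the two maps $N_0\to X_1$ and $N_1\to X_1$ become the single map $c$, the factor $E_1$ in $L_{01}$ cancels the factor $E_1^{-1}$ in $L_{12}$, leaving $h^*L_{02}\otimes c^*K_{X_1}^{-1}$. On the other hand, taking determinants in the cotangent-complex triangle of $P$ over $X_1$ yields the canonical-bundle formula $K_P\cong(K_{N_0}\boxtimes K_{N_1})\otimes c^*K_{X_1}^{-1}$, equivalently $K_{N_0}\boxtimes K_{N_1}\cong K_P\otimes c^*K_{X_1}$. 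Multiplying the two identities, the target $(K_{N_0}\otimes\varphi_0^*L_{01})\boxtimes(K_{N_1}\otimes\varphi_1^*L_{12})$ of $\gamma_{N_0}\boxtimes\gamma_{N_1}$ becomes $K_P\otimes h^*L_{02}$, as required.

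The one point worth highlighting is that the two copies of $c^*K_{X_1}^{-1}$ cancel: the twist by $K_{X_1}$ built into the definition of $L_{12}$ is precisely what absorbs the correction term $c^*K_{X_1}^{-1}$ coming from the fiber-product canonical bundle. This replaces the step in Lemma \ref{lem:OrienConv} where the trivialization of $K_{S_1}$ induced by the $0$-symplectic structure on the middle stack was used; here the middle stack $X_1$ carries no such trivialization, and the canonical-bundle twist in the convention for $1$-morphisms of $\fkLag^{or}(S)$ does the job instead. Specializing to $S=\bullet_1$, so that all the $K_{X_i}$ are trivial, recovers Lemma \ref{lem:OrienConv} exactly. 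For the diagonal, pulling $L_{00}=(K_{X_0}\otimes E_0)^{-1}\boxtimes E_0$ back along $\Delta_{X_0}:X_0\to X_{00}$ gives $\Delta_{X_0}^*L_{00}\cong K_{X_0}^{-1}$, whence $K_{X_0}\otimes\Delta_{X_0}^*L_{00}\cong\mathcal{O}_{X_0}$; I would take $R=\mathcal{O}_{X_0}$ and let $\gamma$ be the canonical isomorphism $\mathcal{O}_{X_0}^{\otimes 2}\cong\mathcal{O}_{X_0}$. I do not expect a conceptual obstacle: the only real care is in ordering the chain of canonical isomorphisms defining $\gamma_P$ and in tracking which tensor factor carries each $K$-twist, exactly the kind of bookkeeping already carried out in Lemma \ref{lem:OrienConv}.
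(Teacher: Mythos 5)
Your proof is correct and is exactly the argument the paper intends: the lemma is stated with no written proof (``One can easily prove the following''), the expected argument being the adaptation of Lemma \ref{lem:OrienConv} that you carry out, taking $R_P=R_{N_0}\boxtimes R_{N_1}$ and $\gamma_P=\gamma_{N_0}\boxtimes\gamma_{N_1}$ composed with the canonical identifications. You also correctly isolate the one genuinely new point --- the $c^*K_{X_1}^{-1}$ correction in $K_{N_0\times_{X_1}N_1}\cong(K_{N_0}\boxtimes K_{N_1})\otimes c^*K_{X_1}^{-1}$ is absorbed by the $K_{X_1}$-twist built into the convention for $1$-morphisms, replacing the symplectic trivialization of $K_{S_1}$ used in Lemma \ref{lem:OrienConv} --- and your treatment of the diagonal mirrors the identity $1$-morphism in Theorem \ref{thm:sympor}.
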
 

This lemma defines the composition of $1$-morphisms and also gives the identity $1$-morphisms in $\fkLag^{or}(S)$. 

The rest of the construction of $\fkLag^{or}(S)$ is the same as $\fkSymp^{or}$. Note that as in Lemma \ref{lem:OrienProds}, given $N_0$ and $N_1$ in $\fkLag^{or}(S)_1((X_0, E_0),(X_1,E_1))$, the $(-1)$-symplectic derived stack $N_{01}= N_0 \times_{X_{01}} N_1$ is naturally oriented. So we define $\fkLag^{or}(S)_2(N_0,N_1)$ as the set of oriented Lagrangeomorphism classes of oriented Lagrangians in $N_{01}$. Now we can proceed as in Section \ref{Ssec:Or} and obtain the following theorem which is analogous to Theorem \ref{thm:sympor}.

\begin{thm}\label{thm:Lagor}
Let $S$ be a $1$-symplectic derived stack. There exists a bicategory $\fkLag^{or}(S)$  enriched over $\textbf{gr-Inv}$. The objects are pairs consisting of a Lagrangian $f: X \to S$ and a line bundle $E$ on $X$.  The $1$-morphisms $\fkLag^{or}(S)_1((X_0, E_0),(X_1,E_1))$ are $(K_{X_0}\otimes E_0)^{-1} \boxtimes E_1$-oriented Lagrangians and the $2$-morphisms $\fkLag^{or}(S)_2(N_0, N_1)$ are oriented Lagrangeomorphism classes of oriented Lagrangians in $N_0 \times_{S} N_1$. 

Moreover there is a homomorphism $\fkLag^{or}(S) \to \fkLag(S)$ which forgets the orientation data. 
\end{thm}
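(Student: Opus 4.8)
The plan is to transport the entire construction of $\fkSymp^{or}$ from Theorem \ref{thm:sympor} to the relative setting over the fixed $1$-symplectic stack $S$, observing that $\fkSymp^{or}$ is precisely the case $S=\bullet_1$. First I would note that all the geometric inputs are already available in the generality needed: since the constructions of Section 2 (Corollary \ref{cor:Const}, Corollary \ref{cor:Comp}, Theorem \ref{thm:ThreeLag}, Proposition \ref{prop:6}, Corollary \ref{cor:diag}) and the associativity and unit data of Section 3 (Proposition \ref{prop:Associativity}, Proposition \ref{prop:IdOriginal}, Corollary \ref{cor:NotationX}) were proved for an arbitrary $n$-symplectic $S$, they apply verbatim with $n=1$. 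Thus when $X_0,X_1$ are Lagrangians in $S$ the intersection $X_{01}=X_0\times_S X_1$ is $0$-symplectic, a $1$-morphism $\varphi:N\to X_{01}$ has a $(-1)$-symplectic self-intersection $N_{01}=N_0\times_{X_{01}}N_1$, and the composition, identity, associator and unitor Lagrangians are produced exactly as in Definition \ref{defn:THECAT} and Theorem \ref{thm:Main2Cat}. In other words, the underlying unoriented bicategory is simply $\fkLag(S)$ itself.

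The one genuine new ingredient is the orientation bookkeeping, since here the twist is by $(K_{X_0}\otimes E_0)^{-1}\boxtimes E_1$ rather than by $E_0^{-1}\boxtimes E_1$. I would establish the relative analogues of the orientation Lemmas \ref{lem:OrientedProduct}, \ref{lem:OrienConv}, \ref{lem:OrienProds}, \ref{lem:ConvOrien} and \ref{lem:comporientation}, replacing each $S_i$ by a Lagrangian $X_i\to S$ and inserting the factor $K_{X_i}$ into the relevant twists. The Lemma stated immediately before the theorem already records the two basic cases---composition of $1$-morphisms and the diagonal---with the correct convention; the remaining statements are proved by the same determinant-of-cotangent-complex manipulations, the extra canonical-bundle factors being absorbed via the canonical trivialization $\det\mathbb{L}=K$ and the exact triangles for the relative cotangent complexes appearing in those proofs. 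The only point to check carefully is that the auxiliary trivializations used in Lemma \ref{lem:OrienConv} (of $K_S$ and of $f_1^*E_1\boxtimes g_1^*E_1^{-1}$) continue to arrange themselves consistently once the base is $S$ rather than a point; this is routine but is where I expect to spend the most care.

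With these orientation lemmas in hand, the enrichment over $\textbf{gr-Inv}$ and the bicategory axioms follow exactly as in the proof of Theorem \ref{thm:sympor}. I would assign to a $2$-morphism $M\in\fkLag^{or}(S)_2(N_0,N_1)$ the degree $|M|=n-\vdim M$ with $n=\vdim N_0$ and the involution given by reversing the orientation, and define $\odot$ and $*$ by the same signed formulas involving $C_{X_{012}}$ and the triple-intersection Lagrangian of Proposition \ref{prop:6}. The sign verifications in the compatibility of horizontal and vertical composition depend only on virtual dimensions and on the orientation-Lagrangeomorphism signs furnished by the relative Lemma \ref{lem:comporientation}, so the computation reproducing $(-1)^{|M_2||N_1|}$ is identical to the one in Theorem \ref{thm:sympor}; associativity, the pentagon and triangle axioms, and naturality of the associator and unitors then reduce to their unoriented counterparts (Lemmas \ref{lem:2-morAssoc}--\ref{lem:UnitorsSatisfy}) together with these oriented refinements, again mirroring Theorem \ref{thm:sympor}. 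Finally the forgetful homomorphism $\fkLag^{or}(S)\to\fkLag(S)$ is the identity on underlying Lagrangians and discards the orientation data; since every structure cell of $\fkLag^{or}(S)$ lies over the corresponding cell of $\fkLag(S)$ by construction, the coherence $2$-isomorphisms may be taken to be identities, so this is a strict homomorphism. The main obstacle, as indicated, is confirming the consistency of the canonical-bundle twists under composition in the relative orientation lemmas; the sign tracking is mechanical.
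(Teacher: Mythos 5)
Your proposal matches the paper's own treatment: the paper likewise defines the objects and $1$-morphisms with the twist $(K_{X_0}\otimes E_0)^{-1}\boxtimes E_1$, isolates exactly the lemma on composition of oriented $1$-morphisms (where the extra $K_{X_1}^{-1}$ in the twist cancels the $K_{X_1}$ factor appearing in $K_{N_0\times_{X_1}N_1}$, playing the role that the canonical trivialization of $K_{S_1}$ played in Lemma \ref{lem:OrienConv}), and then declares the rest of the construction, enrichment over $\textbf{gr-Inv}$, sign bookkeeping, and forgetful homomorphism to be verbatim as in Theorem \ref{thm:sympor}. The one point you correctly flagged as needing care is precisely the content of the paper's unproved lemma preceding the theorem, so your route is the same as the paper's.
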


Next we linearize $\fkLag^{or}(S)$ using Conjecture \ref{con:OurConj} and the results in Sections \ref{subsec:sheaves}, \ref{Ssec:PL} and \ref{subsec:linfunct}. Since the details are no different from the case of $\fkLSymp$ we do not repeat them and simply state the end result.

\begin{thm}\label{thm:LLag}
Let $S$ be a $1$-symplectic derived stack. There exists a bicategory $\fkLLag(S)$ enriched over graded vector spaces, whose objects and $1$-morphisms are the same as $\fkLag^{or}(S)$. The space of two morphisms $\fkLLag(S)_2(N_0, N_1)$ is the hypercohomology $\mathbb{H}^\bullet(\mathcal{P}_{N_{01}}[-\vdim N_0])$.

Consider $\fkLag^{or}_c(S)$, the subcategory of $\fkLag^{or}(S)$ where the $2$-morphisms are (equivalence classes of) proper $(-1)$-Lagrangians. Then there is a homomorphism
\[\fkLag^{or}_c(S) \To \fkLLag(S),
\]
which is the identity on objects and $1$-morphisms.
\end{thm}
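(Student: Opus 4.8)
The plan is to transport, essentially verbatim, the construction of $\fkLSymp$ from Section~6.1 and of the homomorphism $F$ of Theorem~\ref{thm:prequant} from the absolute setting (base a point) to the relative setting of a fixed $1$-symplectic derived stack $S$. First I would record the objects and $1$-morphisms: these are taken to be those of $\fkLag^{or}(S)$, already constructed in Theorem~\ref{thm:Lagor}. The key structural observation is the one underlying that theorem: for Lagrangians $X_0,X_1 \to S$ the fibre product $X_{01}=X_0\times_S X_1$ is $0$-symplectic by Corollary~\ref{cor:Const}, and for $N_0,N_1\in \fkLLag(S)_1((X_0,E_0),(X_1,E_1))$, i.e.\ Lagrangians in $X_{01}$, the fibre product $N_{01}=N_0\times_{X_{01}}N_1$ is $(-1)$-symplectic and carries a natural orientation (the relative analogue of Lemma~\ref{lem:OrienProds}(a) proved just before Theorem~\ref{thm:Lagor}). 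Hence Theorem~\ref{thm:Existence} produces a perverse sheaf $\mathcal{P}_{N_{01}}$, and I set $\fkLLag(S)_2(N_0,N_1)=\mathbb{H}^\bullet(\mathcal{P}_{N_{01}}[-\vdim N_0])$, noting that $\vdim N_0=\vdim N_1=\tfrac12\vdim X_{01}$ so the shift is unambiguous.

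Next I would define all compositions, identities, associators and unitors exactly as in Section~6.1, with the $0$-symplectic stacks $S_0^-\times S_1$ there replaced by the $0$-symplectic intersections $X_{01}=X_0\times_S X_1$. Concretely, vertical composition is the pull-push $(\mu_{N_{012}})_*$ along the proper oriented $(-1)$-Lagrangian $N_{012}\to (N_{12}\times N_{01})^-\times N_{02}$ supplied by Theorem~\ref{thm:ThreeLag}, horizontal composition uses the Lagrangian of the type $P_0\times_{X_{012}}P_1$ from Proposition~\ref{prop:6}, and the identity $2$-morphisms, associators and unitors are the classes $e_\rho$ attached (as in Lemma~\ref{lem:eMors}) to the diagonal of Corollary~\ref{cor:diag} and to the associator/unitor Lagrangeomorphisms of $\fkLag^{or}(S)$, with signs dictated by the enrichment over graded vector spaces just as before. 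I would stress that all the Section~2 constructions invoked here, together with their uniqueness up to Lagrangeomorphism (Lemmas~\ref{lem:LagUniqP01}, \ref{lem:LagUniqP02} and Proposition~\ref{prop:Characterization}), were stated for an arbitrary $n$-symplectic base, and so apply to the $0$-symplectic stacks $X_{ij}$ without change. Moreover the perverse sheaves that occur always sit on genuine \emph{products} of $(-1)$-symplectic stacks, e.g.\ $\mathcal{P}_{P_{01}\times N_{01}}\cong \mathcal{P}_{P_{01}}\boxtimes\mathcal{P}_{N_{01}}$ by Remark~\ref{rem:productP}, so the K\"unneth formula is available even though $X_{01}$ itself is not a product.

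Then I would verify the bicategory axioms by the same mechanism as in Section~6.1: each identity (associativity of $\odot$, compatibility of $\odot$ and $*$, the pentagon and triangle axioms, and naturality of the associators and unitors) is reduced, through Conjecture~\ref{con:OurConj}(b)--(e) together with the functoriality statements Lemma~\ref{lem:XWdiag}(b)--(e), to the corresponding oriented-Lagrangeomorphism identity between the canonical Lagrangians of Section~2. Each such Lagrangeomorphism is already available, being exactly the one established for $\fkLag^{or}(S)$ in Theorem~\ref{thm:Lagor} (whose unoriented and oriented prototypes are Corollary~\ref{cor:NotationX}, Lemma~\ref{lem:Compat}, Proposition~\ref{prop:Associativity} and the proof of Theorem~\ref{thm:sympor}). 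The sign bookkeeping is identical to that in Theorem~\ref{thm:sympor} and in Section~6.1.

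Finally, to build the homomorphism $\fkLag^{or}_c(S)\to \fkLLag(S)$ I would copy the proof of Theorem~\ref{thm:prequant}: it is the identity on objects and $1$-morphisms, and a proper oriented $(-1)$-Lagrangian $M\to N_{01}$ representing a $2$-morphism is sent to $(\mu_M)_*[-\vdim N_0](1)\in\mathbb{H}^\bullet(\mathcal{P}_{N_{01}}[-\vdim N_0])$; the hypotheses of Conjecture~\ref{con:OurConj} and Lemma~\ref{lem:XWdiag} are met precisely because $\fkLag^{or}_c(S)$ is defined to consist of proper $2$-morphisms, and compatibility with $\odot$ and $*$ then follows from Conjecture~\ref{con:OurConj}(c),(e) as in Theorem~\ref{thm:prequant}. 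I expect the only genuine work---and hence the main obstacle---to be the translation step of the third paragraph: one must check, case by case, that the uniqueness-up-to-Lagrangeomorphism results of Section~3, applied in the relative setting over $S$, select exactly the Lagrangeomorphisms needed to match the two sides of each axiom, and that the signs produced by the enrichment agree. Everything else is formal transport of the point-base arguments.
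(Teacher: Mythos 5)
Your proposal is correct and matches the paper's own treatment: the paper explicitly omits the proof of Theorem \ref{thm:LLag} on the grounds that one transports the $\fkLSymp$ construction of Section 6.1 and the homomorphism of Theorem \ref{thm:prequant} verbatim to the relative setting over $S$, replacing $S_0^-\times S_1$ by the $0$-symplectic intersections $X_{01}$, which is exactly what you do. Your supporting observations (the dimension identity $\vdim N_0=\tfrac12\vdim X_{01}$ fixing the shift, and that K\"unneth is only ever needed on genuine products of $(-1)$-symplectic stacks) are accurate and correctly identify why the point-base arguments go through unchanged.
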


\section{Categories of Fillings and mapping stacks}

One of the main results in \cite{PTVV} states that under certain conditions, the mapping stack $\bMap(X, S)$ is a symplectic derived stack if $S$ is also symplectic. The main condition is that the stack $X$ possess a $d$-orientation, rather informally this can be thought of as a volume form that allows us to ``integrate functions" on $X$. Calaque \cite{Cal} defined a relative version of orientation and proved that the functor $\bMap(-, S)$ sends relative orientations to Lagrangians.
In this section we will build a bicategory of derived stacks with relative orientations, in analogous but dual way to how we constructed $\fkLag$. Then we will show that under certain conditions, $\bMap(-, S)$ can be promoted to a homomorphism of bicategories. We end by showing that this functor gives extended topological field theories with values in $\fkSymp^m$.

\subsection{Categories of Fillings}\label{subsec:lsymp}\

In this section, contrary to the rest of the paper, we will not assume that our derived stacks are Artin or locally of finite presentation. Instead we will require that the derived stacks be $\mathcal{O}$-compact. 

A derived stack is $X$ is $\mathcal{O}$-compact according to \cite[Definition 2.1]{PTVV} when for any affine derived scheme $Spec(A)$ we have that $\mathcal{O}_{X\times Spec(A)}$ is a compact object of $D_{qcoh}(X\times Spec(A))$ and for any perfect complex $E$ on $X\times Spec(A)$, the $A$-dg-module $\mathbb{R}\underline{Hom}(\mathcal{O}_{X\times Spec(A)}, E)$ is perfect. 
For a derived stack $X$ and an affine derived scheme $Spec \ A$, we use $X_{A}$ to denote $X \times Spec \ A$.

\begin{lem}\label{lem:PushOcompact} 
Given a diagram 
\[W_{1} \stackrel{j_1}\longleftarrow X \stackrel{j_2}\longrightarrow W_{2}\] 
of $\mathcal{O}$-compact derived stacks, a homotopy pushout, in the category of derived stacks, is $\mathcal{O}$-compact. 
\end{lem}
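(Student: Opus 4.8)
The plan is to verify the two defining conditions of $\mathcal{O}$-compactness directly for the homotopy pushout $W := W_1 \sqcup_X W_2$. First I would fix an arbitrary affine derived scheme $\Spec(A)$ and base-change the whole diagram by $\Spec(A)$; since homotopy pushouts commute with products, $W_A \cong (W_1)_A \sqcup_{X_A} (W_2)_A$, so it suffices to understand quasi-coherent sheaves on this pushout. The key structural input is that the $\infty$-category $D_{qcoh}(W_A)$ is the homotopy limit (fiber product) of $D_{qcoh}((W_1)_A)$ and $D_{qcoh}((W_2)_A)$ over $D_{qcoh}(X_A)$, obtained by gluing along the restriction functors $j_1^*, j_2^*$. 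Concretely, a quasi-coherent sheaf on $W_A$ is a triple $(E_1, E_2, \phi)$ with $E_i \in D_{qcoh}((W_i)_A)$ and $\phi$ an equivalence $j_1^* E_1 \simeq j_2^* E_2$ on $X_A$, and $\mathcal{O}_{W_A}$ corresponds to $(\mathcal{O}_{(W_1)_A}, \mathcal{O}_{(W_2)_A}, \id)$.

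The main computation is to produce the Mayer--Vietoris exact triangle for mapping complexes. For $E = (E_1, E_2, \phi)$ on $W_A$ with restrictions $E|_{(W_i)_A} = E_i$ and $E|_{X_A}$ the common pullback, the gluing description yields a fiber sequence
\[
\mathbb{R}\underline{\Hom}_{W_A}(\mathcal{O}_{W_A}, E) \longrightarrow \mathbb{R}\underline{\Hom}_{(W_1)_A}(\mathcal{O}, E_1) \oplus \mathbb{R}\underline{\Hom}_{(W_2)_A}(\mathcal{O}, E_2) \longrightarrow \mathbb{R}\underline{\Hom}_{X_A}(\mathcal{O}, E|_{X_A})
\]
of $A$-dg-modules. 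To check the first condition I would take $E = \mathcal{O}_{W_A}$ and test compactness: compactness of $\mathcal{O}_{W_A}$ as an object of $D_{qcoh}(W_A)$ amounts to the mapping complex functor $\mathbb{R}\underline{\Hom}_{W_A}(\mathcal{O}_{W_A}, -)$ commuting with filtered homotopy colimits, which follows from the triangle above together with the hypothesis that $\mathcal{O}_{(W_1)_A}, \mathcal{O}_{(W_2)_A}, \mathcal{O}_{X_A}$ are all compact, since a fiber of a map between finite (co)products of compact-corepresented functors again commutes with filtered colimits. For the second condition I would take an arbitrary perfect complex $E$ on $W_A$; its restrictions $E_1, E_2, E|_{X_A}$ are perfect on the respective $\mathcal{O}$-compact stacks, so by hypothesis the three outer terms of the triangle are perfect $A$-dg-modules, and perfect $A$-modules are closed under fibers and finite direct sums, whence $\mathbb{R}\underline{\Hom}_{W_A}(\mathcal{O}_{W_A}, E)$ is perfect.

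The step I expect to be the main obstacle is establishing the gluing description of $D_{qcoh}(W_A)$ as the honest homotopy fiber product of the categories over $X_A$, i.e. descent for quasi-coherent sheaves along the homotopy pushout, and in particular the resulting Mayer--Vietoris triangle with its correct compatibility with $\mathbb{R}\underline{\Hom}$ and with $A$-linearity. One must be careful that $j_1, j_2$ need not be nice (closed/open) maps, so I would invoke the general fact that $D_{qcoh}(-)$ sends homotopy pushouts of derived stacks to homotopy pullbacks of presentable stable $\infty$-categories (a consequence of fppf, or here simply universal, descent, cf.\ the descent properties of $L_{qcoh}$ used throughout \cite{PTVV}); once this categorical gluing statement is in hand, the two conditions follow formally from stability of compact objects and of perfect $A$-modules under fibers and finite sums as sketched above. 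Everything else is routine bookkeeping that I would not grind through here.
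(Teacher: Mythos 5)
Your proposal is correct and follows essentially the same route as the paper's proof: base-change to $\Spec(A)$, write the quasi-coherent category of the pushout as the homotopy limit of the categories on $(W_1)_A$, $(W_2)_A$, $X_A$, and deduce compactness of $\mathcal{O}_{W_A}$ from commutation of finite homotopy limits with filtered colimits in the stable setting, and perfectness of $\mathbb{R}\underline{\Hom}(\mathcal{O}_{W_A},E)$ from the resulting Mayer--Vietoris triangle and closure of perfect $A$-modules under fibers and finite sums. The gluing statement you single out as the main obstacle is precisely what the paper also asserts (and it holds since $D_{qcoh}$, being defined by descent, sends homotopy colimits of derived stacks to homotopy limits of categories, with no hypotheses needed on $j_1, j_2$), so nothing further is required beyond your sketch.
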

\begin{proof}
%First note that the homotopy pushout of this diagram in the category of derived stacks or classical stacks agrees, because the inclusion functor has a R/L adjoint. 
Let us denote the homotopy pushout by $Y$. Then $Y_A$ is a homotopy pushout of 
\[(W_{1})_A \xleftarrow{j_{1,A}} X_A  \xrightarrow{j_{2,A}} (W_{2})_A.
\]
Consider the resulting canonical maps $i_{1,A}:(W_1)_A \to Y_A$, $i_{2,A} :(W_2)_A \to Y_A$, and $i_{A}: X_A \to Y_A$.  We can write the (stable $\infty$-) categories of quasi-coherent sheaves and perfect complexes on $Y_{A}$ as a homotopy limit of the corresponding categories on $(W_1)_A$, $(W_2)_A$, and $X_A$. This means that given an object on $Y_A$ it is determined by objects on  $(W_1)_A$, $(W_2)_A$, and $X_A$ related by the appropriate pullbacks. Since homotopy colimits and finite homotopy limits commute in the stable context, this correspondence is preserved by homotopy filtered colimits. In particular, working in the derived categories of quasi-coherent sheaves, for any $E \in D_{qcoh}(Y_A)$ the set $\Hom(\mathcal{O}_{Y_A}, E)$ is the limit of the diagram 
\[\Hom(\mathcal{O}_{(W_{1})_A}, E|_{(W_1)_A}) \xrightarrow{j^{*}_{1,A}} \Hom(\mathcal{O}_{X_A}, E|_{X_A}) \xleftarrow{j^{*}_{2,A}} \Hom(\mathcal{O}_{(W_{2})_A}, E|_{(W_2)_A}).\]
Notice that these pullbacks commute with homotopy filtered colimits in the $E$ variable,  that $(W_1)_A$, $(W_2)_A$, and $X_A$ are $\mathcal{O}$-compact and that finite limits and filtered colimits in the category of sets commute. Putting this all together, this diagram commutes with homotopy colimits in the $E$ variable and so $\mathcal{O}_{Y_A}$ is compact.
In a similar way, considering the functor $\mathbb{R}\underline{Hom}(\mathcal{O}_{Y_A},- ) $  we obtain the exact triangle 
\[ \mathbb{R}\underline{Hom}(\mathcal{O}_{Y_A}, E ) \to  \mathbb{R}\underline{Hom}(\mathcal{O}_{(W_1)_A}, E|_{(W_1)_A} )\oplus   \mathbb{R}\underline{Hom}(\mathcal{O}_{(W_2)_A},E|_{(W_2)_A}) \to   \mathbb{R}\underline{Hom}(\mathcal{O}_{X_A},E|_{X_A}).
\] 
Since the restrictions of $E$ are perfect, and because  $(W_1)_A$, $(W_2)_A$, and $X_A$ are $\mathcal{O}$-compact we can conclude that  $\mathbb{R}\underline{Hom}(\mathcal{O}_{(W_1)_A}, E|_{(W_1)_A} )$,    $\mathbb{R}\underline{Hom}(\mathcal{O}_{(W_2)_A},E|_{(W_2)_A})$ and $\mathbb{R}\underline{Hom}(\mathcal{O}_{X_A},E|_{X_A})$ are all perfect.  This implies that $\mathbb{R}\underline{Hom}(\mathcal{O}_{Y_A}, E )$ is perfect, which completes the proof.
\end{proof}

We now review the definition of orientation following \cite{PTVV}. From now on we use the notation $C(X, E)=\mathbb{R}\underline{Hom}(\mathcal{O}_{X}, E)$, for a complex $E$ on $X$.
Let $\eta:C(X, \mathcal{O}_{X})\to k[-d]$ be a morphism in the derived category $D(k)$, this defines, for any perfect complex $E$ on $X_{A}$, a morphism 
\begin{equation}\label{eqn:AcapEta}  
{(- \cap \eta)}_{A}:C(X_A, E) \to C(X_A, E^{\vee})^{\vee}[-d]
\end{equation}
corresponding to the composition 
\[C(X_A, E) \otimes C(X_A, E^{\vee}) \to C(X_A, E \otimes E^{\vee}) \to C(X_A, \mathcal{O}_{X_A})\cong C(X, \mathcal{O}_{X})\otimes A \to A[-d]
\]
where the first map is the cup product, the second is the trace and the last is $\eta\otimes \text{id}_{A}$. 

\begin{defn}
Let $X$ be a $\mathcal{O}$-compact derived stack, an $\mathcal{O}$-\emph{orientation} of degree $d$ (usually abbreviated to $d$-orientation) consists of a morphism $[X] : C(X, \mathcal{O}_{X})\to k[-d]$ such that for any $A \in cdga_{k}^{\leq 0}$ and any perfect complex $E$ on $X_{A}$, the morphism 
\begin{equation}\label{eqn:Acap} 
{( - \cap [X])}_{A}:C(X_A, E) \to C(X_A, E^{\vee})^{\vee}[-d]
\end{equation}
is a quasi-isomorphism of $A$-dg-modules.
\end{defn}
From now on, however, we will suppress this notation from our calculations, and just use the notation 
\[- \cap [X]:C(X, E) \to C(X, E^{\vee})^{\vee}[-d]
\]
for the entire family of morphisms in (\ref{eqn:Acap}) for all possible choices of $A$ and $E$.

\begin{rem} If $X$ is equipped with an orientation $[X]$ which is understood, we sometimes use $\overline{X}$ to denote $(X, -[X])$.
\end{rem}

We now recall the definitions of boundary structure and relative orientation (which we will call a filling) from \cite{Cal}.
Let $X$ be a $d$-oriented derived stack and $f:X \to W$ be a morphism of derived stacks. Denote by $f_{*}[X]$ be the composition 
\[C(W, \mathcal{O}_{W}) \to C(X, \mathcal{O}_{X}) \xrightarrow{[X]} k[-d],\]
where the first morphism is pullback. Note that we can rewrite $- \cap f_{*}[X]$ as the composition 
\begin{equation}\label{capfX}
C(W,E) \to  C(X,f^{*}E)  \to  C(X,f^{*}E^{\vee})^{\vee}[-d]  \to C(W,E^{\vee})^{\vee}[-d] 
\end{equation}
given by pullback, cap with $[X]$ and finally the shifted dual of pullback. 
\begin{defn}
Let $(X, [X])$ be a $\mathcal{O}$-compact derived stack with a $d$-orientation. A \emph{ boundary structure} \cite{Cal} on a morphism $f:X \to W$ is a path $\gamma$ from $f_{*}[X]$ to $0$ in the space $Map(C(W, \mathcal{O}_{W}), k[-d])$.
\end{defn}

Suppose we have a morphism $f:X \to Y$ of derived stacks and an object $E \in \Perf(Y)$. We define $C(f,E)$ by the exact triangle 
\[C(Y,E) \longrightarrow C(X, f^{*}E) \longrightarrow C(f,E) \longrightarrow.
\]
Notice that for a pair of morphisms of derived stacks $X \stackrel{f}\longrightarrow Y \stackrel{g}\longrightarrow Z$, $C(f,E)$ and $C(g,E)$ are related by the following exact triangle
\[C(g,E) \longrightarrow C(g\circ f, E) \longrightarrow C(f, g^{*}E) \longrightarrow,
\]
in the derived category $D(k)$.

A boundary structure $\gamma$ induces the following diagram
\begin{equation}\label{eqn:BdStr2}
\xymatrix{&C(W,E) \ar[ddl]_{\Theta_{\gamma}} \ar[d]\ar[ddr]^{0}& & &  \\& C(X,f^{*}E)   \ar[d]_{-\cap [X]} \\C(f, E^{\vee})^{\vee}[-d] \ar[r] & C(X,f^{*}E^{\vee})^{\vee}[-d] \ar[r]& C(W,E^{\vee})^{\vee}[-d] \ultwocell\omit.
}
\end{equation}
This is because $\gamma$ determines a homotopy between $0$ and the composition (\ref{capfX}) which, since the bottom row is exact, determines the lift $\Theta_{\gamma}$.
A boundary structure $\gamma$ is called {\it non-degenerate} if the associated morphism $\Theta_{\gamma}$ is a quasi-isomorphism. 

\begin{defn}
Consider a $\mathcal{O}$-compact derived stack $X$ with a $d$-orientation $[X]$.
If $\gamma$ is a non-degenerate boundary structure on $f:X\to W$, we call the pair $(f, \gamma)$ a \emph{filling} of $X$. Denote the set of fillings for a fixed morphism $f$ by $\mathcal{F}ill(f,[X])$. 
\end{defn}

%From now on in this subsection, we will assume that all the derived stacks are in fact classical stacks and all the morphism are closed immersions.
We will see that fillings have many of the same formal properties as Lagrangians, just dualized.
The following is the analogue of Example \ref{point} and Proposition \ref{product}.

\begin{prop}\label{prop:EasyFill}
Let $X$ and $W$ be $\mathcal{O}$-compact derived stacks and $f:X \To W$ a morphism of derived stacks. We have the following:
\begin{itemize}
\item[\textbf{(a)}] Consider the empty set $\empty$ as a $d$-oriented derived stack. A $d$-filling of the morphism $\emptyset \To X$ is equivalent to a $(d+1)$-orientation on $X$.  
\item[\textbf{(b)}] If $(X,[X])$ is $d$-oriented, there is a bijection between $\mathcal{F}ill(f,[X])$ and $\mathcal{F}ill(f,-[X])$.
\item[\textbf{(c)}] Let $X_{1}$ and $X_{2}$ be $d$-oriented derived stacks and suppose we have fillings $f_1: X_{1} \To W_{1}$ and $f_2: X_{2}\To W_{2}$. Then $X_{1} \coprod X_{2}$ has an induced $d$-orientation and the morphism $f_1\coprod f_2: X_{1} \coprod X_{2} \To W_{1} \coprod W_{2}$ is a filling.
\end{itemize}
\end{prop}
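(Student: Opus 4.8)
The key computation is that $C(\emptyset, \mathcal{O}_{\emptyset}) = 0$, so $\emptyset$ is (trivially) $\mathcal{O}$-compact and carries a unique, vacuously non-degenerate $d$-orientation $[\emptyset]$ for every $d$. I would then argue exactly as in Example \ref{point}: writing $g : \emptyset \To X$, the pushforward $g_{*}[\emptyset]$ is forced to be the zero map $C(X,\mathcal{O}_{X}) \To k[-d]$, so a boundary structure on $g$ is a path from $0$ to $0$, i.e. a loop at $0$ in $Map(C(X,\mathcal{O}_{X}), k[-d])$. Since this mapping space is an infinite loop space, $\pi_{1}$ at the basepoint $0$ is identified with $[C(X,\mathcal{O}_{X}), k[-(d+1)]]$, so the boundary structure $\gamma$ determines precisely a morphism $[X] : C(X,\mathcal{O}_{X}) \To k[-(d+1)]$, the datum of a candidate $(d+1)$-orientation. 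To match non-degeneracy, I would compute the triangle defining $C(g,E)$: since $C(\emptyset, g^{*}E) = 0$, it gives $C(g,E) \cong C(X,E)[1]$. Feeding this into the diagram \eqref{eqn:BdStr2} yields $\Theta_{\gamma} : C(X,E) \To C(g,E^{\vee})^{\vee}[-d] \cong C(X,E^{\vee})^{\vee}[-(d+1)]$, and a short diagram chase identifies this with the cap-product map $-\cap[X]$ of \eqref{eqn:Acap} for the degree $d+1$. Hence $\gamma$ is non-degenerate as a boundary structure if and only if $[X]$ is a genuine $(d+1)$-orientation, which is the claimed equivalence.

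\textbf{Plan for part (b).} This is the dual of the first map in Proposition \ref{product}. The mapping space $Map(C(W,\mathcal{O}_{W}), k[-d])$ is group-like, so negation $a \mapsto -a$ is a self-equivalence sending $f_{*}[X]$ to $-f_{*}[X] = f_{*}(-[X])$. I would send a boundary structure $\gamma$ (a path $f_{*}[X] \rightsquigarrow 0$) to $-\gamma$ (a path $f_{*}(-[X]) \rightsquigarrow 0$); this is a bijection, its own inverse. For non-degeneracy I would note that the lift $\Theta_{\gamma}$ constructed in \eqref{eqn:BdStr2} is compatible with negation, $\Theta_{-\gamma} = -\Theta_{\gamma}$, so one is a quasi-isomorphism exactly when the other is, giving the desired bijection $\mathcal{F}ill(f,[X]) \To \mathcal{F}ill(f,-[X])$.

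\textbf{Plan for part (c).} Everything splits across the coproduct. First, $X_{1}\coprod X_{2}$ is $\mathcal{O}$-compact (this can be seen directly, or as a degenerate case of Lemma \ref{lem:PushOcompact}), and $C(X_{1}\coprod X_{2}, \mathcal{O}) \cong C(X_{1},\mathcal{O}) \oplus C(X_{2},\mathcal{O})$; I would define $[X_{1}\coprod X_{2}]$ as the map which is $[X_{i}]$ on each summand. Any perfect complex $E$ on $(X_{1}\coprod X_{2})_{A}$ restricts to $E_{i}$ on $(X_{i})_{A}$ with $C((X_{1}\coprod X_{2})_{A}, E) \cong C((X_{1})_{A}, E_{1}) \oplus C((X_{2})_{A}, E_{2})$, and the cap map \eqref{eqn:Acap} splits as the direct sum of the two cap maps, each a quasi-isomorphism by hypothesis; hence $[X_{1}\coprod X_{2}]$ is a $d$-orientation. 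For the filling, I would take $\gamma = \gamma_{1}\oplus\gamma_{2}$, using $C(W_{1}\coprod W_{2}, \mathcal{O}) \cong C(W_{1},\mathcal{O})\oplus C(W_{2},\mathcal{O})$ and $(f_{1}\coprod f_{2})_{*}[X_{1}\coprod X_{2}] = (f_{1*}[X_{1}], f_{2*}[X_{2}])$, so that $\gamma$ is a path to $0$. Finally $C(f_{1}\coprod f_{2}, E) \cong C(f_{1},E_{1})\oplus C(f_{2},E_{2})$ gives $\Theta_{\gamma} = \Theta_{\gamma_{1}}\oplus\Theta_{\gamma_{2}}$, a quasi-isomorphism since both summands are, so $(f_{1}\coprod f_{2}, \gamma)$ is a filling.

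\textbf{Main obstacle.} The only genuinely delicate step is in part (a): correctly identifying a based loop in $Map(C(X,\mathcal{O}_{X}), k[-d])$ with a morphism to $k[-(d+1)]$ and tracking the single degree shift through the triangle $C(g,E)\cong C(X,E)[1]$ so that $\Theta_{\gamma}$ is identified on the nose with the cap-product map $-\cap[X]$ of \eqref{eqn:Acap}. Parts (b) and (c) are then formal consequences of the additive/direct-sum structure of the relevant mapping spaces and complexes, entirely parallel to (and dual to) the arguments for Lagrangians in Example \ref{point} and Proposition \ref{product}.
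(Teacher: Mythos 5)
Your proposal is correct and takes essentially the same approach as the paper: the paper proves (a) exactly by your identification of a based loop in $Map(C(X,\mathcal{O}_{X}),k[-d])$ with a point of $Map(C(X,\mathcal{O}_{X}),k[-(d+1)])$, together with the shift $C(i,E^{\vee})^{\vee}[-d]\cong C(X,E^{\vee})^{\vee}[-(d+1)]$ identifying $\Theta_{\gamma}$ with the cap product $-\cap[X]$. The paper dismisses (b) and (c) as obvious, and your negation-of-paths and direct-sum arguments are precisely the evident verifications it leaves implicit.
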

\begin{proof}  In order to prove (a), notice that a boundary structure on $i: \emptyset \to X$ is just a loop $\gamma$ at $0$ in $Map(C(X, \mathcal{O}_{X}),k[-d])$. This is the same as a point in $Map(C(X, \mathcal{O}_{X}),k[-(d+1)])$. The associated morphism 
\[- \cap [X]:C(X, E) \to C(X, E^{\vee})^{\vee}[-(d+1)]
\]
is equivalent to 
\[C(X,E) \stackrel{\Theta_{\gamma}}\To C(i, E^{\vee})^{\vee}[-d] \cong  C(X, E^{\vee})[-1]^{\vee}[-d] \cong C(X, E^{\vee})^{\vee}[-(d+1)]\]
and so each is non-degenerate if and only if the other is. Points (b) and (c) are obvious. 
\end{proof}

The following is an analogue of Proposition \ref{prop:Lag2Lag}
\begin{prop}\label{prop:BigBlittleB}
Suppose that $X_0$ and $X_1$ are $d$-oriented derived stacks and we are given a filling $f= (f_0, f_1): \overline{X_0} \coprod X_1 \to W$. For a morphism of derived stacks $g:X_0 \to U$, consider the associated morphism $b_{f}(g): X_{1} \to  U\coprod_{X_0} W$.
Then there is a map 
\[B_{f}: \mathcal{F}ill(g, [X_0]) \to \mathcal{F}ill(b_{f}(g), [X_1]).
\]
\end{prop}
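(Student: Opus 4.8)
The plan is to dualize the proof of Proposition \ref{prop:Lag2Lag} line by line, replacing isotropic/Lagrangian structures (paths to a pulled-back form) by boundary structures (paths to $0$ of a pushed-forward orientation), cotangent complexes $\mathbb{L}$ by the complexes $C(-,E)$, tangent complexes $\mathbb{T}$ by their $(-\cap[X])$-duals, and the fiber product $N\times_{S_0}X$ by the homotopy pushout $V:=U\coprod_{X_0}W$, which is $\mathcal{O}$-compact by Lemma \ref{lem:PushOcompact}. Here the pushout is taken along $g\colon X_0\to U$ and $f_0\colon X_0\to W$, with structure maps $i_U\colon U\to V$ and $i_W\colon W\to V$, so that $b_f(g)=i_W\circ f_1$ and there is a canonical homotopy $i_U\circ g\cong i_W\circ f_0$.

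First I would construct the boundary structure $\gamma'=B_f(e)$. The filling datum on $f$ is a non-degenerate path $\gamma$ from $f_*(-[X_0]\amalg[X_1])=-(f_0)_*[X_0]+(f_1)_*[X_1]$ to $0$ in $Map(C(W,\mathcal{O}_W),k[-d])$; up to translation this is a path from $(f_0)_*[X_0]$ to $(f_1)_*[X_1]$. The filling datum $e$ on $g$ is a path from $g_*[X_0]$ to $0$ in $Map(C(U,\mathcal{O}_U),k[-d])$. Pulling $e$ and $\gamma$ back to $V$ along $i_U$ and $i_W$, and using the homotopy $i_U\circ g\cong i_W\circ f_0$ — which supplies a canonical path $l$ from $i_U^*(g_*[X_0])$ to $i_W^*((f_0)_*[X_0])$ — I would form the concatenation $i_U^*e\bullet l\bullet i_W^*\gamma$. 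This is a path from $0$ to $i_W^*((f_1)_*[X_1])=(b_f(g))_*[X_1]$, i.e. (after reversal) a boundary structure $\gamma'$ on $b_f(g)$. This is the exact dual of the construction $H=\pi_N^*e\bullet l\bullet\pi_X^*h$.

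Next I would verify non-degeneracy, namely that $\Theta_{\gamma'}\colon C(V,E)\to C(b_f(g),E^\vee)^\vee[-d]$, defined as in diagram (\ref{eqn:BdStr2}), is a quasi-isomorphism for all $A$ and all perfect $E$. Applying $C(-,E)$ to the pushout square gives a homotopy Cartesian square (exactly the limit description used in the proof of Lemma \ref{lem:PushOcompact}), hence the exact triangle
\[ C(V,E)\longrightarrow C(U,E)\oplus C(W,E)\longrightarrow C(X_0,E)\longrightarrow, \]
the dual analogue of the fiber-product triangle $\mathbb{T}_{N\times_{S_0}X}\to\mathbb{T}_N\boxplus\mathbb{T}_X\to(f_0\circ\pi_X)^*\mathbb{T}_{S_0}$. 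Dually, combining the identification $C(i_W,E^\vee)\cong C(g,E^\vee)$ coming from the (co)Cartesian square with the relative-complex triangle for the composite $X_1\xrightarrow{f_1}W\xrightarrow{i_W}V$ and the triangle defining $C(f,E^\vee)$ for the disjoint union $\overline{X_0}\coprod X_1$, an octahedral manipulation produces the exact triangle
\[ C(X_0,E^\vee)\longrightarrow C(g,E^\vee)\oplus C(f,E^\vee)\longrightarrow C(b_f(g),E^\vee)\longrightarrow, \]
whose consistency is immediate from the additivity of Euler characteristics under the pushout. Shifting by $[-d]$ and dualizing yields the bottom row, the analogue of triangle (\ref{eqn:Tri3}).

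Finally I would assemble the commutative diagram whose two rows are these triangles and whose vertical maps are $\Theta_{\gamma'}$ on the left, $\Theta_e\oplus\Theta_\gamma$ in the middle, and $-\cap[X_0]$ on the right. The middle map is a quasi-isomorphism because $e$ and $\gamma$ are non-degenerate boundary structures, and the right map is a quasi-isomorphism because $[X_0]$ is an orientation; the long exact sequence then forces $\Theta_{\gamma'}$ to be a quasi-isomorphism. As in Proposition \ref{prop:Lag2Lag}, a short diagram chase identifies the left vertical map induced by the two triangles with the $\Theta$ attached to the boundary structure $\gamma'$ just built, so $\gamma'$ is non-degenerate and $B_f(e):=\gamma'$ lies in $\mathcal{F}ill(b_f(g),[X_1])$, as required. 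I expect the main obstacle to be the third paragraph: establishing the lower exact triangle cleanly via the octahedral axiom, and above all checking that the square relating the Mayer–Vietoris triangle of the pushout to the dualized relative-complex triangle genuinely commutes through the duality functor $(-)^\vee[-d]$ and the cap products — this is the duality- and sign-bookkeeping that the Lagrangian argument conceals inside the single symbol $\Theta_{\omega_0}$.
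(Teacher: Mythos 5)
Your proposal is correct and follows essentially the same route as the paper's proof: the boundary structure is the same concatenation $i_{U*}\delta\bullet c\bullet i_{W*}\gamma$ (you say ``pulling back,'' but the operation you describe is the paper's pushforward of functionals along $i_U$, $i_W$), and non-degeneracy is verified exactly as in the paper by comparing the Mayer--Vietoris triangle $C(V,E)\to C(U,E)\oplus C(W,E)\to C(X_0,E)$ with the shifted dual of the triangle $C(X_0,E^\vee)\to C(g,E^\vee)\oplus C(f,E^\vee)\to C(b_f(g),E^\vee)$, obtained there via the relative-complex triangle for $X_1\to X_0\coprod X_1\to U\coprod_{X_0}W$ and the disjoint-union splitting, then concluding by two-out-of-three in the three-column diagram. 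Your right-hand vertical map $-\cap[X_0]$ is in fact the correct one (the paper's displayed diagram writes $[X_1]$, an index slip), and your closing worry about duality and sign bookkeeping corresponds to steps the paper likewise asserts rather than spells out.
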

\begin{proof}
First note that Lemma \ref{lem:PushOcompact} guarantees that $U\coprod_{X_0} W$ is $\mathcal{O}$-compact. Let us denote by $\gamma$ the boundary structure for the map $f$, that is a path from $-f_{0*}[X_0]+ f_{1*}[X_1]$ to $0$ in the mapping space $Map((C(W, \mathcal{O}_{W}), k[-d])$, or equivalently, a path $\gamma$ from $f_{1*}[X_1]$ to $f_{0*}[X_0]$.  Let $\delta$ be a filling of $g$, that is a path from $g_{*}[X_0]$ to $0$. 
Consider the homotopy commutative diagram 
\begin{equation}
\xymatrix{ & U \coprod_{X_0} W && \\
U \ar[ur]^{i_U} && W  \ar[ul]_{i_W}& \\
& & X_0 \coprod X_1 \ar[u]_{f} \\
& X_0 \ar[uur]^{f_0} \ar[uul]_{g} \ar[ur]_{j_0} && X_1 \ar[ul]^{j_1} \ar[uul]_{f_1}
}
\end{equation}
The fact that the square on the left is homotopy commutative determines a path $c$ from $i_{W*}f_{0*}[X_0]$ to $i_{U*}g_{*}[X_0]$. Consider the concatenation \[B_{f}(\delta)= (i_{W*}\gamma)\bullet c \bullet (i_{U*}\delta) .\] It is a path from $(i_{W}f_{1})_{*}[X_1]$ to $0$. Since $i_{W}f_{1}= b_{f}(g)$ we have produced a boundary structure on $b_{f}(g)$. We will now prove that it is non-degenerate if $\delta$ is non-degenerate. 
Let $T=U \coprod_{X_1} W$. 
Given $F \in \Perf(U \coprod_{X_1} W)$ we have an exact triangle
\[F \to i_{U*}i_{U}^{*}F \oplus i_{W*}i_{W}^{*}F \to (i_W \circ f_0)_{*} (i_W \circ f_0)^{*}F \to.
\]
applying this to $E$ and $E^{\vee}$ and taking derived global sections over $T$ we get a commutative diagram 
\begin{equation}\label{eqn:boundaryND}
\xymatrix{
C(T,E) \ar[d] \ar[r]  &C(U, i^{*}_{U}E) \oplus C(W, i_{W}^{*}E)\ar[r] \ar[d]^{(\Theta_{\delta}, \Theta_{\gamma})}&  C(X_{1}, f_{0}^{*}i_{W}^{*} E)  \ar[d]^{(-)\cap [X_1]}    \\
C(i_{W} \circ f_{1},E^{\vee})^{\vee}[-d] \ar[r]  &C(g, i_{U}^{*}E^{\vee})^{\vee}[-d] \oplus C(f, i_{W}^{*} E^{\vee})^{\vee}[-d] \ar[r] &  C(X_{1}, f_{0}^{*}i_{W}^{*} E^{\vee})^{\vee}[-d]  
}
\end{equation}
Consider 
\[X_{1} \stackrel{j_1}\longrightarrow X_{0} \coprod X_1 \cong X_{0} \coprod_{X_0} (X_{0} \coprod X_{1}) \xrightarrow{(g,f)} U \coprod_{X_0} W,
\]
because $(g,f)\circ j_2 \cong i_{2} \circ f_{1}$ we get an exact triangle 
\[C(j_2, (g,f)^{*}E^{\vee})^{\vee} \longrightarrow 
C(i_2 \circ f_1 , E^{\vee})^{\vee} \longrightarrow C((g,f),E^{\vee})^{\vee} \longrightarrow.
\]
We also have 
\[C(X_1,j_{1}^{*}F)\oplus C(X_2, j_{2}^{*}F) = C(X_1 \coprod X_2, F) \longrightarrow C(X_2, j_{2}^{*}F) \longrightarrow C(j_2, F) \longrightarrow,
\]
and therefore, $C(j_2, F) \cong C(X_1, j_{1}^{*}F)[+1]$. And so we conclude
\begin{equation}\label{eqn:eqni}
C(X_1, j_{1}^{*}F)^{\vee}[-1] \cong C(j_2, F)^{\vee}
\end{equation}
and 
\begin{equation}\label{eqn:eqnii}
C((g,f), E^{\vee}) \cong C(g,i_{U}^{*} E^{\vee}) \oplus C(f, i_{W}^{*}E^{\vee}).
\end{equation}
Putting these together we obtain the exact triangle 
\[C(X_0, f_{0}^{*} i_{W}^{*} E^{\vee})^{\vee}[-1] \longrightarrow C(i_{W} \circ f_{0}, E^{\vee})^{\vee} \longrightarrow C(g, i^{*}_U E^{\vee})^{\vee} \oplus C(f, i^{*}_W E^{\vee})^{\vee} \longrightarrow.
\]
Shifting and rotating it we have the exact triangle 
\[C(i_{W} \circ f_{2}, E^{\vee})^{\vee}[-d] \longrightarrow C(g, i^{*}_U E^{\vee})^{\vee} [-d]\oplus C(f, i^{*}_W E^{\vee})^{\vee}[-d] \longrightarrow C(X_1, f_{0}^{*} i_{W}^{*} E^{\vee})^{\vee}[-d] \longrightarrow.
\]
Therefore, the bottom row in (\ref{eqn:boundaryND}) is an exact triangle. The top row is an exact triangle as well. Therefore the left vertical arrow in (\ref{eqn:boundaryND}) is an equivalence in the homotopy category. This agrees with the morphism $\Theta_{B_{f} (\delta)}$ which is induced by the path $B_{f}(\delta)$ by the mechanism explained in (\ref{eqn:BdStr2}).
\end{proof}

The following lemma is the analogue of Proposition \ref{prop:Diag} and Corollary \ref{cor:Const}.
\begin{lem}\label{lem:NatOrien}
Suppose that $X$ is a $d$-oriented derived stack. The natural morphism $ \nabla:\overline{X} \coprod X\to X$ has a canonical filling.  Moreover, given fillings $f_1:X \to W_1$ and  $f_2:X \to W_2$ of $X$ then the (homotopy) pushout $W_1 \coprod_{X} W_2$ has an induced $(d+1)$-orientation. 
\end{lem}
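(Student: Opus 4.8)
The plan is to prove both assertions by dualizing the arguments used for the diagonal in Proposition \ref{prop:Diag} and for intersections of Lagrangians in Corollary \ref{cor:Const}, replacing those inputs by the formal properties of fillings recorded in Propositions \ref{prop:EasyFill} and \ref{prop:BigBlittleB}.

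\emph{The codiagonal filling.} I would first treat $\nabla:\overline{X}\coprod X\to X$. Write $Z=\overline{X}\coprod X$, which carries the $d$-orientation $[Z]=(-[X])\oplus[X]$ by Proposition \ref{prop:EasyFill}(c). Since the two component inclusions $\iota_1,\iota_2$ satisfy $\nabla\circ\iota_i\simeq\id_X$, the pullback $\nabla^{*}:C(X,\mathcal{O}_X)\to C(Z,\mathcal{O}_Z)\simeq C(X,\mathcal{O}_X)\oplus C(X,\mathcal{O}_X)$ is canonically the diagonal, so $\nabla_{*}[Z]$ is canonically identified with $-[X]+[X]$; the evident contraction of this sum to $0$ is the desired boundary structure $\gamma$. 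For non-degeneracy I would compute $C(\nabla,E)$ from its defining triangle $C(X,E)\to C(Z,\nabla^{*}E)\to C(\nabla,E)\to$: the first map is the diagonal $C(X,E)\to C(X,E)\oplus C(X,E)$, whose cofibre is canonically $C(X,E)$ again. Tracing the construction of $\Theta_{\gamma}$ in \eqref{eqn:BdStr2} through this identification, exactly as in the closing lines of Proposition \ref{prop:Diag}, one sees that $\Theta_{\gamma}:C(X,E)\to C(\nabla,E^{\vee})^{\vee}[-d]\simeq C(X,E^{\vee})^{\vee}[-d]$ becomes, up to sign, the pairing $-\cap[X]$. This is a quasi-isomorphism precisely because $[X]$ is a $d$-orientation, so $\gamma$ is non-degenerate and $(\nabla,\gamma)$ is the canonical filling.

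\emph{Orientation on the pushout.} Given fillings $f_1:X\to W_1$ and $f_2:X\to W_2$, I would use Proposition \ref{prop:EasyFill}(b) to regard $f_1$ as a filling of $\overline{X}$, and then Proposition \ref{prop:EasyFill}(c) to assemble $f_1$ and $f_2$ into a single filling $g=f_1\coprod f_2:\overline{X}\coprod X\to W_1\coprod W_2$ of the $d$-oriented stack $Z$. Next I would apply Proposition \ref{prop:BigBlittleB} with $X_0=Z$, $X_1=\emptyset$ and $W=X$, taking the filling $f$ there to be the codiagonal filling $(\nabla,\gamma)$ just built (regarded as a filling of $\overline{Z}\simeq\overline{Z}\coprod\emptyset$ via Proposition \ref{prop:EasyFill}(b)) and taking the morphism to be $g$. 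The proposition then yields a $d$-filling $B_f(g)$ of $b_f(g):\emptyset\to (W_1\coprod W_2)\coprod_{Z}X$, whose target is $\mathcal{O}$-compact by Lemma \ref{lem:PushOcompact}. A universal-property check identifies the pushout: a map out of $(W_1\coprod W_2)\coprod_{\overline{X}\coprod X}X$ is precisely a pair of maps out of $W_1$ and $W_2$ agreeing after restriction to $X$ along $f_1$ and $f_2$, so $(W_1\coprod W_2)\coprod_{Z}X\simeq W_1\coprod_X W_2$. Finally, by Proposition \ref{prop:EasyFill}(a), a $d$-filling of $\emptyset\to W_1\coprod_X W_2$ is the same datum as a $(d+1)$-orientation on $W_1\coprod_X W_2$, which is what we want. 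This is the exact dual of the chain of steps (product of Lagrangians)--(diagonal is Lagrangian)--($C_\Delta$)--(Example \ref{point}) used in Corollary \ref{cor:Const}.

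\emph{Main obstacle.} I expect the only genuinely delicate point to be the non-degeneracy verification in the first part: identifying the lift $\Theta_{\gamma}$ of \eqref{eqn:BdStr2} with $-\cap[X]$ under the equivalence $C(\nabla,E^{\vee})\simeq C(X,E^{\vee})$ requires chasing the null-homotopy $\gamma$ through the dualized exact triangle, just as the statement ``which we can easily see it is induced by $h$'' was the crux in Proposition \ref{prop:Diag}. The remaining work is bookkeeping: tracking the orientation reversals through Proposition \ref{prop:EasyFill}(b) and confirming that the equivalence $(W_1\coprod W_2)\coprod_{Z}X\simeq W_1\coprod_X W_2$ is compatible with the boundary structures, which follows formally from the universal property of homotopy pushouts.
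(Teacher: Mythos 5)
Your proposal is correct and follows essentially the same route as the paper's proof: the boundary structure on $\nabla$ from the evident contraction of $-[X]+[X]$, non-degeneracy via the identification $C(\nabla,E^{\vee})\simeq C(X,E^{\vee})$ (the paper uses the first-factor inclusion with $\nabla\circ j\simeq\id$, you the cofibre of the diagonal, which is the same computation) reducing $\Theta_{\gamma}$ to $-\cap[X]$. Your second part, assembling $f_1\coprod f_2$ via Proposition \ref{prop:EasyFill}(b),(c), applying Proposition \ref{prop:BigBlittleB} with $X_1=\emptyset$ and the codiagonal filling, and invoking Proposition \ref{prop:EasyFill}(a) together with $(W_1\coprod W_2)\coprod_{\overline{X}\coprod X}X\cong W_1\coprod_X W_2$, is exactly the paper's argument.
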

\begin{proof} 
The pushforward of $[\overline{X}]$ to $X$ is of course $-[X]$ and so we can use that to find a path to zero from the pushforward of $[X \coprod \overline{X}]$ to $X$. It induces a morphism $C(X, E) \to C( \nabla, E^{\vee})^{\vee}[-d]$ which we want to show is a quasi-isomorphism. If we consider the inclusion of the first factor $X \stackrel{j}\to  \overline{X}\coprod X$, then $\nabla \circ j$ is the identity and the exact triangle 
\[C(X , E^{\vee}) \to C( \overline{X}\coprod X, \nabla^{*}E^{\vee}) \to C(X , E^{\vee})
\]
shows that $C( \nabla, E^{\vee}) \cong C(X, E^{\vee})$ and in fact $C(X, E) \to C( \nabla, E^{\vee})^{\vee}[-d]$ actually agrees with the original morphism $(-)\cap [X]:C(X,E) \to C(X,E^{\vee})^{\vee}[d]$ itself so it is a quasi-isomorphism. 

The second statement is an immediate corollary of the first and Proposition \ref{prop:BigBlittleB}. Indeed by Proposition \ref{prop:EasyFill} (3) we have a filling $\overline{X} \coprod X \to W_1 \coprod W_2$. By the first statement, we have the filling $\nabla : (\overline{X} \coprod  X) \coprod \emptyset\to X $. By applying Proposition \ref{prop:BigBlittleB} we see that $\emptyset \to (W_1 \coprod W_2)\coprod_{\overline{X} \coprod X} X$ is a $d$-filling and so Proposition \ref{prop:EasyFill} (1) corresponds to a $(d+1)$-orientation on $(W_1 \coprod W_2)\coprod_{\overline{X} \coprod X} X \cong W_1 \coprod_{X} W_2$.
\end{proof}

The following proposition is an analogue of Theorem \ref{thm:ThreeLag}.
\begin{prop}\label{lem:ThreeFill} 
Let $X$ be a $d$-oriented derived stack. Suppose that we are given three fillings $X \to W_{i}$ for $i=0,1,2$. The natural morphism  
\[\phi: (W_{0}\coprod_{X} W_{1}) \coprod (W_{1}\coprod_{X} W_{2}) \coprod (W_{2}\coprod_{X} W_{0}) \to W_{0} \coprod_{X} W_{1} \coprod_{X} W_{2}
\]
has a canonical  filling.
\end{prop}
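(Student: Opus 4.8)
The plan is to dualize the proof of Theorem~\ref{thm:ThreeLag}, systematically replacing homotopy fiber products by homotopy pushouts, cotangent complexes $\mathbb{L}$ by the complexes $C(-,E)$ of this section, and the Lagrangian non-degeneracy square by the boundary-structure square of Proposition~\ref{prop:BigBlittleB} (diagram~(\ref{eqn:boundaryND})). First I would record the orientation data (all the stacks in sight being $\mathcal{O}$-compact by iterated application of Lemma~\ref{lem:PushOcompact}): by Lemma~\ref{lem:NatOrien} each pairwise pushout $W_{ij}=W_i\coprod_X W_j$ carries an induced $(d+1)$-orientation, and by Proposition~\ref{prop:EasyFill}\,(c) their disjoint union $W_{01}\coprod W_{12}\coprod W_{20}$ is then $(d+1)$-oriented; this is the oriented source with respect to which $\phi$ is to be a $(d+1)$-filling, and (exactly as the $S^-$ convention enters Proposition~\ref{prop:Diag}) the reverse orientations $\overline{X}$ appearing in Lemma~\ref{lem:NatOrien} must be threaded through the gluings consistently. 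The geometric input is the canonical equivalence $W_0\coprod_X W_1\coprod_X W_2\cong W_{01}\coprod_{W_1}W_{12}$, dual to the equivalence $Z\cong X_{01}\times_{X_1}X_{12}$ used in Theorem~\ref{thm:ThreeLag}, together with the coCartesian square with corners $X, W_1, W_{20}, W_{012}$ expressing $W_{012}\cong W_1\coprod_X W_{20}$, which is dual to the Cartesian square there yielding $\mathbb{L}_q\cong\pi^*\mathbb{L}_{f_1}$.

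Next I would construct the canonical boundary structure $\gamma$ on $\phi$. Dually to the construction of the isotropic structure $H$ in \cite{B} (or Proposition~\ref{prop:Characterization}), $\gamma$ is obtained by pushing the canonical boundary paths that define the $(d+1)$-orientations of the $W_{ij}$ forward into the mapping space $Map(C(W_{012},\mathcal{O}_{W_{012}}),k[-(d+1)])$ and concatenating them along their common $W_0,W_1,W_2$ contributions; this produces a path from $\phi_*[W_{01}\coprod W_{12}\coprod W_{20}]$ to $0$, that is, a boundary structure on $\phi$.

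Finally I would prove non-degeneracy, i.e.\ that the map $\Theta_\gamma$ induced as in diagram~(\ref{eqn:BdStr2}) is a quasi-isomorphism. As in Theorem~\ref{thm:ThreeLag} I would assemble a morphism of exact triangles: the top triangle is the Mayer--Vietoris triangle
\[
C(W_{012},E)\longrightarrow C(W_{01},E)\oplus C(W_{12},E)\longrightarrow C(W_1,E)
\]
coming from the pushout $W_{012}\cong W_{01}\coprod_{W_1}W_{12}$, and the bottom triangle is its shifted $k[-(d+1)]$-dual, built from the composition triangle $C(g,E)\to C(g\circ f,E)\to C(f,g^{*}E)$ and the identification $C(p,E)\cong C(f_1,\iota^{*}E)$ supplied by the coCartesian square (the contribution of $W_{20}$, dual to the projection $q$ onto $X_{20}$ in Theorem~\ref{thm:ThreeLag}, is absorbed into $C(\phi,E^\vee)^\vee$). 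The three vertical maps are $\Theta_\gamma$, the orientation equivalences $(-\cap[W_{01}])\oplus(-\cap[W_{12}])$, and the boundary-structure map of the given filling $f_1:X\to W_1$; the latter two are quasi-isomorphisms precisely because the $W_{ij}$ are oriented and $f_1$ is a filling. By the standard fact that a morphism of exact triangles with two of three vertical maps quasi-isomorphisms has the third quasi-isomorphic as well, $\Theta_\gamma$ is a quasi-isomorphism, and a short diagram chase identifies this composite with the map of diagram~(\ref{eqn:BdStr2}), completing the proof.

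The main obstacle will be the homological bookkeeping in this last step: correctly dualizing the chain of cotangent-complex triangles of Theorem~\ref{thm:ThreeLag} into the $C(-,E)$ language, in particular producing the bottom triangle from the coCartesian square and the composition triangle, and verifying that every square in the resulting diagram commutes up to the homotopies provided by $\gamma$. The orientation-sign conventions (the $\overline{X}$'s) then have to be checked to match, but once the template of Theorem~\ref{thm:ThreeLag} and Proposition~\ref{prop:BigBlittleB} is in place these are routine.
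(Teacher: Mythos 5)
Your proposal is correct and coincides with the paper's own proof in every essential respect: like you, the paper omits the explicit construction of the boundary structure (deferring to the analogues in Theorem~\ref{thm:ThreeLag} and Proposition~\ref{prop:Characterization}) and establishes non-degeneracy by the same morphism of exact triangles, with top row the Mayer--Vietoris triangle from $W_{012}\cong W_{01}\coprod_{W_1}W_{12}$, bottom row its shifted dual obtained via the co-Cartesian square identifying $C(q,E)\cong C(f_1,\pi^{*}E)$ (the $W_{20}$-term absorbed into $C(\phi,E^{\vee})^{\vee}$), and vertical maps $\Theta_{\gamma}$, $((-)\cap[W_{01}])\oplus((-)\cap[W_{12}])$ and $\Theta_{\gamma_1}$, concluding by the two-out-of-three property of quasi-isomorphisms.
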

\begin{proof}
The derived stack $W_{0} \coprod_{X} W_{1} \coprod_{X} W_{2}$ is $\mathcal{O}$-compact by Lemma \ref{lem:PushOcompact}. The construction of the natural boundary structure is analogous to the construction of the isotropic structure in Theorem \ref{thm:ThreeLag} and Proposition \ref{prop:Characterization} and so is omitted. We will prove that this boundary structure is non-degenerate.

Denote by $W_{ij} = W_{i} \coprod_{X} W_{j}$ the $(d+1)$-oriented derived stacks constructed in Lemma \ref{lem:NatOrien}. Let $T=W_{0} \coprod_{X} W_{1} \coprod_{X} W_{2}$ and denote by $f_{ij}: W_{ij}\to T$ the induced maps. Notice that $T \cong W_{01} \coprod_{ W_{1}} W_{12}$. Consider $E \in \Perf(T)$, there is an exact triangle
\[C(T,E) \longrightarrow C(W_{01}, f_{01}^{*}E) \oplus C(W_{12}, f_{12}^{*}E) \longrightarrow C(W_{1},f_{1}^{*}E) \longrightarrow.
\]
Denote by $q$ the composition 
\[W_{20} \stackrel{i}\longrightarrow W_{01} \coprod W_{12} \coprod W_{02} \stackrel{\phi}\longrightarrow T
.\]
This gives an exact triangle 
\[C(i, \phi^{*}E)^{\vee} \longrightarrow C(q,E)^{\vee}  \longrightarrow C(\phi,E)^{\vee}   \longrightarrow.
\]
Notice also that $C(i, \phi^{*}E) \cong C(W_{01} \coprod W_{12} , \phi^{*}E)[1]$ and we have a co-Cartesian square 
\begin{equation}
\xymatrix{X \ar[d]_{f_1} \ar[r] & W_{20} \ar[d]^{q} \\
W_{1} \ar[r]_{\pi} & T
}
\end{equation}
and therefore $C(q,E) \cong C(f_1, \pi^{*}T)$ for all $E \in \Perf(T)$. Combining the above we get an exact triangle 
\[C(W_{01},f_{01}^{*}E)^{\vee}[-1] \oplus C(W_{12}, f_{12}^{*}E)^{\vee}[-1] \longrightarrow C(f_{1}, E)^{\vee} \longrightarrow C(\phi, E)^{\vee} \longrightarrow 
\]
and by shifting and rotating, an exact triangle 
\[C(\phi, E)^{\vee}[-d-1] \longrightarrow C(W_{01},f_{01}^{*}E)^{\vee}[-d-1] \oplus C(W_{12}, f_{12}^{*}E)^{\vee}[-d-1] \longrightarrow C(f_{1}, E)^{\vee}[-d].
\]
In conclusion, we get a diagram with exact triangles as rows
\begin{equation*}
\resizebox{1 \textwidth}{!}{
\xymatrix{
C(T,E) \ar@{.>}[d]^{\Theta_{\gamma_{012}}} \ar[r]  &C(W_{01}, f_{01}^{*}E) \oplus C(W_{12}, f_{12}^{*}E)\ar[r] \ar[d]^{((-)\cap[W_{01}], (-)\cap[W_{12}])}&  C(W_{1}, f_{1}^{*}E) \ar[r] \ar[d]^{\Theta_{\gamma_1}} &   \\
C(\phi,E^{\vee})^{\vee}[-d-1] \ar[r]  &C(W_{01}, f_{01}^{*}E^{\vee})^{\vee}[-d-1] \oplus C(W_{12},  f_{12}^{*}E^{\vee})^{\vee}[-d-1] \ar[r] &  C(f_1,  E^{\vee})^{\vee}[-d] \ar[r]& 
}}
\end{equation*}
Because the middle and final vertical arrow are quasi-isomorphisms, the first is as well.
\end{proof}

\begin{defn}\label{defn:Filleo}
Let $X$ be a $d$-oriented derived stack and let $i_0:X\to W_0$ and $i_1:X \to W_1$ be fillings of $X$. A \emph{filleomorphism} between $W_0$ and $W_1$ is a triple consisting of an equivalence of derived stacks $g: W_{0} \to W_{1}$, a homotopy between $g \circ i_0$ and $i_1$ and a filling of the induced morphism 
\[g \coprod_{X} \text{id}_{W_1}: W_{0} \coprod_{X} W_{1} \longrightarrow W_{1}.
\]
\end{defn}

Using Proposition \ref{prop:EasyFill}, Proposition \ref{prop:BigBlittleB} and Proposition \ref{lem:ThreeFill} we can redo the entirety of sections 2, 3, 4 of this article in this ``dual" picture where symplectic structures are replaced with $\mathcal{O}$-orientations, Lagrangians are replaced by fillings,  Lagrangeomorphisms are replaced by filleomorphisms and all the morphisms go in the opposite direction. For example composition of $1$-morphisms and vertical composition of $2$-morphisms are defined using the morphisms
\[\mathcal{F}ill(W_1 \coprod_{X} W_2) \times \mathcal{F}ill(W_2 \coprod_{X} W_3) \to \mathcal{F}ill(W_1 \coprod_{X} W_3),
\]
constructed by combining Proposition\ref{prop:EasyFill} and Proposition \ref{lem:ThreeFill}, as in 
Corollary \ref{cor:Comp}.

We spare the reader the details and summarize the result in the following:
% letting $W_{ij} = W_{i}\coprod_{X} W_{j}$
%\[\mathcal{F}ill(P_{1}\coprod_{W_{12}} P_{2}) \times \mathcal{F}ill(Q_{1}\coprod_{W_{23}} Q_{2}) \to %\mathcal{F}ill(R_{1}\coprod_{W_{13}} R_{2})
%\]
%where $R_{i} = P_{i}\coprod_{W_2} Q_{i}$. 

\begin{thm}
Let $(X, [X])$ be a $d$-oriented stack. There exists a bicategory $\fkFill(X, [X])$ whose objects are fillings $(f:X \to W, \gamma)$, $1$-morphisms between two fillings $(f_1:X \to W_1, \gamma_1)$ and  $(f_2:X \to W_2, \gamma_2)$ are the fillings of $W_1 \coprod_{X} W_2$, equipped with the orientation defined in Lemma \ref{lem:NatOrien}. The $2$-morphisms between two such fillings $(W_1 \coprod_{X} W_2 \to Q_1, \tau_1)$ and $(W_1 \coprod_{X} W_2 \to Q_2, \tau_2)$ are fillings of $Q_1 \coprod_{(W_1 \coprod_{X} W_2)} Q_2$ up to filleomorphism.
\end{thm}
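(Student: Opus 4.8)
The plan is to transport the entire construction of $\fkLag(S,\omega)$ from Sections 2--4 into the ``dual'' setting, systematically replacing the $n$-symplectic base $S$ by the $d$-oriented stack $(X,[X])$, derived fiber products $\times_S$ by homotopy pushouts $\coprod_{X}$, Lagrangians by fillings, and Lagrangeomorphisms by filleomorphisms (Definition \ref{defn:Filleo}), reversing every morphism and using the degrees $d, d+1, d+2$ in place of $n, n-1, n-2$. The four structural inputs needed for this are already in place: Proposition \ref{prop:EasyFill} dualizes Example \ref{point} and Proposition \ref{product} (producing a $(d+1)$-orientation from a filling of $\emptyset\to X$, reversing the orientation, and forming coproducts of fillings); Proposition \ref{prop:BigBlittleB} provides the pushforward operation $B_f$ dual to the map $C_f$ of Proposition \ref{prop:Lag2Lag}; Lemma \ref{lem:NatOrien} supplies the codiagonal filling $\nabla:\overline{X}\coprod X\to X$ and the induced $(d+1)$-orientation on a pushout, dual to Proposition \ref{prop:Diag} and Corollary \ref{cor:Const}; and Proposition \ref{lem:ThreeFill} is the dual of Theorem \ref{thm:ThreeLag}.

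With these inputs I would first define the compositions exactly as in Definition \ref{defn:THECAT}. Composition of $1$-morphisms and vertical composition of $2$-morphisms are both instances of the map $\mathcal{F}ill(W_1\coprod_{X}W_2)\times\mathcal{F}ill(W_2\coprod_{X}W_3)\to\mathcal{F}ill(W_1\coprod_{X}W_3)$ obtained by applying $B_\phi$ to the canonical filling of Proposition \ref{lem:ThreeFill}, exactly dualizing Corollary \ref{cor:Comp}. Horizontal composition requires the dual of Proposition \ref{prop:6}, assembled from Proposition \ref{lem:ThreeFill} applied to the codiagonal of Lemma \ref{lem:NatOrien} together with two already-composed fillings. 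The identity $1$-morphism on a filling $f:X\to W$ is the codiagonal filling of $W\coprod_{X}W\to W$ coming from the dual of Corollary \ref{cor:diag}, and the identity $2$-morphisms are the analogous codiagonals one level higher; the associators and unitors are the fillings attached to the canonical filleomorphisms, dualizing Definitions \ref{defn:Assoc} and \ref{defn:unitors}.

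The verification of the bicategory axioms then follows Lemmas \ref{lem:2-morAssoc}--\ref{lem:UnitorsSatisfy} step by step: associativity and unitality of the vertical composition $\odot$ come from the duals of Proposition \ref{prop:Associativity} and Proposition \ref{prop:IdOriginal}, while compatibility of the two compositions, naturality of the associator and unitors, and the pentagon and triangle axioms all reduce, as before, to producing canonical filleomorphisms between fillings whose underlying stacks are the same iterated pushout. The engine driving every such identification is a uniqueness statement dual to Proposition \ref{prop:Characterization} (together with Lemmas \ref{lem:LagUniqP01} and \ref{lem:LagUniqP02}): any filling of an iterated pushout $W_0\coprod_{X}W_1\coprod_{X}\cdots\coprod_{X}W_m$ whose underlying stack is the expected homotopy pushout and whose boundary structure is homotopic to the canonical ``sum of simplices'' one is unique up to filleomorphism.

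The hard part will be establishing this dual characterization cleanly. It requires dualizing the homotopy-limit argument of Proposition \ref{prop:Characterization} to homotopy pushouts, hence checking that the complexes $C(-,E)$ controlling non-degeneracy split compatibly over a pushout --- precisely the gluing property already used in Lemma \ref{lem:PushOcompact} and in the exact-triangle computation of Proposition \ref{lem:ThreeFill}. Once one verifies that the boundary-structure analogue of the $2$-simplex $\Theta_N$ is well defined over iterated pushouts and that non-degeneracy can be tested on the pieces through those exact triangles, the remaining coherence verifications are diagram chases formally identical to the ones already carried out for $\fkLag(S)$, and I would suppress them exactly as the authors do.
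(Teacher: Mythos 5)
Your proposal follows exactly the paper's own route: the paper proves this theorem by declaring that Propositions \ref{prop:EasyFill}, \ref{prop:BigBlittleB}, Lemma \ref{lem:NatOrien} and Proposition \ref{lem:ThreeFill} allow one to ``redo the entirety of sections 2, 3, 4'' in the dual picture (pushouts for fiber products, fillings for Lagrangians, filleomorphisms for Lagrangeomorphisms), which is precisely your plan, down to the same four structural inputs and the same definitions of the compositions. If anything, your sketch is more explicit than the paper's, since you correctly identify the dual of Proposition \ref{prop:Characterization} (uniqueness of fillings of iterated pushouts up to filleomorphism, tested via the exact triangles splitting $C(-,E)$ over pushouts) as the engine behind all the coherence checks, a point the paper suppresses entirely.
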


\begin{defn}\label{defn:Ord}
In the special case of the $(d-1)$-oriented derived stack $X=\emptyset$, this theorem constructs a bicategory $\fkFill(\emptyset)_{d-1}$, which we denote by $\fkOr^d$, whose objects are $d$-oriented derived stacks. Analogous to the symplectic case, it has a symmetric monoidal structure.
\end{defn}
\begin{thm}
The bicategory $\fkOr^d$ is a symmetric monoidal bicategory. The monoidal structure 
\[
\fkOr^d \times \fkOr^d \to \fkOr^d,
\]
at the level of objects, sends $((X_1, [X_1]), (X_2, [X_2]))$ to $(X_1 \coprod X_2, [X_1 \coprod X_2] )$ and has the empty set $\emptyset$ as the unit.
\end{thm}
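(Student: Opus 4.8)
The plan is to mirror, under the dualization of Sections 2--4 explained earlier in this section, the proof that $\fkSymp^{n}$ is a symmetric monoidal bicategory (the theorem following Corollary \ref{cor:Symp0}). First I would construct the monoidal homomorphism $\fkOr^d \times \fkOr^d \to \fkOr^d$. On objects it is disjoint union of $d$-oriented stacks, $((X_1,[X_1]),(X_2,[X_2])) \mapsto (X_1 \coprod X_2, [X_1 \coprod X_2])$, where the orientation is the one produced in Proposition \ref{prop:EasyFill}\textbf{(c)}; this plays the role that the product orientation and Proposition \ref{product} played in the symplectic case. Since the $1$-morphisms and $2$-morphisms of $\fkOr^d = \fkFill(\emptyset)_{d-1}$ are themselves fillings of disjoint unions, the functor is again given on them by disjoint union of fillings, using Proposition \ref{prop:EasyFill}\textbf{(c)} to supply the boundary structures. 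The unit object is the empty set $\emptyset$, with the canonical identifications $\emptyset \coprod W \cong W \cong W \coprod \emptyset$ providing the unit equivalences.

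Next I would verify that this assignment is a homomorphism of bicategories in the sense of Definition \ref{def:functor}. The one structural fact that does the work is that disjoint union commutes with the homotopy pushouts out of which all compositions in $\fkFill$ are built: for fillings $Z \to W_i$ and $Z' \to W_i'$ there is a canonical equivalence
\[
(W_0 \coprod_{Z} W_1)\coprod(W_0' \coprod_{Z'} W_1') \;\cong\; (W_0 \coprod W_0')\coprod_{Z \coprod Z'}(W_1 \coprod W_1'),
\]
since coproducts commute with pushouts in derived stacks. Because the composite of two $1$-morphisms is built as such a pushout of targets (the dual of the fiber product $N_1 \times_{X_1} N_2$ appearing in Corollary \ref{cor:Comp}), these equivalences furnish the structure $2$-isomorphisms $F_{g,f}$ and $F_X$ of Definition \ref{def:functor}. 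One then checks that they respect the $\mathcal{O}$-orientations and boundary structures constructed in Lemma \ref{lem:NatOrien} and Proposition \ref{lem:ThreeFill}, and that the three coherence identities of Definition \ref{def:functor} hold; as in the associativity and unit lemmas of Section 4, these reduce to the universal property of homotopy colimits and to uniqueness-up-to-filleomorphism statements dual to Proposition \ref{prop:Characterization} and Lemma \ref{lem:LagUniqP01}.

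Finally I would assemble the remaining data of a symmetric monoidal bicategory in the sense of Definition 2.1 of \cite{S-P}: the associativity equivalence induced by $(X_1 \coprod X_2)\coprod X_3 \cong X_1 \coprod (X_2 \coprod X_3)$, the braiding induced by $X_1 \coprod X_2 \cong X_2 \coprod X_1$, together with the pentagonator, the two hexagonators and the syllepsis, each of which is realized by the canonical equivalences between iterated disjoint unions, promoted to filleomorphisms. The main obstacle is purely organizational rather than conceptual: one must check that every one of these canonical equivalences lifts to a filleomorphism compatible with the induced orientations, and then verify the full list of coherence $2$- and $3$-cell axioms of \cite{S-P}. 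All of these ultimately follow from the fact that $\coprod$ is a symmetric monoidal structure on derived stacks together with the compatibility displayed above, so, exactly as for the analogous symplectic theorem, I would present the constructions and leave the lengthy but routine diagram chases to the reader.
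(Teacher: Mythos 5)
Your proposal takes essentially the same approach as the paper: the paper's proof simply defines the monoidal structure on morphisms via the coproduct of fillings from Proposition \ref{prop:EasyFill}(c) and declares the remaining natural isomorphisms and coherence checks routine, which is exactly your plan. Your writeup is in fact more explicit than the paper's (notably in identifying the commutation of coproducts with pushouts as the source of the structure $2$-isomorphisms), but the underlying argument is the same.
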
 
\begin{proof}
We define the monoidal structure on morphisms by the coproduct of fillings, as defined in Proposition \ref{prop:EasyFill}\/(c). Together with some natural isomorphisms which we do not write down, this defines a symmetric monoidal bicategory.
\end{proof}

\subsection{From fillings to Lagrangians}\

Let $X$ be a $d$-oriented derived stack and $S$ be a $n$-symplectic derived stack. Consider a subcategory of $\fkFill(X, [X])$, such that the restriction of the functor $\bMap(- ,S)$ has image in the category of derived Artin stacks. We will see that the functor $\bMap(- ,S)$ defines a homomorphism of bicategories to $\fkLag(\bMap(X,S))$. The material here is a modest elaboration on the ideas in \cite{Cal}. We start by reviewing how an orientation determines an ``integral".
Recall from \cite{PTVV} that for if $X$ is $\mathcal{O}$-compact, then for any derived stack $Z$ there is a natural map
\[\mathbf{DR}(X\times Z) \To C(X,\mathcal{O}_X)\otimes \mathbf{DR}(Z).
\]
If we are given a morphism $ \eta:C(X, \mathcal{O}_X) \To k[-d]$, we can compose it with the previous map and get a map
\[\mathbf{DR}(X\times Z) \To \mathbf{DR}(Z)[-d],
\]
which, in particular, induces a map 
\[\int_{\eta}: \mathcal{A}^{2, cl}(X\times Z,n) \To \mathcal{A}^{2,cl}(Z,n-d).
\]
We collect a few useful properties of this construction.

\begin{lem}\label{lem:MsPullPush}
The assignment $\eta \mapsto \int_{\eta}(-)$, determines a continuous map
\[Map(C(X, \mathcal{O}_X), k[-d]) \To Map (\mathcal{A}^{2, cl}(X\times Z,n), \mathcal{A}^{2,cl}(Z,n-d)).
\]

Let $f: X \to Y$ and $g:Z_0 \to Z_1$ be morphisms of $\mathcal{O}$-compact derived stacks. If $[X]: C(X, \mathcal{O}_X) \to k[-d]$, the following holds:
\[\int_{f_{*}[X]} (-) = \int_{[X]} (f \times \text{id})^{*}(-) \ \ \ \textrm{and} \ \ \
g^{*}\big(\int_{[X]} (-) \big)= \int_{[X]}  (\text{id}\times g)^{*}(-) .
\]
\end{lem}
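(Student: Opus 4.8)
The plan is to unwind the definition of $\int_\eta$ in terms of the universal map $\mathbf{DR}(X\times Z)\to C(X,\mathcal{O}_X)\otimes\mathbf{DR}(Z)$ recalled from \cite{PTVV}, and to reduce all three statements to naturality properties of that construction. For the first assertion, I would observe that $\int_\eta(-)$ is obtained by tensoring $\eta$ against $\mathbf{DR}(Z)$ and post-composing with the universal map, followed by passage to the space of closed $2$-forms; each of these operations is continuous (functorial) in the appropriate simplicial sense, so the assignment $\eta\mapsto\int_\eta(-)$ assembles into a continuous map on mapping spaces. The key point is simply that the pairing $Map(C(X,\mathcal{O}_X),k[-d])\times\mathbf{DR}(X\times Z)\to\mathbf{DR}(Z)[-d]$ is induced by a single morphism of spectra (or complexes) and hence is continuous in both variables separately.

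For the two identities I would argue by tracing through the defining diagram, using the naturality of the universal map $\mathbf{DR}(-\times Z)\to C(-,\mathcal{O})\otimes\mathbf{DR}(Z)$ with respect to morphisms in each factor. For the equality $\int_{f_*[X]}(-)=\int_{[X]}(f\times\id)^*(-)$, recall that $f_*[X]$ is by definition the composite $C(Y,\mathcal{O}_Y)\to C(X,\mathcal{O}_X)\xrightarrow{[X]}k[-d]$, where the first map is pullback along $f$. I would then write down the commuting square expressing compatibility of the universal map with the morphism $f\times\id:X\times Z\to Y\times Z$, namely that
\[
\xymatrix{\mathbf{DR}(Y\times Z)\ar[r]\ar[d]_{(f\times\id)^*} & C(Y,\mathcal{O}_Y)\otimes\mathbf{DR}(Z)\ar[d]^{f^*\otimes\id}\\
\mathbf{DR}(X\times Z)\ar[r] & C(X,\mathcal{O}_X)\otimes\mathbf{DR}(Z)}
\]
homotopy commutes; composing with $[X]\otimes\id$ on the right-hand column realizes both sides of the claimed identity, so they agree. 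The second identity $g^*\big(\int_{[X]}(-)\big)=\int_{[X]}(\id\times g)^*(-)$ is the analogous naturality in the $Z$-variable: the universal map is natural in $Z$ with respect to $g:Z_0\to Z_1$, so $(\id\times g)^*$ on $\mathbf{DR}(X\times Z_1)$ corresponds to $\id\otimes g^*$ on $C(X,\mathcal{O}_X)\otimes\mathbf{DR}(Z_1)$, and capping with $[X]$ then yields $g^*$ applied to the integral.

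The main obstacle will be setting up the precise form of the naturality square for the universal ``partial integration'' map of \cite{PTVV} and checking that the realization functor $|-|$ and the projection $\mathbf{DR}\to\mathcal{A}^{2,cl}$ used to define $\int_\eta$ on $\mathcal{A}^{2,cl}(X\times Z,n)$ are compatible with these pullbacks; once that bookkeeping is in place the identities are formal. I expect everything here to be a routine diagram chase at the level of the relevant $\infty$-functors, so I would state the naturality of the \cite{PTVV} construction as the key input and then deduce the two identities and the continuity statement directly from it, suppressing the (standard but tedious) verification that all the maps involved are the evident natural ones.
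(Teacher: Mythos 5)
The paper states this lemma without proof, treating it as a formal consequence of the construction, and your proposal supplies exactly the intended justification: continuity of composition against the fixed universal map $\mathbf{DR}(X\times Z)\to C(X,\mathcal{O}_X)\otimes\mathbf{DR}(Z)$ of \cite{PTVV}, plus its contravariant naturality in each factor separately (with $f_*[X]=[X]\circ f^*$ converting the $X$-naturality square into the first identity, and $\id\otimes g^*$ giving the second). Your argument is correct, and the remaining bookkeeping you flag --- compatibility of the realization and the projection to closed $2$-forms of degree $n$ with these pullbacks --- is indeed routine, so nothing is missing.
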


\begin{thm}[\cite{PTVV}, Theorem 2.6]
Let $(X,[X])$ be a $d$-oriented derived stack, $(S,\omega)$ be a $n$-symplectic derived stack and assume that $\bMap(X, S)$ is a derived Artin stack. Denote by $ev: X\times\bMap(X, S)\to S $ the evaluation map. Then $\int_{[X]}ev^{*}\omega$ is an $(n-d)$-shifted symplectic structure on $\bMap(X, S)$
\end{thm}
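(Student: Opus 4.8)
The plan is to first exhibit $\int_{[X]} ev^{*}\omega$ as a well-defined closed $(n-d)$-shifted $2$-form and then to verify its non-degeneracy by reducing it to the non-degeneracy of $\omega$ together with the duality built into the orientation $[X]$. The form itself requires essentially no work: since pullback sends closed forms to closed forms, $ev^{*}\omega \in \mathcal{A}^{2,cl}(X\times \bMap(X,S),n)$, and the integration map of Lemma \ref{lem:MsPullPush} places $\int_{[X]} ev^{*}\omega$ in $\mathcal{A}^{2,cl}(\bMap(X,S),n-d)$. Writing $M=\bMap(X,S)$ and letting $\pi:X\times M\to M$ be the projection, the whole content of the theorem is therefore the non-degeneracy of the underlying $2$-form, i.e.\ that the induced map $\Theta:\mathbb{T}_{M}\to \mathbb{L}_{M}[n-d]$ is a quasi-isomorphism.

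The first step is to identify the tangent complex. By the deformation theory of mapping stacks as in \cite{PTVV}, there is a canonical equivalence $\mathbb{T}_{M}\simeq \pi_{*}\,ev^{*}\mathbb{T}_{S}$, where $\pi_{*}$ denotes the relative version of $C(X,-)$, and dually $\mathbb{L}_{M}\simeq (\pi_{*}\,ev^{*}\mathbb{T}_{S})^{\vee}$. My claim is that, under these identifications, the map $\Theta$ factors as the composite
\[
\pi_{*}\,ev^{*}\mathbb{T}_{S}\xrightarrow{\ \pi_{*}ev^{*}\Theta_{\omega}\ } \pi_{*}\,ev^{*}\mathbb{L}_{S}[n]\cong \pi_{*}\,(ev^{*}\mathbb{T}_{S})^{\vee}[n]\xrightarrow{\ -\cap[X]\ }(\pi_{*}\,ev^{*}\mathbb{T}_{S})^{\vee}[n-d].
\]
The first arrow is obtained by applying the functor $\pi_{*}ev^{*}$, which preserves quasi-isomorphisms, to the equivalence $\Theta_{\omega}:\mathbb{T}_{S}\xrightarrow{\sim}\mathbb{L}_{S}[n]$ coming from non-degeneracy of $\omega$; the middle identification is $ev^{*}\mathbb{L}_{S}=(ev^{*}\mathbb{T}_{S})^{\vee}$; and the last arrow is the orientation pairing $-\cap[X]$ of (\ref{eqn:Acap}) applied to the perfect complex $(ev^{*}\mathbb{T}_{S})^{\vee}$, which is a quasi-isomorphism precisely because $[X]$ is a $d$-orientation. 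Granting the factorization, $\Theta$ is a composite of two quasi-isomorphisms and the theorem follows at once.

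The step I expect to be the main obstacle is establishing this factorization, i.e.\ that integrating the pulled-back form $ev^{*}\omega$ over $X$ against $[X]$ corresponds, on tangent and cotangent complexes, to first applying $\pi_{*}ev^{*}\Theta_{\omega}$ and then capping with $[X]$. Concretely this is the compatibility between the de Rham-level operation $\int_{[X]}$, built from the cup product, the trace, and the pairing with $\eta=[X]$, and the duality pairing $-\cap[X]$ used to dualize $\pi_{*}ev^{*}\mathbb{T}_{S}$. I would prove it by working affine-locally on $M$ and invoking the functoriality of Lemma \ref{lem:MsPullPush}, namely $\int_{f_{*}[X]}(-)=\int_{[X]}(f\times\id)^{*}(-)$ and the base-change identity $g^{*}\int_{[X]}(-)=\int_{[X]}(\id\times g)^{*}(-)$, to reduce to the universal evaluation map and then unwind both sides to the same cup-product-then-trace-then-$[X]$ map on $C(X,ev^{*}\mathbb{T}_{S})\otimes C(X,ev^{*}\mathbb{T}_{S})$. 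Once this identification is secured, non-degeneracy is immediate from the two quasi-isomorphisms above, completing the argument.
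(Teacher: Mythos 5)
Your proposal is correct, and since the paper does not prove this statement itself (it is quoted from \cite{PTVV}, Theorem 2.6, with the proof deferred to that reference), the right comparison is with the argument in \cite{PTVV}, which your proof reproduces essentially verbatim: the identification $\mathbb{T}_{\bMap(X,S)}\simeq \pi_{*}ev^{*}\mathbb{T}_{S}$ from the deformation theory of mapping stacks, the factorization of $\Theta$ as $\pi_{*}ev^{*}\Theta_{\omega}$ followed by $-\cap[X]$ applied to the perfect complex $(ev^{*}\mathbb{T}_{S})^{\vee}$, and non-degeneracy read off from the quasi-isomorphism condition in the definition of a $d$-orientation, checked on $A$-points via base change. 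The step you flag as the main obstacle, the compatibility of the de Rham-level $\int_{[X]}$ (cup product, trace, pairing with $[X]$) with the cap-product duality on tangent complexes, is likewise the technical heart of the proof in \cite{PTVV}, and your plan of reducing to affines over $\bMap(X,S)$ using the functoriality and base-change identities of Lemma \ref{lem:MsPullPush} is exactly how it is handled there.
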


\begin{thm}[\cite{Cal}, Theorem 2.11]\label{lem:Fill2Lag} 
Let $f:X \To W$ be a filling and assume that $\bMap(W, S)$ and $\bMap(X, S)$ are derived Artin stacks. The induced morphism $\bMap(f, S): \bMap(W, S) \to \bMap(X, S)$ has an induced Lagrangian structure. 
\end{thm}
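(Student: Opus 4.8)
The plan is to follow the ``dual'' pattern established in Section~2 and in \cite{Cal}: first produce the isotropic (boundary-integrated) structure, then verify non-degeneracy by identifying the map it induces with the non-degenerate map $\Theta_\gamma$ coming from the filling. Write $F=\bMap(f,S):\bMap(W,S)\to\bMap(X,S)$ and let $ev_X$, $ev_W$ be the evaluation maps out of $X\times\bMap(X,S)$ and $W\times\bMap(W,S)$. By naturality of evaluation, $ev_X\circ(\mathrm{id}_X\times F)$ is homotopic to $ev_W\circ(f\times\mathrm{id})$. Combining this homotopy with the two functoriality statements in Lemma~\ref{lem:MsPullPush}, I would compute
\[
F^{*}\Big(\int_{[X]} ev_X^{*}\omega\Big)=\int_{[X]} (\mathrm{id}_X\times F)^{*} ev_X^{*}\omega=\int_{[X]} (f\times\mathrm{id})^{*} ev_W^{*}\omega=\int_{f_{*}[X]} ev_W^{*}\omega,
\]
so that $F^{*}\omega_{\bMap(X,S)}=\int_{f_{*}[X]} ev_W^{*}\omega$. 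The filling supplies a path $\gamma$ from $f_{*}[X]$ to $0$ in $Map(C(W,\mathcal{O}_W),k[-d])$, and by the continuity statement in Lemma~\ref{lem:MsPullPush} the assignment $\eta\mapsto\int_{\eta} ev_W^{*}\omega$ sends $\gamma$ to a path in $\mathcal{A}^{2,cl}(\bMap(W,S),n-d)$ from $F^{*}\omega_{\bMap(X,S)}$ to $\int_{0} ev_W^{*}\omega=0$. Reversing this path gives the desired isotropic structure $h$ on $F$.

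For non-degeneracy I would first recall from \cite{PTVV} the description $\mathbb{T}_{\bMap(X,S)}\cong C(X,ev_X^{*}\mathbb{T}_S)$ (and likewise for $W$), where $C(X,-)$ is taken in the relative sense and commutes with base change since $X$ is $\mathcal{O}$-compact. Setting $E=ev_W^{*}\mathbb{T}_S$ and using the homotopy above, the defining triangle $C(W,E)\to C(X,f^{*}E)\to C(f,E)$ becomes the triangle $\mathbb{T}_{\bMap(W,S)}\xrightarrow{dF} F^{*}\mathbb{T}_{\bMap(X,S)}\to C(f,E)$, whence $C(f,ev_W^{*}\mathbb{T}_S)\cong\mathbb{T}_F[1]$ and therefore $\mathbb{L}_F[(n-d)-1]\cong C(f,ev_W^{*}\mathbb{T}_S)^{\vee}[n-d]$. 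On the other hand, non-degeneracy of $\omega$ gives $\Theta_\omega:ev_W^{*}\mathbb{T}_S\cong(ev_W^{*}\mathbb{T}_S)^{\vee}[n]$, so the non-degenerate map $\Theta_\gamma:C(W,E)\to C(f,E^{\vee})^{\vee}[-d]$ of diagram \eqref{eqn:BdStr2}, evaluated at $E=ev_W^{*}\mathbb{T}_S$ and transported through $\Theta_\omega$, specializes to a quasi-isomorphism $C(W,ev_W^{*}\mathbb{T}_S)\to C(f,ev_W^{*}\mathbb{T}_S)^{\vee}[n-d]$; that is, a quasi-isomorphism with exactly the source and target of $\Theta_h$.

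The crux of the proof, and the step I expect to be the main obstacle, is to show that $\Theta_h$ agrees, under the identifications above, with $\Theta_\gamma$ twisted by $\Theta_\omega$. Concretely, one must check that the passage from a boundary path to its induced lift $\Theta_{(-)}$ in diagram \eqref{eqn:BdStr2} is compatible with the integration functor $\int_{(-)}$ and with the identification of tangent complexes of mapping stacks, so that the homotopy-commutative square defining $\Theta_h$ from $h$ is obtained by applying $C(X,-)$, $C(W,-)$ and $\int$ to the square defining $\Theta_\gamma$ from $\gamma$ after the substitution $E=ev_W^{*}\mathbb{T}_S$. Granting this compatibility, the two-out-of-three property for quasi-isomorphisms applied to $\Theta_\gamma$ (non-degenerate because $\gamma$ is a non-degenerate boundary structure) and to $\Theta_\omega$ (a quasi-isomorphism because $\omega$ is symplectic) forces $\Theta_h$ to be a quasi-isomorphism, establishing that $h$ is a Lagrangian structure. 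As elsewhere in this article, the bookkeeping of the homotopies is the delicate part, and I would organize the diagram chase exactly as in the proof of Proposition~\ref{prop:Lag2Lag}.
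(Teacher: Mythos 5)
Your construction of the isotropic structure is exactly the paper's: the proof in the text likewise computes $\mathcal{M}_S(f)^*\int_{[X]}ev^*\omega=\int_{f_*[X]}ev^*\omega$ using the two functoriality statements of Lemma \ref{lem:MsPullPush}, and then feeds the path $\gamma$ from $f_*[X]$ to $0$ through the continuity statement of that lemma to obtain the path exhibiting $\mathcal{M}_S(f)$ as isotropic. Where you genuinely diverge is on non-degeneracy: the paper does not prove it, but explicitly refers the reader to \cite{Cal} for that step, whereas you sketch the argument. Your sketch is the correct one, and it is essentially Calaque's: the identification $\mathbb{T}_{\bMap(X,S)}\cong C(X,ev_X^*\mathbb{T}_S)$ from \cite{PTVV}, base change (valid by $\mathcal{O}$-compactness) combined with the homotopy $ev_X\circ(\mathrm{id}\times F)\simeq ev_W\circ(f\times\mathrm{id})$ to turn the defining triangle of $C(f,E)$ at $E=ev_W^*\mathbb{T}_S$ into $\mathbb{T}_{\bMap(W,S)}\to F^*\mathbb{T}_{\bMap(X,S)}\to\mathbb{T}_F[1]$, and then the transport of $\Theta_\gamma$ through $\Theta_\omega$ (note your bookkeeping is right: $C(f,E^\vee)^\vee[-d]\cong C(f,E)^\vee[n-d]$ once $E\cong E^\vee[n]$), with two-out-of-three closing the argument. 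You also correctly isolate the crux — the compatibility of the integration map $\int_{(-)}$ and the tangent-complex identifications with the lift construction of \eqref{eqn:BdStr2} — which is precisely the content of the proof of Theorem 2.11 in \cite{Cal}. So relative to the paper you have, if anything, done more: the paper delegates this entire verification to \cite{Cal}, while you outline it and honestly leave only that one diagram chase unexpanded; a fully self-contained write-up would still need to supply it.
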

\begin{proof}
We will explain how a boundary structure in $f$ determines an isotropic structure on $\bMap(f, S)$ and refer the reader to \cite{Cal} for a proof that this assignment preserves non-degeneracy.

For simplicity of notation, we denote $\mathcal{M}_{S}(X)=\bMap(X,S)$ and $\mathcal{M}_{S}(W)= \bMap(W,S)$ and $\mathcal{M}_{S}(f)$ for the morphism $\bMap(W,S)\to \bMap(X,S)$ the morphism induced by $f:X\to W$. A boundary structure on $f:X \to W$ consists of a path from $f_{*}[X]$ to $0$. By the first part of Lemma \ref{lem:MsPullPush}, this induces a path from $\int_{f_{*}[X]} \pi^{*} \omega$ to $0$.
% Consider the map $(f\times id):X \times \bMap(X, S) 
%\to W \times \bMap(X, S)$
Again using Lemma \ref{lem:MsPullPush} we have
\[\mathcal{M}_{S}(f)^{*}\int_{[X]}\pi^{*}\omega = \int_{[X]}(\text{id}\times \mathcal{M}_{S}(f))^{*}\pi^{*}\omega = \int_{f_{*}[X]}\pi^{*}\omega
\]
so we have a path from $\mathcal{M}_{S}(f)^{*}\int_{[X]}\pi^{*}\omega$ to $0$, in other words an isotropic structure on $\mathcal{M}_{S}(f)$.
\end{proof}

This proof points the way to some helpful notation. 
If $f:X \to W$ has a boundary structure, that is a path $\gamma_{f}$ from $0$ to $f_{*}[X]$ then we define $\mathcal{M}_{S}(\gamma_{f})$ to be the corresponding path in the space of closed $2$-forms on $\bMap(W,S)$ from $\mathcal{M}_{S}(f)^{*}\int_{[X]}\pi^{*}\omega$ to $0$.

\begin{prop}\label{prop:filltolag}
Let $X_0$, $X_1$ be $d$-oriented derived stacks, let $f= (f_0, f_1): \overline{X_0} \coprod X_1 \to W$ and $g:X_0 \to U$ be fillings and consider the filling $b_{f}(g): X_{1} \to U  \coprod_{X_0} W$, constructed in Proposition \ref{prop:BigBlittleB}. Assuming that the following mapping stacks are Artin, then an equivalence (determined by the universal property)  
\[\bMap(U \coprod_{X_0} W, S) \cong \bMap(U, S) \times_{\bMap(X_0, S)} \bMap(W,S)
\]
can be upgraded to a Lagrangeomorphism of Lagrangians in $\bMap(X_1, S)$. Here the Lagrangian structure on the right side is constructed by applying Proposition \ref{prop:Lag2Lag} 
%to the Lagrangians $\bMap(W, S) \to \bMap(X_0, S)^{-}\times \bMap(X_1, S)$ and $\bMap(U, S) \to \bMap(X_0,S)$. 
and the Lagrangian structure on the left hand side comes from applying Theorem \ref{lem:Fill2Lag} to $b_{f}(g)$.
\end{prop}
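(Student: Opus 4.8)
\medskip

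The plan is to reduce the claim to Corollary \ref{cor:HIsomLag} by showing that the two Lagrangian structures on the two models of the fiber product are assembled from the same three ingredients, and hence are homotopic. First I would fix the underlying equivalence. Since $\bMap(-,S)$ is contravariant and preserves homotopy limits, it carries the homotopy pushout $U\coprod_{X_0}W$ to a homotopy pullback, yielding the stated equivalence
\[
\bMap(U\coprod_{X_0}W,S)\cong \bMap(U,S)\times_{\bMap(X_0,S)}\bMap(W,S),
\]
and it carries $\overline{X_0}\coprod X_1$ to $\bMap(X_0,S)^-\times\bMap(X_1,S)$, the opposite symplectic structure on the first factor recording the reversed orientation carried by $\overline{X_0}$. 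Writing $F=\bMap(f,S)$ for the resulting Lagrangian correspondence and $G=\bMap(g,S)$ for the Lagrangian of Theorem \ref{lem:Fill2Lag}, the morphism $\bMap(b_f(g),S)$ becomes, under this equivalence, the map $c_F(G)$ of Proposition \ref{prop:Lag2Lag}; this is immediate from functoriality, since both are induced by $b_f(g)=i_W\circ f_1$. Thus the two underlying derived stacks and their maps to $\bMap(X_1,S)$ agree up to the canonical equivalence, and only the isotropic structures remain to be compared.

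Next I would match the isotropic structures piece by piece. On the filling side the boundary structure on $b_f(g)$ is the concatenation $B_f(\delta)=(i_{W*}\gamma)\bullet c\bullet(i_{U*}\delta)$ from the proof of Proposition \ref{prop:BigBlittleB}, and the induced isotropic structure on $\bMap(b_f(g),S)$ is $\mathcal{M}_S(B_f(\delta))$, which by the continuity statement of Lemma \ref{lem:MsPullPush} splits as $\mathcal{M}_S(i_{W*}\gamma)\bullet\mathcal{M}_S(c)\bullet\mathcal{M}_S(i_{U*}\delta)$. Applying the two naturality identities $\int_{f_*[X]}(-)=\int_{[X]}(f\times\id)^*(-)$ and $g^*\int_{[X]}(-)=\int_{[X]}(\id\times g)^*(-)$, together with the naturality of the evaluation maps with respect to $i_U$ and $i_W$, I would show $\mathcal{M}_S(i_{U*}\delta)=\pi_N^*\,e$ and $\mathcal{M}_S(i_{W*}\gamma)=\pi_X^*\,h$, where $e=\mathcal{M}_S(\delta)$ is the Lagrangian structure on $G$, $h=\mathcal{M}_S(\gamma)$ is the relevant segment of the Lagrangian structure on $F$, and $\pi_N,\pi_X$ are the two projections of the fiber product. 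Finally, since $c$ is the path induced by the homotopy-commutative square relating $i_W\circ f_0$ and $i_U\circ g$, integrating it against $ev^*\omega$ produces exactly the path $l$ attached to the Cartesian square in the proof of Proposition \ref{prop:Lag2Lag}. Hence $\mathcal{M}_S(B_f(\delta))$ and the structure $H=\pi_N^*e\bullet l\bullet\pi_X^*h$ of Proposition \ref{prop:Lag2Lag} are built from identical segments.

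Having identified the three segments, I would produce the homotopy required by Corollary \ref{cor:HIsomLag} from the homotopies implementing the naturality of $\int_{[X]}$, and verify the endpoint condition: evaluating this homotopy at its far endpoint recovers the canonical path between the two pullbacks of $\int_{[X_1]}ev^*\omega$ to $\bMap(X_1,S)$ induced by $b_f(g)$, again by Lemma \ref{lem:MsPullPush} applied to the square. Corollary \ref{cor:HIsomLag} then upgrades the canonical equivalence to the desired Lagrangeomorphism. The main obstacle will be the careful bookkeeping of this middle step: one must track how each segment and each filling $2$-simplex of $B_f(\delta)$ is transported through the integral, and confirm in particular that the path $c$ and the Cartesian-square path $l$ literally coincide after integration rather than merely up to a further homotopy, so that no unaccounted-for correction term appears in the comparison.
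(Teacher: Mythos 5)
Your proposal is correct and takes essentially the same route as the paper's own proof: both decompose the image of the boundary structure $B_f(\delta)=(i_{W*}\gamma)\bullet c\bullet(i_{U*}\delta)$ under integration against $ev^*\omega$ using the continuity and naturality statements of Lemma \ref{lem:MsPullPush}, identify the resulting three segments with the segments $\pi_N^*e$, $l$, $\pi_X^*h$ of the isotropic structure from Proposition \ref{prop:Lag2Lag} (the middle comparison coming from the canonical path between the two pushforwards of $[X_0]$), and conclude by Corollary \ref{cor:HIsomLag}. The only difference is one of explicitness: you spell out the endpoint condition and note that a homotopy (rather than literal equality) between $\mathcal{M}_S(c)$ and $l$ suffices, which is exactly the level of agreement the paper asserts.
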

\begin{proof}
Let $\gamma$ be the path from $f_{0*}[X_0]$ to $f_{1*}[X_1]$. It gives rise to a path from $\mathcal{M}_{S}(f_1)^{*}\int_{[X_1]}\pi_1^{*}\omega$ to $\mathcal{M}_{S}(f_0)^{*}\int_{[X_0]}\pi_0^{*}\omega$ in the space of closed $2$-forms on $\bMap(W,S)$. Similarly, we have $\mathcal{M}_{S}(\gamma_g)$, a path from $\mathcal{M}_{S}(g)^{*}\int_{[X_0]}\pi_g^{*}\omega$ to $0$ in the space of closed $2$-forms on $\bMap(U,S)$. The canonical path connecting the pullbacks of $\mathcal{M}_{S}(f_0)^{*}\int_{[X_0]}\pi_0^{*}\omega$ and $\mathcal{M}_{S}(g)^{*}\int_{[X_0]}\pi_g^{*}\omega$ in the space of forms on the right hand side is induced using from the canonical path from the two different pushforwards of $[X_0]$ to the space $Map(C(U \coprod_{X_0} W),k[-d])$. Therefore $\mathcal{M}_{S}((i_{W*}\gamma) \bullet c \bullet (i_{U*}\gamma_g))$ is homotopy equivalent to the path made by connecting the pullbacks of $\mathcal{M}_{S}(i_{W})^{*}(\mathcal{M}_{S}(\gamma))$ and  $\mathcal{M}_{S}(i_{U})^{*}(\mathcal{M}_{S}(\gamma_g))$. We can now complete the proof by appealing to Corollary \ref{cor:HIsomLag}.
 \end{proof}
 
By taking $X_1$ to be a point, we obtain the following corollary, which can be found in \cite{Cal}
\begin{cor}\label{lem:Bd2Lag} 
Given two fillings $X\to W_1$ and $X\to W_2$, the equivalence of derived stacks  
\[ \bMap(W_{1} \coprod_{X} W_{2}, S) \to \bMap(W_1,S) \times_{\bMap(X,S)} \bMap(W_2,S)
\]
is a symplectomorphism, assuming these are derived Artin stacks.
\end{cor}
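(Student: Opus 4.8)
The plan is to deduce the statement as the degenerate case $X_1=\emptyset$ of Proposition \ref{prop:filltolag}. In the notation of that proposition I would set $X_0=X$, $U=W_1$ and $W=W_2$, and take $X_1=\emptyset$ viewed as a $d$-oriented derived stack as in Proposition \ref{prop:EasyFill}. Then $\overline{X_0}\coprod X_1=\overline{X}$, the filling $f\colon \overline{X}\to W_2$ is just the given filling of $W_2$ read through the bijection $\mathcal{F}ill(f_2,[X])\cong\mathcal{F}ill(f_2,-[X])$ of Proposition \ref{prop:EasyFill}(b), and $g\colon X\to W_1$ is the given filling of $W_1$. With these choices the associated map of Proposition \ref{prop:BigBlittleB} is $b_f(g)\colon\emptyset\to U\coprod_{X_0}W=W_1\coprod_X W_2$, and the equivalence furnished by Proposition \ref{prop:filltolag} is precisely
\[
\bMap(W_1\coprod_X W_2,S)\cong \bMap(W_1,S)\times_{\bMap(X,S)}\bMap(W_2,S),
\]
now upgraded to a Lagrangeomorphism of Lagrangians in $\bMap(\emptyset,S)$.

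Next I would identify the ambient stack. Since $\emptyset$ is the initial derived stack, $\bMap(\emptyset,S)\simeq\bullet$ is the terminal (one-point) stack, and because $X_1=\emptyset$ is regarded as $d$-oriented this point carries the $(n-d)$-shifted symplectic form, i.e.\ it is $\bullet_{n-d}$. By Example \ref{point} a Lagrangian in $\bullet_{n-d}$ is exactly an $(n-d-1)$-shifted symplectic derived stack, and, as recorded in the remark following Definition \ref{defn:IsomOfLags} and in Corollary \ref{cor:EtaleHLag}, a Lagrangeomorphism between two Lagrangians in a point is the same datum as a symplectomorphism of the corresponding symplectic stacks. Translating the Lagrangeomorphism above through this dictionary produces the desired symplectomorphism. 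The virtual dimensions match: the source $\bMap(W_1\coprod_X W_2,S)$ is $(n-d-1)$-shifted symplectic by the theorem of \cite{PTVV} applied to the $(d+1)$-orientation on $W_1\coprod_X W_2$ from Lemma \ref{lem:NatOrien}, while the target is $(n-d-1)$-shifted symplectic as an intersection of two Lagrangians via Corollary \ref{cor:Const}.

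The one point requiring care, and the likely main obstacle, is to confirm that under the specialization $X_1=\emptyset$ the two symplectic structures implicit in Proposition \ref{prop:filltolag} are the canonical ones named in the corollary. On the right-hand side the Lagrangian structure there is built from Proposition \ref{prop:Lag2Lag} with final target $\bMap(X_1,S)$; once this target is a point, that construction collapses to the operation of Corollary \ref{cor:Const}, producing exactly the $(n-d-1)$-shifted form on the fiber product of the two Lagrangians $\bMap(W_i,S)\to\bMap(X,S)$ supplied by Theorem \ref{lem:Fill2Lag}. Dually, the left-hand form is the one obtained by viewing $\bMap(W_1\coprod_X W_2,S)$ as a Lagrangian in $\bullet_{n-d}$, which by Example \ref{point} is the form $\int_{[W_1\coprod_X W_2]}ev^*\omega$. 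Since all of these identifications are natural in the orientation data and compatible with the pushout description of $W_1\coprod_X W_2$, checking their coincidence is essentially bookkeeping along the lines already carried out in Proposition \ref{prop:filltolag} rather than a new argument, and the corollary follows.
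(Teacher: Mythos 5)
Your proof is correct and is essentially the paper's own argument: the corollary is deduced by degenerating one boundary component of Proposition \ref{prop:filltolag}, and your choice $X_1=\emptyset$ (with its canonical $d$-orientation via Proposition \ref{prop:EasyFill}) is the right reading of the paper's one-line derivation, which loosely says ``taking $X_1$ to be a point'' --- only $X_1=\emptyset$ makes $\bMap(X_1,S)$ the shifted symplectic point $\bullet_{n-d}$, where by Example \ref{point} and the remark after Definition \ref{defn:IsomOfLags} a Lagrangeomorphism is exactly a symplectomorphism. Your remaining bookkeeping (the collapse of Proposition \ref{prop:Lag2Lag} to Corollary \ref{cor:Const} on the fiber-product side, and the identification of the left-hand form with $\int_{[W_1\coprod_X W_2]}ev^*\omega$) just makes explicit what the paper leaves implicit.
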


We now have all the ingredients necessary to show that $\bMap(-, S)$ defines a homomorphism from $\fkFill(X)$ to $\fkLag(\bMap(X,S)$, modulo the question of the required mapping stacks being derived Artin stacks. We fix this problem by restricting the domain of the homomorphism.

\begin{defn}\label{defn:Cdef}
Let $S$ be a derived Artin stack. Fix a subcategory $\fkC$ of the category of $\mathcal{O}$-compact derived stacks, closed under pushouts, and containing the initial object $\emptyset$ such that for any $X$ in $\fkC$, the mapping stack $\bMap(X, S)$ is a derived Artin stack.

Let $(X,[X])$ be a $d$-oriented derived stack, such that $X$ is an object of $\fkC$. We define the bicategory $\fkFill_{\fkC}(X)$ as the subcategory of $\fkFill(X)$, where all the fillings are objects and morphisms in $\fkC$. Note this defines a subcategory since $\fkC$ is closed under pushouts. Let $\fkOr_{\fkC}^d=\fkFill_{\fkC}(\emptyset_{d})$. As with $\fkOr$, this is a symmetric monoidal bicategory.
\end{defn}

We are aware of two examples of categories $\fkC$ which fulfil the conditions of Definition \ref{defn:Cdef}. It would be interesting to identify other examples.

\begin{example}\label{exam:Const}
Let $S$ be an arbitrary derived Artin stack. We can take $\fkC$ to be the category of ``constant stacks", that is to say, stacks whose value on any cdga is the same topological space (which has the homotopy type of a finite CW complex) and whose value on any morphism is the identity. The homotopy pushout of a diagram of such constant stacks is just the constant stack with value the homotopy pushout of the corresponding topological spaces. Moreover, as explained in \cite{PTVV}, for any such stack $X$, $\bMap(X, S)$ is a derived Artin stack. We discuss this example in more detail in subsection \ref{Ssec:ETFT}.
\end{example}

\begin{example}
Assume that $S$ is a smooth quasi-projective variety, or a classifying stack $BG$. Take $\fkC$ to be the category whose objects are finite homotopy colimits (in the category of derived stacks) of diagrams of smooth proper Deligne-Mumford stacks with morphisms closed immersions. As explained in \cite{PTVV}, if $X$ is a smooth proper Deligne-Mumford stack then $\bMap(X, S)$ is a derived Artin stack. Therefore for any object $Y$ in $\fkC$, then $\bMap(Y, S)$ is a derived Artin stack since it is a finite homotopy limit of derived Artin stacks.
\end{example}

\begin{thm}
Let $S$ be an $n$-symplectic derived stack and $\fkC$ a category as in Definition \ref{defn:Cdef}. If $(X, [X])$ is a $d$-oriented derived stack belonging to $\fkC$ then there is a homomorphism of bicategories 
\[\mathcal{M}_{S}:\fkFill_{\fkC}(X)\To \fkLag(\bMap(X,S))
\]
where $\bMap(X,S)$ is equipped with $(n-d)$-shifted symplectic structure $\int_{[X]}\pi^{*}\omega_{S}$ discussed above.
\end{thm}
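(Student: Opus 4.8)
The plan is to construct the homomorphism $\mathcal{M}_S$ by applying the mapping stack functor $\bMap(-,S)$ componentwise and checking that all the structure required in Definition \ref{def:functor} is preserved. The strategy runs exactly parallel to the construction of $\fkLag(S)$ in Section 4, but translated through $\bMap(-,S)$ via the dictionary established in the previous subsection; the point is that every geometric operation used to build $\fkFill_{\fkC}(X)$ has an $\bMap(-,S)$-image that is precisely the corresponding operation in $\fkLag(\bMap(X,S))$. First I would define $\mathcal{M}_S$ on objects: a filling $(f: X \to W, \gamma)$ is sent to the Lagrangian $\bMap(f,S): \bMap(W,S) \to \bMap(X,S)$ with the isotropic structure $\mathcal{M}_S(\gamma)$ produced by Theorem \ref{lem:Fill2Lag}. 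This is well-defined because $\fkC$ is closed under pushouts and guarantees that all the mapping stacks in sight are derived Artin stacks. On $1$-morphisms, a filling of $W_0 \coprod_X W_1$ is sent, using the symplectomorphism of Corollary \ref{lem:Bd2Lag}, to a Lagrangian in $\bMap(W_0,S) \times_{\bMap(X,S)} \bMap(W_1,S) \cong \bMap(W_0,S) \times_{\bMap(X,S)} \bMap(W_1,S)$, which is a relative Lagrangian correspondence in $\fkLag(\bMap(X,S))$. Similarly $2$-morphisms are sent through the analogous equivalence.

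The heart of the argument is supplying the natural isomorphisms $\mathcal{M}_{S,g,f}$ and $\mathcal{M}_{S,X}$ of Definition \ref{def:functor} that compare $\bMap$ of a composite filling with the composite of the images. Concretely, composition of $1$-morphisms in $\fkFill_{\fkC}(X)$ is built from the canonical filling of a triple pushout (Proposition \ref{lem:ThreeFill}) together with Proposition \ref{prop:BigBlittleB}, while composition in $\fkLag(\bMap(X,S))$ is built from Theorem \ref{thm:ThreeLag} and Proposition \ref{prop:Lag2Lag}. I would therefore invoke Proposition \ref{prop:filltolag}, which already shows that $\bMap(-,S)$ intertwines the ``push'' operation $B_f$ on fillings with the operation $C_f$ on Lagrangians up to a canonical Lagrangeomorphism: this is exactly the natural isomorphism $\mathcal{M}_{S,g,f}$. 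For the unit constraint $\mathcal{M}_{S,X}$, I would apply $\bMap(-,S)$ to the canonical filling $\nabla: \overline{X}\coprod X \to X$ of Lemma \ref{lem:NatOrien} and identify its image with the diagonal Lagrangian $\Delta$ of Proposition \ref{prop:Diag}, again via the universal property of the relevant fiber product and Corollary \ref{lem:Bd2Lag}.

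Once these constraints are in hand, verifying the three coherence identities of Definition \ref{def:functor} reduces to checking that the associators and unitors match up. Since both the associator $W_{N_3N_2N_1}$ in $\fkLag$ (Definition \ref{defn:Assoc}) and its filling counterpart are defined as graphs of the canonical equivalences arising from the universality of homotopy (co)limits, and since $\bMap(-,S)$ converts homotopy pushouts into homotopy fiber products (Corollary \ref{lem:Bd2Lag} and Proposition \ref{prop:filltolag}), the image of the filling associator will be canonically Lagrangeomorphic to the Lagrangian associator. The same reasoning applies to the unitors via Proposition \ref{prop:IdOriginal} and its dual. The coherence identities then follow because they hold at the level of the underlying equivalences of derived stacks, and the accompanying isotropic/boundary data are compatible by construction; I would phrase this as ``a diagram chase entirely analogous to those in the proofs of Lemmas \ref{lem:naturalAssoc} and \ref{lem:weakAssoc1mor}, now transported through $\bMap(-,S)$.''

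The main obstacle I anticipate is not any single identity but the bookkeeping needed to make Proposition \ref{prop:filltolag} do the work of $\mathcal{M}_{S,g,f}$ coherently for \emph{all} composites simultaneously, i.e. promoting the pointwise Lagrangeomorphisms into genuine natural isomorphisms of composition functors and then confirming they satisfy the pentagon-compatible identity. Because Proposition \ref{prop:filltolag} is stated only for a single application of $B_f$ versus $C_f$, some care is required to iterate it and to check that the resulting comparison is independent of the order of pushouts, exactly where the dual of Corollary \ref{cor:NotationX} is implicitly used. Fortunately all of this is governed by the universal properties of homotopy limits and colimits together with the continuity and functoriality of $\int_{[X]}$ recorded in Lemma \ref{lem:MsPullPush}, so the verification is tedious but not conceptually difficult; I would simply indicate the pattern and omit the lengthy diagrams, as is done elsewhere in the paper.
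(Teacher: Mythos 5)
Your proposal is correct and follows essentially the same route as the paper's proof: define $\mathcal{M}_S$ by applying $\bMap(-,S)$ componentwise via Theorem \ref{lem:Fill2Lag}, and reduce the homomorphism axioms to the fact that $\bMap(-,S)$ converts the pushouts defining compositions in $\fkFill_{\fkC}(X)$ into the fiber products defining compositions in $\fkLag(\bMap(X,S))$, invoking Proposition \ref{prop:filltolag} and Corollary \ref{lem:Bd2Lag} repeatedly. Your discussion of the coherence constraints and the iteration issue for Proposition \ref{prop:filltolag} is in fact more explicit than the paper's own (deliberately terse) verification, but it is the same argument.
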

\begin{proof}
The definition of this homomorphism on objects was explained in the proof of Lemma \ref{lem:Fill2Lag}
% \ref{lem:Bd2Lag}
. Recall that the $1$-morphisms and $2$-morphisms in the category $\fkFill_{\fkC}(X)$ are given by fillings, and similarly the $1$-morphisms and $2$-morphisms in the category $\fkLag(\bMap(X,S))$ are given by Lagrangians. The definition of this homomorphism on $1$-morphisms and $2$-morphisms looks the same as the definition on objects, it is again given by taking the mapping stack into $S$ and using the Lagrangian structure we have explained. The main thing to check is the compatibility of this assignment with the composition of $1$-morphisms and with the composition of $2$-morphisms. This works because compositions in $\fkFill_{\fkC}(X)$ are given by pushouts and compositions in $\fkLag(\bMap(X,S))$ are given by fiber products. Therefore, checking that this is a homomorphism boils down to a repeated use of Proposition \ref{prop:filltolag} and Corollary \ref{lem:Bd2Lag}, for which it is shown that the relevant structures on the pushouts correspond to the correct structures on fiber products. 
\end{proof}

As a special case of this, taking $X=\emptyset$ we have the following 

\begin{cor}\label{cor:functor_ms}
Let $S$ be an $n$-symplectic derived stack $\fkC$ a category satisfied the conditions of Definition \ref{defn:Cdef}. There is a homomorphism of symmetric monoidal bicategories
\[\mathcal{M}_{S}:\fkOr^d_{\fkC} \To \fkSymp^{n-d}
\]
which at the level of object sends a $d$-oriented derived stack $X$ to the $(n-d)$-symplectic derived Artin stack $\bMap(X,S)$.
\end{cor}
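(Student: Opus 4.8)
The plan is to obtain this corollary as the $X=\emptyset$ specialization of the preceding theorem, combined with the symmetric monoidal structures already established on the source and target bicategories. First I would take the $d$-oriented base stack in the preceding theorem to be the empty set, viewed via Proposition \ref{prop:EasyFill}(a) as the $(d-1)$-oriented stack $\emptyset_{d-1}$, so that by Definition \ref{defn:Ord} (and its $\fkC$-refinement in Definition \ref{defn:Cdef}) one has $\fkFill_{\fkC}(\emptyset_{d-1})=\fkOr^d_{\fkC}$. Since $\emptyset$ is the initial derived stack, $\bMap(\emptyset,S)=\bullet$ is the point, and the $\bigl(n-(d-1)\bigr)$-shifted symplectic structure $\int_{[\emptyset_{d-1}]}ev^{*}\omega$ that the theorem assigns to it is the canonical structure on $\bullet_{n-d+1}$. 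By Corollary \ref{cor:Symp0} we have $\fkLag(\bullet_{n-d+1})=\fkSymp^{n-d}$, so the preceding theorem already supplies a homomorphism of bicategories $\mathcal{M}_{S}:\fkOr^d_{\fkC}\To \fkSymp^{n-d}$.

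Next I would verify the object-level description. A $d$-oriented stack $X$ in $\fkC$, regarded as a filling of $\emptyset_{d-1}$ via the morphism $\emptyset \To X$, is sent by the theorem to the Lagrangian $\bMap(\emptyset\To X,S):\bMap(X,S)\To \bMap(\emptyset,S)=\bullet_{n-d+1}$. By Example \ref{point}, a Lagrangian structure on the canonical morphism to a point is exactly an $(n-d)$-shifted symplectic structure on the source, and one checks that the structure produced here is precisely $\int_{[X]}ev^{*}\omega$; this is the content of the PTVV theorem quoted above. Hence $\mathcal{M}_{S}$ sends $X$ to $\bMap(X,S)$ equipped with its $(n-d)$-shifted symplectic structure, as claimed.

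Finally I would promote $\mathcal{M}_{S}$ to a homomorphism of \emph{symmetric monoidal} bicategories in the sense of Definition 2.2 of \cite{S-P}. Both $\fkOr^d_{\fkC}$ and $\fkSymp^{n-d}$ already carry symmetric monoidal structures, given respectively by the coproduct of oriented stacks (with unit $\emptyset$) and the product of symplectic stacks (with unit $\bullet_{n-d}$). The essential geometric input is that $\bMap(-,S)$ converts coproducts into products compatibly with symplectic data: applying Corollary \ref{lem:Bd2Lag} with base stack $\emptyset$ shows that the universal equivalence
\[
\bMap(X_1\coprod X_2,S)\cong \bMap(X_1,S)\times \bMap(X_2,S)
\]
is a symplectomorphism, since by Lemma \ref{lem:MsPullPush} the integration $\int_{[X_1\coprod X_2]}$ splits additively over the disjoint union and thereby matches the product form $\omega_1\boxplus\omega_2$. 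Together with $\bMap(\emptyset,S)=\bullet_{n-d}$, which identifies units, these equivalences furnish the monoidal constraint cells. The main obstacle is not geometric but the coherence bookkeeping: the preceding theorem only produces a plain homomorphism, so one must check that the monoidal constraints assemble into the full data of a symmetric monoidal homomorphism and satisfy the coherence axioms of \cite{S-P}. This reduces to the naturality of $\bMap(-,S)$ with respect to coproducts, which is controlled by universal properties, and to the uniqueness-up-to-Lagrangeomorphism results of Sections 2 and 3 guaranteeing that the various symplectomorphisms and Lagrangeomorphisms involved agree coherently; I would only indicate these diagrams rather than write them out in full.
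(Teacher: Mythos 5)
Your proof is correct and follows exactly the paper's route: the paper likewise obtains the corollary by taking $X=\emptyset$ in the preceding theorem (so that $\fkFill_{\fkC}(\emptyset_{d-1})=\fkOr^d_{\fkC}$ and $\fkLag(\bMap(\emptyset,S))=\fkLag(\bullet_{n-d+1})=\fkSymp^{n-d}$), and disposes of the monoidal structure with the single remark that $\bMap(-,S)$ sends coproducts to products. Your write-up merely fills in the same specialization in more detail (the identification of the object-level symplectic form via Example \ref{point} and the symplectomorphism from Corollary \ref{lem:Bd2Lag} with empty base), which is consistent with, and slightly more explicit than, the paper's one-line argument.
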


The claim that this respects the monoidal structure is an easy consequence of the fact that $\bMap(-,S)$ sends coproducts to products.
  
\subsection{Extended Topological Field Theories} \label{Ssec:ETFT} \

We now sketch the existence of (symmetric monoidal) homomorphism of bicategories from the oriented  cobordism category of $d$-dimensional manifolds to the category $\fkOr_{\fkC}^d$ discussed in \ref{defn:Cdef} where $\fkC$ is the category of constant stacks from Example \ref{exam:Const}. First we explain what we mean by the oriented cobordism bicategory $\fkCob^{or}_d$ following the constructions in \cite{S-P}. Our indexing is shifted from that in  \cite{S-P}, what we are describing here is the oriented version of what in his notation is $\fkCob_{d+2}$ 

\begin{defn}
The objects in $\fkCob^{or}_d$ are pairs $(M, \mu)$ where $M$ is a closed $d$-dimensional manifold and $\mu$ is an orientation on $M$. The $1$-morphisms in $\fkCob^{or}_d$ between two objects $(M_0, \mu_0)$ and $(M_1, \mu_1)$ are triples $(N, \nu, f)$ where $N$ is a compact $(d+1)$-dimensional manifold with boundary, $\nu$ is an orientation on $N$ and $f$ is an isomorphism of oriented manifolds $f: (\partial N, \nu|_{N}) \to (M_0, -\mu_0) \coprod  (M_1, \mu_1) $. The $2$-morphisms between triples $(N_0, \nu_0, f_0)$ and $(N_1, \nu_1, f_1)$ are equivalence classes of quadruples $(P, \rho, a, b)$ where $P$ is a compact $(d+2)$-dimensional manifold (with corners) with orientation $\rho$ and boundary components $(\partial_0 P, \rho|_{\partial_0 P})$ and $(\partial_1 P, \rho|_{\partial_1 P})$,  \[a:(\partial_0 P, \rho|_{\partial_0 P}) \to (N_0, -\nu_0) \coprod  (N_1, \nu_1)\] is an orientation preserving diffeomorphism and 
\[b: (\partial_1 P, \rho|_{\partial_1 P}) \to (M_0 \times I, -\mu_0) \coprod (M_1 \times I, \mu_1)
\]
is an orientation preserving diffeomorphism such that \[b \circ a^{-1}|_{\partial N_0}: \partial N_{0} \to (M_{0} \times \{0\}) \coprod (M_{1} \times \{0\} )\] and \[b \circ a^{-1}|_{\partial N_1}: \partial N_{1} \to (M_{0} \times \{1\}) \coprod (M_{1} \times \{1\})\] agree with $f_0$ and $f_1$. Two such quadruples $(P_0, \rho_0, a_0, b_0)$ and $(P_1, \rho_1, a_1, b_1)$ are equivalent if there is an orientation preserving diffeomorphism $(P_0, \rho_0)\to (P_1, \rho_1)$ which takes $a_0^{-1}(N_0)$ to  $a_1^{-1}(N_0)$ and $a_0^{-1}(N_1)$ to  $a_1^{-1}(N_1)$,  $b_0^{-1}(M_0 \times I)$ to  $b_1^{-1}(M_0 \times I)$ and $b_0^{-1}(M_1 \times I)$ to  $b_1^{-1}(M_1 \times I)$.
\end{defn}

This is a symmetric monoidal bicategory where the monoidal structure is given by disjoint union (along with the induced orientation). We end this section by explaining that the Betti stack construction gives a symmetric monoidal homomorphism
\[\fkCob^{or}_d \To \fkOr^d_{\fkC}
\]
where $\fkC$ is the category of constant stacks discussed in Example \ref{exam:Const}.  The interest in such a homomorphism is that given an $n$-symplectic derived Artin stack $S$, we can postcompose such a homomorphism with the homomorphism $\mathcal{M}_{S}$ described in Corollary \ref{cor:functor_ms}
to get a symmetric monoidal homomorphism 
 \[\mathcal{Z}_S: \fkCob^{or}_d \To \fkSymp^{n-d}
\]
given by $\mathcal{Z}_S(-)= \mathcal{M}_S((-)_{B})$. 

\begin{prop}
Let $\fkC$ be the category of constant stacks discussed in Example \ref{exam:Const}.  Then the Betti stack construction gives a symmetric monoidal homomorphism of bicategories 
\[\fkCob^{or}_d \To \fkOr^d_{\fkC}.
\]
\end{prop}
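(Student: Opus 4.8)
The plan is to exhibit the desired homomorphism as the Betti stack functor $(-)_B$, with all orientation and filling data supplied by Poincar\'e--Lefschetz duality. Recall from \cite{PTVV} that for a space $T$ of the homotopy type of a finite CW complex the Betti stack $T_{B}$ is $\mathcal{O}$-compact, that $C(T_{B},\mathcal{O}_{T_{B}})\simeq C^{*}(T;k)$, and that perfect complexes on $T_{B}$ are exactly local systems of perfect $k$-complexes on $T$; moreover $(-)_{B}$ preserves homotopy colimits and sends disjoint unions to coproducts of stacks, so it lands in the category $\fkC$ of Example \ref{exam:Const}. First I would treat objects. Given a closed oriented $d$-manifold $(M,\mu)$, the fundamental class $[M]\in H_{d}(M;k)$ determined by $\mu$ defines the integration map $[M_{B}]\colon C(M_{B},\mathcal{O})\simeq C^{*}(M;k)\to k[-d]$, and Poincar\'e duality with coefficients in an arbitrary perfect local system $E$ is precisely the statement that cap product with $[M]$ is the quasi-isomorphism required in (\ref{eqn:Acap}). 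Thus $M_{B}$ carries a canonical $d$-orientation, and reversing $\mu$ flips the sign of $[M_{B}]$, matching the conventions of Proposition \ref{prop:EasyFill}(b).

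Next I would handle $1$- and $2$-morphisms uniformly. A cobordism $(N,\nu,f)$ has oriented boundary identified with $\overline{M_{0}}\coprod M_{1}$, and Stokes' theorem $\partial[N,\partial N]=[\partial N]$ shows that the pushforward to $N$ of the boundary fundamental class $[\partial N]=[M_{1}]-[M_{0}]$ is canonically null-homotopic, the null-homotopy being the image of the relative fundamental class $[N,\partial N]\in H_{d+1}(N,\partial N;k)$ under the connecting map of the pair. Under $(-)_{B}$ this null-homotopy is exactly a boundary structure on $\overline{(M_{0})_{B}}\coprod(M_{1})_{B}\to N_{B}$, i.e.\ a path from $f_{*}[\partial N_{B}]$ to $0$; Poincar\'e--Lefschetz duality for $(N,\partial N)$ with perfect local coefficients supplies non-degeneracy, so $N_{B}$ is a filling in the sense required for a $1$-morphism of $\fkOr^{d}_{\fkC}$. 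The identical argument, using Lefschetz duality for the $(d+2)$-manifold with corners $P$ and the connecting maps of the relevant pairs, equips the Betti stack of a $2$-morphism with a filling of the appropriate pushout. I would then set $\mathcal{M}(M,\mu)=M_{B}$ and send each higher morphism to its Betti stack with the filling just described.

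Compatibility with composition rests on colimit-preservation. Gluing cobordisms along a common closed boundary $M$ is a homotopy pushout of the underlying spaces, since a collar makes $M\hookrightarrow N_{i}$ a cofibration; hence $(N_{0}\cup_{M}N_{1})_{B}\simeq(N_{0})_{B}\coprod_{M_{B}}(N_{1})_{B}$, which is again $\mathcal{O}$-compact by Lemma \ref{lem:PushOcompact} and is exactly the pushout by which composition in $\fkOr^{d}_{\fkC}$ is defined (via Proposition \ref{prop:EasyFill}, Proposition \ref{prop:BigBlittleB}, Lemma \ref{lem:NatOrien} and Proposition \ref{lem:ThreeFill}). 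A Mayer--Vietoris computation of fundamental classes, i.e.\ the chain-level additivity $[N_{0}\cup_{M}N_{1}]=[N_{0}]+[N_{1}]$, shows that the $(d+1)$-orientation produced by Lemma \ref{lem:NatOrien} on the glued stack agrees with the one coming from the relative fundamental class of the glued cobordism. Disjoint union of manifolds maps to the coproduct of Betti stacks, which is the monoidal product of $\fkOr^{d}_{\fkC}$, so the braiding and unit are transported through $(-)_{B}$, upgrading $\mathcal{M}$ to a symmetric monoidal homomorphism of bicategories in the sense of \cite{S-P}.

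The main obstacle is bookkeeping rather than conceptual depth: one must check that the boundary-structure null-homotopies, the duality quasi-isomorphisms, and the fundamental classes all glue coherently across pushouts, so that the weak $2$-functoriality data of Definition \ref{def:functor} — in particular the $2$-isomorphisms $F_{g,f}$ and $F_{X}$ and the three coherence identities — are respected. Since every filling arising here is a non-degenerate boundary structure pinned down (up to contractible choice) by a relative fundamental class, these identities reduce to the additivity and naturality of fundamental classes under gluing together with the sign conventions fixed by orientation reversal; I would assemble them exactly as the associativity and unit checks were carried out for $\fkLag(S)$ in Section 4 and for $\fkOr^{d}$ in Lemma \ref{lem:NatOrien} and Proposition \ref{lem:ThreeFill}, and therefore would only sketch rather than spell out the diagrams.
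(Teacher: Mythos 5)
Your proposal is correct and takes essentially the same route as the paper: assign Betti stacks (landing in $\fkC$ by Lemma \ref{lem:PushOcompact} and colimit-preservation), read off the $\mathcal{O}$-orientations and boundary structures from fundamental and relative fundamental classes with Poincar\'e--Lefschetz duality supplying non-degeneracy, and reduce compatibility with compositions to the fact that gluing of cobordisms is a homotopy pushout preserved by $(-)_B$; you in fact spell out the duality and Mayer--Vietoris details that the paper only sketches. The one step the paper states explicitly that you leave implicit is well-definedness on $2$-morphisms: since these are \emph{equivalence classes} of cobordisms, one must note that an equivalence of quadruples induces an orientation-preserving homeomorphism of the glued boundaries and hence a filleomorphism (Definition \ref{defn:Filleo}) of the corresponding fillings --- a routine check within your framework.
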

\begin{proof}
To each compact oriented manifold with corners $(T, \sigma)$ we assign to the (constant) Betti stack $T_{B}$ (which is $\mathcal{O}$-compact). In the case that $S$ has no boundary, it has a $\mathcal{O}$-orientation $[T_{B}]= \sigma_{B}$. We will sometimes use the notation $(T,\sigma)_{B} = (T_{B}, \sigma_{B})$. The functor $(-)_{B}$ preserves finite homotopy pushouts and homotopy equivalences.  If $(M_0, \mu_0)$ and  $(M_1, \mu_1)$ are closed oriented manifolds such that $M_0 \coprod M_1$, is the boundary of a manifold with boundary $(N,\eta)$, then the inclusion $M_0 \coprod M_1 \to N$ induces a filling $(M_0, -\mu_0)_{B} \coprod (M_{1}, \mu_1)_{B}\to (N,\eta)_{B}$ of $(M_0, -\mu_0)_{B} \coprod (M_{1}, \mu_1)_{B}$, which is a $1$-morphism in $\fkOr^d$ from $(M_0, \mu_0)_{B}$ to $(M_{1}, \mu_1)_{B}$.  Given two such oriented manifolds with boundary $(N_0,\eta_0,f_0)$ and $(N_1,\eta_1,f_1)$ a cobordism $(P,\rho, a, b)$ between them (a $2$-morphism in $\fkCob^{or}_d$) is homotopy equivalent to an oriented topological space whose boundary is the pushout 
$N_0 \coprod_{(M_0 \coprod M_1)} N_1.
$
Applying the Betti stack construction, one gets a filling of
\[(N_0,\eta_0)_{B} \coprod_{(M_0,-\mu_0 )\coprod (M_1,-\mu_1)} (N_1,\eta_1)_{B}
.\]
Any equivalence of cobordisms, gives an orientation preserving homeomorphism of the two topological spaces whose boundary is the pushout above and therefore induces an filleomorphism (see Definition \ref{defn:Filleo}) of the two corresponding fillings. 
\end{proof}

\end{document}